\def\itemNum$#1${\item $\displaystyle#1$
   \hfill\refstepcounter{equation}(\theequation)}
\newtheorem{Lem}{Lemma}[section]
\newtheorem{Prop}[Lem]{Proposition}
\newtheorem*{Def}{Definition}
\theoremstyle{plain}
\newtheorem{Thm}[Lem]{Theorem}
\newtheorem*{Assu}{Reduction Assumption}
\theoremstyle{definition}
\declaretheorem[numbered=no,name=Example,qed={\lower-0.3ex\hbox{$\triangleleft$}}]{Ex}
\newtheorem*{Rem}{Remark}
\newtheorem*{Rems}{Remarks}
\newcommand{\Hom}{\text{\textnormal{Hom}}}
\newcommand{\Aut}{\text{\textnormal{Aut}}}
\newcommand{\Ext}{\text{\textnormal{Ext}}}
\newcommand{\rk}{\text{\textnormal{rk}}}
\newcommand{\map}{\text{\textnormal{map}}}
\newcommand{\pt}{\text{\textnormal{pt}}}
\mathchardef\mhyphen="2D
\newcommand{\git}{/\!\!/}
\newcommand{\revgit}{\backslash\!\!\backslash}
\newcommand{\id}{\text{\textnormal{id}}}
\def\ho{\operatorname{ho}\!}
\def\hopro{\ho \underleftarrow{\lim}{}}
\newcommand{\Seg}{\mathcal{S}\mathsf{eg}}
\newcommand{\hgline}[2]{
\pgfmathsetmacro{\thetaone}{#1}
\pgfmathsetmacro{\thetatwo}{#2}
\pgfmathsetmacro{\theta}{(\thetaone+\thetatwo)/2}
\pgfmathsetmacro{\phi}{abs(\thetaone-\thetatwo)/2}
\pgfmathsetmacro{\close}{less(abs(\phi-90),0.0001)}
\ifdim \close pt = 1pt
    \draw[red] (\theta+180:1) -- (\theta:1);
\else
    \pgfmathsetmacro{\R}{tan(\phi)}
    \pgfmathsetmacro{\distance}{sqrt(1+\R^2)}
    \draw[red] (\theta:\distance) circle (\R);
\fi
}
\begin{document}

\title{Relative $2$-Segal spaces}

\author[M.\,B. Young]{Matthew B. Young}
\address{The Institute of Mathematical Sciences and Department of Mathematics\\
The Chinese University of Hong Kong\\
Shatin, Hong Kong}
\email{myoung@ims.cuhk.hk.edu}

\date{\today}

\keywords{Higher Segal spaces. Categorified Hall algebra representations. Categories with duality. Grothendieck-Witt theory.}
\subjclass[2010]{Primary: 18G30; Secondary 19G38, 16G20}

\begin{abstract}
We introduce a relative version of the $2$-Segal simplicial spaces defined by Dyckerhoff and Kapranov and G\'{a}lvez-Carrillo, Kock and Tonks. Examples of relative $2$-Segal spaces include the categorified unoriented cyclic nerve, real pseudoholomorphic polygons in almost complex manifolds and the $\mathcal{R}_{\bullet}$-construction from Grothendieck-Witt theory. We show that a relative $2$-Segal space defines a categorical representation of the Hall algebra associated to the base $2$-Segal space. In this way, after decategorification we recover a number of known constructions of Hall algebra representations. We also describe some higher categorical interpretations of relative $2$-Segal spaces.
\end{abstract}

\maketitle

\tableofcontents

\setcounter{footnote}{0}

\section*{Introduction}
Motivated by Segal's notion of a $\Gamma$-space \cite{segal1974}, Rezk introduced Segal spaces in his study of the homotopy theory of $(\infty,1)$-categories \cite{rezk2001}. Generalizing these ideas, for each integer $k \geq 1$, Dyckerhoff and Kapranov introduced $k$-Segal spaces \cite{dyckerhoff2012b}. Very roughly, a simplicial topological space $X_{\bullet}$ is called $k$-Segal if it satisfies a collection of locality conditions governed by polyhedral subdivisions of $k$-dimensional cyclic polytopes. When $k=1$, so that the locality conditions are governed by subdivisions of the interval, the $1$-Segal conditions state that, for each $n \geq 2$, the canonical map to the homotopy fibre product
\[
X_n \rightarrow \overbrace{X_1 \times^R_{X_0} \cdots \times^R_{X_0} X_1}^{n \; \mbox{\scriptsize factors}}
\]
is a weak homotopy equivalence. Hence  $1$-Segal spaces reduce to Rezk's Segal spaces. The $2$-Segal spaces, which were introduced independently by G\'{a}lvez-Carrillo, Kock and Tonks \cite{galvez2015} under the name decomposition spaces, obey locality conditions governed by subdivisions of convex plane polygons. The first non-trivial conditions derive from the two triangulations of the square,
\[
\begin{tikzpicture}[baseline= (a).base]
\node (origin) at (0,0) (origin) {$\begin{tikzpicture}
\node[draw,minimum size=1.5cm,regular polygon,regular polygon sides=4] (a) {};
\draw[] (a.corner 1) node[above]{\footnotesize $2$} ;
\draw[] (a.corner 2) node[above]{\footnotesize $3$} ;
\draw[] (a.corner 3) node[below]{\footnotesize $0$} ;
\draw[] (a.corner 4) node[below]{\footnotesize $1$} ; 
\end{tikzpicture}$}; 

\node (P0) at (0:3cm) { $
\begin{tikzpicture}
\node[draw,minimum size=1.5cm,regular polygon,regular polygon sides=4] (a) {};
\draw[] (a.corner 1) node[above]{\footnotesize $2$} ;
\draw[] (a.corner 2) node[above]{\footnotesize $3$} ;
\draw[] (a.corner 3) node[below]{\footnotesize $0$} ;
\draw[] (a.corner 4) node[below]{\footnotesize $1$} ; 
\path[-,font=\scriptsize,color=red]
    (a.corner 1) edge[-]  (a.corner 3); 
\end{tikzpicture}
$}; 

\node (P1) at (180:3cm) {$
\begin{tikzpicture}
\node[draw,minimum size=1.5cm,regular polygon,regular polygon sides=4] (a) {};
\draw[] (a.corner 1) node[above]{\footnotesize $2$} ;
\draw[] (a.corner 2) node[above]{\footnotesize $3$} ;
\draw[] (a.corner 3) node[below]{\footnotesize $0$} ;
\draw[] (a.corner 4) node[below]{\footnotesize $1$} ; 
\path[-,font=\scriptsize,color=red]
    (a.corner 2) edge[-]  (a.corner 4);  
\end{tikzpicture}
$};
\draw [-latex, thick] (P0) -- (origin);
\draw [-latex, thick] (P1) -- (origin); 
\end{tikzpicture},
\]
and state that the induced morphisms
\begin{equation}
\label{eq:assCond}
X_{\{0,1,3\}} \times^R_{X_{\{1,3\}}} X_{\{1,2,3\}} \leftarrow X_{\{0,1,2,3\}} \rightarrow X_{\{0,1,2\}} \times^R_{X_{\{0,2\}}} X_{\{0,2,3\}}
\end{equation}
are weak homotopy equivalences. A large number of examples of $2$-Segal spaces from a diverse range of subjects are described in \cite{dyckerhoff2012b} and \cite{galvez2015}.

One motivation to study $2$-Segal spaces is the theory of Hall algebras. Indeed, as exploited by both Dyckerhoff and Kapranov \cite{dyckerhoff2012b} and G\'{a}lvez-Carrillo, Kock and Tonks \cite{galvez2015}, the $2$-Segal conditions admit a natural interpretation as higher coherence conditions on a multiplication defined on the $1$-simplices $X_1$, the weak equivalences \eqref{eq:assCond} imposing weak associativity. From the Hall algebra point of view, the most important example of a $2$-Segal space is the Waldhausen $\mathcal{S}_{\bullet}$-construction applied to an exact category \cite{waldhausen1985}, or more generally an exact $\infty$-category \cite{dyckerhoff2012b}. Applying suitable realization functors to this $2$-Segal space recovers various familiar incarnations of the Hall algebra \cite{ringel1990}, \cite{lusztig1991}, \cite{joyce2007}, \cite{kontsevich2011}. However, these realizations use only the lowest $2$-Segal conditions, namely \eqref{eq:assCond}. Taking into account the remaining conditions leads to higher categorical structures and thus to categorical Hall algebras. Applications of $2$-Segal spaces to other areas, including combinatorics, topological field theories and Fukaya categories, can be found in \cite{dyckerhoff2013b}, \cite{dyckerhoff2015}, \cite{galvez2015}.

Partially motivated by the representation theory of Hall algebras, in this paper we introduce relative higher Segal spaces. For an integer $k \geq 1$, a relative $k$-Segal space over a $k$-Segal space $X_{\bullet}$ is a $(k-1)$-Segal space $Y_{\bullet}$ together with a morphism $Y_{\bullet} \rightarrow X_{\bullet}$ which satisfies $k$-dimensional locality conditions involving both $X_{\bullet}$ and $Y_{\bullet}$; by convention the $0$-Segal conditions are vacuous. The simplest case is that of right relative $1$-Segal spaces, the locality conditions reducing to the condition that the map $Y_{\bullet} \rightarrow X_{\bullet}$ be a right fibration of Segal spaces in the sense of \cite{kazhdan2014}, \cite{brito2016}. More interesting is the case of relative $2$-Segal spaces. The relative $2$-Segal conditions on a morphism $Y_{\bullet} \rightarrow X_{\bullet}$ are governed by reflection symmetric polyhedral subdivisions of symmetric convex plane polygons, the most basic of which are the two subdivisions of the plane hexagon,
\[
\begin{tikzpicture}[baseline= (a).base]
\node (origin) at (0,0) (origin) {$\begin{tikzpicture}
\node[draw,minimum size=1.5cm,regular polygon,regular polygon sides=6] (a) {};
\draw[] (a.corner 1) node[above]{\footnotesize $2$} ;
\draw[] (a.corner 2) node[above]{\footnotesize $2^{\prime}$} ;
\draw[] (a.corner 3) node[left]{\footnotesize $1^{\prime}$} ;
\draw[] (a.corner 4) node[below]{\footnotesize $0^{\prime}$} ;
\draw[] (a.corner 5) node[below]{\footnotesize $0$} ;
\draw[] (a.corner 6) node[right]{\footnotesize $1$} ;
\end{tikzpicture}
$}; 

\node (P0) at (0:4cm) { $
\begin{tikzpicture}
\node[draw,minimum size=1.5cm,regular polygon,regular polygon sides=6] (a) {};
\draw[] (a.corner 1) node[above]{\footnotesize $2$} ;
\draw[] (a.corner 2) node[above]{\footnotesize $2^{\prime}$} ;
\draw[] (a.corner 3) node[left]{\footnotesize $1^{\prime}$} ;
\draw[] (a.corner 4) node[below]{\footnotesize $0^{\prime}$} ;
\draw[] (a.corner 5) node[below]{\footnotesize $0$} ;
\draw[] (a.corner 6) node[right]{\footnotesize $1$} ;

\path[-,font=\scriptsize,color=red]
    (a.corner 1) edge[-]  (a.corner 5);  
\path[-,font=\scriptsize,color=red]
    (a.corner 2) edge[-]  (a.corner 4);  
\end{tikzpicture}
$}; 

\node (P1) at (180:4cm) {$
\begin{tikzpicture}
\node[draw,minimum size=1.5cm,regular polygon,regular polygon sides=6] (a) {};
\draw[] (a.corner 1) node[above]{\footnotesize $2$} ;
\draw[] (a.corner 2) node[above]{\footnotesize $2^{\prime}$} ;
\draw[] (a.corner 3) node[left]{\footnotesize $1^{\prime}$} ;
\draw[] (a.corner 4) node[below]{\footnotesize $0^{\prime}$} ;
\draw[] (a.corner 5) node[below]{\footnotesize $0$} ;
\draw[] (a.corner 6) node[right]{\footnotesize $1$} ;
\path[-,font=\scriptsize,color=red]
    (a.corner 3) edge[-]  (a.corner 6);  
\end{tikzpicture}
$};
\draw [-latex, thick] (P0) -- (origin);
\draw [-latex, thick] (P1) -- (origin); 
\end{tikzpicture},
\]
and translate into the requirement that the induced morphisms
\begin{equation}
\label{eq:modAssCond}
Y_{\{0,1\}} \times^R_{Y_{\{1\}}} Y_{\{1,2\}} \leftarrow Y_{\{0,1,2\}} \rightarrow X_{\{0,1,2\}} \times^R_{X_{\{0,2\}}} Y_{\{0,2\}}
\end{equation}
be weak equivalences. Similar to the case of $2$-Segal spaces, the relative $2$-Segal conditions give higher coherence conditions for appropriately defined left and right actions of the algebra object $X_1$ on the $0$-simplices $Y_0$. From this point of view, the weak equivalences \eqref{eq:modAssCond} are the weak module associativity constraints. Relative $2$-Segal spaces therefore lead naturally to categorical representations of the Hall algebra of $X_{\bullet}$; see Theorems \ref{thm:univHallMod} and \ref{thm:hallMonModCat} for particular instances of this construction. In this way we obtain natural categorifications of many of the Hall algebra representations which have appeared in the literature. For example, we prove that a stable framed variant of the Waldhausen $\mathcal{S}_{\bullet}$-construction is relative $2$-Segal  over the ordinary $\mathcal{S}_{\bullet}$-construction, thus categorifying the Hall algebra representations studied in \cite{soibelman2016}, \cite{franzen2016}; see Theorem \ref{thm:stabFrameSegal}. We also prove that the $\mathcal{R}_{\bullet}$-construction from Grothendieck-Witt theory (i.e. higher algebraic $KR$-theory) \cite{shapiro1996}, \cite{hornbostel2004} is relative $2$-Segal over the $\mathcal{S}_{\bullet}$-construction; see Theorem \ref{thm:sd2Segal}. The input for the $\mathcal{R}_{\bullet}$-construction is a proto-exact category with duality which satisfies a reduction assumption. In the case of exact categories the $\mathcal{R}_{\bullet}$-construction categorifies the Hall algebra representations of \cite{leeuwen1991}, \cite{enomoto2009}, \cite{mbyoung2016}, \cite{mbyoung2016b} while for the proto-exact category $\mathsf{Rep}_{\mathbb{F}_1}(Q)$ of representations of a quiver over $\mathbb{F}_1$ we obtain new modules over Szczesny's combinatorial Hall algebras \cite{szczesny2012}. The latter modules will be the subject of future work.

We also give examples of relative $2$-Segal spaces which do not come from previously known Hall algebra representations. Starting from an almost complex manifold $M$ with a real structure, we construct in Theorem \ref{thm:realPseudoPoly} a relative $2$-Segal semi-simplicial set consisting of real pseudoholomorphic polygons in $M$, the base $2$-Segal set being the pseudoholomorphic polygon space of Dyckerhoff and Kapranov \cite{dyckerhoff2012b}. In Theorem \ref{thm:twNerveRel2Segal} we prove that the categorified unoriented twisted cyclic nerve of a category with endomorphism and compatible duality structure is relative $2$-Segal over the categorified twisted cyclic nerve. This example can be viewed as a homotopical incarnation of the unoriented loop space of an orbifold.

A common theme of many of the relative $2$-Segal spaces constructed in this paper is that they are in a sense unoriented. It is tempting to view these examples in the context of orientifold string theory. (The stable framed $\mathcal{S}_{\bullet}$-construction is different, being related to string theory with defects.) A general feature of the orientifold construction is that it imposes $\mathbb{Z}_2$-equivariance conditions on objects in the parent string theory, such as reduction of structure groups of vector bundles from general linear to orthogonal or symplectic groups, as in the $\mathcal{R}_{\bullet}$-construction, or replacing oriented string worldsheets with unoriented worldsheets, similar to the real pseudoholomorphic polygon and unoriented nerve constructions. In \cite[Remark 3.7.8]{dyckerhoff2012b} it is speculated that there exists a sort of mirror symmetry relating the $2$-Segal spaces arising from the $\mathcal{S}_{\bullet}$-construction with those arising from the pseudoholomorphic polygon construction. It is natural to speculate that such a mirror symmetry admits an orientifold enhancement, relating the relative $2$-Segal spaces arising from the $\mathcal{R}_{\bullet}$- and real pseudoholomorphic polygon constructions.

Finally, we describe some applications of relative higher Segal spaces to higher category theory, thus lifting some of the results of \cite{rezk2001}, \cite{joyal2007}, \cite{dyckerhoff2012b}. It is a classical fact that $1$-Segal simplicial sets can be characterized as the essential image of the fully faithful nerve functor $N_{\bullet}: \mathsf{Cat} \rightarrow \mathsf{Set}_{\Delta}$. In a similar vein, right relative $1$-Segal simplicial sets are the essential image of the relative nerve construction applied to the category of discrete right fibrations which, via the Grothendieck construction, can be interpreted as presheaves on small categories; see Proposition \ref{prop:relNerve}. Using the work of several authors \cite{joyalPrep}, \cite{lurie2009b}, \cite{brito2016}, we explain a quasicategorical generalization of these statements by considering instead right relative $1$-Segal combinatorial simplicial spaces and $(\infty,1)$-presheaves. Secondly, in \cite{dyckerhoff2012b} it is proved that the category of $2$-Segal simplicial sets is equivalent to both the category of multivalued categories and to the category of $\sqcup$-semisimple semibicategories. Pursuing an interpretation in terms of actions of categories as in the relative $1$-Segal case, we lift of these statements to the relative setting, establishing equivalences of the category of relative $2$-Segal simplicial sets with both the category of modules over multivalued categories and with the category of $\mathsf{Cat}^{\sqcup}$-valued presheaves on $\sqcup$-semisimple semibicategories; see Theorems \ref{thm:multiCatModule} and \ref{thm:psh2Cat}, respectively.

\begin{Rem}
After the first version of this paper was completed, a preprint by Tashi Walde \cite{walde2016} was posted to the arXiv which also aims at developing a theory of modules over higher Segal spaces. We comment where appropriate on the overlap.
\end{Rem}

\subsubsection*{Acknowledgements}
The author would like to thank Ajneet Dhillon, Tobias Dyckerhoff, Joachim Kock, Karol Szumi\l{}o and Pal Zsamboki for interesting discussions regarding the subject of this paper. The author was supported by the Direct Grants and Research Fellowship Scheme from the Chinese University of Hong Kong.

\section{Higher Segal spaces}

In this section we recall, following closely \cite{dyckerhoff2012b}, some required background material from the theory of  higher Segal spaces.

\subsection{Simplicial objects}

Let $\Delta$ be the category whose objects are the non-empty finite ordinals $[n] = \{0 < \cdots < n\}$, $n \geq 0$, and whose morphisms are weakly monotone set maps. Let also $\Delta_{\mathsf{aug}}$ be the category of all finite non-empty ordinals, of which $\Delta$ is a skeleton. Denote by $\Delta_{\mathsf{inj}} \subset \Delta$ the subcategory of injective morphisms. We sometimes consider the object $[n] \in \Delta$ as a category itself. Explicitly, the objects of $[n]$ are labelled by integers $0 \leq i \leq n$ and the morphism set $\Hom_{[n]}(i,j)$ is empty if $i > j$ and consists of a single element if $i \leq j$.

A simplicial object of a category $\mathcal{C}$ is a functor $X_{\bullet}: \Delta^{\mathsf{op}} \rightarrow \mathcal{C}$. We write $X_n$ for $X_{[n]} \in \mathcal{C}$ if it will not lead to confusion. The face and degeneracy maps of $X_{\bullet}$ are denoted by
\[
\partial_i : X_n \rightarrow X_{n-1}, \qquad s_i : X_n \rightarrow X_{n+1}, \qquad 0 \leq i \leq n.
\]
More generally, a functor $X_{\bullet}: \Delta^{\mathsf{op}}_{\mathsf{inj}} \rightarrow \mathcal{C}$ is a semi-simplicial object of $\mathcal{C}$. A simplicial object $X_{\bullet}$ admits a canonical extension to a functor $\Delta_{\mathsf{aug}}^{\mathsf{op}} \rightarrow \mathcal{C}$, which we continue to denote by $X_{\bullet}$. For $I \in \Delta_{\mathsf{aug}}$, we write $\Delta^I$ for the simplicial set $\Hom_{\Delta}(-, I)$. In particular, $\Delta^n = \Delta^{[n]}$.

Given categories $\mathcal{C}$ and $\mathcal{D}$, with $\mathcal{C}$ small, denote by $[\mathcal{C}, \mathcal{D}]$ or $\mathcal{D}^{\mathcal{C}}$ the category of functors $\mathcal{C} \rightarrow \mathcal{D}$. Let $\mathsf{Set}$, $\mathsf{Grpd}$ and $\mathsf{Top}$ be the categories of sets, small groupoids and compactly generated topological spaces, respectively. Objects of the categories $[\Delta^{\mathsf{op}}, \mathsf{Top}]$ and $[\Delta^{\mathsf{op}}, \mathsf{Grpd}]$ are called simplicial spaces and groupoids while objects of $\mathbb{S} = [\Delta^{\mathsf{op}}, \mathsf{Set}]$ and $\mathbb{S}_{\Delta}=[\Delta^{\mathsf{op}}, \mathbb{S}]$ are called simplicial sets and combinatorial simplicial spaces, respectively.

\subsection{\texorpdfstring{$1$}{}-Segal spaces}
\label{sec:1SegSpace}

Segal spaces (called $1$-Segal spaces below) were introduced by Rezk \cite[\S 4]{rezk2001}; see also \cite[\S 1]{segal1974}. The definition below is slightly different, omitting a fibrancy condition. Write $- \times_U^R-$ for the homotopy fibre product over a topological space $U$.

\begin{Def}
A semi-simplicial space $X_{\bullet}: \Delta_{\mathsf{inj}}^{\mathsf{op}} \rightarrow \mathsf{Top}$ is called $1$-Segal if for every $n \geq 2$ the map
\[
X_n \rightarrow X_1 \times_{X_0}^R \cdots  \times_{X_0}^R X_1
\]
induced by the inclusions $\{ i, i+1\} \hookrightarrow [n]$, $i=0, \dots, n-1$, is a weak homotopy equivalence.
\end{Def}

It is straightforward to verify that a semi-simplicial space $X_{\bullet}$ is $1$-Segal if and only if one of the following two conditions hold:
\begin{enumerate}[label=(\roman*)]
\item For every $n \geq 2$ and every $0 \leq i_1 < \cdots < i_l \leq n$ the map
\[
X_n \rightarrow X_{i_1} \times_{X_{\{i_1\}}}^R \cdots  \times_{X_{\{i_l\}}}^R X_{n- i_l}
\]
induced by the inclusions $\{ 0,\dots, i_1\}, \cdots, \{i_l, \dots, n\} \hookrightarrow [n]$ is a weak equivalence.

\item For every $n \geq 2$ and every $0 \leq i \leq n$ the map
\[
X_n \rightarrow X_{\{0, \dots, i\}} \times_{X_{\{i\}}}^R X_{\{i, \dots, n\}}
\]
induced by the inclusions $\{0, \dots, i\}, \{i, \dots, n\} \hookrightarrow [n]$ is a weak equivalence.
\end{enumerate}

With only minor changes one can formulate the theory of $1$-Segal objects (along with their higher and relative variants defined below) of a combinatorial model category, in which case $-\times^R-$ becomes a homotopy limit; see \cite[\S 5]{dyckerhoff2012b}. In particular, we can speak of $1$-Segal simplicial sets, groupoids or combinatorial simplicial spaces. For simplicity we will state results in terms of simplicial spaces.

\begin{Ex}
The nerve $N_{\bullet} (\mathcal{C})$ of a small category $\mathcal{C}$ is the simplicial set which assigns to $[n] \in \Delta$ the set underlying the category $\mathcal{C}^{[n]}$. It is well-known that $N_{\bullet} (\mathcal{C})$ is a $1$-Segal simplicial set. In fact, the nerve functor $N_{\bullet}: \mathsf{Cat} \rightarrow \mathbb{S}$ is fully faithful with essential image the $1$-Segal simplicial sets. The category $\mathcal{X}$ associated to a $1$-Segal simplicial set $X_{\bullet}$ has objects $\text{Ob}(\mathcal{X})=X_0$ and morphisms
\[
\Hom_{\mathcal{X}}(x_0, x_1) = \{x_0\} \times_{X_{\{0\}}} X_{\{0,1\}} \times_{X_{\{ 1\}}} \{x_1\}.
\]
Composition of morphisms is defined using the lowest $1$-Segal conditions while associativity follows from the higher $1$-Segal conditions.
\end{Ex}

\begin{Ex}
The categorified nerve $\mathcal{N}_{\bullet}(\mathcal{C})$ of a small category  $\mathcal{C}$ is the $1$-Segal simplicial groupoid which assigns to $[n] \in \Delta$ the maximal groupoid of the category $\mathcal{C}^{[n]}$ \cite[\S 3.5]{rezk2001}. Passing to classifying spaces gives a $1$-Segal simplicial space $B\mathcal{N}_{\bullet}(\mathcal{C})$. In Rezk's framework the categorified nerve is preferred to the ordinary nerve as the former is a complete $1$-Segal space.
\end{Ex}

\subsection{\texorpdfstring{$2$}{}-Segal spaces}
\label{sec:2Segal}

The $1$-Segal spaces are the first in an infinite tower of higher Segal spaces introduced in \cite{dyckerhoff2012b}. In this section we focus on $2$-Segal spaces, the next step in this tower. See \cite{galvez2015} for a second approach to (unital) $2$-Segal spaces.

For each integer $n \geq 2$ let $P_n \subset \mathbb{R}^2$ be a convex $(n+1)$-gon with a total order on its vertices which is consistent with the counterclockwise orientation of $\mathbb{R}^2$. The total order induces a canonical identification of the set of vertices of $P_n$ with $[n]$. Let $\mathcal{P}$ be a polyhedral subdivision of $P_n$. Associating to each polygon of $\mathcal{P}$ its set of vertices defines a collection of subsets of $[n]$ and hence a simplicial subset $\Delta^{\mathcal{P}} \subset \Delta^n$. Let $X_{\bullet}$ be a semi-simplicial space. The polyhedral subdivision $\mathcal{P}$ induces a map
\[
f_{\mathcal{P}}: X_n \simeq (\Delta^n, X_{\bullet})_R \rightarrow (\Delta^{\mathcal{P}}, X_{\bullet})_R
\]
where, following \cite[\S 2.2]{dyckerhoff2012b}, for a semi-simplicial set $D$ the derived space of $D$-membranes of $X_{\bullet}$ is defined to be
\[
(D, X_{\bullet})_R = \hopro^{\mathsf{Top}}_{\{\Delta^p \hookrightarrow D \} \in \Delta_{\mathsf{inj}} \slash D} X_p.
\]

\begin{Def}
A semi-simplicial space $X_{\bullet}$ is called $2$-Segal if for every $n \geq 3$ and every triangulation $\mathcal{T}$ of $P_n$ the map $f_{\mathcal{T}} : X_n \rightarrow (\Delta^{\mathcal{T}}, X_{\bullet})_R$ is a weak equivalence.
\end{Def}

As in the case of $1$-Segal spaces, the $2$-Segal conditions can be verified using coarser subdivisions. Indeed, it is proved in \cite[Proposition 2.3.2]{dyckerhoff2012b} that a semi-simplicial space $X_{\bullet}$ is $2$-Segal if and only if one of the following conditions holds:
\begin{enumerate}[label=(\roman*)]
\item For every $n \geq 3$ and every polyhedral subdivision $\mathcal{P}$ of $P_n$ the map $f_{\mathcal{P}} : X_n \rightarrow (\Delta^{\mathcal{P}}, X_{\bullet})_R$ is a weak equivalence.

\item For every $n \geq 3$ and every $0 \leq i < j \leq n$ the map
\begin{equation}
\label{eq:2SegalDiagram}
f_{\{i,j\}}: X_n \rightarrow X_{\{i, \dots, j\}} \times_{X_{\{i,j\}}}^R X_{\{0, \dots, i, j, \dots, n\} }
\end{equation}
induced by the inclusions $\{i, \dots, j\}, \{0, \dots, i, j, \dots n\} \hookrightarrow [n]$ is a weak equivalence.

\item For every $n \geq 3$ the map \eqref{eq:2SegalDiagram} is a weak equivalence if $i=0$ or $j=n$.
\end{enumerate}

The following definition uses degeneracy maps and so can only be formulated in the simplicial setting.

\begin{Def}
A $2$-Segal simplicial space $X_{\bullet}$ is called unital $2$-Segal if for every $n \geq 2$ and every $0 \leq i \leq n-1$ the map
\[
\partial_{\{i\}} \times s_i : X_{n-1} \rightarrow X_{\{i\}} \times^R_{X_{\{i,i+1\}}} X_n
\]
is a weak equivalence.
\end{Def}

One simple construction of $2$-Segal spaces is the following.

\begin{Prop}[{\cite[Propositions 2.3.3, 2.5.3]{dyckerhoff2012b}, \cite[Proposition 3.5]{galvez2015}}]
\label{prop:2SegFrom1Seg}
Let $X_{\bullet}$ be a $1$-Segal semi-simplicial space. Then $X_{\bullet}$ is $2$-Segal. If in fact $X_{\bullet}$ is a simplicial space, then $X_{\bullet}$ is unital $2$-Segal.
\end{Prop}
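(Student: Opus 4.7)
My plan is to verify the $2$-Segal property by checking the equivalent condition (ii) from the list after the definition of $2$-Segal, namely that for each $0 \leq i < j \leq n$ the map
\[
f_{\{i,j\}}: X_n \rightarrow X_{\{i,\dots,j\}} \times^R_{X_{\{i,j\}}} X_{\{0,\dots,i,j,\dots,n\}}
\]
is a weak equivalence. The key idea is to resolve each face $X_I$ appearing here as an iterated homotopy fibre product of edges over vertices, using the $1$-Segal property in the enhanced form stated as condition (i) in Section \ref{sec:1SegSpace}. Applying that equivalence to $X_n$, to the ``short'' face $X_{\{i,\dots,j\}}$, and to the face $X_{\{0,\dots,i,j,\dots,n\}}$ respectively yields weak equivalences
\[
X_n \simeq X_{\{0,1\}} \times^R_{X_{\{1\}}} \cdots \times^R_{X_{\{n-1\}}} X_{\{n-1,n\}},
\]
\[
X_{\{i,\dots,j\}} \simeq X_{\{i,i+1\}} \times^R_{X_{\{i+1\}}} \cdots \times^R_{X_{\{j-1\}}} X_{\{j-1,j\}},
\]
and an analogous product for $X_{\{0,\dots,i,j,\dots,n\}}$ in which the long edge $X_{\{i,j\}}$ appears as one factor, placed between the ``outer'' edge-chains on indices $0,\dots,i$ and $j,\dots,n$.

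The homotopy fibre product along $X_{\{i,j\}}$ can then be computed by associativity of homotopy pullbacks: it replaces the factor $X_{\{i,j\}}$ in the decomposition of $X_{\{0,\dots,i,j,\dots,n\}}$ by the chain of short edges resolving $X_{\{i,\dots,j\}}$. The result is the full edge-chain decomposition of $X_n$, and tracing through the identifications shows that the composite weak equivalence is exactly $f_{\{i,j\}}$. Hence $X_\bullet$ is $2$-Segal. The main bookkeeping obstacle is ensuring at each step that the structure maps used in the pullbacks indeed agree with the face maps induced by the inclusions of vertex subsets; this follows from naturality of the $1$-Segal equivalences but has to be checked against the definitions.

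For the unital condition, when $X_\bullet$ is a simplicial space, I would show that $\partial_{\{i\}} \times s_i: X_{n-1} \rightarrow X_{\{i\}} \times^R_{X_{\{i,i+1\}}} X_n$ is a weak equivalence by the same strategy. Decomposing
\[
X_n \simeq X_{\{0,\dots,i\}} \times^R_{X_{\{i\}}} X_{\{i,i+1\}} \times^R_{X_{\{i+1\}}} X_{\{i+1,\dots,n\}}
\]
via $1$-Segal and forming the homotopy pullback with $X_{\{i\}} \rightarrow X_{\{i,i+1\}}$ given by the degeneracy $s_0$, the middle factor $X_{\{i\}} \times^R_{X_{\{i,i+1\}}} X_{\{i,i+1\}}$ collapses to $X_{\{i\}}$ because $s_0$ is a section of both face maps out of $X_{\{i,i+1\}}$. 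Under the induced identifications $X_{\{i+1\}} \cong X_{\{i\}}$ and $X_{\{i+1,\dots,n\}} \cong X_{\{i,\dots,n-1\}}$, the resulting fibre product becomes $X_{\{0,\dots,i\}} \times^R_{X_{\{i\}}} X_{\{i,\dots,n-1\}}$, which is weakly equivalent to $X_{n-1}$ by $1$-Segal applied to $X_{n-1}$. Tracing definitions, the composite equivalence matches $\partial_{\{i\}} \times s_i$, completing the proof.
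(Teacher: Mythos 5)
The paper does not prove this proposition itself; it is quoted from \cite{dyckerhoff2012b} and \cite{galvez2015}. Your argument is correct and is essentially the standard one from those references: refine both sides of the $2$-Segal map $f_{\{i,j\}}$ to the common edge-chain decomposition of $X_n$ using the strengthened $1$-Segal condition (condition (i) of Section 1.2) together with associativity of homotopy fibre products and the cancellation $X_{\{i,\dots,j\}}\times^R_{X_{\{i,j\}}}X_{\{i,j\}}\simeq X_{\{i,\dots,j\}}$, then conclude by $2$-out-of-$3$; the unital case is handled the same way using that $s_0$ is a common section of $\partial_0$ and $\partial_1$. The only points requiring care are the ones you already flag (matching the structure maps of the pullbacks with the face maps induced by vertex inclusions), and these follow from the simplicial identities.
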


\subsection{The Waldhausen \texorpdfstring{$\mathcal{S}_{\bullet}$}{}-construction}
\label{sec:waldhausen}

We recall a motivating example of a unital $2$-Segal space. We will work with proto-exact categories, a not necessarily additive generalization of exact categories in the sense of Quillen \cite{quillen1973}.

\begin{Def}[{\cite[\S 2.4]{dyckerhoff2012b}}]
A proto-exact category is a pointed category $\mathcal{C}$, with zero object $0$, together with two classes of morphisms, $\mathfrak{I}$ and $\mathfrak{D}$, called inflations and deflations and denoted by $\rightarrowtail$ and $\twoheadrightarrow$, respectively, which have the following properties:
\begin{enumerate}[label=(\roman*)]
\item Any morphism $0 \rightarrow U$ is in $\mathfrak{I}$ and any morphism $U \rightarrow 0$ is in $\mathfrak{D}$.

\item The classes $\mathfrak{I}$ and $\mathfrak{D}$ are closed under composition and contain all isomorphisms.

\item A commutative square of the form
\begin{equation}
\label{eq:biCartDiag}
\begin{tikzpicture}[baseline= (a).base]
\node[scale=1] (a) at (0,0){
\begin{tikzcd}
U \arrow[two heads]{d} \arrow[tail]{r} & V \arrow[two heads]{d}\\
W \arrow[tail]{r} & X
\end{tikzcd}
};
\end{tikzpicture}
\end{equation}
is Cartesian if and only if it is coCartesian.

\item Any diagram $W \rightarrowtail X \twoheadleftarrow V$ can be completed to a biCartesian diagram of the form \eqref{eq:biCartDiag}.

\item Any diagram $W \twoheadleftarrow U \rightarrowtail V$ can be completed to a biCartesian diagram of the form \eqref{eq:biCartDiag}.
\end{enumerate}
\end{Def}

BiCartesian squares of the form
\[
\begin{tikzpicture}[baseline= (a).base]
\node[scale=1] (a) at (0,0){
\begin{tikzcd}
U \arrow[two heads]{d} \arrow[tail]{r} & V \arrow[two heads]{d}\\
0 \arrow[tail]{r} & X
\end{tikzcd}
};
\end{tikzpicture}
\]
are called conflations and play the role of short exact sequences in $\mathcal{C}$. Familiar examples of proto-exact categories include abelian and, more generally, exact categories. A more exotic example is given by the category of representations of a quiver over $\mathbb{F}_1$, as described in \cite{szczesny2012}.

The Waldhausen $\mathcal{S}_{\bullet}$-construction associates to a proto-exact category $\mathcal{C}$ a simplicial groupoid $\mathcal{S}_{\bullet}(\mathcal{C})$ as follows \cite[\S 1.3]{waldhausen1985}, \cite[\S 2.4]{dyckerhoff2012b}. Let $\mathsf{Ar}_n =[[1],[n]]$ be the arrow category of $[n]$. The assignment $[n] \mapsto \mathsf{Ar}_n$ defines a cosimplicial category. An object $\{(i \rightarrow j) \mapsto A_{\{i,j\}}\}_{0 \leq i \leq j \leq n}$ of the functor category $[\mathsf{Ar}_n, \mathcal{C}]$ is a commutative diagram in $\mathcal{C}$ of the form
\[
\begin{tikzpicture}[baseline= (a).base]
\node[scale=1] (a) at (0,0){
\begin{tikzcd}
A_{\{0,0\}} \arrow{r} & A_{\{0,1\}} \arrow{d} \arrow{r} & \cdots \arrow{r} & A_{\{0,n-1\}} \arrow{d}  \arrow{r} & A_{\{0,n\}} \arrow{d} \\
 & A_{\{1,1\}} \arrow{r} & \cdots   \arrow{r} & A_{\{1,n-1\}} \arrow{d}  \arrow{r} & A_{\{1,n\}} \arrow{d} \\
 &  & \ddots & \vdots \arrow{d} & \vdots \arrow{d} \\
  &  & & A_{\{n-1,n-1\}}  \arrow{r} & A_{\{n-1,n\}} \arrow{d} \\
  &  &  &  & A_{\{n,n\}}.
\end{tikzcd}
};
\end{tikzpicture}
\]
Let $\mathcal{W}_n(\mathcal{C}) \subset [\mathsf{Ar}_n, \mathcal{C}]$ be the full subcategory consisting of diagrams which have the following properties:
\begin{enumerate}[label=(\roman*)]
\item For each $0 \leq i \leq n$ the object $A_{\{i,i\}}$ is isomorphic to $0 \in \mathcal{C}$.

\item All horizontal morphisms are inflations and all vertical morphisms are deflations.

\item Each square that can be formed in the diagram is biCartesian.
\end{enumerate}
Let $\mathcal{S}_n(\mathcal{C})$ be the maximal groupoid of $\mathcal{W}_n(\mathcal{C})$. Then $\mathcal{S}_{\bullet}(\mathcal{C})$ is a simplicial groupoid, the degeneracy map $s_i: \mathcal{S}_n(\mathcal{C}) \rightarrow \mathcal{S}_{n+1}(\mathcal{C})$ inserting a row/column of identity morphisms after the $i$th row/column and the face map $\partial_i: \mathcal{S}_n(\mathcal{C}) \rightarrow \mathcal{S}_{n-1}(\mathcal{C})$ deleting the $i$th row/column and composing the obvious morphisms.

\begin{Thm}[{\cite[Proposition 2.4.8]{dyckerhoff2012b}, \cite[Theorem 10.14]{galvez2015}}]
\label{thm:waldSegal}
For any proto-exact category $\mathcal{C}$, the simplicial groupoid $\mathcal{S}_{\bullet}(\mathcal{C})$ is unital $2$-Segal.
\end{Thm}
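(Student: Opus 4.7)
The plan is to verify condition (iii) of the minimal characterisation of Section~\ref{sec:2Segal}, namely that the map \eqref{eq:2SegalDiagram} is a weak equivalence whenever $i=0$ or $j=n$, together with the unitality condition. Because each $\mathcal{S}_n(\mathcal{C})$ is a groupoid, every homotopy fibre product that appears is modelled by the strict $2$-pullback of groupoids, so it suffices to exhibit an equivalence of ordinary groupoids in each case.

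\textbf{Gluing.} For the splitting $j=n$ (the case $i=0$ is symmetric), an object of the $2$-pullback consists of diagrams $D^\prime \in \mathcal{S}_{\{i,\ldots,n\}}(\mathcal{C})$ and $D^{\prime\prime} \in \mathcal{S}_{\{0,\ldots,i,n\}}(\mathcal{C})$ together with an isomorphism identifying their restrictions to the edge $\{i,n\}$; the forward map extracts this data from a global $D \in \mathcal{S}_n(\mathcal{C})$. To construct a pseudo-inverse I would normalise along the given isomorphism and fill in the missing vertices $A_{\{a,b\}}$ with $0 \leq a < i$ and $i < b < n$ by induction on $b-a$, taking at each stage the biCartesian completion of the corner
\[
A^{\prime\prime}_{\{a,n\}} \twoheadleftarrow A^{\prime\prime}_{\{i,n\}} \rightarrowtail A^\prime_{\{i,b\}}
\]
supplied by proto-exact axioms (iv) and (v). This simultaneously produces the requisite inflations $A_{\{a,b\}} \rightarrowtail A_{\{a,b+1\}}$ and deflations $A_{\{a,b\}} \twoheadrightarrow A_{\{a+1,b\}}$. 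Horizontal and vertical pasting of biCartesian squares, an operation valid in any proto-exact category, then ensures that every subsquare of the assembled diagram is biCartesian, so the result lies in $\mathcal{S}_n(\mathcal{C})$. Since biCartesian completions are unique up to canonical isomorphism, the construction is functorial up to natural isomorphism and the composites with the restriction functor are naturally isomorphic to the identities.

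\textbf{Unitality.} The groupoid $\mathcal{S}_0(\mathcal{C})$ is contractible, containing only the zero object, whence the $2$-pullback $\mathcal{S}_{\{i\}}(\mathcal{C}) \times^R_{\mathcal{S}_{\{i,i+1\}}(\mathcal{C})} \mathcal{S}_n(\mathcal{C})$ is equivalent to the full subgroupoid of $\mathcal{S}_n(\mathcal{C})$ on those diagrams with $A_{\{i,i+1\}} \cong 0$. The biCartesian constraints propagate this vanishing along the diagram, forcing the $(i+1)$th row and the $i$th column to consist of isomorphisms; collapsing them yields a diagram in $\mathcal{S}_{n-1}(\mathcal{C})$ and provides the desired inverse to $s_i$ up to natural isomorphism.

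\textbf{Main obstacle.} The principal technical step is the coherence of the gluing construction: one must verify that not merely the minimally completed squares but every subsquare of the final diagram is biCartesian, which reduces to repeated applications of the horizontal and vertical pasting lemmas for biCartesian squares in a proto-exact category. Some care is further required to perform the inductive completions in a fashion natural enough in isomorphisms to assemble into a functor into $\mathcal{S}_n(\mathcal{C})$, but this is automatic once the construction is interpreted up to canonical isomorphism on both sides of the comparison.
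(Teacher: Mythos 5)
The paper does not prove this theorem itself; it is quoted from Dyckerhoff--Kapranov and G\'alvez-Carrillo--Kock--Tonks, and your argument is essentially the standard direct proof given there: reduce to the $i=0$ or $j=n$ subdivisions, glue via biCartesian completions, and handle unitality by observing that $A_{\{i,i+1\}}\cong 0$ forces the adjacent row and column of the diagram to consist of isomorphisms. The strategy, the use of strict $2$-pullbacks of groupoids, and the appeal to pasting/cancellation for biCartesian squares are all correct.

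One concrete slip: the corner you display for the $j=n$ gluing has its arrows reversed. In the Waldhausen diagram the deflations increase the first index and the inflations increase the second, so the maps that actually exist are $A^{\prime}_{\{i,b\}} \rightarrowtail A^{\prime\prime}_{\{i,n\}} \twoheadleftarrow A^{\prime\prime}_{\{a,n\}}$, a corner of the shape handled by axiom (iv) (pullback completion), and $A_{\{a,b\}}$ is the resulting pullback; as written, your span out of $A^{\prime\prime}_{\{i,n\}}$ consists of maps that do not exist in the diagram, and would call for a pushout completion of nonexistent data. (Dually, for $i=0$ one completes the corner $A_{\{a,j\}} \twoheadleftarrow A_{\{0,j\}} \rightarrowtail A_{\{0,b\}}$ by the pushout axiom (v).) With that orientation corrected, the remaining work is exactly the coherence you flag: the new structure maps must be shown to be inflations/deflations via the universal property, and every elementary subsquare must be checked biCartesian by the two-out-of-three property for biCartesian squares, which holds in a proto-exact category by combining pullback/pushout cancellation with axiom (iii).
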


When $\mathcal{C}$ is an exact category the simplicial space $B\mathcal{S}_{\bullet}(\mathcal{C})$ plays a fundamental role in the higher algebraic $K$-theory of $\mathcal{C}$. Indeed, we have $K_i(\mathcal{C}) = \pi_i \Omega \vert B\mathcal{S}_{\bullet}(\mathcal{C}) \vert$, $i \geq 0$, where the basepoint of $\vert B\mathcal{S}_{\bullet}(\mathcal{C}) \vert$ is taken to be $0 \in \mathcal{C}$. See \cite{thomason1990}, \cite{waldhausen1985}.

\begin{Rem}
A variation of the Waldhausen $\mathcal{S}_{\bullet}$-construction was defined in \cite{bergner2016}, giving a functor from the category of augmented stable double categories to the category of simplicial sets. It was proved that this functor is fully faithful with essential image the unital $2$-Segal simplicial sets.
\end{Rem}

\section{Relative higher Segal spaces}

\subsection{Relative \texorpdfstring{$1$}{}-Segal spaces}
\label{sec:rel1SegSpaces}

Before introducing and studying relative  $2$-Segal spaces, which will be the main objects of interest in this paper, we study the more basic relative $1$-Segal spaces.

\begin{Def}
Let $X_{\bullet}$ be a $1$-Segal semi-simplicial space. A morphism $F_{\bullet}: Y_{\bullet} \rightarrow X_{\bullet}$ of semi-simplicial spaces is called right relative $1$-Segal if for every $n \geq 1$ and every $0 \leq i \leq n$ the outside square of the diagram
\begin{equation}
\label{eq:rel1SegalDiagram}
\begin{tikzpicture}[baseline= (a).base]
\node[scale=1] (a) at (0,0){
\begin{tikzcd}
Y_n \arrow{r} \arrow{d} & Y_{\{i, \dots,n\}} \arrow{d}  \\
Y_{\{0, \dots, i\}} \arrow{r} \arrow{d}[left]{F_{\{0, \dots,i\}}} & Y_{\{i\}} \arrow{d}[right]{F_{\{i\}}} \\
X_{\{0, \dots, i\}} \arrow{r} & X_{\{i\}}
\end{tikzcd}
};
\end{tikzpicture}
\end{equation}
is homotopy Cartesian.
\end{Def}

Similarly, a left relative $1$-Segal space is a morphism $F_{\bullet}: Y_{\bullet} \rightarrow X_{\bullet}$, with $X_{\bullet}$ being $1$-Segal, for which the outside square of all diagrams of the form
\[
\begin{tikzpicture}[baseline= (a).base]
\node[scale=1] (a) at (0,0){
\begin{tikzcd}
Y_n \arrow{r} \arrow{d} & Y_{\{i, \dots,n\}} \arrow{d} \arrow{r}[above]{F_{\{i, \dots, n\}}} & [1.5em] X_{\{i, \dots, n\}} \arrow{d} \\
Y_{\{0, \dots, i\}} \arrow{r} \arrow{r} & Y_{\{i\}} \arrow{r}[below]{F_{\{i\}}} & X_{\{i\}}
\end{tikzcd}
};
\end{tikzpicture}
\]
is homotopy Cartesian. All results below will be formulated for right relative $1$-Segal spaces; analogous results hold for left relative $1$-Segal spaces.

\begin{Ex}
Let $X_{\bullet}$ be a $1$-Segal semi-simplicial space. Then the identity morphism $\mathbf{1}_{X_{\bullet}}: X_{\bullet} \rightarrow X_{\bullet}$ is both left and right relative $1$-Segal.
\end{Ex}

\begin{Ex}
Let $x_*$ be an object of a small category $\mathcal{C}$ and let $\mathcal{C}_{\slash x_*}$ be the corresponding overcategory. The forgetful functor $\mathcal{C}_{\slash x_*} \rightarrow \mathcal{C}$ induces a simplicial morphism $N_{\bullet}(\mathcal{C}_{\slash x_*}) \rightarrow N_{\bullet}(\mathcal{C})$ which is right relative $1$-Segal. Using instead the undercategory $_{x_* \slash}\mathcal{C}$ we obtain a left relative $1$-Segal simplicial set $N_{\bullet}(_{x_* \slash}\mathcal{C}) \rightarrow N_{\bullet}(\mathcal{C})$.
\end{Ex}

\begin{Ex}
Suppose that a group $\mathsf{G}$ acts on a set $E$. Then $\mathsf{G}$ acts diagonally on the Cartesian product $E^n$, $n \geq 1$. The action groupoid $\mathsf{G} \revgit E^{n+1}$ is the category with objects $E^{n+1}$ and morphisms $\Hom_{\mathsf{G} \revgit E^{n+1}}(e_{\bullet}, e^{\prime}_{\bullet}) = \{g \in \mathsf{G} \mid g \cdot e_{\bullet} = e^{\prime}_{\bullet} \}$. The assignment $[n] \mapsto \mathsf{G} \revgit E^{n+1}$ defines a $1$-Segal simplicial groupoid $\mathcal{S}_{\bullet}(\mathsf{G},E)$, the face (resp. degeneracy) maps omitting (resp. repeating) the appropriate entries of $E^{\bullet+1}$. Passing to classifying spaces yields a $1$-Segal space $B\mathcal{S}_{\bullet}(\mathsf{G},E)$, called the Hecke-Waldhausen space of $(\mathsf{G},E)$ \cite[\S 2.6]{dyckerhoff2012b}.

Let $\mathsf{H} \leq \mathsf{G}$ be a subgroup. The inclusion $\mathsf{H} \hookrightarrow \mathsf{G}$ defines a simplicial morphism $\mathcal{S}_{\bullet}(\mathsf{H},E) \rightarrow \mathcal{S}_{\bullet}(\mathsf{G},E)$. At the level of classifying spaces we obtain a morphism $B\mathcal{S}_{\bullet}(\mathsf{H},E) \rightarrow B\mathcal{S}_{\bullet}(\mathsf{G},E)$ which is both left and right relative $1$-Segal; this can be verified in much the same way as the $1$-Segal property of $B\mathcal{S}_{\bullet}(\mathsf{G},E)$. At the level of geometric realizations this map is homotopy equivalent to the induced morphism $B \mathsf{H} \rightarrow B \mathsf{G}$ of classifying spaces (\textit{cf}. \cite[Proposition 2.6.7]{dyckerhoff2012b}).
\end{Ex}

We have the following alternative characterization of right relative $1$-Segal spaces.

\begin{Prop}
\label{prop:rel1Segal}
A semi-simplicial morphism $F_{\bullet}: Y_{\bullet} \rightarrow X_{\bullet}$, with $X_{\bullet}$ being $1$-Segal, is right relative $1$-Segal if and only if $Y_{\bullet}$ is $1$-Segal and the map
\begin{equation}
\label{eq:lowRel1Seg}
(F_1, \partial_0): Y_1 \rightarrow X_1 \times^R_{X_0} Y_0
\end{equation}
is a weak equivalence.
\end{Prop}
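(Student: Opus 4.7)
The strategy is to deduce both implications by reducing to the lowest case $n = 1$ and then bootstrapping using the pasting lemma for homotopy Cartesian squares, together with the $1$-Segal conditions on $X_\bullet$ (and, in the reverse direction, on $Y_\bullet$).

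\textbf{Necessity.} Specialising the right relative $1$-Segal condition \eqref{eq:rel1SegalDiagram} to $(n,i) = (1,1)$, the top square joins two copies of $\partial_0 : Y_1 \to Y_0$ by identities and so is tautologically Cartesian; consequently the outer square is homotopy Cartesian precisely when the map \eqref{eq:lowRel1Seg} is a weak equivalence. For the $1$-Segal property of $Y_\bullet$, apply the relative condition at $(n,i)$ and at $(i,i)$ to obtain
\begin{align*}
Y_n &\simeq X_{\{0,\dots,i\}} \times^R_{X_{\{i\}}} Y_{\{i,\dots,n\}}, \\
Y_{\{0,\dots,i\}} &\simeq X_{\{0,\dots,i\}} \times^R_{X_{\{i\}}} Y_{\{i\}}.
\end{align*}
Since the structure map $Y_{\{i,\dots,n\}} \to X_{\{i\}}$ factors through $Y_{\{i\}}$, splicing the second equivalence into the first collapses the $X_{\{0,\dots,i\}}$-factor and yields $Y_n \simeq Y_{\{0,\dots,i\}} \times^R_{Y_{\{i\}}} Y_{\{i,\dots,n\}}$, which is characterisation (ii) of the $1$-Segal conditions for $Y_\bullet$ recalled in \S\ref{sec:1SegSpace}.

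\textbf{Sufficiency.} Assume $Y_\bullet$ is $1$-Segal and \eqref{eq:lowRel1Seg} is a weak equivalence. I prove the relative condition $Y_n \simeq X_{\{0,\dots,i\}} \times^R_{X_{\{i\}}} Y_{\{i,\dots,n\}}$ by induction on $i$; the case $i = 0$ is vacuous. For the inductive step, the $1$-Segal condition on $Y_\bullet$ applied to the sub-simplex $Y_{\{i,\dots,n\}}$ along its first edge gives $Y_{\{i,\dots,n\}} \simeq Y_{\{i,i+1\}} \times^R_{Y_{\{i+1\}}} Y_{\{i+1,\dots,n\}}$, and the hypothesis \eqref{eq:lowRel1Seg} applied to the edge $\{i, i+1\}$---legitimate by simpliciality, as \eqref{eq:lowRel1Seg} is really a condition on the space of $1$-simplices of $Y_\bullet$---rewrites $Y_{\{i,i+1\}} \simeq X_{\{i,i+1\}} \times^R_{X_{\{i+1\}}} Y_{\{i+1\}}$. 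Substituting the combined expression into the inductive hypothesis produces $Y_n \simeq X_{\{0,\dots,i\}} \times^R_{X_{\{i\}}} X_{\{i,i+1\}} \times^R_{X_{\{i+1\}}} Y_{\{i+1,\dots,n\}}$, and invoking the $1$-Segal condition on $X_\bullet$ to assemble the first two factors into $X_{\{0,\dots,i+1\}}$ completes the induction.

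\textbf{Main obstacle.} The proof is essentially a sequence of homotopy-pullback manipulations via the pasting lemma, so the only real subtlety is orientation. The hypothesis $(F_1, \partial_0)$ rewrites $Y_{\{j,j+1\}}$ in terms of the \emph{target} vertex $Y_{\{j+1\}}$, and the $1$-Segal decomposition of $Y_\bullet$ must be performed along its first edge so that the $Y_{\{i+1\}}$-factors produced by \eqref{eq:lowRel1Seg} cancel with those in the middle of the telescope. This orientation is precisely what makes the condition \emph{right} relative; the dual choice yields the left variant.
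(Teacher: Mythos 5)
Your proof is correct and follows essentially the same route as the paper's: necessity is obtained by pasting the outer square against its degenerate $(i,i)$ case (the paper phrases this as the two-out-of-three property applied to the outer and bottom squares of \eqref{eq:rel1SegalDiagram}), and sufficiency is the same telescope of equivalences — $1$-Segal for $Y_{\bullet}$, the edge condition \eqref{eq:lowRel1Seg}, then $1$-Segal for $X_{\bullet}$ to reassemble the $X$-factors — which the paper writes as one long chain rather than an induction on $i$.
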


\begin{proof}
Suppose that $F_{\bullet}$ is right relative $1$-Segal. Taking $i=n=1$ in diagram \eqref{eq:rel1SegalDiagram} implies that the map \eqref{eq:lowRel1Seg} is a weak equivalence. For arbitrary $i$ and $n$, both the outside square and the bottom square (which is a degenerate version of the outside square) of diagram \eqref{eq:rel1SegalDiagram} are homotopy Cartesian. By the $2$-out-of-$3$ property of weak equivalences the top square is therefore also homotopy Cartesian. Hence $Y_{\bullet}$ is $1$-Segal.

Conversely, suppose that $Y_{\bullet}$ is $1$-Segal and that the map \eqref{eq:lowRel1Seg} is a weak equivalence. Then we have the following sequence of weak equivalences:
\begin{eqnarray*}
Y_n & \xrightarrow[]{\mbox{\tiny w.e.}} & Y_{\{0, \dots, i\}} \times^R_{Y_{\{i\}}} Y_{\{i, \dots, n\}} \\
& \xrightarrow[]{\mbox{\tiny w.e.}} & Y_{\{0, 1\}} \times^R_{Y_{\{1\}}} Y_{\{1, \dots, i\}} \times^R_{Y_{\{i\}}} Y_{\{i, \dots, n\}} \\
& \xrightarrow[]{\mbox{\tiny w.e.}} & X_{\{0, 1\}} \times^R_{X_{\{1\}}} Y_{\{1\}} \times^R_{Y_{\{1\}}} Y_{\{1, \dots, i\}} \times^R_{Y_{\{i\}}} Y_{\{i, \dots, n\}} \\
& \vdots & \\
& \xrightarrow[]{\mbox{\tiny w.e.}} & X_{\{0, 1\}} \times^R_{X_{\{1\}}} \cdots \times^R_{X_{\{i-1\}}} X_{\{i-1,i\}} \times^R_{X_{\{i\}}} Y_{\{i, \dots, n\}} \\
& \xleftarrow[]{\mbox{\tiny w.e.}} & X_{\{0, \dots, i\}} \times^R_{X_{\{i\}}} Y_{\{i, \dots, n\}}.
\end{eqnarray*}
The map $Y_n \rightarrow X_{\{0, \dots, i\}} \times^R_{X_{\{i\}}} Y_{\{i, \dots, n\}}$ makes this chain of maps commute and is thus a weak equivalence. Hence $F_{\bullet}$ is right relative $1$-Segal. 
\end{proof}

Kazhdan and Varshavsky \cite{kazhdan2014} and de Brito \cite{brito2016} define a right Segal fibration to be a morphism of $1$-Segal spaces $F_{\bullet}: Y_{\bullet} \rightarrow X_{\bullet}$ for which the map \eqref{eq:lowRel1Seg} is a weak equivalence. It follows from Proposition \ref{prop:rel1Segal} that right relative $1$-Segal spaces and right Segal fibrations are the same objects. Using \cite[Proposition 1.10]{brito2016} we obtain a model category theoretic interpretation of the relative $1$-Segal conditions. Namely, right relative $1$-Segal objects in $\mathbb{S}_{\Delta}$ are the fibrant objects of a natural left Bousfield localization of $(\Seg_1)_{\slash X_{\bullet}}$, the overcategory model structure on $(\mathbb{S}_{\Delta})_{\slash X_{\bullet}}$ induced by Rezk's $1$-Segal model structure $\Seg_1$ on $\mathbb{S}_{\Delta}$ \cite[Theorem 7.1]{rezk2001}. 

Right relative $1$-Segal simplicial sets admit a simple nerve theoretic characterization, analogous to that of $1$-Segal simplicial sets; in Section \ref{sec:multiCat} we will prove a similar result in the $2$-Segal setting. To formulate this, let $\mathsf{DRFib} \subset \mathsf{Cat}^{[1]}$ be the full subcategory of discrete right fibrations and let $1 \mhyphen \mathsf{SegRel}\mathbb{S} \subset \mathbb{S}^{[1]}$ be the full subcategory of right relative $1$-Segal simplicial sets. Let also $\mathsf{Psh}$ be the category whose objects are presheaves on small categories and whose morphisms are pairs
\begin{equation}
\label{eq:pshMorph}
(\phi, \theta): (\mathcal{F}: \mathcal{C}^{\mathsf{op}} \rightarrow \mathsf{Set}) \rightarrow (\mathcal{F}^{\prime}: \mathcal{C}^{\prime \mathsf{op}} \rightarrow \mathsf{Set})
\end{equation}
consisting of a functor $\phi: \mathcal{C} \rightarrow \mathcal{C}^{\prime}$ and a natural transformation $\theta: \mathcal{F} \Rightarrow \mathcal{F}^{\prime} \circ \phi^{\mathsf{op}}$. The Grothendieck construction defines a functor $\int: \mathsf{Psh} \rightarrow \mathsf{Cat}^{[1]}$, assigning to the presheaf $\mathcal{F}$ the functor $F: \int_{\mathcal{C}} \mathcal{F} \rightarrow \mathcal{C}$ and assigning to a morphism \eqref{eq:pshMorph} the diagram
\[
\begin{tikzcd}
\int_{\mathcal{C}} \mathcal{F} \arrow{r}{\tilde{\theta}} \arrow{d}[left]{F} & \int_{\mathcal{C}^{\prime}} \mathcal{F}^{\prime} \arrow{d}{F^{\prime}} \\ \mathcal{C} \arrow{r}[below]{\phi}& \mathcal{C}^{\prime}.
\end{tikzcd}
\]
Recall that $\int_{\mathcal{C}} \mathcal{F}$ is the category whose objects are pairs $(c, \tilde{c})$, with $c \in \mathcal{C}$ and $\tilde{c} \in \mathcal{F}(c)$, and whose morphisms $(c_1, \tilde{c}_1) \rightarrow (c_2, \tilde{c}_2)$ are morphisms $c_1 \xrightarrow[]{x} c_2$ which satisfy $\mathcal{F}(x)(\tilde{c}_2) = \tilde{c}_1$. The functor $\tilde{\theta}$ assigns to $(c, \tilde{c}) \in \int_{\mathcal{C}} \mathcal{F}$ the object $(\phi(c), \theta_c(\tilde{c}))$ and assigns to $x: (c_1, \tilde{c}_1) \rightarrow (c_2, \tilde{c}_2)$ the morphism $\phi(x)$.

\begin{Prop}
\label{prop:relNerve}
The relative nerve functor
\[
N^{[1]}_{\bullet} : \mathsf{Cat}^{[1]} \rightarrow \mathbb{S}^{[1]}, \qquad (\mathcal{Y} \xrightarrow[]{F} \mathcal{X}) \mapsto (N_{\bullet}(\mathcal{Y}) \xrightarrow[]{N_{\bullet}(F)} N_{\bullet}(\mathcal{X}))
\]
is fully faithful and fits into the commutative diagram of functors
\[
\begin{tikzcd}
\mathsf{Psh} \arrow{r}{\int} \arrow{dr}[below]{\sim} & \mathsf{Cat}^{[1]} \arrow{r}{N^{[1]}_{\bullet}} & \mathbb{S}^{[1]} \\ {} & \mathsf{DRFib} \arrow[hookrightarrow]{u} \arrow{r}[below]{\sim}& 1 \mhyphen \mathsf{SegRel}\mathbb{S} \arrow[hookrightarrow]{u}
\end{tikzcd}
\]
with indicated equivalences. In particular, there is an equivalence of categories $1 \mhyphen \mathsf{SegRel}\mathbb{S} \simeq \mathsf{Psh}$.
\end{Prop}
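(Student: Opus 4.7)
The plan is to split the proposition into three claims—(a) the relative nerve $N_\bullet^{[1]}$ is fully faithful, (b) the Grothendieck construction $\int$ is an equivalence $\mathsf{Psh} \simeq \mathsf{DRFib}$, and (c) the restriction of $N_\bullet^{[1]}$ to $\mathsf{DRFib}$ lands in and is essentially surjective onto $1\mhyphen\mathsf{SegRel}\mathbb{S}$—and to verify each in turn. Commutativity of the triangle and the resulting composite equivalence $\mathsf{Psh} \simeq 1\mhyphen\mathsf{SegRel}\mathbb{S}$ then follow formally.

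For (a), I would observe that morphisms in both $\mathsf{Cat}^{[1]}$ and $\mathbb{S}^{[1]}$ are by definition commutative squares, so full faithfulness of the ordinary nerve $N_\bullet: \mathsf{Cat} \rightarrow \mathbb{S}$ (recalled earlier in the paper) lifts each commutative square of simplicial maps between nerves of functors to a unique commutative square of functors. For (b), I would invoke the classical Grothendieck correspondence: the inverse of $\int$ sends a discrete right fibration $F: \mathcal{Y} \rightarrow \mathcal{X}$ to the presheaf $c \mapsto F^{-1}(c)$, with functoriality in $\mathcal{X}^{\mathsf{op}}$ provided by the unique right lifts, and compatibility with the morphism data $(\phi, \theta)$ on one side and commuting squares on the other is essentially tautological.

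The substantive content lies in (c), and here the key tool is Proposition \ref{prop:rel1Segal} specialised to the simplicial set setting, where homotopy fibre products become ordinary pullbacks and weak equivalences become bijections. Because any nerve of a category is $1$-Segal, that proposition reduces the right relative $1$-Segal property of $N_\bullet(F)$ to bijectivity of the single map
\[
(N_1(F), \partial_0): N_1(\mathcal{Y}) \rightarrow N_1(\mathcal{X}) \times_{N_0(\mathcal{X})} N_0(\mathcal{Y}).
\]
Unwinding with the convention that $\partial_0$ returns the target of a $1$-simplex, this bijection states precisely that for each $y_1 \in \mathcal{Y}$ and each morphism $x \rightarrow F(y_1)$ in $\mathcal{X}$ there is a unique lift to a morphism of $\mathcal{Y}$ ending at $y_1$, i.e.\ that $F$ is a discrete right fibration. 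Essential surjectivity runs in reverse: given a right relative $1$-Segal morphism $Y_\bullet \rightarrow X_\bullet$ of simplicial sets, Proposition \ref{prop:rel1Segal} forces both $Y_\bullet$ and $X_\bullet$ to be $1$-Segal and hence, by the standard nerve characterisation, the nerves of small categories; the same bijection then identifies the associated functor as a discrete right fibration.

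The only genuine obstacle is the direction bookkeeping in (c): one must be careful to match the pullback description of $Y_1$ with the right (rather than left) unique lifting property, a distinction that comes down to which vertex $\partial_0$ selects. All remaining steps are either formal categorical manipulations or direct appeals to results already recorded in the paper.
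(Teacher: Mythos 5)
Your proposal is correct and follows essentially the same route as the paper: full faithfulness and the Grothendieck equivalence are treated as standard, and the substantive step reduces the right relative $1$-Segal condition via Proposition \ref{prop:rel1Segal} to the single bijection $N_1(\mathcal{Y}) \rightarrow N_1(\mathcal{X}) \times_{N_0(\mathcal{X})} N_0(\mathcal{Y})$, which is unwound to the discrete right fibration condition (with the quasi-inverse built symmetrically). Your explicit attention to the direction bookkeeping for $\partial_0$ is a useful elaboration of a point the paper leaves implicit, but the argument is the same.
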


\begin{proof}
That $N_{\bullet}^{[1]}$ is fully faithful is well-known; see for example \cite[Proposition 2.1]{segal1968}. Commutativity of the triangle in the above diagram and the fact that $\int$ induces an equivalence $\mathsf{Psh} \simeq \mathsf{DRFib}$ are both standard. To see that $N_{\bullet}^{[1]}$ restricts as claimed, let $(F: \mathcal{Y} \rightarrow \mathcal{X} ) \in \mathsf{DRFib}$. Then $N_{\bullet}(F) : N_{\bullet}(\mathcal{Y}) \rightarrow N_{\bullet}(\mathcal{X})$ is a morphism of $1$-Segal simplicial sets. The condition that $F$ be a discrete right fibration is precisely the condition that the map $N_1 (\mathcal{Y}) \rightarrow N_1(\mathcal{X}) \times_{N_{\{1\}}(\mathcal{X})} N_{\{1\}}(\mathcal{Y})$ be a bijection. Proposition \ref{prop:rel1Segal} therefore implies that $N_{\bullet}(F)$ is right relative $1$-Segal. The construction of a quasi-inverse $1 \mhyphen \mathsf{SegRel}\mathbb{S} \rightarrow \mathsf{DRFib}$ is similar.
\end{proof}

To end this section we explain a quasicategorical generalization of Proposition \ref{prop:relNerve}. Suppose that $X_{\bullet} \in \mathbb{S}_{\Delta}$ is a complete $1$-Segal combinatorial simplicial space. The quasicategory $\mathcal{X}$ modelled by $X_{\bullet}$ (see \cite[\S 5]{rezk2001}, \cite[\S 4]{joyal2007}) has object set the $0$-simplices of $X_0$ and has mapping spaces
\[
\map_{\mathcal{X}}(x_0, x_1) = \{x_0\} \times^R_{X_{\{0\}}} X_{\{0,1\}} \times^R_{X_{\{ 1\}}} \{x_1\}.
\]
Here $\{x\}$ is regarded as the simplicial set $\Delta^0$. The lowest $1$-Segal conditions define, up to homotopy, a composition law
\[
\map_{\mathcal{X}}(x_0, x_1) \times \map_{\mathcal{X}}(x_1, x_2) \rightarrow \map_{\mathcal{X}}(x_0, x_2)
\]
which, by the remaining $1$-Segal conditions, is coherently associative. Suppose now that we are given a right relative $1$-Segal morphism $F_{\bullet} : Y_{\bullet} \rightarrow X_{\bullet}$. For each object $x \in \mathcal{X}$ define a Kan complex by $\mathcal{F}(x) = \{x\} \times^R_{X_0} Y_0 \in \mathbb{S}$. The diagram
\[
\begin{tikzpicture}[baseline= (a).base]
\node[scale=1] (a) at (0,0){
\begin{tikzcd}[column sep=tiny]
\{x_0\} \times^R_{X_{\{ 0\}}} Y_{\{0,1\}} \arrow{d}[left]{\mbox{\tiny w.e.}} \arrow{r} & \{x_0\} \times^R_{X_{\{ 0\}}} Y_{\{0\}}   \\
\{x_0\} \times^R_{X_{\{ 0\}}} X_{\{0,1\}} \times^R_{X_{\{ 1\}}} Y_{\{1\}} & \\
(\{x_0\} \times^R_{X_{\{ 0\}}} X_{\{0,1\}} \times^R_{X_{\{ 1\}}} \{x_1\} ) \times 
(\{x_1\} \times^R_{X_{\{1\}}} Y_{\{1\}}) ,\arrow{u}
\end{tikzcd}
};
\end{tikzpicture}
\]
whose indicated arrow is a weak equivalence by the lowest right relative $1$-Segal condition, defines up to homotopy an action map
\[
\map_{\mathcal{X}}(x_0,x_1) \times \mathcal{F}(x_1) \rightarrow \mathcal{F}(x_0).
\]
The remaining right relative $1$-Segal conditions ensure that this action is coherently associative. In this way we obtain an $(\infty,1)$-presheaf on $\mathcal{X}$. Conversely, by combining \cite[Proposition 5.1.1.1]{lurie2009b} (see also \cite{joyalPrep}) and \cite[Theorem 1.22]{brito2016} we see that, up to weak equivalence, any $(\infty,1)$-presheaf on $\mathcal{X}$ arises in this way.

\subsection{Relative \texorpdfstring{$2$}{}-Segal spaces}

In this section we give a direct definition of relative $2$-Segal spaces. In Section \ref{sec:symmSubdivisions} we will describe a second approach using polyhedral subdivisions, in line with the definition of $2$-Segal spaces.

\begin{Def}
Let $X_{\bullet}$ be a $2$-Segal semi-simplicial space. A morphism $F_{\bullet}: Y_{\bullet} \rightarrow X_{\bullet}$ of semi-simplicial spaces is called relative $2$-Segal if
\begin{enumerate}
\item for every $n \geq 2$ and every $0 \leq i < j \leq n$ the outside square of the diagram
\begin{equation}
\label{eq:rel2SegalDiagram}
\begin{tikzpicture}[baseline= (a).base]
\node[scale=1] (a) at (0,0){
\begin{tikzcd}
Y_n \arrow{r} \arrow{d} & Y_{\{0, \dots, i,j, \dots,n\}} \arrow{d}  \\
Y_{\{i, \dots, j\}} \arrow{r} \arrow{d}[left]{F_{\{i, \dots, j\}}} & Y_{\{i,j\}} \arrow{d}[right]{F_{\{i,j\}}} \\
X_{\{i, \dots, j\}} \arrow{r} & X_{\{i,j\}}
\end{tikzcd}
};
\end{tikzpicture}
\end{equation}
is homotopy Cartesian, and
\item the simplicial space $Y_{\bullet}$ is $1$-Segal.
\end{enumerate}
\end{Def}

\begin{Rems}
\hspace{2em}
\begin{enumerate}
\item Morphisms $F_{\bullet}: Y_{\bullet} \rightarrow X_{\bullet}$ of simplicial spaces for which the first condition above holds play an important role in \cite{galvez2015}, where they are called unique lifting of factorization (ULF) functors. If, in addition, $F_{\bullet}$ preserves units (see below), then $F_{\bullet}$ is called a conservative ULF functor.

\item The above notion of relative $2$-Segal space coincides with that of Walde; see \cite[Proposition 3.5.10]{walde2016}.

\item Rather than considering diagrams of the form \eqref{eq:rel2SegalDiagram}, one could require that the outside square of all diagrams of the form
\[
\begin{tikzpicture}[baseline= (a).base]
\node[scale=1] (a) at (0,0){
\begin{tikzcd}
Y_n \arrow{r} \arrow{d} & Y_{\{0, \dots, i,j, \dots,n\}} \arrow{d} \arrow{r}[above]{F_{\{0, \dots, i,j, \dots,n\}}} & [3em] X_{\{0, \dots, i,j, \dots,n\}} \arrow{d} \\
Y_{\{i, \dots, j\}} \arrow{r} & Y_{\{i,j\}} \arrow{r}[below]{F_{\{i,j\}}} & X_{\{i,j\}}
\end{tikzcd}
};
\end{tikzpicture}
\]
be homotopy Cartesian. However, morphisms $F_{\bullet}$ satisfying these conditions are less interesting from our perspective since, for example, they do not lead to categorified Hall algebra representations.
\end{enumerate} 
\end{Rems}

The following result will be helpful in verifying the relative $2$-Segal conditions.

\begin{Prop}
\label{prop:equivRelSeg}
Let $F_{\bullet} : Y_{\bullet} \rightarrow X_{\bullet}$ be a morphism of semi-simplicial spaces. Assume that $X_{\bullet}$ is $2$-Segal. The following statements are equivalent:
\begin{enumerate}
\item For every $n \geq 1$ and every $0 \leq i < j \leq n$ the outside square of diagram \eqref{eq:rel2SegalDiagram} is homotopy Cartesian.

\item $Y_{\bullet}$ is $2$-Segal and for every $n \geq 1$ and the outside square of diagram \eqref{eq:rel2SegalDiagram} is homotopy Cartesian if $i=0$ or $j=n$.

\item $Y_{\bullet}$ is $2$-Segal and for every $n \geq 1$ the square
\begin{equation}
\label{eq:botRel2SegalDiagram}
\begin{tikzpicture}[baseline= (a).base]
\node[scale=1] (a) at (0,0){
\begin{tikzcd}
Y_{\{0, \dots, n\}} \arrow{r} \arrow{d}[left]{F_{\{0, \dots,n\}}} & Y_{\{0,n\}} \arrow{d}[right]{F_{\{0,n\}}} \\ X_{\{0,\dots,n\}} \arrow{r} & X_{\{0,n\}}
\end{tikzcd}
};
\end{tikzpicture}
\end{equation}
is homotopy Cartesian.
\end{enumerate}
\end{Prop}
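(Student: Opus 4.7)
The plan is to verify the cycle of implications $(1) \Rightarrow (2) \Rightarrow (3) \Rightarrow (1)$, using the pasting lemma for homotopy Cartesian squares in both directions, together with the reformulation of the $2$-Segal conditions in part (ii) of \cite[Proposition 2.3.2]{dyckerhoff2012b} recalled in Section~\ref{sec:2Segal}.

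For $(1) \Rightarrow (2)$: The outer-square condition of (2) for $i=0$ or $j=n$ is a direct specialization of (1). To see that $Y_{\bullet}$ is $2$-Segal, fix $n \geq 2$ and $0 \leq i < j \leq n$. Under the canonical identification $[j-i] \cong \{i,\dots,j\}$ induced by the semi-simplicial structure, the outer square of \eqref{eq:rel2SegalDiagram} instantiated at $(n',i',j') = (j-i,0,j-i)$ coincides with the bottom square of \eqref{eq:rel2SegalDiagram} for our original $(n,i,j)$; so (1) gives that the bottom square is homotopy Cartesian. Since (1) also provides that the outer square is homotopy Cartesian, transitivity of homotopy pullbacks (i.e., the reverse direction of the pasting lemma) forces the top square to be homotopy Cartesian. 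This is exactly the $2$-Segal condition for $Y_{\bullet}$ at level $n$ with cut $\{i,j\}$, and ranging over $(n,i,j)$ yields $2$-Segality.

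For $(2) \Rightarrow (3)$: Taking $(i,j) = (0,n)$ in (2) gives $Y_{\{i,\dots,j\}} = Y_n$ and $Y_{\{0,\dots,i,j,\dots,n\}} = Y_{\{0,n\}}$, so the outer square of \eqref{eq:rel2SegalDiagram} reduces to \eqref{eq:botRel2SegalDiagram}.

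For $(3) \Rightarrow (1)$: Fix $n \geq 2$ and $0 \leq i < j \leq n$. The $2$-Segal condition on $Y_{\bullet}$ (in the form (ii) of Proposition~2.3.2) asserts that the top square of \eqref{eq:rel2SegalDiagram} is homotopy Cartesian. Applying (3) to $n' = j-i$ under the identification $[j-i] \cong \{i,\dots,j\}$ shows that the bottom square is homotopy Cartesian as well. The standard pasting lemma then yields that the outer square is homotopy Cartesian, which is (1).

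The only real subtlety is the two-way use of the pasting lemma. The direction ``top and bottom Cartesian $\Rightarrow$ outer Cartesian'', invoked in $(3) \Rightarrow (1)$, is the usual one; the direction ``bottom and outer Cartesian $\Rightarrow$ top Cartesian'', invoked in $(1) \Rightarrow (2)$, is equally valid because homotopy pullbacks compose transitively. Beyond this the proof is bookkeeping: one must identify, functorially under the semi-simplicial structure, the outer square of \eqref{eq:rel2SegalDiagram} for the sub-ordinal $\{i,\dots,j\}$ with the bottom square of \eqref{eq:rel2SegalDiagram} for the original $(n,i,j)$, and similarly for the square \eqref{eq:botRel2SegalDiagram}.
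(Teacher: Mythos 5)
Your proof is correct and follows essentially the same route as the paper: identify the bottom square of \eqref{eq:rel2SegalDiagram} with a degenerate instance of the outer square (equivalently, of \eqref{eq:botRel2SegalDiagram}) for the sub-ordinal $\{i,\dots,j\}$, and then use the pasting lemma for homotopy Cartesian squares in both directions. The paper compresses the bookkeeping you spell out ("the bottom square is a degenerate version of the outer square") but the argument is the same.
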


\begin{proof}
Assume that the first condition holds. Since the bottom square of \eqref{eq:rel2SegalDiagram} is a degenerate version of the outer square, the $2$-out-of-$3$ property of weak equivalences implies that $Y_{\bullet}$ is $2$-Segal. Hence the second condition holds. It is clear that the second condition implies the third. Assume that the third condition holds. Then the bottom square of diagram \eqref{eq:rel2SegalDiagram} is homotopy Cartesian by assumption while the top square is homotopy Cartesian since $Y_{\bullet}$ is $2$-Segal. It follows that the outside square is also homotopy Cartesian, showing that the first condition holds.
\end{proof}

We will often show that a morphism $F_{\bullet}: Y_{\bullet} \rightarrow X_{\bullet}$ is relative $2$-Segal by first proving that $Y_{\bullet}$ is $1$-Segal, applying Proposition \ref{prop:2SegFrom1Seg} and then verifying the third condition of Proposition \ref{prop:equivRelSeg}.

We briefly describe a model theoretic interpretation of relative $2$-Segal spaces.
Denote by $\mathcal{R}$ the Reedy model structure on $\mathbb{S}_{\Delta}$. Following \cite[\S 5.2]{dyckerhoff2012b}, let $\Seg_2$ be the left Bousfield localization of $\mathcal{R}$ along the maps
\[
\mathfrak{S}eg_2 = \{ \Delta^{\mathcal{T}} \hookrightarrow \Delta^n \mid n \geq 3, \;\; \mathcal{T} \mbox{ is a triangulation of } P_n \}.
\]
Fibrant objects of $(\mathbb{S}_{\Delta}, \Seg_2)$ are then (Reedy fibrant) $2$-Segal combinatorial simplicial spaces. For $X_{\bullet} \in \mathbb{S}_{\Delta}$ write $(\Seg_2)_{\slash X_{\bullet}}$ for the induced model structure on the overcategory $(\mathbb{S}_{\Delta})_{\slash X_{\bullet}}$ and let $(\Seg_2)^{\mathsf{fib}}_{\slash X_{\bullet}}$ be its left Bousfield localization along
\[
\mathfrak{S}eg_2^{\mathsf{fib}} = \{ \Delta^{\{0,n\}} \hookrightarrow \Delta^n \xrightarrow[]{x_n} X_{\bullet} \mid x_n \in X_n, \; n \geq 2\}.
\]
Assuming that $X_{\bullet}$ is $2$-Segal, Proposition \ref{prop:equivRelSeg} implies that the fibrant objects of $((\mathbb{S}_{\Delta})_{\slash X_{\bullet}},(\Seg_2)^{\mathsf{fib}}_{\slash X_{\bullet}})$ are the morphisms $F_{\bullet}: Y_{\bullet} \rightarrow X_{\bullet}$ for which the outside square of the diagrams \eqref{eq:rel2SegalDiagram} is homotopy Cartesian, that is, the ULF functors. From this point of view, a relative $2$-Segal combinatorial simplicial space is a fibrant object of $((\mathbb{S}_{\Delta})_{\slash X_{\bullet}},(\Seg_2)^{\mathsf{fib}}_{\slash X_{\bullet}})$ whose total space is in addition $1$-Segal. It is important to note that the main results of this paper do not hold for arbitrary fibrant objects of $((\mathbb{S}_{\Delta})_{\slash X_{\bullet}},(\Seg_2)^{\mathsf{fib}}_{\slash X_{\bullet}})$.

We now formulate the relative analogue of the unital conditions.

\begin{Def}
A relative $2$-Segal simplicial space $F_{\bullet} : Y_{\bullet} \rightarrow X_{\bullet}$ is called unital relative $2$-Segal if for every $n \geq 2$ and every $0 \leq i \leq n-1$ the map
\[
(F_{\{i\}} \circ \partial_{\{i\}}) \times s_i :  Y_{n-1} \rightarrow X_{\{i\}} \times^R_{X_{\{i,i+1\}}} Y_n
\]
is a weak equivalence.
\end{Def}

We have the following relative analogue of Proposition \ref{prop:2SegFrom1Seg}.

\begin{Prop}
Let $F_{\bullet} : Y_{\bullet} \rightarrow X_{\bullet}$ be a right relative $1$-Segal semi-simplicial space. Then $F_{\bullet}$ is relative $2$-Segal. If, moreover, $F_{\bullet}$ is a morphism of simplicial spaces, then $F_{\bullet}$ is unital relative $2$-Segal.
\end{Prop}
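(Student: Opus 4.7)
The plan is to assemble the relative $2$-Segal property from the ingredients already developed: Proposition \ref{prop:rel1Segal} (characterising right relative $1$-Segal morphisms), Proposition \ref{prop:2SegFrom1Seg} (upgrading $1$-Segal to $2$-Segal) and Proposition \ref{prop:equivRelSeg} (reducing the relative $2$-Segal conditions to the single square \eqref{eq:botRel2SegalDiagram}). First, Proposition \ref{prop:rel1Segal} yields that $Y_{\bullet}$ is $1$-Segal, after which Proposition \ref{prop:2SegFrom1Seg} promotes both $X_{\bullet}$ and $Y_{\bullet}$ to $2$-Segal semi-simplicial spaces; in the simplicial setting, both become unital $2$-Segal. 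By Proposition \ref{prop:equivRelSeg} it then remains only to verify that the square \eqref{eq:botRel2SegalDiagram} is homotopy Cartesian for each $n \geq 2$.

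For this, I would first apply the lowest right relative $1$-Segal equivalence \eqref{eq:lowRel1Seg} in the guise $Y_{\{0,n\}} \simeq X_{\{0,n\}} \times^R_{X_{\{n\}}} Y_{\{n\}}$. Substituting into the right hand side of \eqref{eq:botRel2SegalDiagram} and absorbing the repeated factor $X_{\{0,n\}}$ gives a weak equivalence
\[
X_n \times^R_{X_{\{0,n\}}} Y_{\{0,n\}} \;\simeq\; X_n \times^R_{X_{\{n\}}} Y_{\{n\}},
\]
under which the canonical map out of $Y_n$ is identified with the outside square of \eqref{eq:rel1SegalDiagram} at $i = n$. The latter is a weak equivalence by hypothesis, and consequently so is $Y_n \to X_n \times^R_{X_{\{0,n\}}} Y_{\{0,n\}}$. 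This confirms the relative $2$-Segal property.

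When $F_{\bullet}$ is a morphism of simplicial spaces, the unital relative $2$-Segal condition demands that $Y_{n-1} \to X_{\{i\}} \times^R_{X_{\{i,i+1\}}} Y_n$ be a weak equivalence. I would factor this through $Y_{\{i\}} \times^R_{Y_{\{i,i+1\}}} Y_n$; the map $Y_{n-1} \to Y_{\{i\}} \times^R_{Y_{\{i,i+1\}}} Y_n$ is the unital $2$-Segal map for $Y_{\bullet}$ and is already a weak equivalence, so it suffices to check that the naturality square
\[
\begin{tikzcd}
Y_{\{i\}} \arrow{r}{s_i} \arrow{d}[left]{F_{\{i\}}} & Y_{\{i,i+1\}} \arrow{d}{F_{\{i,i+1\}}} \\ X_{\{i\}} \arrow{r}{s_i} & X_{\{i,i+1\}}
\end{tikzcd}
\]
is homotopy Cartesian, as this will transport through the pullback against $Y_n \to X_{\{i,i+1\}}$. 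To verify this square, one more application of \eqref{eq:lowRel1Seg}, combined with the simplicial identity $\partial_0 \circ s_i = \mathrm{id}_{X_0}$, collapses $X_{\{i\}} \times^R_{X_{\{i,i+1\}}} Y_{\{i,i+1\}}$ down to $Y_{\{i+1\}} = Y_0$, so that the comparison map reduces to the identity.

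The principal obstacle is largely bookkeeping: tracking which face and degeneracy maps glue the various pullbacks together and recognising the cancellations that follow from the simplicial identities. No genuinely new input beyond the right relative $1$-Segal hypothesis and the previously established Propositions is required.
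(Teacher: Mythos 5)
Your argument is correct and matches the paper's proof in substance: the paper likewise deduces that $Y_{\bullet}$ is $1$-Segal from Proposition \ref{prop:rel1Segal} and then verifies the square \eqref{eq:botRel2SegalDiagram} by rewriting $X_{\{0,\dots,n\}} \times^R_{X_{\{0,n\}}} Y_{\{0,n\}}$ as $X_{\{0,\dots,n\}} \times^R_{X_{\{n\}}} Y_{\{n\}}$ via the lowest condition \eqref{eq:lowRel1Seg} and identifying the resulting map out of $Y_n$ with the $i=n$ instance of the right relative $1$-Segal square. For the unital clause the paper only says the verification is ``similar,'' and your factorization through the unit map of $Y_{\bullet}$ together with the collapse of $X_{\{i\}} \times^R_{X_{\{i,i+1\}}} Y_{\{i,i+1\}}$ to $Y_0$ is a valid way to carry it out.
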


\begin{proof}
Proposition \ref{prop:rel1Segal} implies that $Y_{\bullet}$ is $1$-Segal. The relative $2$-Segal morphism $Y_{\{0, \dots, n\}} \rightarrow X_{\{0, \dots, n\}} \times_{X_{\{0,n\}}}^R Y_{\{0, n\}}$ factors as the composition
\begin{eqnarray*}
Y_{\{0, \dots, n\}} & \rightarrow & X_{\{0, \dots, n\}} \times_{X_{\{n\}}}^R Y_{\{n\}} \\
& \rightarrow & X_{\{0, \dots, n\}} \times_{X_{\{0,n\}}}^R X_{\{0,n\}} \times_{X_{\{n\}}}^R  Y_{\{n\}} \\
& \rightarrow & X_{\{0, \dots, n\}} \times_{X_{\{0,n\}}}^R  Y_{\{0, n\}}.
\end{eqnarray*}
The first and third morphisms are weak equivalences by the right relative $1$-Segal conditions on $F_{\bullet}$ while the second  morphism is a weak equivalence for trivial reasons. Hence the composition is a weak equivalence. The unital condition is verified in a similar way.
\end{proof}

\begin{Ex}
For any $1$-Segal semi-simplicial space $X_{\bullet}$, the identity morphism $\mathbf{1}_{X_{\bullet}}$ is relative $2$-Segal. If $X_{\bullet}$ is $2$-Segal but not $1$-Segal, then $\mathbf{1}_{X_{\bullet}}$ is not relative $2$-Segal.
\end{Ex}

We end this section with a construction of relative $2$-Segal spaces which can be seen as the $2$-Segal analogue of the fact that the morphisms $N_{\bullet}(\mathcal{C}_{\slash x_*}) \rightarrow N_{\bullet}(\mathcal{C})$ and $N_{\bullet}(_{x_*\slash}\mathcal{C}) \rightarrow N_{\bullet}(\mathcal{C})$, defined in Section \ref{sec:rel1SegSpaces}, are right and left relative $1$-Segal, respectively. Recall the left join functor
\[
l: \Delta \rightarrow \Delta, \qquad \{0 , \dots ,  n\} \mapsto \{0^{\prime} , 0 , \dots ,  n\}.
\]
The left path space of a simplicial space $X_{\bullet}$ is the composition
\[
P^{\lhd}X_{\bullet} : \Delta^{\mathsf{op}} \xrightarrow[]{l^{\mathsf{op}}} \Delta^{\mathsf{op}} \xrightarrow[]{X_{\bullet}} \mathsf{Top}. 
\]
After identifying $P^{\lhd}X_m$ with $X_{m+1}$ for each $m \geq 0$, the face map $\partial^{\lhd}_i: P^{\lhd}X_n \rightarrow P^{\lhd}X_{n-1}$ is identified with $\partial_{i+1}$. The remaining face maps $\partial_{0^{\prime}}$ assemble to a simplicial morphism $F^{\lhd}_{\bullet}: P^{\lhd}X_{\bullet} \rightarrow X_{\bullet}$. Using instead the right join functor
\[
r: \Delta \rightarrow \Delta, \qquad \{0 , \dots ,  n\} \mapsto \{0 , \dots ,  n,n^{\prime} \}
\]
we obtain the right path space $F^{\rhd}_{\bullet}: P^{\rhd}X_{\bullet} \rightarrow X_{\bullet}$.

\begin{Prop}
\label{prop:relPathSpace}
Assume that $X_{\bullet}$ is (unital) $2$-Segal. Then the left and right path spaces
\[
F^{\lhd}_{\bullet}: P^{\lhd}X_{\bullet} \rightarrow X_{\bullet}, \qquad F^{\rhd}_{\bullet}: P^{\rhd}X_{\bullet} \rightarrow X_{\bullet}
\]
are (unital) relative $2$-Segal.
\end{Prop}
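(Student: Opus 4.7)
The plan is to reduce each of the (unital) relative $2$-Segal conditions for $F^{\lhd}_{\bullet}$ to a polyhedral subdivision condition on $X_{\bullet}$, working throughout with the identification $P^{\lhd}X_n = X_{\{0^{\prime}, 0, \dots, n\}}$ induced by the left join functor.

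First I would verify that $P^{\lhd}X_{\bullet}$ is itself $1$-Segal. Under the above identification, the $1$-Segal condition at level $n \geq 2$ unpacks to the requirement that
\[
X_{\{0^{\prime}, 0, \dots, n\}} \rightarrow X_{\{0^{\prime}, 0, 1\}} \times^R_{X_{\{0^{\prime}, 1\}}} X_{\{0^{\prime}, 1, 2\}} \times^R_{X_{\{0^{\prime}, 2\}}} \cdots \times^R_{X_{\{0^{\prime}, n-1\}}} X_{\{0^{\prime}, n-1, n\}}
\]
be a weak equivalence, which is exactly the $2$-Segal condition on $X_{\bullet}$ associated to the fan triangulation of the $(n+2)$-gon $\{0^{\prime}, 0, 1, \dots, n\}$ obtained by drawing all diagonals emanating from $0^{\prime}$. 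Hence $P^{\lhd}X_{\bullet}$ is $1$-Segal, and Proposition \ref{prop:2SegFrom1Seg} promotes this to (unital) $2$-Segality.

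With $P^{\lhd}X_{\bullet}$ known to be $2$-Segal, by Proposition \ref{prop:equivRelSeg}(3) the relative $2$-Segal property of $F^{\lhd}_{\bullet}$ reduces to showing that the square \eqref{eq:botRel2SegalDiagram} is homotopy Cartesian for each $n \geq 1$. Under our identification this is precisely the assertion that
\[
X_{\{0^{\prime}, 0, \dots, n\}} \rightarrow X_{\{0, \dots, n\}} \times^R_{X_{\{0, n\}}} X_{\{0^{\prime}, 0, n\}}
\]
is a weak equivalence, and this is the $2$-Segal condition on $X_{\bullet}$ corresponding to the subdivision of the polygon $\{0^{\prime}, 0, 1, \dots, n\}$ along the diagonal $\{0, n\}$ into the sub-polygon $\{0, 1, \dots, n\}$ and the triangle $\{0^{\prime}, 0, n\}$.

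For the unital statement, I would track the shift of degeneracies of $P^{\lhd}X_{\bullet}$ through $l^{\mathsf{op}}$ and identify the map $(F^{\lhd}_{\{i\}} \circ \partial_{\{i\}}) \times s_i$ with
\[
X_{\{0^{\prime}, 0, \dots, n-1\}} \rightarrow X_{\{i\}} \times^R_{X_{\{i, i+1\}}} X_{\{0^{\prime}, 0, \dots, n\}},
\]
which is the unital $2$-Segal weak equivalence for $X_{\bullet}$ at the adjacent pair $i, i+1$ (neither vertex being $0^{\prime}$) and so is guaranteed by the unital $2$-Segal hypothesis on $X_{\bullet}$. The argument for the right path space $F^{\rhd}_{\bullet}$ is entirely symmetric, replacing the fan from $0^{\prime}$ with a fan from $n^{\prime}$ and using the diagonal $\{0, n\}$ in the polygon $\{0, 1, \dots, n, n^{\prime}\}$. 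I do not anticipate any substantive homotopical difficulty; the one real task is the combinatorial bookkeeping of matching each (unital) Segal condition to the correct polyhedral subdivision of the augmented polygon.
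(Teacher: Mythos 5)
Your proposal is correct and follows essentially the same route as the paper: the relative $2$-Segal map $X_{\{0',0,\dots,n\}} \to X_{\{0,\dots,n\}} \times^R_{X_{\{0,n\}}} X_{\{0',0,n\}}$ and the relative unit map are identified with $2$-Segal and unit maps for $X_{\bullet}$, exactly as in the paper's proof. The only cosmetic difference is that the paper cites the path space criterion of Dyckerhoff--Kapranov and G\'{a}lvez-Carrillo--Kock--Tonks for the $1$-Segal property of $P^{\lhd}X_{\bullet}$, whereas you unpack it directly via the fan triangulation from $0'$, which is precisely the content of that criterion.
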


\begin{proof}
Since $X_{\bullet}$ is $2$-Segal, the path space criterion \cite[Theorem 6.3.2]{dyckerhoff2012b}, \cite[Theorem 4.11]{galvez2015} implies that $P^{\lhd}X_{\bullet}$ and $P^{\rhd}X_{\bullet}$ are $1$-Segal.  That the remaining relative $2$-Segal conditions hold is also proved in \cite[Theorem 4.11]{galvez2015}. Explicitly, the relative $2$-Segal map
\[
P^{\lhd}X_n = X_{\{ 0^{\prime}, 0, \dots, n\}} \rightarrow X_n \times^R_{X_{\{0,n\}}} X_{\{0^{\prime}, 0, n\}} = X_n \times^R_{X_{\{0,n\}}} P^{\lhd}X_{\{0,n\}}
\]
and the relative unit map
\[
P^{\lhd}X_{n-1} = X_{\{ 0^{\prime}, 0, \dots, n-1\}} \rightarrow X_{\{i\}} \times^R_{X_{\{i,i+1\}}} X_{\{0^{\prime}, 0, \dots, n\}} = X_{\{i\}} \times^R_{X_{\{i,i+1\}}} P^{\lhd}X_n
\]
are $2$-Segal and unit maps for $X_{\bullet}$ and so are weak equivalences by assumption.
\end{proof}

\subsection{Symmetric polyhedral subdivisions}
\label{sec:symmSubdivisions}

In this section we use combinatorial geometry to give a uniform treatment of the two conditions which define relative $2$-Segal spaces.

For each $n \geq 0$ let $\mathbf{n}$ be the ordered set $\{ 0 < \cdots < n < n^{\prime} <  \cdots < 0^{\prime}\}$. There is a unique isomorphism $\mathbf{n} \simeq [2n+1]$ in $\Delta$. Let $P_{\mathbf{n}} \subset \mathbb{R}^2$ be a convex $(2n+2)$-gon which is symmetric with respect to reflection about the $y$-axis and which has no vertices on the $y$-axis. Identify the vertices of $P_{\mathbf{n}}$ with $\mathbf{n}$ in a way which is consistent with the counterclockwise orientation of $\mathbb{R}^2$ and so that reflection about the $y$-axis interchanges the vertices $i$ and $i^{\prime}$.

A polyhedral subdivision $\mathcal{P}$ of $P_{\mathbf{n}}$ is called symmetric if it is fixed by reflection about the $y$-axis. Explicitly, $\mathcal{P}$ is symmetric if and only if it consists of 
\begin{enumerate}[label=(\roman*)]
\item horizontal diagonals $\{i^{\prime}, i\}$, $0 < i < n$, and
\item pairs of diagonals $\{i,j\}$ and $\{i^{\prime},j^{\prime}\}$, $0 \leq i < j \leq n$,
\end{enumerate}
and these diagonals have no intersection in the interior of $P_{\mathbf{n}}$.

\begin{Def}
Let $\mathcal{P}$ be a symmetric polyhedral subdivision of $P_{\mathbf{n}}$. The category $\mathcal{S}_{\mathcal{P}}$ of symmetric simplices in $\mathcal{P}$ is defined as follows.
\begin{itemize}
\item Objects of $\mathcal{S}_{\mathcal{P}}$ are $\mathbb{Z}_2$-equivariant monomorphisms $\sigma: D \hookrightarrow \Delta^{\mathcal{P}}$ of simplicial sets of the form
\[
\Delta^k \sqcup \Delta^k \hookrightarrow \Delta^{\mathcal{P}}
\qquad \mbox{or} \qquad \Delta^{\mathbf{m}} \hookrightarrow \Delta^{\mathcal{P}},
\]
where $\mathbb{Z}_2$ acts on $\Delta^k \sqcup \Delta^k$ by interchanging the two copies of $\Delta^k$ and acts on $\Delta^{\mathbf{m}}$ by first interchanging $i$ and $i^{\prime}$ and then using the canonical isomorphism $\mathbf{m}^{\mathsf{op}} \simeq \mathbf{m}$.

\item Morphisms in $\mathcal{S}_{\mathcal{P}}$ are commutative diagrams
\[
\begin{tikzcd}
D \arrow[hookrightarrow]{rd}[below]{\sigma}  \arrow{rr}{\phi} &  & D^{\prime} \\
 &\Delta^{\mathcal{P}}. \arrow[hookleftarrow]{ru}[below]{\sigma^{\prime}} &
\end{tikzcd}
\]
\end{itemize}
\end{Def}

It follows that a morphism $\phi$ in $\mathcal{S}_{\mathcal{P}}$ is of one of the following three types:
\begin{enumerate}[label=(\roman*)]
\item $\Delta^k \sqcup \Delta^k \hookrightarrow \Delta^l \sqcup \Delta^l$ for some $k \leq l$, in which case $\phi$ is determined by its restriction $\Delta^k \hookrightarrow \Delta^l$ to a summand.

\item $\Delta^{\mathbf{m}} \hookrightarrow \Delta^{\mathbf{n}}$ for some $m \leq n$, in which case $\phi$ is determined by its restriction $\Delta^m \hookrightarrow \Delta^n$ for the canonical inclusions $[m] \hookrightarrow \mathbf{m}$ and $[n] \hookrightarrow \mathbf{n}$.

\item $\Delta^k \sqcup \Delta^k \hookrightarrow \Delta^{\mathbf{m}}$ for some $k \leq m$, in which case $\phi$ is determined by its restriction $\Delta^k \hookrightarrow \Delta^m$.
\end{enumerate}

Given a morphism of semi-simplicial spaces $F_{\bullet}: Y_{\bullet} \rightarrow X_{\bullet}$ and a symmetric polyhedral subdivision $\mathcal{P}$ of $P_{\mathbf{n}}$, define a functor $F_{\mathcal{P}}: \mathcal{S}^{\mathsf{op}}_{\mathcal{P}} \rightarrow \mathsf{Top}$ as follows. At the level of objects $F_{\mathcal{P}}$ is given by
\[
F_{\mathcal{P}} (\Delta^k \sqcup \Delta^k \hookrightarrow \Delta^{\mathcal{P}}) = X_k, \qquad F_{\mathcal{P}}(\Delta^{\mathbf{m}} \hookrightarrow \Delta^{\mathcal{P}}) = Y_m.
\]
Given a morphism $\phi$ of type (i), (ii) or (iii), the morphism $F_{\mathcal{P}}(\phi)$ is defined by the semi-simplicial structure of $X_{\bullet}$, the semi-simplicial structure of $Y_{\bullet}$ and the semi-simplicial morphism $F_{\bullet}$, respectively. The derived space of symmetric $\mathcal{P}$-membranes of $F_{\bullet}$ is then defined to be the homotopy limit
\[
(\Delta^{\mathcal{P}}, F_{\bullet})_R = \hopro^{\mathsf{Top}}_{\sigma \in \mathcal{S}_{\mathcal{P}}} F_{\mathcal{P}}(\sigma).
\]
A refinement $\mathcal{P}$ of $\mathcal{P}^{\prime}$ defines a simplicial subset $\Delta^{\mathcal{P}} \subset \Delta^{\mathcal{P}^{\prime}}$ and thus a morphism $(\Delta^{\mathcal{P}^{\prime}}, F_{\bullet})_R \rightarrow (\Delta^{\mathcal{P}}, F_{\bullet})_R$. In particular, taking $\mathcal{P}^{\prime}$ to be the trivial subdivision and noting that $(\Delta^{\mathbf{n}}, F_{\bullet})_R \simeq Y_n$, we obtain a morphism
\[
f_{\mathcal{P}}^{F_{\bullet}}: Y_n \rightarrow (\Delta^{\mathcal{P}}, F_{\bullet})_R.
\]

\begin{Ex}
Consider the symmetric polyhedral subdivisions $\mathcal{P}$ and $\mathcal{P}^{\prime}$ of $P_{\mathbf{2}}$ depicted in Figure \ref{fig:symmSubdivisions}. In these cases the associated derived spaces symmetric membranes are homotopy fibre products,
\[
(\Delta^{\mathcal{P}}, F_{\bullet})_R \simeq Y_{\{0,1\}} \times^R_{Y_{\{ 1 \}}} Y_{\{1,2\}}, \qquad (\Delta^{\mathcal{P}^{\prime}}, F_{\bullet})_R \simeq X_{\{0,1,2\}} \times^R_{X_{\{ 0,2 \}}} Y_{\{0,2\}}.
\]
For more complicated subdivisions, such as $\mathcal{P}^{\prime \prime}$ of the same figure, a limit is indeed required to define the derived space of symmetric membranes.
\end{Ex}

\begin{figure}
\begin{center}
\[
\mathcal{P}=\begin{tikzpicture}[baseline={([yshift=-.5ex]current bounding box.center)}]
\node[draw,minimum size=1.5cm,regular polygon,regular polygon sides=6] (a) {};

\draw[] (a.corner 1) node[above]{\scriptsize $2$} ; 
\draw[] (a.corner 2) node[above]{\scriptsize $2^{\prime}$} ; 
\draw[] (a.corner 3) node[left]{\scriptsize $1^{\prime}$} ; 
\draw[] (a.corner 4) node[below]{\scriptsize $0^{\prime}$} ; 
\draw[] (a.corner 5) node[below]{\scriptsize $0$} ; 
\draw[] (a.corner 6) node[right]{\scriptsize $1$} ; 

\path[-,font=\scriptsize,color=red]
    (a.corner 3) edge[-]  (a.corner 6); 
\end{tikzpicture}
\;\;\;\;\;\;\;\;
\mathcal{P}^{\prime}=\begin{tikzpicture}[baseline={([yshift=-.5ex]current bounding box.center)}]
\node[draw,minimum size=1.5cm,regular polygon,regular polygon sides=6] (a) {};

\draw[] (a.corner 1) node[above]{\scriptsize $2$} ; 
\draw[] (a.corner 2) node[above]{\scriptsize $2^{\prime}$} ; 
\draw[] (a.corner 3) node[left]{\scriptsize $1^{\prime}$} ; 
\draw[] (a.corner 4) node[below]{\scriptsize $0^{\prime}$} ; 
\draw[] (a.corner 5) node[below]{\scriptsize $0$} ; 
\draw[] (a.corner 6) node[right]{\scriptsize $1$} ; 

\path[-,font=\scriptsize,color=red]
    (a.corner 2) edge[-]  (a.corner 4);
\path[-,font=\scriptsize,color=red]
    (a.corner 1) edge[-]  (a.corner 5);     
\end{tikzpicture}
\;\;\;\;\;\;\;\;
\mathcal{P}^{\prime \prime}=
\begin{tikzpicture}[baseline={([yshift=-.5ex]current bounding box.center)}]
\node[draw,minimum size=1.5cm,regular polygon,regular polygon sides=10] (a) {};

\path[-,font=\scriptsize,color=red]
    (a.corner 2) edge[-]  (a.corner 6); \path[-,font=\scriptsize,color=red]
    (a.corner 1) edge[-]  (a.corner 7); 
\path[-,font=\scriptsize,color=red]
    (a.corner 1) edge[-]  (a.corner 9); 
\path[-,font=\scriptsize,color=red]
    (a.corner 7) edge[-]  (a.corner 9);
\path[-,font=\scriptsize,color=red]
    (a.corner 2) edge[-]  (a.corner 4); 
\path[-,font=\scriptsize,color=red]
    (a.corner 4) edge[-]  (a.corner 6);  
\draw[] (a.corner 1) node[above]{\scriptsize $4$} ;  
\draw[] (a.corner 2) node[above]{\scriptsize $4^{\prime}$} ;  
\draw[] (a.corner 3) node[above]{\scriptsize $3^{\prime}$} ;  
\draw[] (a.corner 4) node[left]{\scriptsize $2^{\prime}$} ;  
\draw[] (a.corner 5) node[below]{\scriptsize $1^{\prime}$} ;   
\draw[] (a.corner 6) node[below]{\scriptsize $0^{\prime}$} ;  
\draw[] (a.corner 7) node[below]{\scriptsize $0$} ;   
\draw[] (a.corner 8) node[below]{\scriptsize $1$} ;  
\draw[] (a.corner 9) node[right]{\scriptsize $2$} ;  
\draw[] (a.corner 10) node[above]{\scriptsize $3$} ;  
\end{tikzpicture}
\]
\caption{Examples of symmetric polyhedral subdivisions.}
\label{fig:symmSubdivisions}
\end{center}
\end{figure}
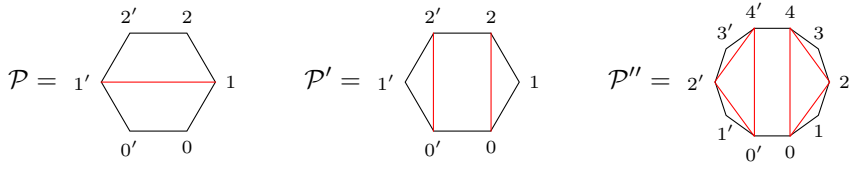

\begin{Prop}
\label{prop:symmDecomp}
Let $F_{\bullet}: Y_{\bullet} \rightarrow X_{\bullet}$ be a morphism of semi-simplicial spaces. Assume that $X_{\bullet}$ is $2$-Segal. Then the following statements are equivalent:
\begin{enumerate}
\item The morphism $F_{\bullet}$ is relative $2$-Segal.

\item For every $n \geq 2$ and every symmetric polyhedral subdivision $\mathcal{P}$ of $P_{\mathbf{n}}$ the morphism $f_{\mathcal{P}}^{F_{\bullet}}$ is a weak equivalence.

\item For every $n \geq 2$ and every maximal symmetric polyhedral subdivision $\mathcal{P}$ of $P_{\mathbf{n}}$ the morphism $f_{\mathcal{P}}^{F_{\bullet}}$ is a weak equivalence.
\end{enumerate}
\end{Prop}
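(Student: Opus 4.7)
The plan is to prove $(2) \Rightarrow (3) \Rightarrow (1) \Rightarrow (2)$. The implication $(2) \Rightarrow (3)$ is immediate since every maximal symmetric subdivision is a symmetric subdivision. For the other two implications, the strategy is to analyze how the derived space of symmetric membranes changes under elementary refinements, in the spirit of \cite[Proposition 2.3.2]{dyckerhoff2012b}. Specifically, I aim to prove a refinement lemma: if $\mathcal{P}^- \subset \mathcal{P}$ are symmetric polyhedral subdivisions differing by the addition of one horizontal diagonal or one non-horizontal symmetric pair, then the refinement map
\[
g: (\Delta^{\mathcal{P}^-}, F_{\bullet})_R \to (\Delta^{\mathcal{P}}, F_{\bullet})_R
\]
is a weak equivalence provided an appropriate local condition on the subpolygon being subdivided holds. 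The local condition splits into three cases. Case (a): a symmetric pair of diagonals subdivides a non-symmetric subpolygon (a ``wing''); here the local condition is a $2$-Segal condition on $X_\bullet$. Case (b): a horizontal diagonal subdivides a symmetric subpolygon $P_{\mathbf{m}}$ into two symmetric subpolygons; here the local condition is a $1$-Segal condition on $Y_\bullet$ at level $m$. Case (c): a non-horizontal symmetric pair subdivides a symmetric subpolygon $P_{\mathbf{m}}$ into two wings and a central symmetric subpolygon; here the local condition is a relative $2$-Segal condition on $F_\bullet$. Establishing these three local equivalences means identifying $(\Delta^{\mathcal{P}^-}, F_{\bullet})_R$ and $(\Delta^{\mathcal{P}}, F_{\bullet})_R$ as homotopy limits over $\mathcal{S}_{\mathcal{P}^-}$ and $\mathcal{S}_{\mathcal{P}}$ and comparing them via the pushout structure induced by the polygon decomposition; this is where the real technical work lies.

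Granted the refinement lemma, $(1) \Rightarrow (2)$ will follow by induction on the number of diagonals in $\mathcal{P}$. Starting from the trivial subdivision, for which $f_{\mathcal{P}}^{F_{\bullet}}$ is the identity, I add diagonals one at a time; in each of the three cases above the refinement map is a weak equivalence by the hypotheses of (1), namely $X_\bullet$ being $2$-Segal, $Y_\bullet$ being $1$-Segal, and $F_\bullet$ being relative $2$-Segal.

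To prove $(3) \Rightarrow (1)$, I will extract the two parts of the relative $2$-Segal property separately. For the $1$-Segal condition on $Y_\bullet$ at level $n$, apply (3) to the maximal symmetric subdivision $\mathcal{P}^{\mathrm{horz}}$ of $P_{\mathbf{n}}$ consisting of all horizontal diagonals $\{k,k^{\prime}\}$, $1 \leq k \leq n-1$. The associated derived space of symmetric membranes is the iterated homotopy fibre product
\[
Y_{\{0,1\}} \times^R_{Y_{\{1\}}} Y_{\{1,2\}} \times^R_{Y_{\{2\}}} \cdots \times^R_{Y_{\{n-1\}}} Y_{\{n-1,n\}},
\]
and the map from $Y_n$ is precisely the $1$-Segal condition. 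For the relative $2$-Segal condition at $(i,j)$, with $0 \leq i < j \leq n$, let $\mathcal{P}_{(i,j)}$ be the subdivision of $P_{\mathbf{n}}$ with the single symmetric pair $\{i,j\},\{i^{\prime},j^{\prime}\}$, whose derived membrane space is exactly $X_{\{i,\dots,j\}} \times^R_{X_{\{i,j\}}} Y_{\{0,\dots,i,j,\dots,n\}}$. Refine $\mathcal{P}_{(i,j)}$ to a maximal symmetric subdivision $\mathcal{P}^{\mathrm{max}}$ by triangulating each of the two wings and then adding all possible horizontal diagonals to the central symmetric subpolygon (the resulting quadrilateral strips admit no further symmetric diagonals). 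The map $f_{\mathcal{P}^{\mathrm{max}}}^{F_{\bullet}}$ is a weak equivalence by (3), and the refinement map is a composition of elementary steps of types (a) and (b), each a weak equivalence by the $2$-Segal property of $X_\bullet$ and the $1$-Segal property of $Y_\bullet$ just established. The $2$-out-of-$3$ property then yields the relative $2$-Segal condition at $(i,j)$.

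The main obstacle is the refinement lemma itself, particularly carrying out the homotopy limit comparison in the $\mathbb{Z}_2$-equivariant setting where $\mathcal{S}_{\mathcal{P}}$ treats pairs of mirror simplices as single objects. The rest of the argument is combinatorial and follows the template of the analogous non-relative result in \cite{dyckerhoff2012b}.
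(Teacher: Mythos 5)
Your proposal is correct and follows essentially the same strategy as the paper: identify the membrane spaces of the elementary symmetric subdivisions with the relevant homotopy fibre products, and factor an arbitrary refinement into elementary steps (adding a horizontal diagonal or a mirror pair), each governed by a $1$-Segal condition on $Y_{\bullet}$, a $2$-Segal condition on $X_{\bullet}$, or a relative $2$-Segal condition on $F_{\bullet}$. The paper proves $(1)\Leftrightarrow(2)$ directly and dismisses the equivalence with $(3)$ as ``proved similarly,'' so your explicit $(3)\Rightarrow(1)$ argument --- first extracting $1$-Segality of $Y_{\bullet}$ from the all-horizontal maximal subdivision, then applying two-out-of-three to a maximal refinement of the single-mirror-pair subdivision --- spells out a step the paper leaves to the reader, while, like the paper, you leave the elementary refinement lemma itself at the level of assertion.
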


\begin{proof}
Assume that the second statement holds. If $\mathcal{P}$ is the symmetric polyhedral subdivision of $P_{\mathbf{n}}$ consisting only of the diagonal $\{i^{\prime},i\}$, then
\[
(\Delta^{\mathcal{P}}, F_{\bullet})_R \simeq Y_{\{0, \dots, i\}} \times^R_{Y_{\{i\}}} Y_{\{i, \dots, n\}}
\]
while if $\mathcal{P}^{\prime}$ consists only of the diagonals $\{0,n\}$ and $\{0^{\prime},n^{\prime}\}$, then
\[
(\Delta^{\mathcal{P}^{\prime}}, F_{\bullet})_R \simeq X_{\{0, \dots, n\}} \times^R_{X_{\{0,n\}}} Y_{\{0, n\}}.
\]
Hence $f_{\mathcal{P}}^{F_{\bullet}}$ and $f_{\mathcal{P}^{\prime}}^{F_{\bullet}}$ reduce to the $1$-Segal maps for $Y_{\bullet}$ and the relative $2$-Segal maps for $F_{\bullet}$, respectively, proving that $F_{\bullet}$ is relative $2$-Segal.

Conversely, assume that $F_{\bullet}$ is relative $2$-Segal and let $\mathcal{P}$ be a symmetric polyhedral subdivision. There exists a sequence $\mathcal{P}_0, \mathcal{P}_1, \dots, \mathcal{P}_k=\mathcal{P}$ of symmetric polyhedral subdivisions such that $\mathcal{P}_0$ is the trivial subdivision and $\mathcal{P}_j$ is obtained from $\mathcal{P}_{j-1}$ by adding a single horizontal diagonal or a mirror pair of non-symmetric diagonals. Then $f^{F_{\bullet}}_{\mathcal{P}}$ factors as the composition
\[
Y_n \simeq (\Delta^{\mathcal{P}_0}, F_{\bullet})_R \rightarrow (\Delta^{\mathcal{P}_1}, F_{\bullet})_R \rightarrow \cdots \rightarrow (\Delta^{\mathcal{P}_{k-1}}, F_{\bullet})_R \rightarrow (\Delta^{\mathcal{P}}, F_{\bullet})_R.
\]
The construction of the $\mathcal{P}_j$ ensures that each morphism in this composition is a weak equivalence, being induced by a $1$-Segal map for $Y_{\bullet}$ or a relative $2$-Segal map for $F_{\bullet}$. It follows that $f^{F_{\bullet}}_{\mathcal{P}}$ is a weak equivalence and the second statement holds.

The equivalence of the second and third statements is proved similarly.
\end{proof}

\subsection{The stable framed \texorpdfstring{$\mathcal{S}_{\bullet}$}{}-construction}

Motivated by the Hall algebra representations of \cite{soibelman2016}, \cite{franzen2016}, in this section we modify the Waldhausen $\mathcal{S}_{\bullet}$-construction so as to construct relative $2$-Segal groupoids from an abelian category together with a choice of stability condition.

Fix a field $k$. Let $\mathcal{C}$ be a $k$-linear abelian category with Grothendieck group $K_0(\mathcal{C})$. A stability function on $\mathcal{C}$ is a group homomorphism $Z : K_0(\mathcal{C}) \rightarrow \mathbb{C}$ such that
\[
Z(A) \in \mathbb{H}_+= \{ m \exp(\sqrt{-1}\pi  \phi) \mid m >0, \; \phi \in (0,1] \} \subset \mathbb{C}
\]
for all non-zero objects $A \in \mathcal{C}$ \cite[\S 2]{bridgeland2007}. Let $\phi(A) \in (0,1]$ be the phase of $Z(A)$. A non-zero object $A \in \mathcal{C}$ is called $Z$-semistable if $\phi(A^{\prime}) \leq \phi(A)$ for all non-trivial subobjects $A^{\prime} \subset A$.  For each $\phi \in (0,1]$ the full subcategory $\mathcal{C}^{Z \mhyphen \mathsf{ss}}_{\phi} \subset \mathcal{C}$ consisting of the zero object together with all objects which are $Z$-semistable of phase $\phi$ is again abelian. We can therefore form the Waldhausen simplicial groupoid $\mathcal{S}_{\bullet}(\mathcal{C}^{Z \mhyphen \mathsf{ss}}_{\phi})$, which is $2$-Segal by Theorem \ref{thm:waldSegal}.

We formulate a notion of framing following \cite[\S 4]{soibelman2016}. Fix a left exact functor $\Phi: \mathcal{C} \rightarrow \mathsf{Vect}_k$ with values in the category of finite dimensional vector spaces over $k$. A framed object of $\mathcal{C}$ is then a pair $(M,s)$ consisting of an object $M \in \mathcal{C}$ and a section $s \in \Phi(M)$. A morphism of framed objects $(M,s) \rightarrow (M^{\prime}, s^{\prime})$ is a pair $(\pi, \lambda) \in \Hom_{\mathcal{C}}(M, M^{\prime}) \times k$ which satisfies $\Phi(\pi)(s) = \lambda s^{\prime}$. A framed object $(M,s)$ is called stable framed if $M$ is $Z$-semistable and $\phi(A) < \phi(M)$ for all proper subobjects $A \subset M$ for which $s \in \Phi(A) \subset \Phi(M)$.

\begin{Ex}
We recall two standard examples of framings.
\begin{enumerate}
\item Let $\mathsf{Rep}_k(Q)$ be the category of finite dimensional representations of a quiver $Q$. Sending a representation to its dimension vector defines a surjective group homomorphism $K_0(\mathsf{Rep}_k(Q)) \rightarrow \mathbb{Z}^{Q_0}$. A tuple $\zeta =(\zeta_i)_{i \in Q_0} \in \mathbb{H}_+^{Q_0}$ defines a function $\mathbb{Z}^{Q_0} \rightarrow \mathbb{C}, d \mapsto \sum_{i \in Q_0} \zeta_i d_i$ which induces a stability function $Z$ on $\mathsf{Rep}_k(Q)$. For any $f \in \mathbb{Z}_{\geq 0}^{Q_0}$ the functor $\Phi: U \mapsto \bigoplus_{i \in Q_0} \Hom_k(k^{f_i}, U_i)$ is a framing.

\item Let $\mathsf{Coh}(X)$ be the category of coherent sheaves on a smooth projective curve $X$. Then $Z= - \deg + \sqrt{-1} \cdot \rk$ defines a stability function. The global sections functor $\Phi= H^0(X,-)$ is a framing. \qedhere
\end{enumerate}
\end{Ex}

Define a stable framed modification of the $\mathcal{S}_{\bullet}$-construction as follows. For each $n \geq 0$ let $\mathcal{S}^{\mathsf{st} \mhyphen \mathsf{fr}}_n(\mathcal{C}^{Z \mhyphen \mathsf{ss}}_{\phi})$ be the maximal groupoid of the category of diagrams of the form
\[
\begin{tikzpicture}[baseline= (a).base]
\node[scale=1] (a) at (0,0){
\begin{tikzcd}
A_{\{0,0\}} \arrow{r} & A_{\{0,1\}} \arrow{d} \arrow{r} & \cdots   \arrow{r} & A_{\{0,n\}} \arrow{d}  \arrow{r} & (M_0, s_0) \arrow{d} \\
 & A_{\{1,1\}} \arrow{r} & \cdots   \arrow{r} & A_{\{1,n\}} \arrow{d}  \arrow{r} & (M_1, s_1) \arrow{d} \\
 &  & \ddots & \vdots \arrow{d} & \vdots \arrow{d} \\
  &  & & A_{\{n,n\}}  \arrow{r} & (M_n, s_n) \arrow{d} \\
  &  &  &  & (M_{n+1},s_{n+1})
\end{tikzcd}
};
\end{tikzpicture}
\]
where each $(M_i,s_i)$ is a framed object and which have the following properties:
\begin{enumerate}[label=(\roman*)]
\item Upon forgetting the framing data $s_0, \dots, s_{n+1}$ the resulting diagram is an object of $\mathcal{S}_{n+1}(\mathcal{C}^{Z \mhyphen \mathsf{ss}}_{\phi})$.
 
\item Each pair $(M_i, s_i)$, $i=0, \dots, n$, is a stable framed object.
\end{enumerate}
In the above diagram a morphism $A_{\{i,n\}} \rightarrow (M_i,s_i)$ is simply a morphism of the underlying objects of $\mathcal{C}$. The groupoids $\mathcal{S}^{\mathsf{st} \mhyphen \mathsf{fr}}_n(\mathcal{C}^{Z \mhyphen \mathsf{ss}}_{\phi})$ assemble to a simplicial groupoid. There is a canonical simplicial morphism $F_{\bullet} :\mathcal{S}^{\mathsf{st} \mhyphen \mathsf{fr}}_{\bullet}(\mathcal{C}^{Z \mhyphen \mathsf{ss}}_{\phi}) \rightarrow \mathcal{S}_{\bullet}(\mathcal{C}^{Z \mhyphen \mathsf{ss}}_{\phi})$ which forgets the rightmost column and bottom row of a diagram.

\begin{Thm}
\label{thm:stabFrameSegal}
Let $\mathcal{C}$ be a $k$-linear abelian category with stability function $Z$ and framing $\Phi$. For each $\phi \in (0,1)$ the map $F_{\bullet}: \mathcal{S}^{\mathsf{st} \mhyphen \mathsf{fr}}_{\bullet}(\mathcal{C}^{Z \mhyphen \mathsf{ss}}_{\phi}) \rightarrow \mathcal{S}_{\bullet}(\mathcal{C}^{Z \mhyphen \mathsf{ss}}_{\phi})$ is unital relative $2$-Segal.
\end{Thm}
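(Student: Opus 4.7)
The plan is to verify the relative $2$-Segal condition via Proposition \ref{prop:equivRelSeg}, so I would show that $Y_\bullet := \mathcal{S}^{\mathsf{st}\mhyphen\mathsf{fr}}_\bullet(\mathcal{C}^{Z\mhyphen\mathsf{ss}}_\phi)$ is $2$-Segal, that the square \eqref{eq:botRel2SegalDiagram} is homotopy Cartesian for every $n$, and separately verify the unital condition. After unpacking definitions, an object of $Y_n$ is equivalent to a flag $0 = A_0 \rightarrowtail A_1 \rightarrowtail \cdots \rightarrowtail A_n \rightarrowtail M$ with subquotients $A_i/A_{i-1} \in \mathcal{C}^{Z\mhyphen\mathsf{ss}}_\phi$, together with a framing $s \in \Phi(M)$ such that each pair $(M/A_i, s_i)$, $0 \le i \le n$, is stable framed, where $s_i$ denotes the image of $s$ under $\Phi(M) \to \Phi(M/A_i)$. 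Forgetting $(M, s)$ recovers the corresponding object of $X_n = \mathcal{S}_n(\mathcal{C}^{Z\mhyphen\mathsf{ss}}_\phi)$.

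I would then establish the stronger statement that $Y_\bullet$ is $1$-Segal, whence $2$-Segalness follows from Proposition \ref{prop:2SegFrom1Seg}. The map $Y_n \to Y_1 \times^R_{Y_0} \cdots \times^R_{Y_0} Y_1$ decomposes a flag-with-framing into the chain of $1$-simplices $(A_i \subset A_{i+1} \subset M,\, s_i \in \Phi(M/A_i))$ glued along the $0$-simplices $(M/A_i, s_i)$. Bijectivity on objects is immediate: a composable chain reconstitutes a unique flag inside the common top framed object $M = M_0$, with semistability and stable framedness of intermediate quotients inherited from the endpoints. The essential point is the automorphism comparison: for the plain Waldhausen construction the analogous map $\mathcal{S}_2 \to \mathcal{S}_1 \times_{\mathcal{S}_0} \mathcal{S}_1$ fails to be an equivalence because lifting an automorphism across a short exact sequence carries an $\Ext^1$-ambiguity, but here the rigidity of stable framed objects forces any automorphism of $(M, s)$ preserving the flag to act as a single scalar $\lambda \in k^\times$ on $M$, and such scalar automorphisms lift uniquely from composable $1$-simplices to the $n$-simplex. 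This $1$-Segal property is the main obstacle of the proof.

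For the homotopy Cartesian square \eqref{eq:botRel2SegalDiagram}, a point of the fibre product $X_n \times^R_{X_{\{0,n\}}} Y_{\{0,n\}}$ consists of a flag $0 \subset A_1 \subset \cdots \subset A_n$ in $X_n$ together with a $1$-simplex $(0 \subset B \subset M, s)$ of $Y_\bullet$ whose underlying object matches $A_n$. These assemble uniquely into a candidate $n$-simplex $0 \subset A_1 \subset \cdots \subset A_n = B \subset M$ with framing $s$; the nontrivial check is that each intermediate $(M/A_i, s_i)$ is stable framed. Given a proper $U \subsetneq M/A_i$ with $s_i \in \Phi(U)$, its preimage $U' \subsetneq M$ satisfies $A_i \subset U'$, and by left exactness of $\Phi$ applied to the factorisation $M \twoheadrightarrow M/A_i \twoheadrightarrow (M/A_i)/U$ one has $s \in \Phi(U')$. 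Stable framedness of $(M, s)$ yields $\phi(U') < \phi$, and the Harder--Narasimhan see-saw applied to $0 \to A_i \to U' \to U \to 0$ with $\phi(A_i) = \phi$ gives $\phi(U) \le \phi(U') < \phi = \phi(M/A_i)$, as required. The hypothesis $\phi \in (0, 1)$ keeps all phases in the open range where the see-saw comparison is strict. The unital condition is then verified by the same degeneracy-insertion argument as for the plain $\mathcal{S}_\bullet$-construction: inserting a repeated object into the flag carries the framing canonically, and the resulting map is a weak equivalence as in the proof of Theorem \ref{thm:waldSegal}.
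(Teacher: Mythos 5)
Your proposal is, in substance, the paper's proof. The heart of both arguments is the observation that stable framedness propagates automatically to the framed quotients: for a proper $U \subsetneq M/A_i$ with $s_i \in \Phi(U)$, left exactness of $\Phi$ transfers this to $s \in \Phi(U')$ for the preimage $U' \subsetneq M$, stable framedness of $(M,s)$ gives $\phi(U') < \phi$, and the see-saw applied to $0 \to A_i \to U' \to U \to 0$ with $\phi(A_i)=\phi$ forces $\phi(U) < \phi$. The paper runs exactly this computation, one quotient at a time ($M_0 \twoheadrightarrow M_1$, then iterate) rather than directly for each $M/A_i$, and packages the groupoid bookkeeping by first showing that the forgetful functors to the flag groupoids $\mathcal{F}_n$ and $\mathcal{F}^{\mathsf{st}\mhyphen\mathsf{fr}}_n$ are equivalences, after which the map $\Psi_n$ encoding \eqref{eq:botRel2SegalDiagram} becomes an evident equivalence of flag groupoids. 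Your direct verification is the same argument minus that packaging; you should still record why $\Psi_n$ is fully faithful (a compatible pair of isomorphisms of the flag and of $B \rightarrowtail (M,s)$ induces a unique isomorphism of the whole $n$-simplex, since the $(M_i,s_i)$ are determined as quotients), but this is routine.

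Two smaller points. First, your diagnosis of the $1$-Segal step is misdirected. The map $\mathcal{S}_2 \rightarrow \mathcal{S}_1 \times^{(2)}_{\mathcal{S}_0} \mathcal{S}_1$ fails to be an equivalence because $\mathcal{S}_0$ is a point, so the total object of an extension is simply not recorded by the edges. For $Y_{\bullet}$ the vertices are the full stable framed objects $(M_i,s_i)$ and an edge records the deflation $(M_i,s_i) \twoheadrightarrow (M_{i+1},s_{i+1})$, so an $n$-simplex is reconstructed from its $1$-skeleton by taking kernels and preimages of the flag data; this is the same mechanism that makes path spaces of $2$-Segal spaces $1$-Segal (cf.\ Proposition \ref{prop:relPathSpace} and the remark on the Artin mapping cylinder following the theorem), and it nowhere uses rigidity of stable framed objects. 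Rigidity (every automorphism of $(M,s)$ is a scalar) is true, but it is not what makes the $1$-Segal maps fully faithful, and presenting it as the main obstacle would send a reader down the wrong path. Second, the see-saw inequalities are strict for every phase in $(0,1]$, so the hypothesis $\phi \in (0,1)$ is not what "keeps the comparison strict"; it plays no visible role in this argument.
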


\begin{proof}
The proof that $\mathcal{S}^{\mathsf{st} \mhyphen \mathsf{fr}}_{\bullet}(\mathcal{C}^{Z \mhyphen \mathsf{ss}}_{\phi})$ is $1$-Segal reduces to the $2$-Segal property of $\mathcal{S}_{\bullet}(\mathcal{C}^{Z \mhyphen \mathsf{ss}}_{\phi})$, so we omit it. To verify the second of the relative $2$-Segal conditions we need to show that the functor
\[
\Psi_n: \mathcal{S}^{\mathsf{st} \mhyphen \mathsf{fr}}_n (\mathcal{C}^{Z \mhyphen \mathsf{ss}}_{\phi}) \rightarrow \mathcal{S}_n(\mathcal{C}^{Z \mhyphen \mathsf{ss}}_{\phi}) \times^{(2)}_{\mathcal{S}_{\{0, n\}}(\mathcal{C}^{Z \mhyphen \mathsf{ss}}_{\phi})} \mathcal{S}^{\mathsf{st} \mhyphen \mathsf{fr}}_{\{0,n\}}(\mathcal{C}^{Z \mhyphen \mathsf{ss}}_{\phi})
\]
is an equivalence. Here $- \times^{(2)} -$ denotes the $2$-pullback of groupoids.

Let $\mathcal{F}_n(\mathcal{C}^{Z \mhyphen \mathsf{ss}}_{\phi})$ be the groupoid of $n$-flags in $\mathcal{C}^{Z \mhyphen \mathsf{ss}}_{\phi}$, that is, diagrams in $\mathcal{C}^{Z \mhyphen \mathsf{ss}}_{\phi}$ of the form
\[
0 \rightarrowtail A_1 \rightarrowtail  \cdots \rightarrowtail A_{n-1} \rightarrowtail A_n.
\]
The forgetful functor $\mu_n : \mathcal{S}_n(\mathcal{C}^{Z \mhyphen \mathsf{ss}}_{\phi}) \rightarrow \mathcal{F}_n (\mathcal{C}^{Z \mhyphen \mathsf{ss}}_{\phi})$ is an equivalence of groupoids \cite[\S 1.3]{waldhausen1985}. Similarly, let $\mathcal{F}^{\mathsf{st} \mhyphen \mathsf{fr}}_n(\mathcal{C}^{Z \mhyphen \mathsf{ss}}_{\phi})$ be the groupoid of $n$-flags in a stable framed object. Explicitly, an object of $\mathcal{F}^{\mathsf{st} \mhyphen \mathsf{fr}}_n (\mathcal{C}^{Z \mhyphen \mathsf{ss}}_{\phi})$ is a diagram of the form
\begin{equation}
\label{eq:stableFlag}
0 \rightarrowtail A_1 \rightarrowtail  \cdots \rightarrowtail A_{n-1} \rightarrowtail A_n \rightarrowtail (M,s)
\end{equation}
with $A_1, \dots, A_n, M \in \mathcal{C}_{\phi}^{Z \mhyphen \mathsf{ss}}$ and $(M,s)$ stable framed. We claim that the forgetful functor $\nu_n : \mathcal{S}^{\mathsf{st} \mhyphen \mathsf{fr}}_n (\mathcal{C}^{Z \mhyphen \mathsf{ss}}_{\phi}) \rightarrow \mathcal{F}^{\mathsf{st} \mhyphen \mathsf{fr}}_n (\mathcal{C}^{Z \mhyphen \mathsf{ss}}_{\phi})$ is an equivalence. A quasi-inverse $\eta_n$ of $\nu_n$ can be constructed as follows. Given a flag \eqref{eq:stableFlag} set $A_{\{0, k\}} = A_k$, $1 \leq k \leq n$, and $(M_0,s_0) = (M,s)$. Define $A_{\{1, k\}}$, $1 \leq k \leq n$, and $M_1$ as the pushouts
\[
\begin{tikzpicture}[baseline= (a).base]
\node[scale=1] (a) at (0,0){
\begin{tikzcd}
A_{\{0, 1\}} \arrow[two heads]{d} \arrow[tail]{r} & A_{\{0,k\}} \arrow[dashed,two heads]{d} \\
0 \arrow[dashed,tail]{r} & A_{\{1,k\}}
\end{tikzcd}
};
\end{tikzpicture}
\qquad  \qquad
\begin{tikzpicture}[baseline= (a).base]
\node[scale=1] (a) at (0,0){
\begin{tikzcd}
A_{\{0, 1\}} \arrow[two heads]{d} \arrow[tail]{r} & M_0 \arrow[dashed,two heads]{d} \\
0 \arrow[dashed,tail]{r} & M_1.
\end{tikzcd}
};
\end{tikzpicture}
\]
Since $\mathcal{C}_{\phi}^{Z \mhyphen \mathsf{ss}}$ is abelian, it is automatic that each $A_{\{1,k\}}$ and $M_1$ are $Z$-semistable of phase $\phi$. Let $s_1$ be the image of $s_0$ under the morphism $\Phi(M_0) \rightarrow \Phi(M_1)$. We need to show that $(M_1, s_1)$ is stable framed. Note that $s_1$ is non-zero. Indeed, if $s_1=0$, then $s_0 \in \Phi(A_{\{0,1\}}) \subset \Phi(M_0)$ and $\phi(A_{\{0,1\}}) = \phi(M_0)$, contradicting the assumed framed stability of $(M_0,s_0)$. Let then $B_1 \subset M_1$ be a proper subobject with $s_1 \in \Phi(B_1) \subset \Phi(M_1)$. There exists a unique object $B_0 \in \mathcal{C}$ which fits into the exact commutative diagram
\[
\begin{tikzpicture}[baseline= (a).base]
\node[scale=1] (a) at (0,0){
\begin{tikzcd}
A_{\{0, 1\}} \arrow{d}[left]{\id} \arrow[tail]{r} & B_0 \arrow[tail]{d} \arrow[two heads]{r} & B_1 \arrow[tail]{d}  \\
A_{\{0, 1\}} \arrow[tail]{r} & M_0 \arrow[two heads]{d} \arrow[two heads]{r} & M_1 \arrow[two heads]{d} \\
& M_0 \slash B_0 \arrow{r}[below]{\sim} & M_1 \slash B_1. &
\end{tikzcd}
};
\end{tikzpicture}
\]
Since $s_1 \in \Phi(B_1)$, using the left exactness of $\Phi$ we see that $s_1$ lies in the kernel of $\Phi(M_1) \rightarrow \Phi(M_1 \slash B_1)$. It follows that $s_0$ is in the kernel of $\Phi(M_0) \rightarrow \Phi(M_0 \slash B_0)$. Hence $s_0 \in \Phi(B_0) \subset \Phi(M_0)$. Since $(M_0,s_0)$ is stable framed we have $\phi(B_0) < \phi(M_0)$. The equalities $\phi(A_{\{0,1\}})=\phi(M_0) = \phi(M_1)$ combined with the standard See-Saw properties of stability functions then give $\phi(M_1) > \phi(B_1)$, as desired. This procedure defines the top two rows of $\eta_n(A_{\{0, \bullet\}} \rightarrowtail (M_0,s_0))$ and can be iterated to define the remaining $n-2$ rows. This defines the desired quasi-inverse.

We can now prove the theorem. Consider the following commutative diagram:
\[
\begin{tikzpicture}
  \matrix (m) [matrix of math nodes,row sep=3em,column sep=4em,minimum width=2em] {
     \mathcal{S}^{\mathsf{st} \mhyphen \mathsf{fr}}_n (\mathcal{C}^{Z \mhyphen \mathsf{ss}}_{\phi}) &  \mathcal{S}_n (\mathcal{C}^{Z \mhyphen \mathsf{ss}}_{\phi}) \times^{(2)}_{\mathcal{S}_{\{0,n\}}(\mathcal{C}^{Z \mhyphen \mathsf{ss}}_{\phi})} \mathcal{S}^{\mathsf{st} \mhyphen \mathsf{fr}}_{\{0,n\}} (\mathcal{C}^{Z \mhyphen \mathsf{ss}}_{\phi}) \\
    \mathcal{F}^{\mathsf{st} \mhyphen \mathsf{fr}}_n (\mathcal{C}^{Z \mhyphen \mathsf{ss}}_{\phi}) & \mathcal{F}_n (\mathcal{C}^{Z \mhyphen \mathsf{ss}}_{\phi}) \times^{(2)}_{\mathcal{F}_{\{ 0,n \}}(\mathcal{C}^{Z \mhyphen \mathsf{ss}}_{\phi})} \mathcal{F}^{\mathsf{st} \mhyphen \mathsf{fr}}_{\{0,n\}} (\mathcal{C}^{Z \mhyphen \mathsf{ss}}_{\phi}).\\};
  \path[-stealth]
    (m-1-1) edge node [left] {$\nu_n$} (m-2-1)
            edge node [above] {$\Psi_n$} (m-1-2)
    (m-2-1.east|-m-2-2) edge node [below] {$\widetilde{\Psi}_n$} (m-2-2) 
    (m-1-2) edge node[right] {$\mu_n \times \nu_{\{0,n\}}$} (m-2-2);
\end{tikzpicture}
\]
Noting that the groupoids $\mathcal{S}_1(\mathcal{C}_{\phi}^{Z \mhyphen \mathsf{ss}})$ and $\mathcal{F}_1(\mathcal{C}_{\phi}^{Z \mhyphen \mathsf{ss}})$ are canonically equivalent, the discussion above shows that the vertical functors are equivalences. That $\Psi_n$ is an equivalence therefore reduces to the statement that $\widetilde{\Psi}_n$ is an equivalence, which is obvious. We omit the verification of the relative unital condition.
\end{proof}

\begin{Rem}
The notion of quotient datum on an abelian category $\mathcal{A}$ is introduced in \cite[\S 5.5]{walde2016}, where it is shown to define a relative $2$-Segal groupoid over $\mathcal{S}_{\bullet}(\mathcal{A})$. From this point of view, Theorem \ref{thm:stabFrameSegal} can be rephrased as the statement that a stability function $Z$ and framing $\Phi$ on an abelian category $\mathcal{C}$ defines a (mild generalization of a) quotient datum on the Artin mapping cylinder $\mathcal{C}_{\Phi}$.
\end{Rem}

\subsection{Real pseudoholomorphic polygons}

\label{sec:pseudoPoly}

In this section we give a relative variant of the pseudoholomorphic polygon construction of \cite[\S 3.8]{dyckerhoff2012b}.

Recall that an almost complex structure on a smooth manifold $M$ is an endomorphism $J : TM \rightarrow TM$ of the tangent bundle which satisfies $J \circ J = - \mathbf{1}_{TM}$. A continuous map $u: (\Sigma, \mathfrak{j}) \rightarrow (M, J)$ of almost complex manifolds is called pseudoholomorphic if it is smooth and satisfies the equation
\[
du + J \circ du \circ \mathfrak{j}=0.
\]

Let $\mathbb{H}$ be the Lobachevsky plane, realized as the open unit disk in $\mathbb{C}$ with centre the origin and metric
\[
ds^2 = \frac{dz \wedge d{\overline{z}}}{(1- \vert z \vert^2)^2}.
\]
The ideal boundary $\partial \mathbb{H}$ is the unit circle in $\mathbb{C}$. Denote by $\mathfrak{j}_{\mathbb{H}}$ the complex structure on $\mathbb{H}$. The group $\mathsf{SL}_2(\mathbb{R})$ acts on $\mathbb{H}$ by orientation preserving isometries.

We require some basic definitions from Teichm\"{u}ller theory \cite{penner2012}. Given $b,b^{\prime} \in \partial \mathbb{H}$, let $(b, b^{\prime})$ be the oriented geodesic with limiting points $b$ at infinite negative time and $b^{\prime}$ at infinite positive time. For each $n \geq 0$ let $P(b_0, \dots, b_n)$ be the ideal $(n+1)$-gon in $\mathbb{H}$ with vertices $b_0, \dots, b_n \in \partial \mathbb{H}$ numbered compatibly with the canonical orientation of $\partial \mathbb{H}$. A decoration of $P(b_0, \dots, b_n)$ is the data of a horocycle $\xi_i \in \mathsf{Hor}_{b_i}$ with hyperbolic centre $b_i$ for each $i=0, \dots, n$. The group $\mathsf{SL}_2(\mathbb{R}) \times \mathbb{R}$ acts on the set of decorated ideal $(n+1)$-gons by the formula
\[
(g,a) \cdot (P(b_0, \dots, b_n), \xi_0, \dots, \xi_n) = (P(g\cdot b_0, \dots, g \cdot b_n), g \cdot \xi_0 + a, \dots, g \cdot \xi_n +a)
\]
where $g \in \mathsf{SL}_2(\mathbb{R})$ and $a \in \mathbb{R}$ and we have used the canonical identification of $\mathbb{R}$-torsors $g: \mathsf{Hor}_{b_i} \rightarrow \mathsf{Hor}_{g \cdot b_i}$. We will often omit the decoration from the notation if it will not cause confusion.

Given a decorated ideal $(n+1)$-gon, for each $0 \leq i < j \leq n$ let $m(\xi_i, \xi_j)$ be the midpoint of the unique geodesic connecting the points $\xi_i \cap (b_i,b_j)$ and $\xi_j \cap (b_i,b_j)$. This trivializes the $\mathbb{R}$-torsor $(b_i, b_j)$ via
\begin{equation}
\label{eq:geodesTriv}
\mathbb{R} \rightarrow (b_i, b_j), \qquad 0 \mapsto m(\xi_i,\xi_j).
\end{equation}
This trivialization is invariant under the action of $\mathbb{R} \subset \mathsf{SL}_2(\mathbb{R}) \times \mathbb{R}$.

Following \cite[\S 3.8]{dyckerhoff2012b} we construct from an almost complex manifold $(M,J)$ a semi-simplicial set $\widetilde{\mathbb{T}}_{\bullet}(M)$. Let $\widetilde{\mathbb{T}}_0(M) = M$ and, for each $n \geq 1$, let $\widetilde{\mathbb{T}}_n(M)$ be the set of equivalence classes of pairs consisting of a decorated ideal $(n+1)$-gon $P$ together with a continuous map $u: (P,\mathfrak{j}_{\mathbb{H}}) \rightarrow (M,J)$ which is pseudoholomorphic on the two dimensional interior of $P$. The equivalence relation is generated by the action of $\mathsf{SL}_2(\mathbb{R}) \times \mathbb{R}$ on decorated ideal polygons. Using the trivialization \eqref{eq:geodesTriv}, we can identify $\widetilde{\mathbb{T}}_1(M)$ with the set of continuous maps $[-\infty, \infty] \rightarrow M$. The face map $\partial_i: \widetilde{\mathbb{T}}_n(M) \rightarrow \widetilde{\mathbb{T}}_{n-1}(M)$ omits the ideal boundary point $b_i$ and forms the resulting decorated $n$-gon together with the restricted morphism to $M$. It is proved in \cite[Theorem 3.8.6]{dyckerhoff2012b} that $\widetilde{\mathbb{T}}_{\bullet}(M)$ is a $2$-Segal semi-simplicial set.

Consider now the relative setting. Recall that a real structure on $(M,J)$ is a smooth involution $\tau : M \rightarrow M$ which satisfies
\[
d\tau - J \circ d\tau \circ J =0.
\]
A map $u:(\Sigma, \mathfrak{j}, \sigma) \rightarrow (M, J, \tau)$ of almost complex manifolds with real structures is called real pseudoholomorphic if it is pseudoholomorphic and satisfies $u \circ \sigma = \tau \circ u$.

The real structure $\sigma: z \mapsto -\overline{z}$ on $\mathbb{C}$ induces a real structure on $\mathbb{H}$, again denoted by $\sigma$. The subgroup $\mathsf{SL}_2(\mathbb{R})^{\sigma} \leq \mathsf{SL}_2(\mathbb{R})$ which commutes with $\sigma$ is isomorphic to $\mathbb{R}^{\times} \rtimes \mathbb{Z}_2$, the generator of $\mathbb{Z}_2$ being rotation of $\mathbb{H}$ through an angle $\pi$. Define a real ideal boundary point of $\mathbb{H}$ to be either a point of the real locus $\partial \mathbb{H}^{\sigma} = \{\sqrt{-1},-\sqrt{-1}\}$ or a pair $\{b, \sigma(b)\}$ of distinct $\sigma$-conjugate ideal boundary points.

A real ideal $(n+1)$-gon $Q=Q(b_0, \dots, b_n, \dots)$ is an ideal polygon which has exactly $n+1$ real ideal boundary points, labelled so that only $b_0$ and $b_n$ may lie in $\partial \mathbb{H}^{\sigma}$. It follows that $Q$ is of one of the following four types:
\begin{enumerate}[label=(\roman*)]
\item $Q(b_0, \dots, b_n, \sigma(b_n),\sigma(b_{n-1}), \dots, \sigma(b_1), \sigma(b_0))$

\item $Q(b_0, \dots, b_n, \sigma(b_n),\sigma(b_{n-1}), \dots, \sigma(b_1))$

\item $Q(b_0, \dots, b_n, \sigma(b_{n-1}),\dots, \sigma(b_1), \sigma(b_0))$

\item $Q(b_0, \dots, b_n, \sigma(b_{n-1}), \dots, \sigma(b_1))$.
\end{enumerate}
Then $Q$ has exactly zero, one, one and two ideal vertices in $\partial \mathbb{H}^{\sigma}$ in cases (i)-(iv), respectively. A decoration of $Q$ is a decoration of its underlying ideal polygon for which the horocycle at a vertex $b$ is equal to that at $\sigma(b)$ under the canonical identification $\mathsf{Hor}_b \simeq \mathsf{Hor}_{\sigma(b)}$. The group $\mathsf{SL}_2(\mathbb{R})^{\sigma} \times \mathbb{R}$ acts on the set of real decorated ideal polygons.

With the above notation in place, we define a semi-simplicial set $\widetilde{\mathbb{T}}_{\bullet}^{\tau}(M)$ as follows. For each $n \geq 0$ let $\widetilde{\mathbb{T}}^{\tau}_n(M)$ be the set of equivalence classes of pairs $(Q,v)$ consisting of a real decorated ideal $(n+1)$-gon $Q$ together with a real continuous map $v: (Q,\mathfrak{j}_{\mathbb{H}},\sigma) \rightarrow (M,J,\tau)$ which is pseudoholomorphic on the two dimensional interior of $Q$. The equivalence relation is generated by the action of $\mathsf{SL}_2(\mathbb{R})^{\sigma} \times \mathbb{R}$ on real decorated ideal polygons. In particular, we have
\begin{equation}
\label{eq:pathDecomp}
\widetilde{\mathbb{T}}_0^{\tau}(M) \simeq M^{\tau} \sqcup C^0([-\infty,\infty],M)^{\tau}
\end{equation}
where $C^0([-\infty,\infty],M)^{\tau}$ denotes the set of real continuous maps $[-\infty, \infty] \rightarrow M$ with the compactified real line $\overline{\mathbb{R}}=[-\infty, \infty]$ given the $\mathbb{Z}_2$-action $x \mapsto -x$. Define face maps $\partial_i: \widetilde{\mathbb{T}}^{\tau}_n(M) \rightarrow \widetilde{\mathbb{T}}^{\tau}_{n-1}(M)$ by omitting the real ideal boundary point $\{b_i, \sigma(b_i)\}$. Then $\widetilde{\mathbb{T}}^{\tau}_{\bullet}(M)$ forms a semi-simplicial set. 

Given $(Q, v) \in \widetilde{\mathbb{T}}_n^{\tau}(M)$ write $Q=Q(b_0, \dots, b_n, \dots)$ as above and let $\widetilde{Q}=\widetilde{Q}(b_0, \dots, b_n)$ be the decorated ideal $(n+1)$-gon obtained from $Q$ by omitting the indicated vertices and their decorations. This defines a semi-simplicial morphism
\[
F_{\bullet}: \widetilde{\mathbb{T}}_{\bullet}^{\tau}(M) \rightarrow \widetilde{\mathbb{T}}_{\bullet}(M), \qquad (Q, v)  \mapsto (\widetilde{Q}, v_{\vert \widetilde{Q}}).
\]

We can now state the main result of this section.

\begin{Thm}
\label{thm:realPseudoPoly}
Let $(M, J, \tau)$ be an almost complex manifold with real structure. The morphism $F_{\bullet}: \widetilde{\mathbb{T}}_{\bullet}^{\tau}(M) \rightarrow \widetilde{\mathbb{T}}_{\bullet}(M)$ is a relative $2$-Segal semi-simplicial set.
\end{Thm}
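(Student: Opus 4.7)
The plan is to apply the symmetric polyhedral subdivision criterion of Proposition~\ref{prop:symmDecomp}: I will show that for every $n \geq 2$ and every symmetric polyhedral subdivision $\mathcal{P}$ of $P_{\mathbf{n}}$, the comparison map
\[
f^{F_\bullet}_{\mathcal{P}} : \widetilde{\mathbb{T}}^\tau_n(M) \longrightarrow (\Delta^{\mathcal{P}}, F_\bullet)_R
\]
is a bijection. Since we are working with semi-simplicial sets, the homotopy limit defining $(\Delta^{\mathcal{P}}, F_\bullet)_R$ reduces to an ordinary set-theoretic limit and weak equivalences reduce to bijections. In particular, the $1$-Segal property of $\widetilde{\mathbb{T}}^\tau_\bullet(M)$ will fall out from the special case where $\mathcal{P}$ has a single horizontal diagonal $\{i',i\}$, and the relative $2$-Segal squares from the case where $\mathcal{P}$ has a single mirror pair $\{i,j\}, \{i',j'\}$.

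The geometric content is the following dictionary. Via the quotient $P_{\mathbf{n}} \to P_{\mathbf{n}}/\sigma$, a symmetric subdivision $\mathcal{P}$ determines a polyhedral decomposition of the underlying real ideal $(n+1)$-gon $Q = Q(b_0, \dots, b_n, \dots)$ into two kinds of pieces: real ideal subpolygons, one for each region of $\mathcal{P}$ with $\sigma$-invariant vertex set (arising from horizontal diagonals), and mirror pairs of non-real ideal subpolygons (arising from mirror pairs of non-horizontal diagonals). Under the identifications $F_{\mathcal{P}}(\Delta^k \sqcup \Delta^k) = \widetilde{\mathbb{T}}_k(M)$ and $F_{\mathcal{P}}(\Delta^{\mathbf{m}}) = \widetilde{\mathbb{T}}^{\tau}_m(M)$, the set $(\Delta^{\mathcal{P}}, F_\bullet)_R$ parametrizes compatible collections consisting of a real pseudoholomorphic polygon on each real piece together with a pseudoholomorphic polygon on one chosen representative of each mirror pair (the polygon on the other representative being forced by composition with $\tau$), with the usual matching on shared geodesic edges and horocyclic decorations.

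I would construct an inverse to $f^{F_\bullet}_{\mathcal{P}}$ explicitly by gluing. Given a compatible collection as above, extend each chosen map on a non-real piece to its mirror piece by composition with $\tau$, and glue the resulting maps along shared geodesics to obtain a continuous map $v : Q \to M$ which is $\sigma$-equivariant by construction. That $v$ is pseudoholomorphic on the two-dimensional interior of $Q$ follows from the nonlinear removable singularity argument of \cite[Theorem 3.8.6]{dyckerhoff2012b} applied across each interior geodesic away from the real axis; the decorations automatically glue since the horocycle prescribed at an ideal vertex $b$ of any piece agrees with that at $\sigma(b)$ under the canonical identification $\mathsf{Hor}_b \simeq \mathsf{Hor}_{\sigma(b)}$. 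The cutting and gluing procedures are mutually inverse by the local nature of the conditions, so $f^{F_\bullet}_{\mathcal{P}}$ is a bijection for every symmetric $\mathcal{P}$, and Proposition~\ref{prop:symmDecomp} yields the theorem.

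The main obstacle is the gluing across a horizontal diagonal $\{i',i\}$, whose interior lies in $\mathbb{H}^\sigma$: here the two adjacent pieces are both real, and the boundary values of the maps land in the totally real submanifold $M^\tau \subset M$, so the interior removable singularity result is not directly applicable. The cleanest resolution is to work in a $\sigma$-invariant tubular neighborhood of the diagonal and pass to the $\tau$-equivariant double, reducing the problem to interior gluing as handled in \cite[\S 3.8]{dyckerhoff2012b}; $\tau$-equivariance of the glued map is then forced by the uniqueness statement in the removable singularity result. This is the only step at which the argument genuinely departs from the oriented case, and it is where one must verify that the relevant regularity results for pseudoholomorphic maps with totally real boundary conditions apply.
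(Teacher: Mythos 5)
Your core argument is the same as the paper's: cut a real decorated ideal polygon along the geodesics dictated by a symmetric subdivision, record a real polygon on each $\sigma$-invariant piece and an ordinary polygon on one member of each mirror pair, and reconstruct by gluing, with pseudoholomorphicity across the interior geodesics supplied by Morera's theorem / the removable singularity argument of \cite[Theorem 3.8.6]{dyckerhoff2012b}. The paper verifies only the minimal cases (the $1$-Segal maps $\Xi_n$ for the horizontal diagonal $\{i',i\}$ and the relative $2$-Segal maps $\Psi_n$ for the mirror pair $\{0,n\},\{0',n'\}$) and invokes Propositions \ref{prop:2SegFrom1Seg} and \ref{prop:equivRelSeg}, whereas you route everything through Proposition \ref{prop:symmDecomp}; this is a harmless repackaging, and your reduction of homotopy limits to ordinary limits of sets is fine.

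However, the ``main obstacle'' you identify in the last paragraph does not exist, and the doubling construction you propose to circumvent it is unnecessary. The horizontal diagonal $\{i',i\}$ corresponds to the geodesic $(b_i,\sigma(b_i))$ in $\mathbb{H}$. Since $\sigma$ interchanges its two endpoints, this geodesic is $\sigma$-invariant only as a set: it meets the fixed locus $\mathbb{H}^{\sigma}$ (the vertical diameter) in exactly one point and is otherwise disjoint from it. Consequently the shared datum along this edge is not a map into the totally real submanifold $M^{\tau}$; it is an element of $C^0([-\infty,\infty],M)^{\tau}$, i.e.\ a path $\gamma$ in $M$ satisfying $\gamma(-t)=\tau(\gamma(t))$, exactly as in the decomposition \eqref{eq:pathDecomp} and as the paper notes when it observes that the common image of the two pieces under $\partial_{\{i\}}$ lies in that component. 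The gluing across $(b_i,\sigma(b_i))$ is therefore an ordinary interior gluing in $\mathbb{H}$, handled by the same Morera argument as in the oriented case, and $\tau$-equivariance of the glued map is automatic because both pieces are real and the glued extension is unique. No boundary-value regularity for totally real boundary conditions is needed anywhere in the proof. (You appear to have been led astray by working in the quotient $P_{\mathbf{n}}/\sigma$; the actual domain $Q$ is the full $\sigma$-invariant ideal polygon, not its quotient orbifold.) Aside from this, the only bookkeeping you gloss over is the choice of the unique element of $\mathsf{SL}_2(\mathbb{R})^{\sigma}$ (resp.\ $\mathsf{SL}_2(\mathbb{R})$) aligning the pieces within their equivalence classes, which the paper spells out.
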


\begin{proof}
For each $n \geq 2$ and $0 < i < n$ we construct an inverse of the $1$-Segal map
\[
\Xi_n: \widetilde{\mathbb{T}}_n^{\tau}(M) \rightarrow \widetilde{\mathbb{T}}_{\{0, \dots, i\}}^{\tau}(M) \times_{\widetilde{\mathbb{T}}_{\{i\}}^{\tau}(M)} \widetilde{\mathbb{T}}_{\{i, \dots, n\}}^{\tau}(M).
\]
Let $(Q^{\prime},v^{\prime}) \in \widetilde{\mathbb{T}}^{\tau}_{\{0, \dots, i\}}(M)$ and $(Q^{\prime \prime},v^{\prime  \prime}) \in \widetilde{\mathbb{T}}^{\tau}_{\{i, \dots, n\}}(M)$ with equal image in $\widetilde{\mathbb{T}}_{\{i\}}^{\tau}(M)$. Since $0 < i < n$ these images lie in the component $C^0([-\infty, \infty],M)^{\tau}$ in the decomposition \eqref{eq:pathDecomp}. There exists a unique $g \in \mathsf{SL}_2(\mathbb{R})^{\sigma}$ with the property that $g \cdot (b^{\prime}_i, \sigma(b^{\prime}_i)) =(b^{\prime \prime}_i, \sigma(b^{\prime \prime}_i))$ and the restrictions
\[
g \cdot v^{\prime}, v^{\prime \prime}: [-\infty, \infty] \rightarrow M
\]
are equal. Applying Morera's theorem, we see that $g \cdot (Q^{\prime},v^{\prime})$ and $(Q^{\prime \prime},v^{\prime  \prime})$ can be glued in a unique way so as to obtain a continuous map
\[
g \cdot v^{\prime} \cup v^{\prime \prime}: g\cdot Q^{\prime} \cup_{(b^{\prime \prime}_i, \sigma(b^{\prime \prime}_i))} Q^{\prime \prime} \rightarrow M
\]
which is pseudoholomorphic on the two dimensional interior. Since both $Q^{\prime}$ and $Q^{\prime  \prime}$ (resp. $v^{\prime}$ and $v^{\prime  \prime}$) are real, so too is $g\cdot Q^{\prime} \cup_{(b^{\prime \prime}_i, \sigma(b^{\prime \prime}_i))} Q^{\prime \prime}$ (resp. $g\cdot v^{\prime} \cup v^{\prime \prime}$). See Figure \ref{fig:1SegRealPoly}. This defines the required inverse of $\Xi_n$, showing that $\widetilde{\mathbb{T}}^{\tau}_{\bullet}(M)$ is $1$-Segal.
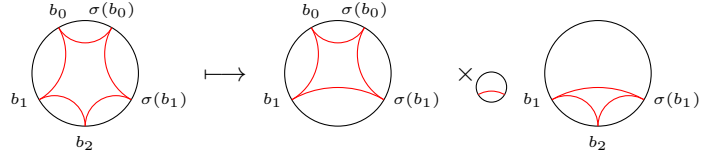
\begin{figure}
\begin{center}
\[
\begin{tikzpicture}[baseline=-0.65ex,scale=.7]
\draw[] (120:1) node[above]{\tiny $b_0$}; 
\draw[] (210:1) node[left]{\tiny $b_1$}; 
\draw[] (270:1) node[below]{\tiny $b_2$}; 
\draw[] (330:1) node[right]{\tiny $\sigma(b_1)$}; 
\draw[] (60:1) node[above]{\tiny $\sigma(b_0)$}; 
\draw (0,0) circle (1);
\clip (0,0) circle (1);
\hgline{120}{210}
\hgline{210}{270}
\hgline{270}{330}
\hgline{-30}{60}
\hgline{60}{120}
\end{tikzpicture}
\longmapsto
\begin{tikzpicture}[baseline=-0.65ex,scale=.7]
\draw[] (120:1) node[above]{\tiny $b_0$}; 
\draw[] (210:1) node[left]{\tiny $b_1$};
\draw[] (330:1) node[right]{\tiny $\sigma(b_1)$}; 
\draw[] (60:1) node[above]{\tiny $\sigma(b_0)$}; 
\draw (0,0) circle (1);
\clip (0,0) circle (1);
\hgline{120}{210}
\hgline{210}{330}
\hgline{-30}{60}
\hgline{60}{120}
\end{tikzpicture}
\times_{
\begin{tikzpicture}[scale=.2] 
\draw (0,0) circle (1);
\clip (0,0) circle (1);
\hgline{210}{330}
\end{tikzpicture}
}
\begin{tikzpicture}[baseline=-0.65ex,scale=.7]
\draw[] (210:1) node[left]{\tiny $b_1$}; 
\draw[] (270:1) node[below]{\tiny $b_2$}; 
\draw[] (330:1) node[right]{\tiny $\sigma(b_1)$}; 
\draw (0,0) circle (1);
\clip (0,0) circle (1);
\hgline{210}{270}
\hgline{270}{330}
\hgline{330}{-30}
\hgline{210}{330}
\end{tikzpicture}
\]
\caption{The map $\Xi_2: \widetilde{\mathbb{T}}_2^{\tau}(M) \rightarrow \widetilde{\mathbb{T}}^{\tau}_{\{0,1\}}(M) \times_{\widetilde{\mathbb{T}}^{\tau}_{\{1\}}(M)} \widetilde{\mathbb{T}}^{\tau}_{\{1,2\}}(M)$, omitting the data of the maps to $M$.}
\label{fig:1SegRealPoly}
\end{center}
\end{figure}

To verify the second of the relative $2$-Segal conditions, consider the map
\[
\Psi_n: \widetilde{\mathbb{T}}_n^{\tau}(M) \rightarrow \widetilde{\mathbb{T}}_n (M) \times_{\widetilde{\mathbb{T}}_{\{0,n\}}(M)} \widetilde{\mathbb{T}}_{\{0,n\}}^{\tau}(M), \qquad n \geq 1.
\]
Let $(P^{\prime}, u^{\prime}) \in \widetilde{\mathbb{T}}_n (M)$ and $(Q^{\prime \prime}, v^{\prime \prime}) \in \widetilde{\mathbb{T}}^{\tau}_{\{0, n\}}(M)$ with equal image in $\widetilde{\mathbb{T}}_{\{0, n\}}(M)$. Choose $g \in \mathsf{SL}_2(\mathbb{R})$ so that $g \cdot (b^{\prime}_0, b^{\prime}_n) = (b^{\prime \prime}_0, b^{\prime \prime}_n)$. As above, the reality condition on $(Q^{\prime \prime}, v^{\prime \prime})$ implies that, up to the action of $\mathsf{SL}_2(\mathbb{R})^{\sigma}$, the triple
\[
\{g \cdot (P^{\prime}, u^{\prime}), (Q^{\prime \prime}, v^{\prime \prime} ), g \cdot (\sigma(P^{\prime}), \tau \circ u^{\prime} \circ \sigma^{-1} )\}
\]
can be glued in a unique way so as to obtain an element of $\widetilde{\mathbb{T}}^{\tau}_n (M)$. See Figure \ref{fig:2SegRealPoly}. This defines the desired inverse of $\Psi_n$.
\begin{figure}
\begin{center}
\[
\begin{tikzpicture}[baseline=-0.65ex,scale=.7]
\draw[] (120:1) node[above]{\tiny $b_0$}; 
\draw[] (175:1) node[left]{\tiny $b_1$}; 
\draw[] (220:1) node[left]{\tiny $b_2$}; 
\draw[] (320:1) node[right]{\tiny $\sigma(b_2)$}; 
\draw[] (365:1) node[right]{\tiny $\sigma(b_1)$}; 
\draw[] (60:1) node[above]{\tiny $\sigma(b_0)$}; 
\draw (0,0) circle (1);
\clip (0,0) circle (1);
\hgline{120}{175}
\hgline{175}{220}
\hgline{220}{320}
\hgline{320}{365}
\hgline{365}{420}
\hgline{120}{60}
\end{tikzpicture}
\longmapsto
\begin{tikzpicture}[baseline=-0.65ex,scale=.7]
\draw[] (120:1) node[above]{\tiny $b_0$}; 
\draw[] (175:1) node[left]{\tiny $b_1$}; 
\draw[] (220:1) node[left]{\tiny $b_2$};
\draw (0,0) circle (1);
\clip (0,0) circle (1);
\hgline{120}{175}
\hgline{175}{220}
\hgline{220}{120}
\end{tikzpicture}
\times_{
\begin{tikzpicture}[scale=.2]
\draw (0,0) circle (1);
\clip (0,0) circle (1);
\hgline{220}{120}
\end{tikzpicture}
}
\begin{tikzpicture}[baseline=-0.65ex,scale=.7]
\draw[] (120:1) node[above]{\tiny $b_0$}; 
\draw[] (220:1) node[left]{\tiny $b_2$}; 
\draw[] (320:1) node[right]{\tiny $\sigma(b_2)$};
\draw[] (60:1) node[above]{\tiny $\sigma(b_0)$}; 
\draw (0,0) circle (1);
\clip (0,0) circle (1);
\hgline{220}{120}
\hgline{220}{320}
\hgline{120}{60}
\hgline{60}{-40}
\end{tikzpicture}
\vspace{4pt}
\]
\caption{The map $\Psi_2: \widetilde{\mathbb{T}}^{\tau}_2(M) \rightarrow \widetilde{\mathbb{T}}_2(M) \times_{\widetilde{\mathbb{T}}_{\{0,2\}}(M)} \widetilde{\mathbb{T}}^{\tau}_{\{0,2\}}(M)$, omitting the data of the maps to $M$.}
\label{fig:2SegRealPoly}
\end{center}
\end{figure}
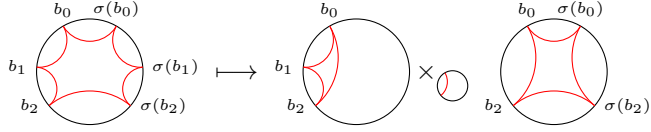
\end{proof}

\section{Relative higher Segal spaces from categories with dualities}
\label{sec:relSegDual}

\subsection{Categories with duality}
\label{sec:simpSub}

We recall some basics about categories with duality. For further details the reader is referred to \cite{schlichting2010}.

\begin{Def}
A category with (strong) duality is a triple $(\mathcal{C}, P, \Theta)$ consisting of a  category $\mathcal{C}$, a functor $P: \mathcal{C}^{\mathsf{op}} \rightarrow \mathcal{C}$ and a natural isomorphism $\Theta: \mathbf{1}_{\mathcal{C}} \Rightarrow P \circ P^{\mathsf{op}}$ such that $P(\Theta_U) \circ \Theta_{P(U)} = \mathbf{1}_{P(U)}$ for all objects $U \in \mathcal{C}$.
\end{Def}

When there is no risk of confusion we will omit $(P, \Theta)$ from the notation and simply refer to $\mathcal{C}$ as a category with duality.

A form functor $(T,\varphi): (\mathcal{C}, P, \Theta) \rightarrow (\mathcal{D}, Q, \Xi)$ between categories with duality consists of a functor $T: \mathcal{C} \rightarrow \mathcal{D}$ and a natural transformation $\varphi: T \circ P \Rightarrow Q \circ T^{\mathsf{op}}$ for which the diagram
\[
\begin{tikzpicture}[baseline= (a).base]
\node[scale=0.85] (a) at (0,0){
\begin{tikzcd}[column sep=small]
T(U) \arrow{r}[above]{\Xi_{T(U)}} \arrow{d}[left]{T(\Theta_U)} & [2.5em] Q^2T(U) \arrow{d}[right]{Q(\varphi_U)}  \\
[1em] TP^2(U) \arrow{r}[below]{\varphi_{P(U)}} & QTP(U)
\end{tikzcd}
};
\end{tikzpicture}
\]
commutes for all objects $U \in \mathcal{C}$. In particular, let $T: \mathcal{C} \rightarrow \mathcal{D}$ be a duality preserving functor, that is, a functor for which $T \circ P = Q \circ T^{\mathsf{op}}$ and $T \Theta = \Xi T$, where $T \Theta$ (resp. $\Xi T$) is the left (resp. right) whiskering of $T$ with $\Theta$ (resp. $\Xi$). Then $(T, \mathbf{1}_{T})$ is a form functor. Write $\mathsf{CatD}$ for the category of small categories with duality with form functors as morphisms.

A (nonsingular) symmetric form in $\mathcal{C}$ is a pair $(N, \psi_N)$, often just denoted by $N$, consisting of an object $N \in \mathcal{C}$ and an isomorphism $\psi_N: N \rightarrow P(N)$ which satisfies $P(\psi_N) \circ \Theta_N = \psi_N$. An isometry of symmetric forms $\phi: (M, \psi_M) \rightarrow (N, \psi_N)$ is an isomorphism $\phi: M \rightarrow N$ which satisfies $P(\phi) \circ  \psi_N \circ \phi = \psi_M$. The groupoid of symmetric forms and their isometries is denoted by $\mathcal{C}_h$ and is called the Hermitian groupoid of $\mathcal{C}$.

Recall that $\mathbf{n}$ denotes the totally ordered set $\{ 0 < \cdots < n < n^{\prime} <  \cdots < 0^{\prime}\}$. The morphisms $
[n] \rightarrow \mathbf{n}$, $i \mapsto i$, define the subdivision functor $\mathsf{sd}: \Delta \rightarrow \Delta$. The edgewise subdivision of a simplicial object $X_{\bullet}$ of a category $\mathcal{C}$ is then defined to be the composition
\[
X_{\bullet}^e: \Delta^{\mathsf{op}} \xrightarrow[]{\mathsf{sd}^{\mathsf{op}}} \Delta^{\mathsf{op}} \xrightarrow[]{X_{\bullet}} \mathcal{C}.
\]
Explicitly, $X_n^e = X_{\mathbf{n}}= X_{2n+1}$ with face maps
\[
\partial_i^e: X^e_n = X_{2n+1} \xrightarrow[]{\partial_i \circ \partial_{2n+1-i}} X_{2n-1} = X^e_{n-1},
\]
the degeneracy maps admitting a similar description. The morphisms $[n] \rightarrow \mathbf{n}$ define a simplicial morphism $X^e_{\bullet} \rightarrow X_{\bullet}$.

In the sections below we will repeatedly apply the following result (\textit{cf.} \cite[\S 1.5]{hornbostel2004}).

\begin{Lem}
\label{lem:subdivDuality}
Let $X_{\bullet}$ be a simplicial object of $\mathsf{Cat}$. For each $n\geq 0$ let $(P_n, \Theta_n)$, $n \geq 0$ be duality a structures on $X_n$. Suppose that
\[
P_{n-1} \circ \partial^{\mathsf{op}}_i = \partial_{n-i} \circ P_n, \qquad P_{n+1} \circ s^{\mathsf{op}}_i = s_{n-i} \circ P_n
\]
and
\[
\partial_i  \Theta_n = \Theta_{n-1}  \partial_i, \qquad s_i   \Theta_n = \Theta_{n+1}  s_i
\]
for all $n \geq 1$ and $0 \leq i \leq n$. Then $(X_{\bullet}^e, P_{\bullet}^e, \Theta^e_{\bullet})$ is a simplicial object of $\mathsf{CatD}$.
\end{Lem}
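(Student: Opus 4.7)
The plan is to equip each category $X_n^e = X_{2n+1}$ with the duality $(P_n^e, \Theta_n^e) := (P_{2n+1}, \Theta_{2n+1})$ inherited from level $2n+1$, and then to verify that every face and degeneracy of $X_\bullet^e$ is a strict form functor. Since $\mathsf{sd} : \Delta \to \Delta$ is a functor, the simplicial identities for $X_\bullet^e$ are automatic; the only thing to show is that the structural maps
\[
\partial_i^e = \partial_i \circ \partial_{2n+1-i}, \qquad s_i^e = s_i \circ s_{2n+1-i} \qquad (0 \le i \le n)
\]
intertwine the dualities and commute with the coherence isomorphisms.

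For the duality-preservation of $\partial_i^e$ I would chain two applications of the hypothesized relation $P_{m-1} \circ \partial_j^{\mathsf{op}} = \partial_{m-j} \circ P_m$. Applied at level $m = 2n+1$ with $j = i$ it yields $\partial_{2n+1-i} \circ P_{2n+1} = P_{2n} \circ \partial_i^{\mathsf{op}}$; composing on the left with $\partial_i$ and then applying the hypothesis at level $m = 2n$ with $j = 2n-i$ gives
\[
\partial_i \circ \partial_{2n+1-i} \circ P_{2n+1} = P_{2n-1} \circ (\partial_{2n-i} \circ \partial_i)^{\mathsf{op}}.
\]
The combinatorial identity $\partial_{2n-i} \circ \partial_i = \partial_i \circ \partial_{2n+1-i}$, which is the simplicial relation $\partial_a \partial_b = \partial_{b-1} \partial_a$ with $a = i < b = 2n+1-i$ (valid since $i \le n$), then rewrites the right-hand side as $P_{n-1}^e \circ (\partial_i^e)^{\mathsf{op}}$, as required. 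The degeneracy case is analogous: the same two-step reindexing, now using $s_a s_b = s_{b+1} s_a$ applied to $a = i \le b = 2n+1-i$, reduces the claim to $s_{2n+2-i} \circ s_i = s_i \circ s_{2n+1-i}$.

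For the compatibility of $\partial_i^e$ with $\Theta$ I would simply whisker the hypothesized identity $\partial_j \Theta_m = \Theta_{m-1} \partial_j$ twice. Applied first at level $m = 2n+1$, $j = 2n+1-i$, and then (after left-whiskering by $\partial_i$) at level $m = 2n$, $j = i$, associativity of horizontal composition produces the desired $\partial_i^e \Theta_n^e = \Theta_{n-1}^e \partial_i^e$. The degeneracy variant is formally identical, using $s_j \Theta_m = \Theta_{m+1} s_j$.

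The argument is fundamentally bookkeeping; the only subtle point is matching the reindexing $j \mapsto m - j$ coming from the duality swap against the reindexing $b \mapsto b \pm 1$ coming from the simplicial identities. Because $i \le n$ one always has $i < 2n+1-i$, so the required simplicial identities are always available and the matching works cleanly. I do not anticipate any serious obstacle beyond this careful indexing.
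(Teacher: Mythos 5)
Your proof is correct and follows essentially the same route as the paper's: apply the hypothesized intertwining relation for $P$ twice at levels $2n+1$ and $2n$ (resp.\ $2n+1$ and $2n+2$), then use the simplicial identity $\partial_{2n-i}\circ\partial_i=\partial_i\circ\partial_{2n+1-i}$ (resp.\ its degeneracy analogue) to recognize the result as $P^e_{n-1}\circ(\partial_i^e)^{\mathsf{op}}$, with the $\Theta$-compatibility obtained by whiskering. The index bookkeeping in your two-step reduction checks out, so nothing further is needed.
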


\begin{proof}
It is clear that $(X_n^e, P_n^e, \Theta^e_n) = (X_{2n+1}, P_{2n+1}, \Theta_{2n+1})$ is a category with duality. The equality $\partial_i \Theta_n = \Theta_{n-1}  \partial_i$ together with the calculation
\begin{eqnarray*}
\partial^e_i \circ P^e_n &=& \partial_i \circ \partial_{2n+1-i} \circ P_{2n+1} \\
&=& \partial_i \circ P_{2n} \circ \partial^{\mathsf{op}}_i \\
&=& P_{2n-1} \circ \partial^{\mathsf{op}}_{2n-i} \circ \partial^{\mathsf{op}}_i \\
&=& P_{2n-1} \circ \partial^{\mathsf{op}}_{i} \circ \partial^{\mathsf{op}}_{2n-1-i} = P_{n-1}^e \circ (\partial_i^e)^{\mathsf{op}}
\end{eqnarray*}
shows that the face map $(\partial_i^e, \mathbf{1}_{\partial_i^e}) : X_n^e \rightarrow X_{n-1}^e$ is a form functor. A similar calculation shows that $(s_i^e, \mathbf{1}_{s_i^e}) : X_n^e \rightarrow X_{n+1}^e$ is a form functor.
\end{proof}

\subsection{Unoriented categorified nerves}

\label{sec:unoriNerve}

As a warm-up for Section \ref{sec:unoriLoopStack} we adapt the categorified nerve construction to the relative setting.

Let $\mathcal{C}$ be a small category. Its categorified nerve $\mathcal{N}_{\bullet}(\mathcal{C})$ (see Section \ref{sec:1SegSpace}) is a $1$-Segal simplicial groupoid. Suppose that $(P, \Theta)$ is a duality structure on $\mathcal{C}$. Then the groupoids $\mathcal{N}_n(\mathcal{C})$, $n \geq 0$, inherit duality structures which are compatible in the sense of Lemma \ref{lem:subdivDuality}. The edgewise subdivision $\mathcal{N}^e_{\bullet} (\mathcal{C})$ is therefore a simplicial groupoid with duality.

\begin{Def}
The Hermitian groupoid $\mathcal{N}^e_{\bullet} (\mathcal{C})_h$, denoted by $\mathcal{U}_{\bullet}(\mathcal{C})$, is called the categorified unoriented nerve of $(\mathcal{C}, P, \Theta)$.
\end{Def}

Slightly abusively, write $(x_{\bullet}, \psi_{\bullet}) \in \mathcal{U}_n(\mathcal{C})$ for the object consisting of the diagram
\[
x_0 \rightarrow \cdots \rightarrow x_n \rightarrow x_{n^{\prime}} \rightarrow \cdots \rightarrow x_{0^{\prime}}
\]
in $\mathcal{C}$ together with isomorphisms $\psi_i : x_i \rightarrow P(x_{i^{\prime}})$, $0 \leq i \leq 0^{\prime}$, which satisfy $P(\psi_i) \circ \Theta_{x_{i^{\prime}}} = \psi_{i^{\prime}}$ and make the obvious squares commute. Here we use the convention that $i^{\prime \prime} = i$. The forgetful morphism $F_{\bullet}: \mathcal{U}_{\bullet}(\mathcal{C}) \rightarrow \mathcal{N}_{\bullet}(\mathcal{C})$ is given by
\[
\mathcal{U}_n(\mathcal{C}) \owns
(x_{\bullet}, \psi_{\bullet}) \mapsto (x_0 \rightarrow \cdots \rightarrow x_n) \in \mathcal{N}_n(\mathcal{C}).
\]

\begin{Prop}
\label{prop:catUnoriNerve}
For any small category with duality $(\mathcal{C}, P, \Theta)$, the morphism $F_{\bullet} : \mathcal{U}_{\bullet}(\mathcal{C}) \rightarrow \mathcal{N}_{\bullet}(\mathcal{C})$ is unital right relative $1$-Segal.
\end{Prop}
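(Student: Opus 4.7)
The plan is to give an equivalent model for $\mathcal{U}_\bullet(\mathcal{C})$ which makes it visibly the categorified nerve of a discrete right fibration over $\mathcal{C}$, and then apply (the categorified analogue of) Proposition \ref{prop:rel1Segal} and Proposition \ref{prop:relNerve}. To this end, introduce the category $\mathcal{C}^+$ whose objects are pairs $(x, \phi)$ with $\phi: x \to P(x)$ satisfying $P(\phi) \circ \Theta_x = \phi$, and whose morphisms $(x,\phi) \to (y,\psi)$ are morphisms $f: x \to y$ in $\mathcal{C}$ with $\phi = P(f) \circ \psi \circ f$. A direct calculation using the naturality of $\Theta$ and the duality axiom shows that if $\psi$ is symmetric, then so is $P(f) \circ \psi \circ f$; consequently the forgetful functor $\pi: \mathcal{C}^+ \to \mathcal{C}$ is a discrete right fibration, with unique lift $f: (x, P(f) \circ \psi \circ f) \to (y, \psi)$ of any $f: x \to y$ and $(y,\psi) \in \mathcal{C}^+$.

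Next, I would exhibit a natural equivalence $\mathcal{U}_\bullet(\mathcal{C}) \simeq \mathcal{N}_\bullet(\mathcal{C}^+)$ of simplicial groupoids, intertwining $F_\bullet$ with $\mathcal{N}_\bullet(\pi)$. On $n$-simplices, a Hermitian object $(x_0 \to \cdots \to x_n \to x_{n^\prime} \to \cdots \to x_{0^\prime}, \psi_\bullet) \in \mathcal{N}_{2n+1}(\mathcal{C})_h$ is sent to the chain $(x_0, \phi_0) \to \cdots \to (x_n, \phi_n)$ in $\mathcal{C}^+$, where $\phi_n: x_n \to P(x_n)$ is the composite $x_n \to x_{n^\prime} \xrightarrow{\psi_{n^\prime}} P(x_n)$ and the earlier $\phi_i$ are determined by transport along the chain. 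Symmetry of $\phi_n$ reduces to the compatibility $\psi_{n^\prime} = P(\psi_n) \circ \Theta_{x_{n^\prime}}$ together with the duality axiom $P(\Theta_U) \circ \Theta_{P(U)} = \mathbf{1}_{P(U)}$. The inverse uses the discrete right fibration property of $\pi$ to uniquely reconstruct the full $2n+1$-chain and the family of $\psi_i$ from the $\mathcal{C}^+$-chain. The structure maps match by construction: the face $\partial_i^e$ for $i<n$ deletes $(x_i, \phi_i)$ from the $\mathcal{C}^+$-chain, $\partial_n^e$ deletes the endpoint (transporting $\phi_n$ to $\phi_{n-1}$), and $s_i^e$ inserts an identity.

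Having this identification in hand, the categorified nerve $\mathcal{N}_\bullet(\mathcal{C}^+)$ is $1$-Segal, so by Proposition \ref{prop:rel1Segal} it remains to verify that the lowest right relative $1$-Segal map
\[
(F_1, \partial_0^e): \mathcal{U}_1(\mathcal{C}) \longrightarrow \mathcal{N}_1(\mathcal{C}) \times^R_{\mathcal{N}_0(\mathcal{C})} \mathcal{U}_0(\mathcal{C})
\]
is a weak equivalence. Under the model, the source is the groupoid of pairs $(x_0 \to x_1, \phi_1)$ and the target is the groupoid of triples consisting of a morphism $x_0 \to x_1$, an object $(z,\phi) \in \mathcal{C}^+$, and an isomorphism $x_1 \simeq z$; the discrete right fibration property of $\pi$ makes this an equivalence of groupoids. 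The unital condition, that $(F_{\{i\}} \circ \partial_{\{i\}}) \times s_i^e : \mathcal{U}_{n-1} \to \mathcal{N}_{\{i\}} \times^R_{\mathcal{N}_{\{i,i+1\}}} \mathcal{U}_n$ is a weak equivalence, is then transparent: its inverse collapses the identity morphism inserted by $s_i^e$ (the face map $\mathcal{U}_n \to \mathcal{N}_{\{i,i+1\}}$ and the degeneracy $\mathcal{N}_{\{i\}} \to \mathcal{N}_{\{i,i+1\}}$ being the usual face/degeneracy maps of the nerve, the pullback is precisely the groupoid of $\mathcal{U}_n$-objects with a chosen identity at position $i$). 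The main obstacle is setting up the equivalence $\mathcal{U}_\bullet(\mathcal{C}) \simeq \mathcal{N}_\bullet(\mathcal{C}^+)$ and its compatibility with face and degeneracy maps, which requires careful bookkeeping of the duality axioms; everything else is formal.
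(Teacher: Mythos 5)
Your proposal is correct, but it reaches the conclusion by a genuinely different route from the paper. The paper argues directly: for each $0<i<n$ it unwinds the objects and morphisms of the $2$-pullback $\mathcal{N}_{\{0,\dots,i\}}(\mathcal{C})\times^{(2)}_{\mathcal{N}_{\{i\}}(\mathcal{C})}\mathcal{U}_{\{i,\dots,n\}}(\mathcal{C})$, checks full faithfulness using the observation that an isometry is determined by its first half (the map $x_j\to y_j$ forces $x_{j^{\prime}}\to y_{j^{\prime}}$), and proves essential surjectivity by explicitly splicing the given length-$i$ chain onto the Hermitian chain and filling in the second half with duals. You instead package $\mathcal{U}_{\bullet}(\mathcal{C})$ as the categorified nerve of the discrete right fibration $\mathcal{C}^+\to\mathcal{C}$ of (possibly singular) symmetric morphisms, and then let Proposition \ref{prop:rel1Segal} collapse the infinite family of conditions to the single lowest one. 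Two points in your plan are exactly where a careless version would break, and you handle both correctly. First, you do \emph{not} require $\phi$ to be invertible in $\mathcal{C}^+$: the induced form $\phi_n=\psi_{n^{\prime}}\circ f_n$ is generally singular because $f_n$ need not be an isomorphism, so the Hermitian groupoid $\mathcal{C}_h$ would be the wrong total category. Second, full faithfulness of $\mathcal{U}_n(\mathcal{C})\to\mathcal{N}_n(\mathcal{C}^+)$ rests on recovering $p_{i^{\prime}}$ uniquely from $p_i$ via $P(p_{i^{\prime}})=\mu_i\circ p_i\circ\psi_i^{-1}$, which uses that the $\psi_i$ are isomorphisms and that $P$, being an equivalence, is fully faithful and reflects isomorphisms — the same mechanism as the paper's injectivity step. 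What your route buys is a conceptual explanation (under the Grothendieck construction $\mathcal{C}^+$ is the presheaf of symmetric forms, so the proposition becomes an instance of Proposition \ref{prop:relNerve}) together with a uniform treatment of unitality; the cost is that the combinatorial bookkeeping migrates into verifying that $\mathcal{U}_{\bullet}(\mathcal{C})\simeq\mathcal{N}_{\bullet}(\mathcal{C}^+)$ is compatible with all face and degeneracy maps, which is comparable in length to the paper's direct check.
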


\begin{proof}
Fix $0 < i < n$. An object of the $2$-pullback $\mathcal{N}_{\{0, \dots, i\}}(\mathcal{C}) \times^{(2)}_{\mathcal{N}_{\{i\}}(\mathcal{C})} \mathcal{U}_{\{i, \dots, n\}}(\mathcal{C})$ is a triple
\begin{equation}
\label{eq:nerveFibEle}
(x_0 \rightarrow \cdots \rightarrow x_i, (x^{\prime}_i \rightarrow \cdots \rightarrow x^{\prime}_n \rightarrow x^{\prime}_{n^{\prime}} \rightarrow \cdots \rightarrow x^{\prime}_{i^{\prime}}, \psi_{\bullet}); \alpha)
\end{equation}
with $\alpha: x_i \rightarrow x_i^{\prime}$ an isomorphism. A morphism from the object \eqref{eq:nerveFibEle} to a second object, say
\[
(y_0 \rightarrow \cdots \rightarrow y_i, (y^{\prime}_i \rightarrow \cdots \rightarrow y^{\prime}_n \rightarrow y^{\prime}_{n^{\prime}} \rightarrow \cdots \rightarrow y^{\prime}_{i^{\prime}}, \mu_{\bullet}); \beta),
\]
is a tuple $((p_0, \dots, p_i),(q_i, \dots, q_n, q_{n^{\prime}}, \dots, q_{i^{\prime}}))$ where $p_j: x_j \rightarrow y_j$, $1 \leq j \leq i$, and $q_j: x^{\prime}_j \rightarrow y^{\prime}_j$, $i \leq j \leq i^{\prime}$, are isomorphisms which make the obvious nerve diagrams commute, the $q_j$ respect the symmetric forms $\psi_{\bullet}$ and $\mu_{\bullet}$ and the compatibility condition
\begin{equation}
\label{eq:morphismCompat}
q_i \circ \alpha = \beta \circ p_i
\end{equation}
holds.

We need to prove that the functor
\begin{eqnarray*}
\Psi_n: \mathcal{U}_n(\mathcal{C}) & \rightarrow & \mathcal{N}_{\{0, \dots, i\}}(\mathcal{C}) \times^{(2)}_{\mathcal{N}_{\{i\}}(\mathcal{C})} \mathcal{U}_{\{i, \dots, n\}}(\mathcal{C}) \\
(x_{\bullet}, \psi_{\bullet}) & \mapsto & (x_0 \rightarrow \cdots \rightarrow x_i, (x_i \rightarrow \cdots \rightarrow x_n \rightarrow x_{n^{\prime}} \rightarrow \cdots \rightarrow x_{i^{\prime}}, \psi_{\bullet}); \mathbf{1}_{x_i})
\end{eqnarray*}
is an equivalence. Consider a morphism $(p_{\bullet},q_{\bullet}) : \Psi_n (x_{\bullet}, \psi_{\bullet}) \rightarrow \Psi_n (y_{\bullet}, \mu_{\bullet})$. Since $\alpha$ and $\beta$ are the identity maps in this case, the compatibility condition \eqref{eq:morphismCompat} reduces to $q_i=p_i$. Any morphism $\Psi_n (x_{\bullet}, \psi_{\bullet}) \rightarrow \Psi_n (y_{\bullet}, \mu_{\bullet})$ is therefore the image of a unique isometry $(x_{\bullet}, \psi_{\bullet}) \rightarrow (y_{\bullet}, \mu_{\bullet})$, since for an isometry the map $x_j \rightarrow y_j$ uniquely determines the map $x_{j^{\prime}} \rightarrow y_{j^{\prime}}$. It follows that $\Psi_n$ is fully faithful. To show that $\Psi_n$ is essentially surjective suppose that we are given an object of the form \eqref{eq:nerveFibEle}. Consider the object of $\mathcal{U}_n(\mathcal{C})$ whose underlying diagram is the top row of
\[
\begin{tikzcd}[column sep = 0.45em]
x_0 \arrow{r} &  \cdots \arrow{r} & x_{i-1} \arrow{r} \arrow{d} & x^{\prime}_i \arrow{r} & \cdots \arrow{r} & x^{\prime}_n \arrow{r} & x^{\prime}_{n^{\prime}} \arrow{r} & \cdots \arrow{r} & x^{\prime}_{i^{\prime}} \arrow{r} \arrow{d}[left]{\psi_{i^{\prime}}} & P(x_{i-1}) \arrow{r} & \cdots \arrow{r} & P(x_0) \\
& & x_i \arrow{ru}[right]{\alpha} & & & & & & P(x^{\prime}_i) \arrow{ru} & & &
\end{tikzcd}
\]
and whose symmetric form is $(\{\Theta_{\bullet}\}_{[0,i]}, \{\psi_{\bullet}\}_{[i, i^{\prime}]}, \{\mathbf{1}_{\bullet}\}_{[i^{\prime},0^{\prime}]})$, the subscripts indicating the index intervals to which each map applies. In the above diagram the triangles define the corresponding horizontal morphisms and the unlabelled maps are the canonical ones. The image of this object under $\Psi_n$ is
\[
(x_0 \rightarrow \cdots \rightarrow x_{i-1} \rightarrow x^{\prime}_i, (x_i^{\prime} \rightarrow \cdots \rightarrow x^{\prime}_n \rightarrow x^{\prime}_{n^{\prime}} \rightarrow \cdots \rightarrow x^{\prime}_{i^{\prime}}, \psi_{\bullet}); \mathbf{1}_{x^{\prime}_i})
\]
which is isomorphic via $((\mathbf{1}_{x_0}, \dots, \mathbf{1}_{x_{i-1}}, \alpha^{-1}),(\mathbf{1}_{x^{\prime}_i}, \dots, \mathbf{1}_{x^{\prime}_{i^{\prime}}}))$ to the object \eqref{eq:nerveFibEle}.
\end{proof}

At the level of simplicial sets, the constructions of this section define a morphism $N^e_{\bullet}(\mathcal{C})_h \rightarrow N_{\bullet}(\mathcal{C})$. However, this morphism is neither left nor right relative $1$-Segal.

\subsection{Unoriented categorified twisted cyclic nerves}
\label{sec:unoriLoopStack}

Let $X$ be a topological space with a self-homeomorphism $T: X \rightarrow X$. The $T$-twisted loop space of $X$ is then defined to be
\[
L^T X = \{ \gamma \in C^0(\mathbb{R},X) \mid \gamma(t+1)=T(\gamma(t)) \}.
\]
The case of the identity map $T = \mathbf{1}_X$ recovers the ordinary loop space $LX$. Suppose that we are also given a continuous involution $p: X \rightarrow X$ which satisfies
\[
p=T \circ p \circ T.
\]
This, together with the orientation reversing involution $\sigma : \mathbb{R} \rightarrow \mathbb{R}$, $t \mapsto 1-t$, induces an involution
\[
p_L: L^{T}X \rightarrow L^{T}X, \qquad \gamma \mapsto p \circ \gamma \circ \sigma^{-1},
\]
the fixed point set of which is naturally interpreted as the unoriented loop space of the stack $[X \slash \langle T \rangle]$. The goal of this section is to construct a relative $2$-Segal simplicial space which is a categorical analogue of the unoriented loop space.

Let $\mathcal{C}$ be a small category with an endofunctor $T: \mathcal{C} \rightarrow \mathcal{C}$. The categorified $T$-twisted cyclic nerve of $\mathcal{C}$ (see \cite[\S 3.3]{dyckerhoff2012b}, \cite{drinfeld2004}) is the simplicial groupoid $\mathcal{N}C_{\bullet}^{T} (\mathcal{C})$ which assigns to $[n] \in \Delta$ the groupoid of all diagrams in $\mathcal{C}$ of the form
\begin{equation}
\label{eq:nerveObject}
x_{\bullet} = \{x_0 \xrightarrow[]{f_0} x_1 \xrightarrow[]{f_1} \cdots \xrightarrow[]{f_{n-2}} x_{n-1} \xrightarrow[]{f_{n-1}} x_n \xrightarrow[]{f_n} T(x_0) \}.
\end{equation}
A morphism $x_{\bullet} \rightarrow y_{\bullet}$ in $\mathcal{N}C_{\bullet}^{T} (\mathcal{C})$ is a collection of isomorphisms $x_i \rightarrow y_i$, $0 \leq i \leq n$, which make the obvious diagrams commute. For $i =1, \dots, n$ the face map $\partial_i : \mathcal{N}C_n^{T}(\mathcal{C}) \rightarrow \mathcal{N}C_{n-1}^{T}(\mathcal{C})$ omits $x_i$ and composes $f_i$ and $f_{i-1}$. The face map $\partial_0$ sends \eqref{eq:nerveObject} to
\[
x_1 \xrightarrow[]{f_1} \cdots \xrightarrow[]{f_{n-2}} x_{n-1} \xrightarrow[]{f_{n-1}} x_n \xrightarrow[]{T(f_0) \circ f_n} T(x_1).
\]
The degeneracy maps insert identity morphisms at appropriate spots.

It is proved in \cite[Theorem 3.2.3]{dyckerhoff2012b} that the (non-categorified) $T$-twisted cyclic nerve $NC_{\bullet}^{T} (\mathcal{C})$ is a unital $2$-Segal simplicial set. The analogous result holds also in the categorified setting.

\begin{Thm}
For any small category $\mathcal{C}$ and endofunctor $T: \mathcal{C} \rightarrow \mathcal{C}$, the simplicial groupoid $\mathcal{N}C_{\bullet}^T(\mathcal{C})$ is unital $2$-Segal.
\end{Thm}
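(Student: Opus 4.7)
The plan is to verify the 2-Segal and unital conditions by direct equivalence of groupoids, categorifying the argument of \cite[Theorem 3.2.3]{dyckerhoff2012b} for the ordinary twisted cyclic nerve. By the characterization \cite[Proposition 2.3.2(iii)]{dyckerhoff2012b}, it suffices to check that for each $n \geq 3$ and each $0 \leq i < j \leq n$ with $i=0$ or $j=n$, the comparison map
\[
\mathcal{N}C_n^T(\mathcal{C}) \rightarrow \mathcal{N}C_{\{i,\ldots,j\}}^T(\mathcal{C}) \times^{(2)}_{\mathcal{N}C_{\{i,j\}}^T(\mathcal{C})} \mathcal{N}C_{\{0,\ldots,i,j,\ldots,n\}}^T(\mathcal{C})
\]
is an equivalence of groupoids, where $- \times^{(2)} -$ denotes the $2$-pullback, which computes the homotopy fibre product in the model structure on simplicial groupoids.

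First I would unfold the restriction maps. Given $x_\bullet \in \mathcal{N}C^T_n(\mathcal{C})$ with structure morphisms $f_0,\ldots,f_n$, the restriction to a subset $S = \{i_0 < \cdots < i_k\} \subset [n]$ is the $k$-chain $x_{i_0} \to \cdots \to x_{i_k} \to T(x_{i_0})$ whose interior arrows are the obvious composites and whose closing morphism is
\[
T(f_{i_0-1} \circ \cdots \circ f_0) \circ f_n \circ f_{n-1} \circ \cdots \circ f_{i_k}.
\]
In particular when $i_0 = 0$ no twist contribution to the closing morphism arises.

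For the case $i = 0$, all three subsets $\{0,\ldots,j\}$, $\{0,j,\ldots,n\}$, $\{0,j\}$ start at $0$, so the comparison map reduces to a clean gluing problem: an object of the $2$-pullback is a pair of chains together with isomorphisms $\alpha: x_0 \cong y_0$ and $\beta: x_j \cong y_j$ intertwining the common $\{0,j\}$-restrictions. A quasi-inverse transports the second chain along $\alpha$ and $\beta$ and concatenates to produce the full $n$-simplex; the matching over $\{0,j\}$ is precisely what is needed for the closing morphism to match up. For the case $j = n$ the same strategy works, but one must use $T(\alpha)$ to identify the twist-shifted closing morphism of the $\{i,\ldots,n\}$-restriction with the closing morphism of the $\{0,\ldots,i,n\}$-restriction. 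The unital condition is verified analogously: the matching condition in the $2$-pullback $\mathcal{N}C^T_{\{i\}}(\mathcal{C}) \times^{(2)}_{\mathcal{N}C^T_{\{i,i+1\}}(\mathcal{C})} \mathcal{N}C^T_n(\mathcal{C})$ forces the $(i,i+1)$-edge of the $n$-simplex to be a canonical isomorphism, so contracting it yields an inverse $(n-1)$-simplex.

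The main obstacle is bookkeeping the twist: one must track how $T$ acts on the composed morphisms in each restriction and verify that the gluings respect all identifications functorially in morphisms of the groupoids. The $j = n$ case with $i > 0$ is more delicate than $i = 0$ because $T$ appears explicitly in the closing morphisms of both $\{i,\ldots,n\}$ and $\{i,n\}$; here one uses functoriality of $T$ applied to the (invertible) gluing isomorphism $\alpha$ to undo the twist-shift and recover the original cyclic data.
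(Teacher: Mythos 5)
Your proposal is correct and follows exactly the strategy the paper intends: the paper omits this proof and defers to the argument for Theorem \ref{thm:twNerveRel2Segal}, which likewise proceeds by unwinding the restriction maps of the twisted cyclic nerve (with the same formula for the twist-shifted closing morphism) and constructing explicit quasi-inverses for the comparison functors into the $2$-pullbacks, using the gluing isomorphisms and functoriality of $T$ to undo the twist. Your handling of the $i=0$ versus $j=n$ cases and of the unital condition matches that template.
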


\begin{proof}
Omitted. A similar result will be proved in Theorem \ref{thm:twNerveRel2Segal} below.
\end{proof}

In addition to the pair $(\mathcal{C},T)$, suppose now that we are given a duality structure $(P, \Theta)$ on $\mathcal{C}$ and a natural transformation
\[
\lambda: P \Rightarrow T \circ P \circ T^{\mathsf{op}}  
\]
which satisfies the compatibility condition
\begin{equation}
\label{eq:loopCompat}
TP(\lambda_x) \circ \lambda_{PT(x)} \circ \Theta_{T(x)} = T(\Theta_x), \qquad x \in \mathcal{C}.
\end{equation}
For example, the pair $(T, \lambda) = (\mathbf{1}_{\mathcal{C}}, \mathbf{1}_P)$ satisfies this condition.

For each $n \geq 0$ define a functor $P_n : \mathcal{N}C^{T}_n(\mathcal{C})^{\mathsf{op}} \rightarrow \mathcal{N}C^{T}_n(\mathcal{C})$ by sending the diagram \eqref{eq:nerveObject} to
\[
P(x_n) \xrightarrow[]{P(f_{n-1})} P(x_{n-1}) \xrightarrow[]{P(f_{n-2})} \cdots \xrightarrow[]{P(f_1)} P(x_1) \xrightarrow[]{P(f_0)} P(x_0) \xrightarrow[]{P_*(f_n)} T P(x_n)
\]
where $P_*(f_n) : P(x_0) \xrightarrow[]{\lambda_{x_0}} TPT(x_0) \xrightarrow[]{TP(f_n)} T P(x_n)$. The action of $P_n$ on morphisms is the obvious one.

\begin{Lem}
\label{lem:twistNerveDual}
The data $\Theta_{n,x_{\bullet}} = (\Theta_{x_0}, \dots, \Theta_{x_n}, T(\Theta_{x_0}))$, $x_{\bullet} \in \mathcal{N}C^{T}_n(\mathcal{C})$, define a natural isomorphism $\Theta_n: \mathbf{1}_{\mathcal{N}C^{T}_n(\mathcal{C})} \Rightarrow P_n \circ P^{\mathsf{op}}_n$, giving a triple $(\mathcal{N}C^{T}_{\bullet}(\mathcal{C}), P_{\bullet}, \Theta_{\bullet})$ which satisfies the hypotheses of Lemma \ref{lem:subdivDuality}.
\end{Lem}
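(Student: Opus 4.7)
The plan is to verify, in order: (a) that for each object $x_\bullet \in \mathcal{N}C^{T}_n(\mathcal{C})$ the tuple $\Theta_{n,x_\bullet}$ is indeed a morphism $x_\bullet \to P_n P_n^{\mathsf{op}}(x_\bullet)$ in $\mathcal{N}C^{T}_n(\mathcal{C})$; (b) that $\Theta_n$ is natural in $x_\bullet$ and that $P_n(\Theta_{n,x_\bullet})\circ \Theta_{n,P_n^{\mathsf{op}}(x_\bullet)}= \mathbf{1}$, so that $(P_n,\Theta_n)$ is a duality on $\mathcal{N}C^{T}_n(\mathcal{C})$; and (c) the four simplicial compatibilities required by Lemma \ref{lem:subdivDuality}. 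Throughout I would unwind the definition $P_n P_n^{\mathsf{op}}(x_\bullet)$ once and for all: its objects are $P^2(x_0),\dots,P^2(x_n),TP^2(x_0)$ with internal morphisms $P^2(f_i)$, and with last morphism $P_*(P_*(f_n))=TP(\lambda_{x_0})\circ TPTP(f_n)\circ \lambda_{P(x_n)}$.

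For (a) the interior squares commute by naturality of $\Theta$ applied to each $f_i$, $0\le i\le n-1$. The real content is the "wrap-around" square involving $f_n$ and $T$: one must show that
\[
TP(\lambda_{x_0})\circ TPTP(f_n)\circ \lambda_{P(x_n)}\circ \Theta_{x_n}=T(\Theta_{x_0})\circ f_n.
\]
I would first use naturality of $\Theta$ on $f_n$ to rewrite $P^2(f_n)\circ \Theta_{x_n}=\Theta_{T(x_0)}\circ f_n$, then naturality of $\lambda$ applied to the morphism $P(f_n):PT(x_0)\to P(x_n)$ to get $TPTP(f_n)\circ \lambda_{P(x_n)}=\lambda_{PT(x_0)}\circ P^2(f_n)$, and finally the hypothesis \eqref{eq:loopCompat} (with $x=x_0$) in the form $TP(\lambda_{x_0})\circ \lambda_{PT(x_0)}\circ \Theta_{T(x_0)}=T(\Theta_{x_0})$. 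Chaining these three identities yields the required equality. This is the main step where the compatibility condition \eqref{eq:loopCompat} gets used and where I expect the bookkeeping to be most error-prone.

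For (b), naturality of $\Theta_n$ in $x_\bullet$ is entrywise naturality of $\Theta$ on $\mathcal{C}$ (the last entry $T(\Theta_{x_0})$ is natural since $T$ and $\Theta$ both are). The duality identity $P_n(\Theta_{n,x_\bullet})\circ \Theta_{n,P_n^{\mathsf{op}}(x_\bullet)}=\mathbf{1}$ is checked entry by entry: for $0\le i\le n$ the $i$th component reduces to $P(\Theta_{x_{n-i}})\circ \Theta_{P(x_{n-i})}=\mathbf{1}$, which holds by the duality axiom on $\mathcal{C}$; the last component similarly reduces to the $T$-image of the same identity, using that $T$ is a functor.

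For (c), write out the four assertions of Lemma \ref{lem:subdivDuality} at the level of objects of $\mathcal{N}C^{T}_n(\mathcal{C})$. The identities $P_{n-1}\circ \partial_i^{\mathsf{op}}=\partial_{n-i}\circ P_n$ and $P_{n+1}\circ s_i^{\mathsf{op}}=s_{n-i}\circ P_n$ are immediate for $1\le i\le n-1$ (interior face/degeneracy): applying $P$ to an interior vertex on either side yields the same diagram. The boundary cases $i=0$ and $i=n$ involve the distinguished morphism $f_n$ and the functor $T$; here I would verify them by noting that $\partial_0$ composes $T(f_0)\circ f_n$ while $\partial_n$ forgets the last vertex, and these operations intertwine with $P_n$ precisely because $P_n$ was defined using $\lambda$ to handle the twisted morphism $f_n$. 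The identities $\partial_i \Theta_n=\Theta_{n-1}\partial_i$ and $s_i\Theta_n = \Theta_{n+1} s_i$ are then immediate from the componentwise definition of $\Theta_n$, noting that both face maps and degeneracy maps act on the indexing by deleting or repeating components, and that the distinguished last component $T(\Theta_{x_0})$ is preserved because $\partial_0$ changes the distinguished vertex from $x_0$ to $x_1$ and $T(\Theta_{x_1})$ is the corresponding new last component.
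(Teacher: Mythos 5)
Your proof is correct and follows essentially the same route as the paper: the main content in both is the wrap-around square for $f_n$, handled by chaining naturality of $\Theta$ on $f_n$, naturality of $\lambda$ applied to $P(f_n)$, and the compatibility condition \eqref{eq:loopCompat}. The paper only writes out this step and declares the rest (your parts (b) and (c)) to be direct verifications, which your sketches supply correctly.
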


\begin{proof}
We will prove that $\Theta_{n,x_{\bullet}}$ defines a morphism $x_{\bullet} \rightarrow P^2_n (x_{\bullet})$; the remaining statements of the lemma can be verified directly. Keeping the notation \eqref{eq:nerveObject}, we need to show that the diagram
\[
\begin{tikzpicture}[baseline= (a).base]
\node[scale=1] (a) at (0,0){
\begin{tikzcd}[column sep=small]
x_n \arrow{r}[above]{f_n} \arrow{d}[left]{\Theta_{x_n}} & [2.5em] T(x_0) \arrow{d}[right]{T(\Theta_{x_0})}  \\
[1em] P^2(x_n) \arrow{r}[below]{P_*^2(f_n)} & TP^2(x_0)
\end{tikzcd}
};
\end{tikzpicture}
\]
commutes. The definition of $P_*$ implies that
\[
P_*^2(f_n) = TP(\lambda_{x_0}) \circ TPTP(f_n) \circ \lambda_{P(x_n)}.
\]
The natural transformation $\lambda$ associates to $x_n \xrightarrow[]{f_n} T(x_0)$ the commutative diagram
\[
\begin{tikzpicture}[baseline= (a).base]
\node[scale=1] (a) at (0,0){
\begin{tikzcd}
TPTP(x_n) \arrow{r}[above]{TPTP(f_n)} & [3em] TPTPT(x_0) \\
[1em] P^2(x_n) \arrow{r}[below]{P^2(f_n)} \arrow{u}[left]{\lambda_{P(x_n)}} & P^2T(x_0). \arrow{u}[right]{\lambda_{PT(x_0)}}
\end{tikzcd}
};
\end{tikzpicture}
\]
Combining this with the equality $P^2(f_n) \circ \Theta_{x_n} =\Theta_{T(x_0)} \circ f_n$, we compute
\begin{eqnarray*}
P_*^2(f_0) \circ \Theta_{x_n} &=& TP(\lambda_{x_0}) \circ TPTP(f_n) \circ \lambda_{P(x_n)} \circ \Theta_{x_n}\\
 &=& TP(\lambda_{x_0}) \circ \lambda_{PT(x_0)} \circ \Theta_{T(x_0)} \circ f_n \\
 &=& T(\Theta_{x_0}) \circ f_n,
\end{eqnarray*}
where in the final step the compatibility condition \eqref{eq:loopCompat} was used.
\end{proof}

Lemmas \ref{lem:subdivDuality} and \ref{lem:twistNerveDual} imply that $\mathcal{N}C^{T,e}_{\bullet}(\mathcal{C})$ is a simplicial groupoid with duality.

\begin{Def}
The Hermitian groupoid $\mathcal{N}C^{T,e}_{\bullet}(\mathcal{C})_h$, denoted by $\mathcal{N}U^T_{\bullet}(\mathcal{C})$, is called the unoriented categorified $T$-twisted cyclic nerve of $(\mathcal{C}, P, \Theta; T, \lambda)$.
\end{Def}

Write $(x_{\bullet}, \psi_{\bullet}) \in \mathcal{N}U^T_n(\mathcal{C})$ for the diagram
\[
x_0 \xrightarrow[]{f_0} x_1 \xrightarrow[]{f_1} \cdots \xrightarrow[]{f_{n-1}} x_n \xrightarrow[]{f_n} x_{n^{\prime}} \xrightarrow[]{f_{n^{\prime}}} x_{(n-1)^{\prime}} \xrightarrow[]{f_{(n-1)^{\prime}}} \cdots \xrightarrow[]{f_{1^{\prime}}} x_{0^{\prime}} \xrightarrow[]{f_{0^{\prime}}} T(x_0)
\]
together with symmetric isomorphisms $\psi_{\bullet}$ which satisfy $\psi_{i+1} \circ f_i = P(f_{(i+1)^{\prime}}) \circ \psi_i$, $i=0, \dots, n-1$, and
\[
\psi_{n^{\prime}} \circ f_n = P(f_n) \circ \psi_n, \qquad T(\psi_0) \circ f_{0^{\prime}} = P_*(f_{0^{\prime}}) \circ \psi_{0^{\prime}}.
\]

The following statement gives a relative $2$-Segal space which plays the role of the unoriented loop space.

\begin{Thm}
\label{thm:twNerveRel2Segal}
For any small category with duality $(\mathcal{C}, P, \Theta)$ and endofunctor $T: \mathcal{C} \rightarrow \mathcal{C}$ with compatibility data $\lambda$ as above, the morphism $F_{\bullet} : \mathcal{N}U^T_{\bullet}(\mathcal{C}) \rightarrow  \mathcal{N}C^T_{\bullet}(\mathcal{C})$ is a unital relative $2$-Segal simplicial groupoid.
\end{Thm}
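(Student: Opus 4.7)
The plan is to apply Proposition \ref{prop:equivRelSeg} together with the unital axiom by exhibiting explicit quasi-inverses of the relevant comparison functors, following closely the template of Proposition \ref{prop:catUnoriNerve}. Concretely, I would (i) show that $\mathcal{N}U^T_\bullet(\mathcal{C})$ is $1$-Segal, so that it is automatically $2$-Segal by Proposition \ref{prop:2SegFrom1Seg}; (ii) verify that the square \eqref{eq:botRel2SegalDiagram} is a $2$-pullback of groupoids; and (iii) verify the unital condition. Throughout, the input data $(P,\Theta,T,\lambda)$ is packaged into the duality on $\mathcal{N}C^{T,e}_\bullet(\mathcal{C})$ supplied by Lemmas \ref{lem:subdivDuality} and \ref{lem:twistNerveDual}.

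For (i), the $1$-Segal functor
\[
\mathcal{N}U^T_n(\mathcal{C}) \to \mathcal{N}U^T_{\{0,1\}}(\mathcal{C}) \times^{(2)}_{\mathcal{N}U^T_{\{1\}}(\mathcal{C})} \cdots \times^{(2)}_{\mathcal{N}U^T_{\{n-1\}}(\mathcal{C})} \mathcal{N}U^T_{\{n-1,n\}}(\mathcal{C})
\]
admits a quasi-inverse obtained by gluing $n$ short symmetric pieces along their shared vertex: matching an intermediate object $x_i$ together with its symmetric isomorphism $\psi_i: x_i \to P(x_{i'})$ in two neighbouring pieces forces the resulting length $2n+2$ diagram to inherit a unique compatible symmetric form, which is symmetric with respect to the duality of Lemma \ref{lem:twistNerveDual}.

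For (ii), I would construct a quasi-inverse to
\[
\Psi_n: \mathcal{N}U^T_n(\mathcal{C}) \to \mathcal{N}C^T_n(\mathcal{C}) \times^{(2)}_{\mathcal{N}C^T_{\{0,n\}}(\mathcal{C})} \mathcal{N}U^T_{\{0,n\}}(\mathcal{C}).
\]
Given an object of the $2$-pullback, namely a diagram $x_0 \xrightarrow{f_0} \cdots \xrightarrow{f_{n-1}} x_n \xrightarrow{f_n} T(x_0)$, a short symmetric diagram $(y_0 \to y_n \to y_{n'} \to y_{0'} \to T(y_0), \mu_\bullet)$ and a gluing isomorphism $\alpha$, I build a preimage by setting $z_i = x_i$ for $0 \leq i \leq n$, $z_{i'} = P(x_i)$ for $1 \leq i \leq n-1$, $z_{n'} = y_{n'}$ and $z_{0'} = y_{0'}$; connecting arrows between dualised vertices are the $P$-duals of the $f_i$, while the boundary arrows come from $\alpha$ and $\mu$, and the symmetric forms are $\Theta_{x_i}$ on the middle indices and $\mu_n,\mu_0$ on the boundary. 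Essential surjectivity follows once this construction is in place, and full faithfulness is proved exactly as in Proposition \ref{prop:catUnoriNerve}: an isometry of objects of $\mathcal{N}U^T_n(\mathcal{C})$ is determined by its restriction to the unprimed vertices, so the compatibility condition analogous to \eqref{eq:morphismCompat} collapses. Step (iii) proceeds in the same spirit: inserting an identity arrow at position $i$ in the unprimed half of a diagram forces, by duality, the insertion of an identity arrow at the mirror position in the primed half, giving an explicit inverse to the relative unit map.

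The main obstacle is the bookkeeping at the seam $z_{0'} \to T(z_0)$ in step (ii). Verifying that the glued symmetric form satisfies the self-duality constraint involving $T$ at the closing edge requires the compatibility relation \eqref{eq:loopCompat} for $\lambda$ in an essential way, precisely in the form encoded by Lemma \ref{lem:twistNerveDual}. This is the step where the twisted situation genuinely differs from the non-twisted one of Proposition \ref{prop:catUnoriNerve}; once this coherence at the seam is checked, the remainder of the argument reduces to routine categorical bookkeeping.
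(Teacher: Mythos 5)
Your proposal follows essentially the same route as the paper: both prove the $1$-Segal and relative $2$-Segal comparison functors are equivalences by exhibiting explicit quasi-inverses, with the preimage for $\Psi_n$ built by doubling the middle of the twisted cyclic diagram via $P$ (using $\Theta$ and identities as the interior symmetric forms and the given forms at the boundary), and with Lemma \ref{lem:twistNerveDual} and the compatibility \eqref{eq:loopCompat} handling the closing edge involving $T$ — exactly the point you single out. The remaining differences (which isomorphic representative is chosen as the preimage, and the precise formulation of the unital check, which the paper omits anyway) are cosmetic.
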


\begin{proof}
We need to prove that the $1$-Segal morphisms
\[
\Xi_n: \mathcal{N}U^T_n(\mathcal{C}) \rightarrow \mathcal{N}U^T_i(\mathcal{C}) \times^{(2)}_{\mathcal{N}U^T_{\{i\}}(\mathcal{C})} \mathcal{N}U^T_{n-i}(\mathcal{C})
\]
and the relative $2$-Segal morphisms
\[
\Psi_n: \mathcal{N}U^T_n(\mathcal{C}) \rightarrow \mathcal{N}C^T_n(\mathcal{C}) \times^{(2)}_{\mathcal{N}C^T_{\{0,n\}}(\mathcal{C})} \mathcal{N}U^T_{\{0,n\}}(\mathcal{C})
\]
are equivalences. Since the proofs are similar we will only prove the latter. Explicitly, the functor $\Psi_n$ sends $(x_{\bullet}, \psi_{\bullet}) \in \mathcal{N}U^T_n(\mathcal{C})$ to
\[
(x_0 \rightarrow \cdots \rightarrow x_n \rightarrow T(x_0), (x_0 \rightarrow x_n \rightarrow x_{n^{\prime}} \rightarrow  x_{0^{\prime}} \rightarrow T(x_0), \psi_{\bullet}); (\mathbf{1}_{x_0},\mathbf{1}_{x_n})).
\]

To see that $\Psi_n$ is fully faithful, let $\Psi_n (x_{\bullet}, \psi_{\bullet}) \rightarrow \Psi_n (y_{\bullet}, \mu_{\bullet})$ be a morphism determined by maps $((p_0, \dots, p_n),(q_0,q_n))$, the notation an obvious modification of that from the proof of Proposition \ref{prop:catUnoriNerve}. Since the isomorphisms $\alpha_0, \alpha_n$ and $\beta_0, \beta_n$ used to define arbitrary morphisms in $\mathcal{N}C^T_n(\mathcal{C}) \times^{(2)}_{\mathcal{N}C^T_{\{0,n\}}(\mathcal{C})} \mathcal{N}U^T_{\{0,n\}}(\mathcal{C})$ are the identities in this case, the compatibility conditions (analogous to equation \eqref{eq:morphismCompat}) imply $p_0=q_0$ and $p_n=q_n$. Using the symmetry conditions imposed by the isometry condition, any morphism $\Psi_n (x_{\bullet}, \psi_{\bullet}) \rightarrow \Psi_n  (y_{\bullet}, \mu_{\bullet})$ is therefore the image of a unique morphism $(x_{\bullet}, \psi_{\bullet}) \rightarrow (y_{\bullet}, \mu_{\bullet})$.

To prove that $\Psi_n$ is essentially surjective, let
\begin{equation}
\label{eq:twFibProObj}
(x_0 \rightarrow \cdots \rightarrow x_n \rightarrow T(x_0), (x^{\prime}_0 \rightarrow x^{\prime}_n \rightarrow x^{\prime}_{n^{\prime}} \rightarrow x^{\prime}_{0^{\prime}} \rightarrow T(x^{\prime}_0), \psi_{\bullet}); (\alpha_0, \alpha_n))
\end{equation}
be an arbitrary object of $\mathcal{N}C^T_n(\mathcal{C}) \times^{(2)}_{\mathcal{N}C^T_{\{0,n\}}(\mathcal{C})} \mathcal{N}U^T_{\{0,n\}}(\mathcal{C})$. Define an object of $\mathcal{N}U^T_n(\mathcal{C})$ by the top row of the diagram
\[
\begin{tikzcd}[column sep = 0.4em]
x^{\prime}_0 \arrow{r} \arrow{d}[left]{\alpha_0^{-1}}  &  x_1 \arrow{r} & \cdots \arrow{r} & x_{n-1} \arrow{dr} \arrow{r} & x^{\prime}_{n} \arrow{r} & x^{\prime}_{n^{\prime}} \arrow{r} \arrow{d}[left]{\psi_{n^{\prime}}} & P(x_{n-1}) \arrow{r} & \cdots \arrow{r} & P(x_1) \arrow{r} \arrow{d} & x^{\prime}_{0^{\prime}} \arrow{r} & T(x_0^{\prime}) \\
x_0 \arrow{ur} & & & & x_n \arrow{u}[right]{\alpha_n} & P(x^{\prime}_n) \arrow{ur} & & & P(x_0^{\prime}) \arrow{ur}[right]{\psi_{0^{\prime}}^{-1}} & &
\end{tikzcd}
\]
with symmetric form $(\{\Theta_{\bullet}\}_{[1, n-1]}, \{\psi_{\bullet}\}_{\{0,n,n^{\prime},0^{\prime}\}}, \{\mathbf{1}_{\bullet}\}_{[(n-1)^{\prime}, 1^{\prime}]})$. The image of this object under $\Psi_n$ is
\[
\begin{tikzcd}[column sep = 0.5em]
(x^{\prime}_0 \arrow{r} &  x_1 \arrow{r} & \cdots \arrow{r} & x_{n-1} \arrow{r} & x^{\prime}_{n}  \arrow{r} & T(x_0^{\prime}),
\end{tikzcd} \hspace{-7pt}
\begin{tikzcd}[column sep = 0.5em]
(x^{\prime}_0 \arrow{r} & \cdots \arrow{r} & x^{\prime}_{0^{\prime}}  \arrow{r} & T(x_0^{\prime}),
\end{tikzcd}
\psi_{\bullet}); (\mathbf{1}_{x^{\prime}_0}, \mathbf{1}_{x^{\prime}_n})).
\]
Then the tuple $(\alpha_0^{-1}, \mathbf{1}_{x_1}, \dots, \mathbf{1}_{x_{n-1}}, \alpha_n^{-1}), (\mathbf{1}_{x^{\prime}_0}, \dots, \mathbf{1}_{x^{\prime}_{0^{\prime}}}))$ defines an isomorphism from the previous object to the object \eqref{eq:twFibProObj}. Hence $\Psi_n$ is an equivalence.

The proof that relative unitality holds is similar.
\end{proof}

\begin{Rem}
The homotopical counterpart of the $\mathsf{SO(2)}$-action on $LX$ is the cyclic structure of $NC_{\bullet}(\mathcal{C})$ in the sense of Connes. This action extends to $\mathsf{O(2)}$ by allowing orientation reversal of loops and corresponds to the dihedral structure of $NC_{\bullet}(\mathcal{C})$ induced by a (strict) duality $P$ on $\mathcal{C}$. More generally, $NC^T_{\bullet}(\mathcal{C})$ has a paradihedral or, if $T^r = 1$ for some $r \geq 2$, an $r$-dihedral structure. Similar comments apply to categorified nerves. See \cite[\S I.6]{dyckerhoff2015} for further discussion.
\end{Rem}

\subsection{The \texorpdfstring{$\mathcal{R}_{\bullet}$}{}-construction}

The goal of this section is to prove that the $\mathcal{R}_{\bullet}$-construction (also called the Hermitian $\mathcal{S}_{\bullet}$-construction) from Grothendieck-Witt theory is relative $2$-Segal over the Waldhausen $\mathcal{S}_{\bullet}$-construction. We work in the proto-exact setting of Section \ref{sec:waldhausen}.

\begin{Def}
A proto-exact category with duality is a category with duality $(\mathcal{C}, P, \Theta)$ such that $\mathcal{C}=(\mathcal{C}, \mathfrak{I}, \mathfrak{D})$ is proto-exact and $P$ is an exact functor. Explicitly, the latter condition comprises of the following:
\begin{enumerate}[label=(\roman*)]
\item $P(0) \simeq 0$,
\item a morphism $U \xrightarrow[]{\phi} V$ is in $\mathfrak{I}$ if and only if $P(V) \xrightarrow[]{P(\phi)} P(U)$ is in $\mathfrak{D}$, and
\item $P$ sends biCartesian squares to biCartesian squares.
\end{enumerate}
\end{Def}

Let $N$ be a symmetric form in a proto-exact category with duality $\mathcal{C}$ and let $i : U \rightarrowtail N$ be an inflation. The orthogonal $U^{\perp}$ is defined to be the pullback
\[
\begin{tikzpicture}[baseline= (a).base]
\node[scale=1] (a) at (0,0){
\begin{tikzcd}
U^{\perp} \arrow[dashed,two heads]{d} \arrow[dashed,tail]{r} & N \arrow[two heads]{d} \\
0 \arrow[tail]{r} & P(U).
\end{tikzcd}
};
\end{tikzpicture}
\]
The inflation $i$ is called isotropic if the composition $P(i) \psi_N i$ is zero and the canonical monomorphism $U \rightarrow U^{\perp}$ is an inflation.

\begin{Ex}
\hspace{2em}
\begin{enumerate}
\item Let $\mathsf{Rep}_k(Q)$ be the abelian category of representations of quiver $Q$. Given a contravariant involution of $Q$ together with some combinatorial data, there is an associated exact duality structure on $\mathsf{Rep}_k(Q)$ whose symmetric forms are (generalizations of) the orthogonal or symplectic quiver representations of Derksen and Weyman \cite{derksen2002}. See also \cite[\S 1.4]{mbyoung2016b}. A similar construction applies when $\mathsf{Rep}_k(Q)$ is replaced with $\mathsf{Rep}_{\mathbb{F}_1}(Q)$.

\item Let $\mathsf{Vect}(X)$ be the exact category of vector bundles over a scheme $X$. Fix a line bundle $L \rightarrow X$ and a sign $s \in \{ \pm 1\}$. Then $(P, \Theta) =(\Hom_{\mathcal{O}_X}(-, L), s \cdot \mathsf{can})$ defines an exact duality structure on $\mathsf{Vect}(X)$ whose symmetric forms are vector bundles over $X$ with $L$-valued orthogonal ($s=1$) or symplectic ($s=-1$) forms. \qedhere
\end{enumerate}
\end{Ex}

In this section we will make the following assumption on $(\mathcal{C}, P, \Theta)$.

\begin{Assu}
Let $N$ be a symmetric form in $\mathcal{C}$ and let $U \rightarrowtail N$ be isotropic with orthogonal $k: U^{\perp} \rightarrowtail N$. Define $M \in \mathcal{C}$ to be the pushout
\[
\begin{tikzpicture}[baseline= (a).base]
\node[scale=1] (a) at (0,0){
\begin{tikzcd}
U \arrow[two heads]{d} \arrow[tail]{r} & U^{\perp} \arrow[dashed,two heads]{d}{\pi} \\
0 \arrow[dashed,tail]{r} & M.
\end{tikzcd}
};
\end{tikzpicture}
\]
Then there is a unique symmetric form $\psi_M$ on $M$ such that $
P(k) \psi_N k = P(\pi) \psi_M \pi$.
\end{Assu}

The symmetric form $(M, \psi_M)$ is called the isotropic reduction of $N$ by $U$ and is denoted by $N \git U$. When $\mathcal{C}$ is exact the Reduction Assumption is known to hold; see \cite[Lemma 5.2]{quebbemann1979}, \cite[Lemma 2.6]{schlichting2010}. In a number of (non-exact) proto-exact examples, such as $\mathsf{Rep}_{\mathbb{F}_1}(Q)$, the Reduction Assumption also holds.

The category $[n]$ has a strict duality structure given at the level of objects by $i \mapsto n-i$. Then $[\mathsf{Ar}_n, \mathcal{C}]$ and its full subcategory $\mathcal{W}_n(\mathcal{C})$ inherit duality structures. Moreover, the duality structures on $\mathcal{W}_{\bullet}(\mathcal{C})$ satisfy the assumptions of Lemma \ref{lem:subdivDuality} so that $\mathcal{W}^e_{\bullet}(\mathcal{C})$ is a simplicial object of $\mathsf{CatD}$. Explicitly, the dual of a diagram $\{A_{\{p,q\}} \}_{0 \leq p \leq q \leq 0^{\prime}} \in \mathcal{W}^e_n(\mathcal{C})$ is the diagram $\{P(A_{\{q^{\prime}, p^{\prime}\}})\}_{0 \leq p \leq q \leq 0^{\prime}}$ and the double dual identification at $(p,q)$ is $\Theta_{A_{\{ p,q \}}}$.

The $\mathcal{R}_{\bullet}$-construction of $(\mathcal{C}, P, \Theta)$, denoted by $\mathcal{R}_{\bullet}(\mathcal{C})$, is the Hermitian groupoid $\mathcal{W}^e_{\bullet}(\mathcal{C})_h$; see \cite{shapiro1996}, \cite{hornbostel2004}.\footnote{In \cite{hornbostel2004} the notation $\mathcal{R}_{\bullet}(\mathcal{C})$ is used for $\mathcal{W}^e_{\bullet}(\mathcal{C})$, whereas what we call $\mathcal{R}_{\bullet}(\mathcal{C})$ is denoted by $\mathcal{R}^h_{\bullet}(\mathcal{C})$.} Objects of $\mathcal{R}_n(\mathcal{C})$ are diagrams $\{A_{\{ p,q \}} \}_{0 \leq p \leq q \leq 0^{\prime}} \in \mathcal{W}^e_n(\mathcal{C})$ together with symmetric isomorphisms $\psi_{p,q} : A_{\{p,q\}} \rightarrow P(A_{\{q^{\prime}, p^{\prime}\}})$ which make all appropriate diagrams commute. In particular,
\begin{enumerate}[label=(\roman*)]
\item for every $1 \leq i \leq n$ the pair $(A_{\{i, i^{\prime}\}}, \psi_{i,i^{\prime}})$ is a symmetric form,

\item for every $0 \leq i \leq j \leq n$ the inflation $A_{\{ i,j \}} \rightarrowtail A_{\{ i,i^{\prime} \}}$ is isotropic with orthogonal $A_{\{i, j^{\prime}\}} \rightarrowtail A_{\{ i,i^{\prime} \}}$, and

\item for every $0 \leq i \leq j \leq n$ the symmetric form $(A_{\{j, j^{\prime}\}}, \psi_{j,j^{\prime}})$ is isometric to the reduction $(A_{\{i,i^{\prime}\}} , \psi_{i,i^{\prime}}) \git A_{\{i,j\}}$.
\end{enumerate}
For example, an object of $\mathcal{R}_0(\mathcal{C})$ is a diagram
\[
\begin{tikzpicture}[baseline= (a).base]
\node[scale=1] (a) at (0,0){
\begin{tikzcd}
0 \arrow[rightarrowtail]{r} & A_{\{0,0^{\prime}\}} \arrow[two heads]{d} \\
& 0
\end{tikzcd}
};
\end{tikzpicture}
\]
together with a symmetric form on $A_{\{0, 0^{\prime} \}}$. Hence $\mathcal{R}_0(\mathcal{C})$ is equivalent to the Hermitian groupoid $\mathcal{C}_h$. Similarly, an object of $\mathcal{R}_1(\mathcal{C})$ is a diagram
\[
\begin{tikzpicture}[baseline= (a).base]
\node[scale=1] (a) at (0,0){
\begin{tikzcd}
0 \arrow[tail]{r} & A_{\{0,1\}} \arrow[two heads]{d} \arrow[tail]{r} & A_{\{0,1^{\prime}\}} \arrow[tail]{r} \arrow[two heads]{d} & A_{\{0,0^{\prime}\}} \arrow[two heads]{d} \\
 & 0 \arrow[tail]{r} & A_{\{1,1^{\prime}\}}  \arrow[two heads]{d} \arrow[tail]{r} & A_{\{1,0^{\prime}\}} \arrow[two heads]{d} \\
 & & 0 \arrow[tail]{r} & A_{\{1^{\prime},0^{\prime}\}} \arrow[two heads]{d} \\
 & & & 0
\end{tikzcd}
};
\end{tikzpicture}
\]
all of whose squares are biCartesian, together with the data of
\begin{enumerate}[label=(\roman*)]
\item a symmetric form on $A_{\{0,0^{\prime}\}}$ such that $A_{\{0,1 \}} \rightarrowtail A_{\{0,0^{\prime}\}}$ is isotropic with orthogonal $A_{\{0,1^{\prime}\}} \rightarrowtail A_{\{0,0^{\prime}\}}$,

\item a symmetric form on $A_{\{1,1^{\prime}\}}$ presenting $A_{\{1,1^{\prime}\}}$ as $A_{\{0,0^{\prime}\}} \git A_{\{ 0,1\}}$, and

\item isomorphisms $A_{\{0,1\}}  \simeq P(A_{\{1^{\prime},0^{\prime} \}})$ and $A_{\{0,1^{\prime} \}}  \simeq P(A_{\{1,0^{\prime} \}})$ such that each morphism above the diagonal agrees with the corresponding morphism below the diagonal.
\end{enumerate}
In short, objects of $\mathcal{R}_n(\mathcal{C})$ are isotropic $n$-flags together with presentations of all subquotients and subreductions. The forgetful map
\[
F_{\bullet}: \mathcal{R}_{\bullet}(\mathcal{C}) \rightarrow \mathcal{S}_{\bullet}(\mathcal{C}), \qquad \mathcal{R}_n(\mathcal{C}) \owns \{A_{\{ p,q \}} , \psi_{p,q} \}_{0 \leq p \leq q \leq 0^{\prime}} \mapsto \{ A_{\{p,q\}}\}_{0 \leq p \leq q \leq n}
\]
is a morphism of simplicial groupoids.

\begin{Thm}
\label{thm:sd2Segal}
Let $\mathcal{C}$ be a proto-exact category with duality which satisfies the Reduction Assumption. Then the morphism $F_{\bullet}: \mathcal{R}_{\bullet}(\mathcal{C}) \rightarrow \mathcal{S}_{\bullet}(\mathcal{C})$ is a unital relative $2$-Segal simplicial groupoid.
\end{Thm}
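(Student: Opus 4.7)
The plan is to mimic the strategy of Theorem \ref{thm:stabFrameSegal} in the Hermitian setting by replacing the $\mathcal{R}_n$-diagrams with their underlying isotropic flags, so that the relevant comparison squares become tautologically Cartesian.

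First, applying Proposition \ref{prop:equivRelSeg} together with Theorem \ref{thm:waldSegal} and Proposition \ref{prop:2SegFrom1Seg}, the theorem reduces to three statements: (a) $\mathcal{R}_{\bullet}(\mathcal{C})$ is $1$-Segal; (b) for every $n \geq 1$ the square \eqref{eq:botRel2SegalDiagram} associated to $F_{\bullet}$ is $2$-Cartesian; and (c) the relative unital condition holds. All three I would handle through a flag model. Let $\mathcal{RF}_n(\mathcal{C})$ denote the groupoid of \emph{isotropic $n$-flags}, whose objects are diagrams
\[
0 \rightarrowtail A_1 \rightarrowtail \cdots \rightarrowtail A_n \rightarrowtail (N, \psi_N)
\]
in which $(N, \psi_N)$ is a symmetric form and the composite $A_n \rightarrowtail N$ is isotropic. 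The obvious forgetful functor $\nu_n : \mathcal{R}_n(\mathcal{C}) \to \mathcal{RF}_n(\mathcal{C})$ retains the top row $A_{\{0,1\}} \rightarrowtail \cdots \rightarrowtail A_{\{0,n\}}$ together with the top-level symmetric form $(A_{\{0,0^{\prime}\}}, \psi_{0,0^{\prime}})$.

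The heart of the argument is to show that $\nu_n$ is an equivalence of groupoids for every $n \geq 0$. A quasi-inverse $\eta_n$ would be built as in the proof of Theorem \ref{thm:stabFrameSegal}: given an isotropic flag, set $A_{\{0,j\}} = A_j$ and $A_{\{0,0^{\prime}\}} = N$; define $A_{\{0,j^{\prime}\}}$ as the orthogonal $A_j^{\perp}$ inside $N$; and inductively fill in the remaining $A_{\{p,q\}}$ for $p \geq 1$ via the biCartesian pushout and pullback squares supplied by the proto-exact axioms. The Reduction Assumption, combined with the exactness of $P$, uniquely supplies the symmetric forms $\psi_{p,q}$ and ensures that the output is a valid object of $\mathcal{R}_n(\mathcal{C})$. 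This is a direct analogue of the standard equivalence $\mathcal{S}_n(\mathcal{C}) \simeq \mathcal{F}_n(\mathcal{C})$ used in \cite{waldhausen1985}.

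With $\nu_n$ available, statement (b) would follow from the commutative diagram
\[
\begin{tikzcd}
\mathcal{R}_n(\mathcal{C}) \arrow{r}{\Psi_n} \arrow{d}[left]{\nu_n} & \mathcal{S}_n(\mathcal{C}) \times^{(2)}_{\mathcal{S}_{\{0,n\}}(\mathcal{C})} \mathcal{R}_{\{0,n\}}(\mathcal{C}) \arrow{d}[right]{\mu_n \times \nu_{\{0,n\}}} \\
\mathcal{RF}_n(\mathcal{C}) \arrow{r}{\widetilde{\Psi}_n} & \mathcal{F}_n(\mathcal{C}) \times^{(2)}_{\mathcal{F}_{\{0,n\}}(\mathcal{C})} \mathcal{RF}_{\{0,n\}}(\mathcal{C})
\end{tikzcd}
\]
whose vertical arrows are equivalences and whose bottom arrow $\widetilde{\Psi}_n$ is the tautological equivalence decomposing an isotropic flag into its underlying flag plus its top isotropic inflation. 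Statement (a) is handled in the same flag model: an isotropic $n$-flag is reconstructed from compatible restrictions to $\{0,\ldots,i\}$ and $\{i,\ldots,n\}$ by pulling back the isotropic reduced flag in $N \git A_i$ along the quotient $A_i^{\perp} \twoheadrightarrow N \git A_i$ and concatenating with $A_1 \rightarrowtail \cdots \rightarrowtail A_i$. The relative unital condition (c) reduces analogously to the unitality of $\mathcal{S}_{\bullet}(\mathcal{C})$. The main obstacle throughout is the equivalence $\mathcal{R}_n(\mathcal{C}) \simeq \mathcal{RF}_n(\mathcal{C})$: without the Reduction Assumption the orthogonals appearing at each stage of the induction would not descend to symmetric forms on the relevant subquotients, so it is precisely this assumption which makes the $\mathcal{R}_{\bullet}$-construction yield a relative $2$-Segal groupoid.
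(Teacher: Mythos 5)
Your proposal follows essentially the same route as the paper: the paper likewise introduces a groupoid $\mathcal{I}_n(\mathcal{C})$ of isotropic $n$-flags, shows the forgetful functor $\nu_n: \mathcal{R}_n(\mathcal{C}) \rightarrow \mathcal{I}_n(\mathcal{C})$ is an equivalence by iterated pushouts with symmetric forms supplied by the Reduction Assumption, and then observes that the $1$-Segal, relative $2$-Segal and unital comparison functors become (essentially) tautological in the flag model, the $1$-Segal gluing using pullback along $A_i^{\perp} \twoheadrightarrow N \git A_i$ exactly as you describe. The only caveat is that your flag groupoid should impose isotropy of every $A_j \rightarrowtail N$ (with its orthogonal), not just of $A_n$, since in the proto-exact setting the inflation condition $A_j \rightarrowtail A_j^{\perp}$ does not obviously descend from the top of the flag; with that adjustment your argument matches the paper's.
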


\begin{proof}
For each $n \geq 0$ let $\mathcal{I}_n(\mathcal{C})$ be the groupoid of isotropic $n$-flags in $\mathcal{C}$. An object of $\mathcal{I}_n(\mathcal{C})$ is a diagram
\begin{equation}
\label{eq:isoFlag}
0 \rightarrowtail A_1 \rightarrowtail  \cdots \rightarrowtail A_n \rightarrowtail A_{n^{\prime}} \rightarrowtail \cdots \rightarrowtail A_{1^{\prime}} \rightarrowtail (A_{0^{\prime}}, \psi_{0^{\prime}})
\end{equation}
with $(A_{0^{\prime}}, \psi_{0^{\prime}})$ a symmetric form such that, for each $0 \leq i \leq n$, the inflation $A_i \rightarrowtail A_{0^{\prime}}$ is isotropic with orthogonal $A_{i^{\prime}} \rightarrowtail A_{0^{\prime}}$. We claim that the forgetful functor
\[
\nu_n: \mathcal{R}_n (\mathcal{C}) \rightarrow \mathcal{I}_n (\mathcal{C}), \qquad \{A_{\{i,j\}}, \psi_{i,j}\}_{0 \leq i \leq j \leq 0^{\prime}} \mapsto \{A_{\{ 0,j \}}, \psi_{0,0^{\prime}}\}_{0 \leq j \leq 0^{\prime}}
\]
is an equivalence. Construct a quasi-inverse $\eta_n$ of $\nu_n$ as follows. Given an isotropic flag \eqref{eq:isoFlag}, put $A_{\{0,k\}} = A_k$, $1 \leq k \leq 1^{\prime}$, and $(A_{\{0,0^{\prime}\}}, \psi_{0,0^{\prime}})=(A_{0^{\prime}}, \psi_{0^{\prime}})$ and let $A_{\{1, k\}}$ be the pushout
\[
\begin{tikzpicture}[baseline= (a).base]
\node[scale=1] (a) at (0,0){
\begin{tikzcd}
A_{\{0, 1\}} \arrow[two heads]{d} \arrow[tail]{r} & A_{\{0,k\}} \arrow[dashed,two heads]{d} \\
0 \arrow[dashed,tail]{r} & A_{\{1,k\}}.
\end{tikzcd}
};
\end{tikzpicture}
\]
By the Reduction Assumption the symmetric form $(A_{\{0,0^{\prime}\}}, \psi_{0,0^{\prime}})$ induces a unique compatible symmetric form on $A_{\{1,1^{\prime}\}}$. Define also $A_{\{1,0^{\prime}\}} = P(A_{\{0, 1^{\prime}\}})$ and let $A_{\{0, 0^{\prime}\}} \twoheadrightarrow A_{\{1^{\prime}, 0^{\prime}\}}$ be the composition
\[
A_{\{0,0^{\prime}\}} \xrightarrow[]{\psi_{0,0^{\prime}}} P(A_{\{0,0^{\prime}\}}) \twoheadrightarrow P(A_{\{0, 1^{\prime}\}}).
\]
This construction defines the top two rows of $\eta_n(A_{\{0, \bullet\}})$ and can be iterated to define the remaining $n-2$ rows. It is clear that $\nu_n$ is a quasi-inverse to $\eta_n$.

We can now prove the theorem. The $1$-Segal morphism for $\mathcal{R}_{\bullet}(\mathcal{C})$ is
\[
\Xi_n : \mathcal{R}_n(\mathcal{C}) \rightarrow \mathcal{R}_{\{0, \dots, i\}}(\mathcal{C})
\times^{(2)}_{\mathcal{R}_{\{i\}}(\mathcal{C})} \mathcal{R}_{\{i, \dots, n\}}(\mathcal{C}).
\]
Arguing as in the proof of Theorem \ref{thm:stabFrameSegal} and using that $\nu_n$ is an equivalence, to prove that $\Xi_n$ is an equivalence it suffices to prove that the functor
\[
\widetilde{\Xi}_n: \mathcal{I}_n(\mathcal{C}) \rightarrow \mathcal{I}_i(\mathcal{C}) \times^{(2)}_{\mathcal{I}_{\{i\}}(\mathcal{C})} \mathcal{I}_{n-i}(\mathcal{C})
\]
is an equivalence. A quasi-inverse of $\widetilde{\Xi}_n$ is defined by assigning to a pair
\[
( 0 \rightarrowtail A^{\prime}_1 \rightarrowtail  \cdots \rightarrowtail A^{\prime}_i \rightarrowtail A^{\prime}_{i^{\prime}} \rightarrowtail \cdots \rightarrowtail A^{\prime}_{1^{\prime}} \rightarrowtail (A^{\prime}_{0^{\prime}}, \psi^{\prime}_{0^{\prime}}) ) \in \mathcal{I}_i(\mathcal{C})
\]
and
\[
( 0 \rightarrowtail A^{\prime \prime}_{i+1} \rightarrowtail  \cdots \rightarrowtail A^{\prime \prime}_n \rightarrowtail A^{\prime \prime}_{n^{\prime}} \rightarrowtail \cdots \rightarrowtail A^{\prime \prime}_{(i+1)^{\prime \prime}} \rightarrowtail (A^{\prime \prime}_{i^{\prime}}, \psi^{\prime \prime}_{i^{\prime}}) ) \in \mathcal{I}_{n-i}(\mathcal{C})
\]
together with an isometry $(A^{\prime}_{0^{\prime}}, \psi^{\prime}_{0^{\prime}}) \git A^{\prime}_i \simeq (A^{\prime \prime}_{i^{\prime}}, \psi^{\prime \prime}_{i^{\prime}})$ the object
\[
(
0 \rightarrowtail A^{\prime}_1 \rightarrowtail  \cdots \rightarrowtail A^{\prime}_n \rightarrowtail A^{\prime}_{n^{\prime}} \rightarrowtail \cdots \rightarrowtail A^{\prime}_{1^{\prime}} \rightarrowtail (A^{\prime}_{0^{\prime}}, \psi^{\prime}_{0^{\prime}}) )\in \mathcal{I}_n(\mathcal{C})
\]
where $A^{\prime}_j$, $i + 1 \leq j \leq (i+1)^{\prime}$, is defined to be the pullback
\[
\begin{tikzpicture}[baseline= (a).base]
\node[scale=1] (a) at (0,0){
\begin{tikzcd}
A^{\prime}_j \arrow[dashed,two heads]{d} \arrow[dashed,tail]{r} & A^{\prime}_{i^{\prime}} \arrow[two heads]{d} \\
A^{\prime \prime}_j \arrow[tail]{r} & A^{\prime \prime}_{i^{\prime}}.
\end{tikzcd}
};
\end{tikzpicture}
\]
That this is indeed a quasi-inverse follows from the Hermitian variant of the Second Isomorphism Theorem \cite[Proposition 6.5]{quebbemann1979}, which generalizes to the proto-exact setting under the Reduction Assumption.

Arguing in the same way, to prove that the functor
\[
\Psi_n: \mathcal{R}_n(\mathcal{C}) \rightarrow \mathcal{S}_n (\mathcal{C}) \times^{(2)}_{\mathcal{S}_{\{0,n\}}(\mathcal{C})} \mathcal{R}_{\{0, n\}}(\mathcal{C})
\]
is an equivalence it suffices to prove that the functor
\[
\widetilde{\Psi}_n : \mathcal{I}_n(\mathcal{C}) \rightarrow \mathcal{F}_n (\mathcal{C}) \times^{(2)}_{\mathcal{F}_{\{0,n\}}(\mathcal{C})} \mathcal{I}_{\{0, n\}}(\mathcal{C})
\]
is an equivalence, which is obvious.

Finally, relative unitality is the condition that, for each $0 \leq i \leq n-1$, the functor
\[
\Upsilon_n: \mathcal{R}_{n-1}(\mathcal{C}) \rightarrow \mathcal{S}_{\{i\}}(\mathcal{C}) \times^{(2)}_{\mathcal{S}_{\{i, i+1\}}(\mathcal{C})} \mathcal{R}_n(\mathcal{C})
\]
is an equivalence. Note that $\mathcal{S}_0(\mathcal{C})$ is a point, $\mathcal{S}_1(\mathcal{C})$ is the maximal groupoid of $\mathcal{C}$ and the map $\mathcal{S}_0(\mathcal{C}) \xrightarrow[]{s_0} \mathcal{S}_1(\mathcal{C})$ sends the point to the zero object. An object of $\mathcal{S}_{\{i\}}(\mathcal{C}) \times^{(2)}_{\mathcal{S}_{\{i, i+1\}}(\mathcal{C})} \mathcal{R}_n(\mathcal{C})$ is therefore an object of $\mathcal{R}_n(\mathcal{C})$ whose maps between the $i$th and $(i+1)$st rows/columns are the identities. It follows from this that $\Upsilon_n$ is an equivalence.
\end{proof}

\begin{Rem}
For comparison, the Waldhausen simplicial groupoid $\mathcal{S}_{\bullet}(\mathcal{C})$ of an exact category is $1$-Segal if and only if all short exact sequence in $\mathcal{C}$ are split.
\end{Rem}

When $\mathcal{C}$ is exact, the morphism $F_{\bullet}: \mathcal{R}_{\bullet}(\mathcal{C}) \rightarrow \mathcal{S}_{\bullet}(\mathcal{C})$ is closely related to the higher Grothendieck-Witt theory of $\mathcal{C}$. Indeed, $GW_i(\mathcal{C})$ is the $i$th homotopy group of the homotopy fibre over $0 \in \mathcal{C}$ of the map $\vert BF_{\bullet} \vert: \vert B\mathcal{R}_{\bullet}(\mathcal{C}) \vert  \rightarrow \vert B\mathcal{S}_{\bullet}(\mathcal{C}) \vert$ \cite{schlichting2010b}.

\section{Applications to Hall algebras}

\subsection{Categorical Hall algebra representations}
\label{sec:catHallAlg}

After reviewing the relationship between $2$-Segal spaces and categorical Hall algebras \cite{dyckerhoff2012b}, in this section we construct from a relative $2$-Segal space a categorical representation of the Hall algebra of the base.

Fix a combinatorial model category $\mathcal{C}$, such as $\mathbb{S}$ with its Kan model structure or $\mathsf{Grpd}$ with its Bousfield model structure; the latter will be the main source of examples. While the Quillen model structure on $\mathsf{Top}$ is not combinatorial, it is Quillen equivalent to $\mathbb{S}$ so that the results of this section, for all intents and purposes, also apply to simplicial spaces. We write $\pt \in \mathcal{C}$ for the final object.

Let $\mathsf{Span}(\mathcal{C})$ be the bicategory of spans in $\mathcal{C}$. Objects of $\mathsf{Span}(\mathcal{C})$ are simply objects of $\mathcal{C}$ while $1$-morphisms $A \rightarrow B$ in $\mathsf{Span}(\mathcal{C})$ are spans from $A$ to $B$, that is, diagrams in $\mathcal{C}$ of the form
\[
A \leftarrow X \rightarrow B.
\]
Composition of spans is given by homotopy pullback. A $2$-morphism in $\mathsf{Span}(\mathcal{C})$ is a homotopy commutative diagram
\[
\xymatrixrowsep{1.5em}
\xymatrixcolsep{1.5em}
\xymatrix{
& X \ar[rd] \ar[ld] \ar[dd] & \\
A & & B \\
& X^{\prime} \ar[ru] \ar[lu] &
}
\]
in $\mathcal{C}$. Give $\mathsf{Span}(\mathcal{C})$ the Cartesian monoidal structure.

Associated to the bicategory $\mathsf{Span}(\mathcal{C})$ is the category $\mathsf{Span}(\mathcal{C})^{\sim}$, having the same objects as $\mathsf{Span}(\mathcal{C})$ but with morphisms being the $2$-isomorphism classes spans. The constructions of this section are simplified if one uses $\mathsf{Span}(\mathcal{C})^{\sim}$ in place of $\mathsf{Span}(\mathcal{C})$, the downside being that less of the (relative) $2$-Segal structure is used.

\begin{Def}[{\cite[\S 8.1]{dyckerhoff2012b}}]
A transfer structure on $\mathcal{C}$ is a pair $(\mathcal{S}, \mathcal{P})$ consisting of collections of morphisms $\mathcal{S}$ and $\mathcal{P}$ in $\mathcal{C}$ called smooth and proper, respectively, which satisfy the following properties:
\begin{enumerate}
\item Both collections $\mathcal{S}$ and $\mathcal{P}$ are closed under composition.

\item Given a homotopy Cartesian diagram
\begin{equation}
\begin{gathered}
\label{eq:pushPullDiag}
\xymatrix{
X \ar[d]_{p} \ar[r]^{s} & Y \ar[d]^{p^{\prime}} \\
X^{\prime} \ar[r]_{s^{\prime}} & Y^{\prime}
}
\end{gathered}
\end{equation}
in $\mathcal{C}$ with $s^{\prime} \in \mathcal{S}$ and $p^{\prime} \in \mathcal{P}$, then $s \in \mathcal{S}$ and $p \in \mathcal{P}$.
\end{enumerate}
\end{Def}

\begin{Ex}
The pair $(\mathcal{S}, \mathcal{P})=(\mathcal{C},\mathcal{C})$ is called the trivial transfer structure on $\mathcal{C}$.
\end{Ex}

Fix a transfer structure $(\mathcal{S}, \mathcal{P})$. Spans of the form $A \xleftarrow[]{s} X \xrightarrow[]{p} B$ with $s \in \mathcal{S}$ and $p \in \mathcal{P}$ are called $(\mathcal{S}, \mathcal{P})$-admissible and form a subbicategory $\mathsf{Span}(\mathcal{S}, \mathcal{P}) \subset \mathsf{Span}(\mathcal{C})$.

\begin{Thm}[{\cite[Proposition 8.1.7]{dyckerhoff2012b}}]
\label{thm:univHallAlg}
Let $X_{\bullet}$ be a $2$-Segal object of $\mathcal{C}$. Assume that the span
\begin{equation}
\label{eq:multSpan}
m_X = \left\{ X_{\{ 0,1\}} \times X_{\{1,2\}} \xleftarrow[]{(\partial_2, \partial_0)} X_{\{ 0,1,2\}} \xrightarrow[]{\partial_1} X_{\{ 0,2 \}} \right\} \in \Hom_{\mathsf{Span}(\mathcal{C})}(X_1 \otimes X_1, X_1)
\end{equation}
is $(\mathcal{S}, \mathcal{P})$-admissible. Then $(X_1, m_X)$ is a semigroup in $\mathsf{Span}(\mathcal{S},\mathcal{P})$. Moreover, if $X_{\bullet}$ is unital and the span
\[
I_X = \left\{ \pt \xleftarrow[]{\mathsf{can}}  X_0 \xrightarrow[]{s_0} X_1 \right\} \in \Hom_{\mathsf{Span}(\mathcal{C})}(\pt, X_1)
\]
is $(\mathcal{S}, \mathcal{P})$-admissible, then $(X_1, m_X, I_X)$ is a monoid in $\mathsf{Span}(\mathcal{S},\mathcal{P})$.
\end{Thm}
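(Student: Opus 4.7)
\bigskip

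\textbf{Proof proposal.} The plan is to read off the semigroup and monoid axioms directly from the $2$-Segal and unital $2$-Segal conditions, once we have identified the composite spans as iterated homotopy pullbacks of face maps of $X_\bullet$.

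First I would verify that $m_X$ is a well-defined $1$-morphism in $\mathsf{Span}(\mathcal{S},\mathcal{P})$, which is built into the admissibility hypothesis. To establish associativity I would compare the two iterated compositions $m_X \circ (m_X \otimes \mathbf{1}_{X_1})$ and $m_X \circ (\mathbf{1}_{X_1} \otimes m_X)$ as spans $X_1 \otimes X_1 \otimes X_1 \to X_1$ in $\mathsf{Span}(\mathcal{C})$. Unpacking the definition, the composite $m_X \circ (m_X \otimes \mathbf{1}_{X_1})$ has apex the homotopy pullback
\[
\bigl(X_{\{0,1,2\}} \times X_{\{2,3\}}\bigr) \times^R_{X_{\{0,2\}} \times X_{\{2,3\}}} X_{\{0,2,3\}} \;\simeq\; X_{\{0,1,2\}} \times^R_{X_{\{0,2\}}} X_{\{0,2,3\}},
\]
with the evident legs to $X_{\{0,1\}} \times X_{\{1,2\}} \times X_{\{2,3\}}$ and $X_{\{0,3\}}$ built out of face maps. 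Symmetrically, the composite $m_X \circ (\mathbf{1}_{X_1} \otimes m_X)$ has apex $X_{\{0,1,3\}} \times^R_{X_{\{1,3\}}} X_{\{1,2,3\}}$. By the $2$-Segal equivalences \eqref{eq:assCond}, both of these apices receive weak equivalences from $X_{\{0,1,2,3\}} = X_3$, and one checks directly that under these equivalences the two legs of each span are identified with the face/degeneracy maps induced from $X_3$. This yields the required $2$-isomorphism between the two iterated compositions in $\mathsf{Span}(\mathcal{C})$.

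Next I would confirm that both composite spans actually live in $\mathsf{Span}(\mathcal{S},\mathcal{P})$, so that the associativity $2$-isomorphism is a $2$-isomorphism in $\mathsf{Span}(\mathcal{S},\mathcal{P})$. This is a bookkeeping step: the tensor products $m_X \otimes \mathbf{1}_{X_1}$ and $\mathbf{1}_{X_1} \otimes m_X$ are admissible because $\mathcal{S}$ and $\mathcal{P}$ are closed under products with identities, and the composition of admissible spans is admissible because smooth (resp. proper) morphisms pull back to smooth (resp. proper) morphisms along the homotopy Cartesian square defining the composite, then compose with the other admissible leg, using the two defining properties of a transfer structure. Thus $(X_1, m_X)$ is a semigroup in $\mathsf{Span}(\mathcal{S},\mathcal{P})$.

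For the monoid statement, assuming $X_\bullet$ is unital and $I_X$ is admissible, I would verify the left and right unit laws. The left composition $m_X \circ (I_X \otimes \mathbf{1}_{X_1})$ has apex
\[
\bigl(X_0 \times X_{\{1,2\}}\bigr) \times^R_{X_{\{0,1\}} \times X_{\{1,2\}}} X_{\{0,1,2\}} \;\simeq\; X_{\{0\}} \times^R_{X_{\{0,1\}}} X_{\{0,1,2\}},
\]
and the unital $2$-Segal condition at $i=0$, $n=2$ supplies a weak equivalence from $X_{\{1,2\}}$ to this homotopy pullback, under which the two legs of the composite span become the identity maps $X_1 \leftarrow X_1 \to X_1$. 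The right unit law is analogous, using $i=1$, $n=2$. Admissibility of the compositions is again automatic from the transfer structure axioms. The main obstacle I anticipate is a notational one: keeping track of which face/degeneracy map plays which role in identifying the homotopy pullbacks with the standard composite spans, and ensuring that the $2$-isomorphisms of spans are coherent (for instance, that the associativity $2$-isomorphism given by $X_3$ is compatible with the unit $2$-isomorphisms), though for the semigroup/monoid assertion only the existence of these $2$-isomorphisms is required.
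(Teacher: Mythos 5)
Your identification of the composite spans is correct: both iterated compositions have apices weakly equivalent to $X_3$ via the lowest $2$-Segal maps \eqref{eq:assCond}, the legs match up with the appropriate face maps, and the unit laws follow from the unital conditions at $n=2$, $i=0,1$ exactly as you describe. The admissibility bookkeeping is also handled correctly. This is the same strategy the paper follows for the relative analogue, Theorem \ref{thm:univHallMod} (the theorem you are proving is itself only cited from Dyckerhoff--Kapranov, so that proof is the closest in-paper comparison).

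There is, however, one genuine gap, and it is precisely the point you wave away at the end: the claim that ``for the semigroup/monoid assertion only the existence of these $2$-isomorphisms is required.'' A semigroup (resp.\ monoid) in the bicategory $\mathsf{Span}(\mathcal{S},\mathcal{P})$ is an object with a multiplication whose associator satisfies the pentagon identity (resp.\ together with unitors satisfying the triangle identity); existence of an associator alone only gives a semigroup in the homotopy category $\mathsf{Span}(\mathcal{C})^{\sim}$, which the paper explicitly singles out as the simplified setting that ``uses less of the $2$-Segal structure.'' Verifying the pentagon is where the higher $2$-Segal conditions enter: one compares the five composite spans associated to the triangulations of the pentagon $P_4$, and the coherence of the associator follows because every edge in the poset of polyhedral subdivisions of $P_4$ is a weak equivalence, all subdivisions being dominated by $X_4$. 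This is the exact analogue of the verification the paper carries out for Theorem \ref{thm:univHallMod} via the poset of symmetric subdivisions of $P_{\mathbf{3}}$ (Figure \ref{fig:macCoher} and diagram \eqref{eq:macCoher}), and the compatibility of the unitors with the associator (the analogue of diagram \eqref{eq:unitCompat}) likewise requires an argument using the unital conditions at $n=3$. Your proof is complete only once this coherence step is supplied; without it you have proved the statement for $\mathsf{Span}(\mathcal{C})^{\sim}$ but not for $\mathsf{Span}(\mathcal{S},\mathcal{P})$.
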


The pair $\mathcal{H}(X_{\bullet})= (X_1, m_X)$ is called the $(\mathcal{S},\mathcal{P})$-universal Hall algebra of $X_{\bullet}$. Slightly abusively, the associativity isomorphism $a_X$ for $m_X$ is omitted from the notation. Note that at the universal level the transfer structure $(\mathcal{S},\mathcal{P})$ simply determines the subbicategory $\mathsf{Span}(\mathcal{S}, \mathcal{P}) \subset \mathsf{Span}(\mathcal{C})$ to which $\mathcal{H}(X_{\bullet})$ belongs. In particular, we can always take the trivial transfer structure. A more important role is played by the transfer structure when passing from the universal Hall algebra to its concrete realizations. To explain this procedure, fix a monoidal category $(\mathcal{V},\otimes, \mathbf{1}_{\mathcal{V}})$.

\begin{Def}[{\cite[\S 8.1]{dyckerhoff2012b}}]
A $\mathcal{V}$-valued theory with transfer on $\mathcal{C}$ is the data of
\begin{enumerate}
\item a transfer structure $(\mathcal{S}, \mathcal{P})$ on $\mathcal{C}$,
\item a contravariant functor $(-)^* : \mathcal{S} \rightarrow \mathcal{V}$ and a covariant functor $(-)_* : \mathcal{P} \rightarrow \mathcal{V}$ with common values on objects, denoted by $\mathfrak{h}$, and which map weak equivalences to isomorphisms, and

\item an isomorphism $\mathfrak{h}(\pt) \simeq \mathbf{1}_{\mathcal{V}}$ and multiplicativity data for $\mathfrak{h}$, that is, natural maps 
\[
\mathfrak{h}(X) \otimes \mathfrak{h}(Y) \rightarrow \mathfrak{h}(X \otimes Y)
\]
which satisfy associativity and unitality conditions
\end{enumerate}
such that for a homotopy Cartesian diagram of the form \eqref{eq:pushPullDiag} with $s, s^{\prime} \in \mathcal{S}$ and $p,p^{\prime} \in \mathcal{P}$, we have $
p^{\prime}_* \circ s^* = s^{\prime *} \circ p_*$.
\end{Def}

Applying a $\mathcal{V}$-valued theory with transfer $\mathfrak{h}$ to the $(\mathcal{S},\mathcal{P})$-universal Hall algebra $\mathcal{H}(X_{\bullet})$ gives a semigroup in $\mathcal{V}$, denoted by
\[
\mathcal{H}(X_{\bullet}; \mathfrak{h}) = (\mathfrak{h}(X_1), \partial_{1*} \circ (\partial_2 \times \partial_0)^*).
\]

Dually, if the opposite span $m^{\mathsf{op}}_X \in  \Hom_{\mathsf{Span}(\mathcal{C})}(X_1, X_1 \otimes X_1)$ is $(\mathcal{S}, \mathcal{P})$-admissible, then $(X_1,m^{\mathsf{op}}_X)$ is a cosemigroup in $\mathsf{Span}(\mathcal{S},\mathcal{P})$, called the $(\mathcal{S}, \mathcal{P})$-universal Hall coalgebra, and passing to theories with transfer gives cosemigroups in monoidal categories. These statements can be proved in the same way as Theorem \ref{thm:univHallAlg}.

\begin{Ex}
Let $\mathcal{S}_{\bullet}(\mathcal{C})$ be the Waldhausen groupoid of an essentially small exact category $\mathcal{C}$. Suppose that $\mathcal{C}$ is finitary in the sense that $\Ext_{\mathsf{env}(\mathcal{C})}^n(U,V)$, $n=0,1$, is finite for all $U,V \in \mathcal{C}$. Here $\mathsf{env}(\mathcal{C})$ denotes the abelian envelope of $\mathcal{C}$ \cite{quillen1973}. Then $\mathcal{H}(\mathcal{S}_{\bullet}(\mathcal{C}))$ categorifies the Hall algebra of $\mathcal{C}$, as defined in various contexts in \cite{ringel1990}, \cite{hubery2006}, \cite{schiffmann2012b}. To see this, let $k$ be a field of characteristic zero. Consider the transfer structure on $\mathsf{Grpd}$ in which $\mathcal{S}$ and $\mathcal{P}$ are the collections of weakly proper and locally proper morphisms, respectively (see \cite[\S 8.2]{dyckerhoff2012b}). A $\mathsf{Vect}_k$-valued theory with transfer $\mathfrak{F}_0$ on $\mathsf{Grpd}$ is then be defined by taking finitely supported $k$-valued functions which are constant on isomorphism classes. The resulting $k$-algebra $\mathcal{H}(\mathcal{S}_{\bullet}(\mathcal{C}); \mathfrak{F}_0)$ is the standard Hall algebra of $\mathcal{C}$. Explicitly, $\mathcal{H}(\mathcal{S}_{\bullet}(\mathcal{C}); \mathfrak{F}_0)$ is the $k$-vector space with basis $\{\mathbf{1}_U\}_{U \in \pi_0(\mathcal{S}_1(\mathcal{C}))}$ and multiplication
\[
\mathbf{1}_U \cdot \mathbf{1}_V = \sum_{W \in \pi_0(\mathcal{S}_1(\mathcal{C}))}  F^W_{U,V} \mathbf{1}_W,
\]
where $F^W_{U,V}$ is the number of admissible subobjects of $W$ which are isomorphic to $U$ and have quotient isomorphic to $V$.

For certain categories $\mathcal{C}$, such as $\mathsf{Coh}(X)$ for a smooth projective variety $X$ or $\mathsf{mod}(A)$ for a finitely generated algebra $A$, the $\mathcal{S}_{\bullet}$-construction defines a $2$-Segal simplicial Artin stack. This allows to recover the perverse sheaf theoretic \cite{lusztig1991}, motivic \cite{joyce2007}, \cite{kontsevich2008} and cohomological \cite{kontsevich2011} Hall algebras of $\mathcal{C}$. See \cite[\S 8.5]{dyckerhoff2012b}.
\end{Ex}

By using relative $2$-Segal spaces we can easily modify the above results to construct module objects.

\begin{Thm}
\label{thm:univHallMod}
Let $X_{\bullet}$ be as in Theorem \ref{thm:univHallAlg} and let $F_{\bullet} : Y_{\bullet} \rightarrow X_{\bullet}$ be a relative $2$-Segal object of $\mathcal{C}$. Assume that the left action span
\begin{equation}
\label{eq:leftModuleSpan}
\mu^l_Y = \left\{ X_{\{ 0,1\}} \times Y_{\{ 1 \}} \xleftarrow[]{(F_1 , \partial_0)} Y_{\{ 0,1\}} \xrightarrow[]{\partial_1}  Y_{\{ 1 \}}
\right\} \in \Hom_{\mathsf{Span}(\mathcal{C})}(X_1 \otimes Y_0, Y_0)
\end{equation}
is $(\mathcal{S}, \mathcal{P})$-admissible. Then $(Y_0, \mu^l_Y)$ is a left $(X_1, m_X)$-module in $\mathsf{Span}(\mathcal{S},\mathcal{P})$. Moreover, if $F_{\bullet}$ is unital, then $(Y_0, \mu^l_Y)$ is a left $(X_1, m_X,I_X)$-module. Analogous statements hold for the right action span
\[
\mu^r_Y = \left\{ Y_{\{ 0 \}} \times X_{\{ 0,1\}} \xleftarrow[]{\partial_1 \times F_1} Y_{\{ 0,1\}} \xrightarrow[]{\partial_0}  Y_{\{ 1 \}}
\right\} \in \Hom_{\mathsf{Span}(\mathcal{C})}(Y_0 \otimes X_1, Y_0).
\]
\end{Thm}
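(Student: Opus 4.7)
The plan is to mimic the argument of Theorem \ref{thm:univHallAlg} — i.e., \cite[Proposition 8.1.7]{dyckerhoff2012b} — in the relative setting: associativity of $m_X$ is provided by Theorem \ref{thm:univHallAlg}, while the genuinely new content is the module associativity $2$-isomorphism, which I will assemble directly from the relative $2$-Segal and $1$-Segal conditions on $F_{\bullet}$. Since the right-action case is completely dual, I will treat only the left action.

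The key step is to compute the two composite spans whose equivalence encodes the module associativity, namely $\mu^l_Y \circ (m_X \otimes \id_{Y_0})$ and $\mu^l_Y \circ (\id_{X_1} \otimes \mu^l_Y)$, as spans from $X_1 \otimes X_1 \otimes Y_0$ to $Y_0$. Taking homotopy pullbacks and cancelling spectator factors, the first reduces to a span with apex $X_{\{0,1,2\}} \times^R_{X_{\{0,2\}}} Y_{\{0,2\}}$, built out of the middle leg of $m_X$ and the left leg of $\mu^l_Y$, and the second reduces to a span with apex $Y_{\{0,1\}} \times^R_{Y_{\{1\}}} Y_{\{1,2\}}$; each is equipped with its canonical external legs induced by face maps. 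By the relative $2$-Segal condition \eqref{eq:modAssCond} at $n=2$, $i=0$, $j=2$ and by the $1$-Segal condition on $Y_{\bullet}$, both apex objects are canonically weakly equivalent to $Y_{\{0,1,2\}}$ in a way compatible with those external legs, and this common identification supplies the required $2$-isomorphism as a zig-zag of weak equivalences of spans with apex $Y_{\{0,1,2\}}$. Admissibility of the new legs in $\mathsf{Span}(\mathcal{S},\mathcal{P})$ follows from the stability of $\mathcal{S}$ and $\mathcal{P}$ under homotopy base change, exactly as in the semigroup case.

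Higher coherence — the pentagon identity relating the iterated associators of the module action and of $m_X$ — will be verified by the same strategy one simplicial level up: both paths around the pentagon identify, via the $1$-Segal and relative $2$-Segal decompositions of $Y_{\{0,1,2,3\}}$ together with the associator for $(X_1,m_X)$, with a common span of apex $Y_{\{0,1,2,3\}}$. For the unital clause, the composition $\mu^l_Y \circ (I_X \otimes \id_{Y_0})$ reduces to a span with apex $X_{\{0\}} \times^R_{X_{\{0,1\}}} Y_{\{0,1\}}$, and the unital relative $2$-Segal condition at $n=1$, $i=0$ says exactly that $s_0$ makes this apex weakly equivalent to $Y_0$ compatibly with both legs, so the composite is equivalent to the identity span. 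The main obstacle is not conceptual but organisational: one must arrange the homotopy pullbacks defining the various span compositions so that the relative $2$-Segal equivalence $Y_{\{0,1,2\}} \simeq X_{\{0,1,2\}} \times^R_{X_{\{0,2\}}} Y_{\{0,2\}}$ appears in the correct slot. Once the diagrams are set up correctly, the remaining verifications are routine diagram chases analogous to those in \cite[\S 8.1]{dyckerhoff2012b}.
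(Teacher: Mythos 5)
Your plan is correct and follows essentially the same route as the paper: both composite spans are identified, after cancelling spectator factors, with a common span of apex $Y_{\{0,1,2\}}$ via the relative $2$-Segal equivalence $Y_{\{0,1,2\}}\xrightarrow{\sim} X_{\{0,1,2\}}\times^R_{X_{\{0,2\}}}Y_{\{0,2\}}$ and the $1$-Segal equivalence $Y_{\{0,1,2\}}\xrightarrow{\sim} Y_{\{0,1\}}\times^R_{Y_{\{1\}}}Y_{\{1,2\}}$, the module associator is the resulting zig-zag, and the unital clause follows from the unital relative $2$-Segal condition. Your pentagon verification "one simplicial level up" with common apex $Y_{\{0,1,2,3\}}$ is exactly what the paper makes explicit via the poset of symmetric polyhedral subdivisions of the octagon $P_{\mathbf{3}}$, all of which are refined by the trivial subdivision corresponding to $Y_3$.
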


\begin{proof}
We will prove the theorem for the left action span. It follows directly from the definitions that we have
\[
\mu^l_Y \circ (m_X \otimes \mathbf{1}_{Y_0}) = \left\{
\begin{gathered}
\begin{tikzpicture}
  \matrix (m) [matrix of math nodes,row sep=1.3em,column sep=0.05em] {
    \left( X_{\{0,1,2\}} \times Y_{\{2\}} \right) \times^R_{X_{\{0,2\}} \times Y_{\{2\}}} Y_{\{0,2\}}  & Y_{\{0,2\}} & Y_{\{0 \}} \\
     X_{\{0,1,2\}}  \times Y_{\{2\}} & X_{\{0,2\}}  \times Y_{\{2\}} & \\
    X_{\{0,1\}} \times X_{\{1,2\}} \times Y_{\{2\}} & & \\};     
  \path[-stealth]
    (m-1-1) edge  (m-1-2);    
  \path[-stealth]
    (m-1-1) edge  (m-2-1);               
  \path[-stealth]
    (m-1-2) edge  (m-1-3);    
   \path[-stealth]
    (m-1-2) edge (m-2-2);   
      \path[-stealth]
    (m-2-1) edge (m-2-2);    
      \path[-stealth]
    (m-2-1) edge (m-3-1);
\end{tikzpicture}
\end{gathered}
\right\}
\]
and
\[
\mu^l_Y \circ ( \mathbf{1}_{X_1} \otimes \mu^l_Y) = 
\left\{
\begin{gathered}
\begin{tikzpicture}
  \matrix (m) [matrix of math nodes,row sep=1.3em,column sep=0.05em] {
    \left( X_{\{0,1\}} \times Y_{\{1,2\}} \right) \times^R_{X_{\{0,1\}} \times Y_{\{1\}}} Y_{\{0,1\}} & Y_{\{0,1\}} & Y_{\{0 \}} \\
     X_{\{0,1\}}  \times Y_{\{1,2\}} & X_{\{0,1\}}  \times Y_{\{1\}} & \\
    X_{\{0,1\}} \times X_{\{1,2\}} \times Y_{\{2\}} & & \\};     
  \path[-stealth]
    (m-1-1) edge  (m-1-2);    
  \path[-stealth]
    (m-1-1) edge  (m-2-1);               
  \path[-stealth]
    (m-1-2) edge  (m-1-3);    
   \path[-stealth]
    (m-1-2) edge (m-2-2);   
      \path[-stealth]
    (m-2-1) edge (m-2-2);    
      \path[-stealth]
    (m-2-1) edge (m-3-1);
\end{tikzpicture}
\end{gathered}
\right\}
\]
as spans $X_1 \otimes X_1 \otimes Y_0 \rightarrow Y_0$. We claim that both $\mu^l_Y \circ (m_X \otimes \mathbf{1}_{Y_0})$ and $\mu^l_Y \circ ( \mathbf{1}_{X_1} \otimes \mu^l_Y)$ are $2$-isomorphic to the span
\[
\sigma^l = \left\{ X_{\{ 0,1\}} \times X_{\{ 1,2 \}} \times Y_{\{ 2 \}} \xleftarrow[]{(F_{\{0,1\}} \circ \partial_2,  F_{\{1,2\}} \circ \partial_0 , \partial_0 \circ \partial_1)} Y_{\{ 0,1,2 \}} \xrightarrow[]{\partial_1 \circ \partial_2} Y_{\{ 0 \}}
\right\}.
\]
Indeed, the composition
\begin{eqnarray*}
Y_{\{0,1,2\}} & \xrightarrow[]{\alpha_1} & \left( X_{\{0,1,2\}} \times Y_{\{2\}} \right) \times^R_{X_{\{0,2\}} \times Y_{\{2\}}} Y_{\{0,2\}} \\
& \rightarrow & X_{\{0,1,2\}} \times^R_{X_{\{0,2\}} } Y_{\{0,2\}}
\end{eqnarray*}
is a weak equivalence by the relative $2$-Segal condition on $F_{\bullet}$. Since the second functor is a weak equivalence for trivial reasons, it follows that $\alpha_1$ is a weak equivalence and so defines a $2$-isomorphism $\sigma^l \rightarrow \mu^l_Y \circ (m_X \otimes \mathbf{1}_{Y_0})$. Similarly, the composition
\begin{eqnarray*}
Y_{\{0,1,2\}} & \xrightarrow[]{\alpha_2} & \left( X_{\{0,1\}} \times Y_{\{1,2\}} \right) \times^R_{X_{\{0,1\}} \times Y_{\{1\}}} Y_{\{0,1\}} \\
& \rightarrow & Y_{\{1,2\}}  \times^R_{ Y_{\{1\}}} Y_{\{0,1\}}
\end{eqnarray*}
is a weak equivalence by the $1$-Segal condition on $Y_{\bullet}$ and $\alpha_2$ defines a $2$-isomorphism $\sigma^l \rightarrow \mu^l_Y \circ ( \mathbf{1}_{X_1} \otimes \mu^l_Y)$. We claim that the composition
\[
\alpha^l_Y: \mu^l_Y \circ (m_X \otimes \mathbf{1}_{Y_0}) \xrightarrow[]{\alpha_1^{-1}} \sigma^l \xrightarrow[]{\alpha_2} \mu^l_Y \circ ( \mathbf{1}_{X_1} \otimes \mu^l_Y)
\]
is a module associator for the left action of $(X_1,m_X)$ on $Y_0$ determined by $\mu^l_Y$. Without the unital assumption, this amounts to verifying that $\alpha^l_Y$ satisfies module theoretic Mac Lane coherence; see \cite[\S 2.3]{ostrik2003} or diagram \eqref{eq:macCoher} below. We will verify this using the setting of Section \ref{sec:symmSubdivisions}. The poset of the 11 symmetric polyhedral subdivisions of the symmetric octagon $P_{\mathbf{3}}$, ordered by refinement, is illustrated in Figure \ref{fig:macCoher}. At the level of the map $F_{\bullet}: Y_{\bullet} \rightarrow X_{\bullet}$, each node $\mathcal{P}$ of Figure \ref{fig:macCoher} defines a span
\[
X_1 \times X_1 \times X_1 \times Y_0 \leftarrow (\Delta^{\mathcal{P}}, F_{\bullet})_R \rightarrow Y_0.
\]
Similarly, each arrow of Figure \ref{fig:macCoher} defines a $2$-isomorphism of spans. The spans associated to the five vertices of the pentagon are precisely the spans appearing in the diagram expressing module theoretic Mac Lane coherence while the composed $2$-isomorphisms along the edges of the pentagon are precisely the arrows in the coherence diagram. It follows that module theoretic Mac Lane coherence holds. Hence $(Y_0,\mu^l_Y)$ is a left $(X_1,m_X)$-module.

\begin{figure}
\begin{center}
\[
\begin{tikzpicture}[scale=0.95]
\node (origin) at (0,0) (origin) {$\begin{tikzpicture}
\node[draw,minimum size=1cm,regular polygon,regular polygon sides=8] (a) {};
\end{tikzpicture}$}; 

\node (P0) at (1*18:3.25cm) { $
\begin{tikzpicture}
\node[draw,minimum size=1cm,regular polygon,regular polygon sides=8] (a) {};

\path[-,font=\scriptsize,color=red]
    (a.corner 2) edge[-]  (a.corner 5);  
\path[-,font=\scriptsize,color=red]
    (a.corner 1) edge[-]  (a.corner 6);    
\path[-,font=\scriptsize,color=red]
    (a.corner 2) edge[-]  (a.corner 4);
\path[-,font=\scriptsize,color=red]
    (a.corner 1) edge[-]  (a.corner 7);
\end{tikzpicture}
$}; 

\node (P2) at (1*18+1*72:3.25cm) {$
\begin{tikzpicture}
\node[draw,minimum size=1cm,regular polygon,regular polygon sides=8] (a) {};

\path[-,font=\scriptsize,color=red]
    (a.corner 1) edge[-]  (a.corner 6);   
\path[-,font=\scriptsize,color=red]
    (a.corner 2) edge[-]  (a.corner 5);
\path[-,font=\scriptsize,color=red]
    (a.corner 3) edge[-]  (a.corner 5);
\path[-,font=\scriptsize,color=red]
    (a.corner 8) edge[-]  (a.corner 6);
\end{tikzpicture}
$};

 \node (P4) at (1*18+2*72:3.25cm) {$
 \begin{tikzpicture}
\node[draw,minimum size=1cm,regular polygon,regular polygon sides=8] (a) {};

\path[-,font=\scriptsize,color=red]
    (a.corner 3) edge[-]  (a.corner 8);
\path[-,font=\scriptsize,color=red]
    (a.corner 3) edge[-]  (a.corner 5);  
\path[-,font=\scriptsize,color=red]
    (a.corner 8) edge[-]  (a.corner 6);
\end{tikzpicture}
$}; 

 \node (P1) at (1*18+3*72:3.25cm) {$
\begin{tikzpicture}
\node[draw,minimum size=1cm,regular polygon,regular polygon sides=8] (a) {};

\path[-,font=\scriptsize,color=red]
    (a.corner 3) edge[-]  (a.corner 8);
\path[-,font=\scriptsize,color=red]
    (a.corner 4) edge[-]  (a.corner 7);
\end{tikzpicture}
$}; 

\node (P3) at (1*18+4*72:3.25cm) {$
\begin{tikzpicture}
\node[draw,minimum size=1cm,regular polygon,regular polygon sides=8] (a) {};

\path[-,font=\scriptsize,color=red]
    (a.corner 4) edge[-]  (a.corner 7);   
\path[-,font=\scriptsize,color=red]
    (a.corner 2) edge[-]  (a.corner 4);    
\path[-,font=\scriptsize,color=red]
    (a.corner 1) edge[-]  (a.corner 7);
\end{tikzpicture}
$}; 

  \path (P0) -- (P2)
  node [midway] (P02) {$
\begin{tikzpicture}
\node[draw,minimum size=1cm,regular polygon,regular polygon sides=8] (a) {};

\path[-,font=\scriptsize,color=red]
    (a.corner 2) edge[-]  (a.corner 5); 
\path[-,font=\scriptsize,color=red]
    (a.corner 1) edge[-]  (a.corner 6);     
\end{tikzpicture}
$}; 

  \path (P2) -- (P4) node [midway] (P24) {$
\begin{tikzpicture}
\node[draw,minimum size=1cm,regular polygon,regular polygon sides=8] (a) {};
\path[-,font=\scriptsize,color=red]
    (a.corner 3) edge[-]  (a.corner 5);
\path[-,font=\scriptsize,color=red]
    (a.corner 6) edge[-]  (a.corner 8); 
\end{tikzpicture}
$}; 

  \path (P4) -- (P1) node [midway] (P14) {$
  \begin{tikzpicture}
\node[draw,minimum size=1cm,regular polygon,regular polygon sides=8] (a) {};

\path[-,font=\scriptsize,color=red]
    (a.corner 3) edge[-]  (a.corner 8);
\end{tikzpicture}
$ };

\path (P1) -- (P3) node [midway] (P13) {$
\begin{tikzpicture}
\node[draw,minimum size=1cm,regular polygon,regular polygon sides=8] (a) {};

\path[-,font=\scriptsize,color=red]
    (a.corner 4) edge[-]  (a.corner 7); 
\end{tikzpicture}
$};

\path (P0) -- (P3) node [midway] (P03) {$
\begin{tikzpicture}
\node[draw,,minimum size=1cm,regular polygon,regular polygon sides=8] (a) {};

\path[-,font=\scriptsize,color=red]
    (a.corner 2) edge[-]  (a.corner 4);    
\path[-,font=\scriptsize,color=red]
    (a.corner 1) edge[-]  (a.corner 7);
\end{tikzpicture}
$}; 
\draw [-latex, thick] (P0) -- (P02);
\draw [-latex, thick] (P2) -- (P02);
\draw [-latex, thick] (P0) -- (origin);
\draw [-latex, thick] (P1) -- (origin);
\draw [-latex, thick] (P2) -- (origin);
\draw [-latex, thick] (P3) -- (origin);
\draw [-latex, thick] (P4) -- (origin);
\draw [-latex, thick] (P02) -- (origin);
\draw [-latex, thick] (P24) -- (origin);
\draw [-latex, thick] (P14) -- (origin);
\draw [-latex, thick] (P13) -- (origin);
\draw [-latex, thick] (P03) -- (origin);
\draw [-latex, thick] (P2) -- (P24); 
\draw [-latex, thick] (P4) -- (P24); 
\draw [-latex, thick] (P4) -- (P14); 
\draw [-latex, thick] (P1) -- (P14);
\draw [-latex, thick] (P1) -- (P13); 
\draw [-latex, thick] (P3) -- (P13);  
\draw [-latex, thick] (P0) -- (P03); 
\draw [-latex, thick] (P3) -- (P03); 
\end{tikzpicture}
\]
\caption{The poset of symmetric polyhedral subdivisions of $P_{\mathbf{3}}$.}
\label{fig:macCoher}
\end{center}
\end{figure}
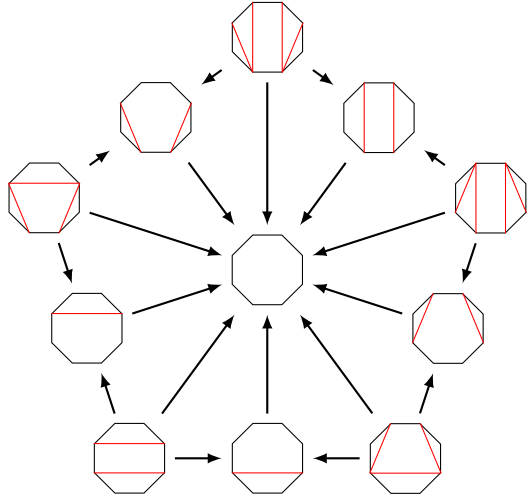

Finally, in the unital setting the action of $I_X$ on $Y_0$ is given by the span
\[
\begin{tikzpicture}
  \matrix (m) [matrix of math nodes,row sep=1.5em,column sep=1.5em,minimum width=1.5em] {
    \left( X_{\{0\}} \times Y_{\{ 1 \}} \right)  \times^R_{X_{\{0,1\}} \times Y_{\{1\}}} Y_{\{0,1\}} & Y_{\{0,1 \}} & Y_{\{0 \}} \\
     X_{\{0\}}  \times Y_{\{1\}} & X_{\{0,1\}}  \times Y_{\{1\}} & \\
    \pt \times Y_{\{1\}} & & \\}; 
  \path[-stealth]
    (m-1-1) edge  (m-1-2);    
  \path[-stealth]
    (m-1-1) edge  (m-2-1);               
  \path[-stealth]
    (m-1-2) edge  (m-1-3);    
  \path[-stealth]
    (m-1-2) edge (m-2-2);    
   \path[-stealth]
    (m-2-1) edge (m-2-2);    
   \path[-stealth]
    (m-2-1) edge (m-3-1);
\end{tikzpicture}
\]
which is $2$-isomorphic to the identity span $Y_0 \xleftarrow[]{\mathbf{1}_{Y_0}} Y_0 \xrightarrow[]{\mathbf{1}_{Y_0}} Y_0$. Indeed, the composition of functors
\begin{eqnarray*}
Y_{\{0\}} & \xrightarrow[]{I_Y} & \left( X_{\{0\}} \times Y_{\{ 1 \}} \right)  \times^R_{X_{\{0,1\}} \times Y_{\{1\}}} Y_{\{0,1\}} \\
& \rightarrow & X_{\{0\}}   \times^R_{X_{\{0,1\}} } Y_{\{0,1\}}
\end{eqnarray*}
is an equivalence by the unital relative $2$-Segal condition while the second functor is trivially an equivalence. To complete the proof we need to verify that $I_Y$ is compatible with the module associator in the sense that diagram \eqref{eq:unitCompat} below commutes. This is a straightforward exercise which can be completed in much the same way as the above verification of Mac Lane coherence.
\end{proof}

The left $\mathcal{H}(X_{\bullet})$-module $\mathcal{M}(Y_{\bullet})=(Y_0,\mu^l_Y)$ is called the $(\mathcal{S},\mathcal{P})$-universal left Hall module of $F_{\bullet}$. From a $\mathcal{V}$-valued theory with transfer $\mathfrak{h}$ we obtain a left $\mathcal{H}(X_{\bullet} ; \mathfrak{h})$-module $\mathcal{M}(Y_{\bullet} ; \mathfrak{h})$ in $\mathcal{V}$. In the same way we get right modules over $\mathcal{H}(X_{\bullet})$ and $\mathcal{H}(X_{\bullet};\mathfrak{h})$. Note that the left and right module structures do not define a bimodule.

\begin{Ex}
Let $X_{\bullet}$ be $2$-Segal object of $\mathcal{C}$. By Proposition \ref{prop:relPathSpace} the right path space $F^{\rhd}_{\bullet}: P^{\rhd}X_{\bullet} \rightarrow X_{\bullet}$ is relative $2$-Segal. Since $F^{\rhd}_n = \partial_{n^{\prime}}$, the spans $\mu^l_{P^{\rhd}X}$ and $m_X$ are equal. Hence $\mathcal{M}(X^{\rhd}_{\bullet}) = \mathcal{H}(X_{\bullet})$ as left $\mathcal{H}(X_{\bullet})$-modules. On the other hand, the right $\mathcal{H}(X_{\bullet})$-module structure on $\mathcal{M}(X^{\rhd}_{\bullet})$ is closely related to the coproduct $\Delta$ on $\mathcal{H}(X_{\bullet})$. Consider for example $\mathcal{H}(\mathcal{S}_{\bullet}(\mathcal{C}); \mathfrak{F}_0)$ for a finitary exact category $\mathcal{C}$. Recall that the Green bilinear form on $\mathcal{H}(\mathcal{S}_{\bullet}(\mathcal{C}); \mathfrak{F}_0)$, defined by
\[
(\mathbf{1}_U, \mathbf{1}_V) = \frac{\delta_{U,V}}{\vert \Aut(U) \vert},
\]
satisfies the Hopf property $(\mathbf{1}_U \otimes \mathbf{1}_V, \Delta \mathbf{1}_W) = (\mathbf{1}_U \cdot \mathbf{1}_V, \mathbf{1}_W)$. Then the right and left $\mathcal{H}(\mathcal{S}_{\bullet}(\mathcal{C}); \mathfrak{F}_0)$-actions are adjoint with respect to $(-,-)$.
\end{Ex}

\begin{Rems}
\hspace{2em}
\begin{enumerate}
\item Generalizing the previous observation, it is proved in \cite[Proposition 5.6.10]{walde2016} that for a relative $2$-Segal simplicial groupoid $Y_{\bullet} \rightarrow X_{\bullet}$, the right and left $\mathcal{H}(X_{\bullet})$-module structures of $\mathcal{M}(Y_{\bullet})$ are related via a categorical Green bilinear form.

\item The universal Hall algebra $\mathcal{H}(X_{\bullet})$ is itself a $\mathcal{H}(X_{\bullet})$-bimodule which, however, does not arise from a relative $2$-Segal space over $X_{\bullet}$.
\end{enumerate}
\end{Rems}

\begin{Ex}
Let $\mathcal{S}^{\mathsf{st} \mhyphen \mathsf{fr}}_{\bullet}(\mathcal{C}^{Z \mhyphen \mathsf{ss}}_{\phi}) \rightarrow \mathcal{S}_{\bullet}(\mathcal{C}^{Z \mhyphen \mathsf{ss}}_{\phi})$ be the relative $2$-Segal groupoid associated to a stability function $Z$ and a framing $\Phi$ on an abelian category $\mathcal{C}$. The universal left $\mathcal{H}(\mathcal{S}_{\bullet}(\mathcal{C}^{Z \mhyphen \mathsf{ss}}_{\phi}))$-module $\mathcal{M}(\mathcal{S}^{\mathsf{st} \mhyphen \mathsf{fr}}_{\bullet}(\mathcal{C}^{Z \mhyphen \mathsf{ss}}_{\phi}))$ categorifies the stable framed Hall algebra representations of \cite{soibelman2016}, \cite{franzen2016} which appear in framed Donaldson-Thomas theory.
\end{Ex}

\begin{Ex}
Let $\mathcal{R}_{\bullet}(\mathcal{C}) \rightarrow \mathcal{S}_{\bullet}(\mathcal{C})$ be the relative $2$-Segal groupoid associated to a proto-exact category with duality via the $\mathcal{R}_{\bullet}$-construction. For finitary exact $\mathcal{C}$, the universal left $\mathcal{H}(\mathcal{S}_{\bullet}(\mathcal{C}))$-module $\mathcal{M}(\mathcal{R}_{\bullet}(\mathcal{C}))$ categorifies the Hall algebra representations of \cite{mbyoung2016}. Explicitly, $\mathcal{M}(\mathcal{R}_{\bullet}(\mathcal{C}); \mathfrak{F}_0)$ is the $k$-vector space with basis $\{\mathbf{1}_{(M,\psi_M)}\}_{(M,\psi_M) \in \pi_0(\mathcal{R}_0(\mathcal{C}))}$ and left $\mathcal{H}(\mathcal{S}_{\bullet}(\mathcal{C}); \mathfrak{F}_0)$-module structure
\[
\mathbf{1}_U \star \mathbf{1}_{(M,\psi_M)} = \sum_{(N,\psi_N) \in \pi_0(\mathcal{R}_0(\mathcal{C}))} G^N_{U,M} \mathbf{1}_{(N,\psi_N)}.
\]
The structure constant $G^N_{U,M}$ is the number of isotropic subobjects of $N$ which are isomorphic to $U$ and have reduction isometric to $M$.

Similarly, the simplicial stack version of $\mathcal{R}_{\bullet}(\mathcal{C})$ is relative $2$-Segal over $\mathcal{S}_{\bullet}(\mathcal{C})$ and recovers the perverse sheaf theoretic \cite{enomoto2009}, motivic and cohomological \cite{mbyoung2016b} Hall algebra representations which appear in the representation theory of quantum enveloping algebras and orientifold Donaldson-Thomas theory.
\end{Ex}

\subsection{Hall monoidal module categories}

\label{sec:hallMonMod}

We briefly describe a variant of the constructions of Section \ref{sec:catHallAlg}. Let $k$ be a field of characteristic zero. Let $X_{\bullet}$ be a unital $2$-Segal groupoid\footnote{With minor modifications $X_{\bullet}$ could be a simplicial object of a combinatorial model category.} which is admissible with respect to weakly proper and locally proper maps. Write $\mathsf{Fun}_0(X_1)$ for the (abelian) category of finitely supported functors $X_1 \rightarrow  \mathsf{Vect}_k$. The span $m_X$ from Theorem \ref{thm:univHallAlg} induces a bifunctor
\[
\otimes^X = \partial_{1*} \circ (\partial_2 \times \partial_0)^* : \mathsf{Fun}_0(X_1) \otimes \mathsf{Fun}_0(X_1) \rightarrow \mathsf{Fun}_0(X_1).
\]
It is proved in \cite[Theorem 2.49]{dyckerhoff2015b} that the triple $\mathcal{H}^{\otimes}(X_{\bullet}) = (\mathsf{Fun}_0(X_1), \otimes^X, I_X)$ is a monoidal category. More generally, each componentwise cocontinuous monoidal left derivator of groupoids $\mathbb{D}$ determines a monoidal category structure on $\mathbb{D}(X_1)$; see \cite[Theorem 5.0.1.(1)]{walde2016}. The example $\mathcal{H}^{\otimes}(X_{\bullet})$ arises for $\mathbb{D}=\mathsf{Fun}_0(-, \mathsf{Vect}_k)$.

In a similar way, we have the following relative construction. The same statement is proved in \cite[Proposition 5.2.6.(2)]{walde2016}, as a special case of \cite[Theorem 5.0.1.(2)]{walde2016}.

\begin{Thm}
\label{thm:hallMonModCat}
Assume that $F_{\bullet} : Y_{\bullet} \rightarrow X_{\bullet}$ is an admissible unital relative $2$-Segal groupoid. Then the left action span $\mu^l_Y$ defines a bifunctor
\[
\otimes^{Y}= \partial_{1*} \circ (F_1 \times \partial_0)^* : \mathsf{Fun}_0(X_1) \otimes \mathsf{Fun}_0(Y_0) \rightarrow \mathsf{Fun}_0(Y_0)
\]
which gives $\mathcal{M}^{\otimes}(Y_{\bullet}) = (\mathsf{Fun}_0(Y_0), \otimes^{Y})$ the structure of a left $\mathcal{H}^{\otimes}(X_{\bullet})$-module category in the sense of \cite{ostrik2003}.
\end{Thm}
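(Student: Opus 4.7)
The plan is to mimic, in the module setting, the construction that establishes the monoidal category structure on $\mathcal{H}^{\otimes}(X_{\bullet})$ in \cite[Theorem 2.49]{dyckerhoff2015b}. Explicitly, I would build the module associator and the unit constraint as natural isomorphisms assembled from Beck--Chevalley base change squares supplied by the $1$-Segal and relative $2$-Segal conditions on $F_{\bullet}$, and then verify the pentagon and triangle coherence using the poset of symmetric polyhedral subdivisions from Section \ref{sec:symmSubdivisions}.

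First I would record the pullback/pushforward formalism. Admissibility of $F_{\bullet}$ means that the relevant face maps are weakly proper in one coordinate and locally proper in the other, so the functors $(-)^{*}$ and $(-)_{*}$ on $\mathsf{Fun}_0(-,\mathsf{Vect}_k)$ of \cite[\S 8.2]{dyckerhoff2012b} are defined on all the spans in sight. The essential tool is the base change isomorphism $s'^{*} p_{*} \simeq p'_{*} s^{*}$ associated to any homotopy Cartesian square of groupoids with a weakly proper and a locally proper edge; this isomorphism is natural, coherent under pasting of squares, and compatible with the external tensor products on $\mathsf{Fun}_0$.

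Next I would write down the associator. Unwinding the definition, both $(A \otimes^X B) \otimes^Y M$ and $A \otimes^Y (B \otimes^Y M)$ are pushforwards to $Y_0$ of $A \boxtimes B \boxtimes M$ along admissible spans indexed by the derived fibre products appearing in the compositions $\mu^l_Y \circ (m_X \otimes \mathbf{1}_{Y_0})$ and $\mu^l_Y \circ (\mathbf{1}_{X_1} \otimes \mu^l_Y)$ computed in the proof of Theorem \ref{thm:univHallMod}. Each of these is base-change-isomorphic to the single pushforward-pullback along the central span
\[
X_{\{0,1\}} \times X_{\{1,2\}} \times Y_{\{2\}} \xleftarrow{} Y_{\{0,1,2\}} \xrightarrow{} Y_{\{0\}},
\]
via the relative $2$-Segal weak equivalence $Y_{\{0,1,2\}} \xrightarrow{\sim} X_{\{0,1,2\}} \times^R_{X_{\{0,2\}}} Y_{\{0,2\}}$ and the $1$-Segal weak equivalence $Y_{\{0,1,2\}} \xrightarrow{\sim} Y_{\{0,1\}} \times^R_{Y_{\{1\}}} Y_{\{1,2\}}$, respectively. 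Composing these two base change isomorphisms defines the natural module associator $\alpha^Y$. The unit constraint $\lambda^Y : I_X \otimes^Y M \xrightarrow{\sim} M$ is produced in the same manner from the unital relative $2$-Segal map $Y_{\{0\}} \xrightarrow{\sim} X_{\{0\}} \times^R_{X_{\{0,1\}}} Y_{\{0,1\}}$.

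For coherence, every node $\mathcal{P}$ in the poset of symmetric polyhedral subdivisions of $P_{\mathbf{3}}$ (Figure \ref{fig:macCoher}) determines a pushforward-pullback functor $\mathsf{Fun}_0(X_1)^{\otimes 3} \otimes \mathsf{Fun}_0(Y_0) \to \mathsf{Fun}_0(Y_0)$ obtained by iterated $*$-pullback along the face maps prescribed by $\mathcal{P}$ and $*$-pushforward along $Y_{\{0,\dots,3\}} \to Y_{\{0\}}$; each refinement arrow gives a base change natural isomorphism. The five vertices of the outer pentagon correspond exactly to the five objects in the pentagon coherence diagram
\begin{equation}
\label{eq:macCoher}
\begin{tikzcd}[column sep=tiny]
((A \otimes^X B) \otimes^X C) \otimes^Y M \arrow{r} \arrow{d} & (A \otimes^X (B \otimes^X C)) \otimes^Y M \arrow{d} \\
(A \otimes^X B) \otimes^Y (C \otimes^Y M) \arrow{dr} & A \otimes^Y ((B \otimes^X C) \otimes^Y M) \arrow{d} \\
& A \otimes^Y (B \otimes^Y (C \otimes^Y M)),
\end{tikzcd}
\end{equation}
and the five outer arrows factor as composites of refinement base change isomorphisms through the central node. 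Commutativity of \eqref{eq:macCoher} then reduces to the elementary fact that pasting of base change squares is coherent. The triangle axiom
\begin{equation}
\label{eq:unitCompat}
\begin{tikzcd}[column sep=small]
(A \otimes^X I_X) \otimes^Y M \arrow{rr} \arrow{dr} & & A \otimes^Y (I_X \otimes^Y M) \arrow{dl} \\
& A \otimes^Y M &
\end{tikzcd}
\end{equation}
is handled analogously using unital degeneracies.

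The main obstacle is the coherence verification: the associator and unit are built as composites of base change isomorphisms, and one must ensure that all such composites agree on the nose. The polyhedral subdivision formalism of Proposition \ref{prop:symmDecomp} organises these composites into a diagram of base change squares whose commutativity is automatic from the Beck--Chevalley coherence established at the outset, so once the framework is laid down the pentagon and triangle axioms follow without additional input beyond the relative $2$-Segal (resp.\ unital relative $2$-Segal) property of $F_{\bullet}$.
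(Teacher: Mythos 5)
Your proposal is correct and follows essentially the same route as the paper: the paper explicitly omits this proof, deferring to the argument for Theorem \ref{thm:univHallMod}, which builds the module associator from the relative $2$-Segal and $1$-Segal weak equivalences and checks module-theoretic Mac Lane coherence and unit compatibility via the poset of symmetric polyhedral subdivisions of $P_{\mathbf{3}}$ in Figure \ref{fig:macCoher}. Your additional attention to the Beck--Chevalley base change isomorphisms is exactly what the passage from the universal span-level statement to $\mathsf{Fun}_0$ via the theory with transfer requires, so nothing is missing.
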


\begin{proof}
The proof is very similar to that Theorem \ref{thm:univHallMod} and is therefore omitted.
\end{proof}

We give two related instances of Theorem \ref{thm:hallMonModCat}.

\begin{Ex}
Let $\mathsf{Vect}_{\mathbb{F}_1}$ be the proto-exact category of finite dimensional vector spaces over $\mathbb{F}_1$. Objects of $\mathsf{Vect}_{\mathbb{F}_1}$ are finite pointed sets and morphisms are partial bijections. Let $\mathsf{Vect}^{\mathsf{sk}}_{\mathbb{F}_1} \subset \mathsf{Vect}_{\mathbb{F}_1}$ be the skeleton of standard ordinals. Writing $\mathfrak{S}_n$ for the symmetric group on $n$ letters, we have an equivalence of groupoids
\[
\mathcal{S}_1(\mathsf{Vect}^{\mathsf{sk}}_{\mathbb{F}_1}) \simeq \bigsqcup_{n \geq 0} B \mathfrak{S}_n.
\]
Objects of $\mathcal{H}^{\otimes}(\mathcal{S}_{\bullet}(\mathsf{Vect}^{\mathsf{sk}}_{\mathbb{F}_1}))$ (henceforth denoted by $\mathcal{H}^{\otimes}(\mathsf{Vect}^{\mathsf{sk}}_{\mathbb{F}_1})$) are thus sequences of finite dimensional representations of symmetric groups over $k$, only finitely many of which are non-trivial. The monoidal product is induction of representations. Using the results of \cite{macdonald1980} it follows that $\mathcal{H}^{\otimes}(\mathsf{Vect}^{\mathsf{sk}}_{\mathbb{F}_1})$ is equivalent to the category $\mathbf{P}$ of polynomial functors $\mathsf{Vect}_k \rightarrow \mathsf{Vect}_k$ \cite[\S 2.5.1]{dyckerhoff2015b}.

The functor $P=\Hom_{\mathsf{Vect}_{\mathbb{F}_1}}(-, \{*,1\})$ defines a strict exact duality structure on $\mathsf{Vect}_{\mathbb{F}_1}$. Note that each object of $\mathsf{Vect}_{\mathbb{F}_1}$ is canonically isomorphic to its dual. In particular, $P$ preserves $\mathsf{Vect}^{\mathsf{sk}}_{\mathbb{F}_1}$. A symmetric form on $\mathbb{F}_1^n$ is an element $\pi \in \mathfrak{S}_n$ which squares to the identity; conjugate such elements determine isometric symmetric forms. It follows that symmetric forms on $\mathbb{F}_1^n$ are determined uniquely by their Witt index $0 \leq w \leq \lfloor \frac{n}{2} \rfloor$, the number of $2$-cycles of any representative $\pi$. The isometry group of $\pi$, which is its centralizer in $\mathfrak{S}_n$, is isomorphic to $(\mathbb{Z}_2 \wr \mathfrak{S}_w) \times \mathfrak{S}_{n-2w}$. We therefore obtain an equivalence of groupoids
\[
\mathcal{R}_0(\mathsf{Vect}^{\mathsf{sk}}_{\mathbb{F}_1}) \simeq \bigsqcup_{w \geq 0} \bigsqcup_{d \geq 0} B ((\mathbb{Z}_2 \wr \mathfrak{S}_w) \times \mathfrak{S}_d)
\]
and we see that objects of $\mathcal{M}^{\otimes}(\mathsf{Vect}^{\mathsf{sk}}_{\mathbb{F}_1})$ are finite sequences of representations of groups of the form $(\mathbb{Z}_2 \wr \mathfrak{S}_w) \times \mathfrak{S}_d$. The left $\mathcal{H}^{\otimes}(\mathsf{Vect}^{\mathsf{sk}}_{\mathbb{F}_1})$-action on $\mathcal{M}^{\otimes}(\mathsf{Vect}^{\mathsf{sk}}_{\mathbb{F}_1})$ is induction of representations along subgroups of the form
\[
\mathfrak{S}_n \times \left( (\mathbb{Z}_2 \wr \mathfrak{S}_w) \times \mathfrak{S}_d \right) \leq (\mathbb{Z}_2 \wr \mathfrak{S}_{n+w}) \times \mathfrak{S}_d.
\]
From this description we see that
\[
\mathcal{M}^{\otimes}(\mathsf{Vect}^{\mathsf{sk}}_{\mathbb{F}_1}) = \bigoplus_{d=0}^{\infty} \mathcal{M}^{\otimes}(\mathsf{Vect}^{\mathsf{sk}}_{\mathbb{F}_1};d)
\]
as $\mathcal{H}^{\otimes}(\mathsf{Vect}^{\mathsf{sk}}_{\mathbb{F}_1})$-modules, the index $d$ labelling the fixed difference between the dimension and twice the Witt index. Moreover, we have
\[
\mathcal{M}^{\otimes} (\mathsf{Vect}^{\mathsf{sk}}_{\mathbb{F}_1};d) \simeq \mathcal{M}^{\otimes} (\mathsf{Vect}^{\mathsf{sk}}_{\mathbb{F}_1};0) \times \mathbf{P}^d
\]
where $\mathbf{P}^d \subset \mathbf{P}$ is the full subcategory of degree $d$ homogeneous polynomial functors. Using again results of \cite{macdonald1980} we find that $\mathcal{M}^{\otimes}(\mathsf{Vect}^{\mathsf{sk}}_{\mathbb{F}_1};0)$ is equivalent to $\mathbf{P}_{\mathbb{Z}_2}$, the category of polynomial functors $\mathsf{Vect}^{\mathbb{Z}_2 \mhyphen \mathsf{gr}}_k \rightarrow \mathsf{Vect}_k$, where $\mathsf{Vect}^{\mathbb{Z}_2 \mhyphen \mathsf{gr}}_k$ denotes the category of $\mathbb{Z}_2$-graded finite dimensional vector spaces over $k$ and we view $\mathbf{P}_{\mathbb{Z}_2}$ as a $\mathbf{P}$-module category via the forgetful functor $\mathsf{Vect}_k^{\mathbb{Z}_2 \mhyphen \mathsf{gr}} \rightarrow \mathsf{Vect}_k$.

Upon passing to Grothendieck groups we obtain an isomorphism
\[
K_0(\mathcal{M}^{\otimes}(\mathsf{Vect}^{\mathsf{sk}}_{\mathbb{F}_1};d)) \simeq \left( \bigoplus_{w=0}^{\infty} R_k(\mathbb{Z}_2 \wr \mathfrak{S}_w)  \right) \otimes_{\mathbb{Z}} R_k(\mathfrak{S}_d)
\]
as modules over the algebra
\[
K_0(\mathcal{H}^{\otimes}(\mathsf{Vect}^{\mathsf{sk}}_{\mathbb{F}_1})) \simeq \bigoplus_{n=0}^{\infty} R_k(\mathfrak{S}_n),
\]
where $R_k(-)$ denotes the representation ring over $k$. In the case $k= \mathbb{C}$ these modules have been studied in \cite{shelley2016} and are closely related to the work of Zelevinsky \cite{zelevinsky1981}, who studied the algebra structure on $K_0(\mathcal{M}^{\otimes}(\mathsf{Vect}^{\mathsf{sk}}_{\mathbb{F}_1};0))$ arising from induction of representations of wreath symmetric products.
\end{Ex}

We now give a sort of quantization of the previous example.

\begin{Ex}
Let $q$ be a prime power and consider the exact category $\mathsf{Vect}_{\mathbb{F}_q}$. Let $\mathsf{Vect}^{\mathsf{sk}}_{\mathbb{F}_q} \subset \mathsf{Vect}_{\mathbb{F}_q}$ be the skeleton consisting of the vector spaces $\mathbb{F}_q^n$, $n \geq 0$. We have an equivalence of groupoids
\[
S_1(\mathsf{Vect}^{\mathsf{sk}}_{\mathbb{F}_q}) \simeq \bigsqcup_{n \geq 0} B \mathsf{GL}_n(\mathbb{F}_q).
\]
The category $\mathcal{H}^{\otimes}(\mathsf{Vect}^{\mathsf{sk}}_{\mathbb{F}_q})$, whose monoidal product is parabolic induction of finite dimensional representations of $\mathsf{GL}$, has appeared in the work of Joyal and Street \cite{joyal1995}. When $k= \mathbb{C}$ the associated algebra $K_0(\mathcal{H}^{\otimes}(\mathsf{Vect}^{\mathsf{sk}}_{\mathbb{F}_q}))$, which is the complex representation ring of the tower of general linear groups over $\mathbb{F}_q$, has been studied by Green \cite{green1955} and Zelevinsky \cite{zelevinsky1981}.

Assume now that $q$ is odd and fix a sign $s \in \{\pm 1\}$. Take the exact duality structure with $P=\Hom_{\mathsf{Vect}_{\mathbb{F}_q}}(-, \mathbb{F}_q)$ with $\Theta = s \cdot \mathsf{can}$. Symmetric forms in $\mathsf{Vect}_{\mathbb{F}_q}$ are orthogonal or symplectic vector spaces. Identify the dual of $\mathbb{F}_q^n$ with itself via the dual basis. It follows that we have an equivalence of groupoids
\[
R_0(\mathsf{Vect}^{\mathsf{sk}}_{\mathbb{F}_q}) \simeq \bigsqcup_{n \geq 0} \bigsqcup_{\varepsilon \in \mathsf{W}_n} B \mathsf{G}^{\varepsilon}_n
\]
where $\mathsf{W}_n$ is the Witt group of $\mathbb{F}^n_q$ ($\mathbb{Z}_2$ if $s=1$ and trivial if $s=-1$) and $\mathsf{G}^{\varepsilon}_n = \mathsf{O}^{\varepsilon}_n(\mathbb{F}_q)$ if $s=+1$ and $\mathsf{G}_n = \mathsf{Sp}_{2n}(\mathbb{F}_q)$ if $s=-1$. The left $\mathcal{H}^{\otimes}(\mathsf{Vect}^{\mathsf{sk}}_{\mathbb{F}_q})$-action on $\mathcal{M}^{\otimes}(\mathsf{Vect}^{\mathsf{sk}}_{\mathbb{F}_q})$ is given by parabolic induction between $\mathsf{GL}$ and $\mathsf{G}$ representations. When $k= \mathbb{C}$ the $K_0(\mathcal{H}^{\otimes}(\mathsf{Vect}^{\mathsf{sk}}_{\mathbb{F}_q}))$-modules $K_0(\mathcal{M}^{\otimes}(\mathsf{Vect}^{\mathsf{sk}}_{\mathbb{F}_q}))$ were studied by van Leeuwen  \cite{leeuwen1991}, who showed that they are generated by cuspidal elements with respect to the natural Hall comodule structure.

It is natural to regard the module $\mathcal{M}^{\otimes}(\mathsf{Vect}^{\mathsf{sk}}_{\mathbb{F}_q})$ as a sort of $q$-analogue of the modules $\mathcal{M}^{\otimes}(\mathsf{Vect}^{\mathsf{sk}}_{\mathbb{F}_1};d)$, $d =0,1$, consistent with the philosophy that the $q \rightarrow 1$ limit of $\mathsf{G}(\mathbb{F}_q)$ is its Weyl group, namely $\mathbb{Z}_2 \wr \mathfrak{S}_n$ for the symplectic $\mathsf{Sp}_{2n}$ and even orthogonal $\mathsf{O}_{2n}$ groups and $(\mathbb{Z}_2 \wr \mathfrak{S}_n) \times \mathbb{Z}_2$ for the odd orthogonal group $\mathsf{O}_{2n+1}$. The extra factor of $\mathbb{Z}_2$ for $\mathsf{O}_{2n+1}$ reflects the fact that the subcategories of $\mathcal{M}^{\otimes}(\mathsf{Vect}^{\mathsf{sk}}_{\mathbb{F}_q})$ with fixed Witt type $\varepsilon$ are isomorphic as left $\mathcal{H}^{\otimes}(\mathsf{Vect}^{\mathsf{sk}}_{\mathbb{F}_q})$-modules, the same statement holding also at $q =1$. Working instead with Ringel-style Hall algebras and modules allows for the following more precise statement, which can be verified directly: the $q=1$ specialization of the $\mathcal{H}(\mathsf{Vect}^{\mathsf{sk}}_{\mathbb{F}_q}; \mathfrak{F}_0)$-module $\mathcal{M}(\mathsf{Vect}^{\mathsf{sk}}_{\mathbb{F}_q}; \mathfrak{F}_0)$ is isomorphic to the direct sum of the $\mathcal{H}(\mathsf{Vect}^{\mathsf{sk}}_{\mathbb{F}_1}; \mathfrak{F}_0)$-modules $\mathcal{M}(\mathsf{Vect}^{\mathsf{sk}}_{\mathbb{F}_1}; \mathfrak{F}_0)$, $d=0,1$.
\end{Ex}

\subsection{Modules over multivalued categories}

\label{sec:multiCat}

In this section and the next we describe two higher categorical interpretations of relative $2$-Segal simplicial sets; the first is in terms of modules over multivalued categories while the second is of a Hall algebraic nature.

Let $2 \mhyphen \mathsf{Seg} \mathbb{S} \subset \mathbb{S}$ be the full subcategory of unital $2$-Segal simplicial sets. Let also $\mu \mathsf{Cat}$ denote the category of small multivalued categories. Objects of $\mu \mathsf{Cat}$ are tuples $\mathfrak{X} = (\mathfrak{X}_0,\mathfrak{X}_1, m_{\mathfrak{X}}, a_{\mathfrak{X}}, e_{\mathfrak{X}}, i^l_{\mathfrak{X}}, i^r_{\mathfrak{X}})$ consisting of sets $\mathfrak{X}_0$ and $\mathfrak{X}_1$ with source and target maps $\partial_1, \partial_0: \mathfrak{X}_1 \rightarrow \mathfrak{X}_0$, a composition law $m_{\mathfrak{X}} \in \Hom_{\mathsf{Span}(\mathsf{Set})}(\mathfrak{X}_1 \times_{\mathfrak{X}_0} \mathfrak{X}_1, \mathfrak{X}_1)$, an associator isomorphism
\[
(m_{\mathfrak{X}} \circ (m_{\mathfrak{X}} \times \mathbf{1}_{\mathfrak{X}_1}) \xrightarrow[]{a_{\mathfrak{X}}} m_{\mathfrak{X}} \circ (\mathbf{1}_{\mathfrak{X}_1} \times m_{\mathfrak{X}}) ) \in \Hom_{\mathsf{Span}(\mathsf{Set})}(\mathfrak{X}_1 \times_{\mathfrak{X}_0} \mathfrak{X}_1 \times_{\mathfrak{X}_0} \mathfrak{X}_1, \mathfrak{X}_1)
\]
which satisfies Mac Lane coherence, a unit map $e_{\mathfrak{X}}: \mathfrak{X}_0 \rightarrow \mathfrak{X}_1$ and compatible left and right unit isomorphisms $i^l_{\mathfrak{X}}, i^r_{\mathfrak{X}} \in \Hom_{\mathsf{Span}(\mathsf{Set})}(\mathfrak{X}_1, \mathfrak{X}_1)$. A morphism $\varphi: \mathfrak{X} \rightarrow \mathfrak{X}^{\prime}$ is the data of maps $\varphi_i : \mathfrak{X}_i \rightarrow \mathfrak{X}_i^{\prime}$, $i=0,1$, compatible with the source, target and unit maps and a morphism of spans $\tilde{\varphi}_2: \varphi_1 \circ m_{\mathfrak{X}} \rightarrow m_{\mathfrak{X}^{\prime}} \circ (\varphi_1 \times_{\varphi_0} \varphi_1)$.

\begin{Thm}[{\cite[Theorem 3.3.6]{dyckerhoff2012b}}]
\label{thm:multiCat}
The generalized nerve construction defines an equivalence of categories $\mathsf{N}_{\bullet}: \mu \mathsf{Cat} \xrightarrow[]{\sim} 2\mhyphen \mathsf{Seg}\mathbb{S}$.
\end{Thm}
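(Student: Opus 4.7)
The plan is to construct the generalized nerve functor explicitly, verify that it lands in $2\mhyphen\mathsf{Seg}\mathbb{S}$, and then construct a quasi-inverse by reading off a multivalued category structure from the low-dimensional simplices of a unital $2$-Segal simplicial set, using the higher $2$-Segal and unitality conditions to supply the required coherence data.

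First I would define $\mathsf{N}_{\bullet}(\mathfrak{X})$ by setting $\mathsf{N}_0(\mathfrak{X}) = \mathfrak{X}_0$, $\mathsf{N}_1(\mathfrak{X}) = \mathfrak{X}_1$, and, for $n \geq 2$, taking $\mathsf{N}_n(\mathfrak{X})$ to be the set of \emph{coherent $n$-chains}: tuples $(f_{\{i,j\}})_{0 \leq i < j \leq n}$ with $f_{\{i,j\}} \in \mathfrak{X}_1$ having source $f_{\{i,j\}} \partial_1 = x_i$ and target $f_{\{i,j\}} \partial_0 = x_j$, together with, for each triple $i < k < j$, a point in the middle of the composition span $m_{\mathfrak{X}}$ witnessing $f_{\{i,j\}} \in m_{\mathfrak{X}}(f_{\{i,k\}},f_{\{k,j\}})$, all these witnesses being related by the associator $a_{\mathfrak{X}}$ in the manner prescribed by Mac Lane coherence. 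Face and degeneracy maps are the obvious ones, with $s_i$ inserting the unit $e_{\mathfrak{X}}$ and the trivial compositions forced by $i^l_{\mathfrak{X}}, i^r_{\mathfrak{X}}$. By inspection the canonical map $\mathsf{N}_n(\mathfrak{X}) \to \mathsf{N}_{\{i,\dots,j\}}(\mathfrak{X}) \times_{\mathsf{N}_{\{i,j\}}(\mathfrak{X})} \mathsf{N}_{\{0,\dots,i,j,\dots,n\}}(\mathfrak{X})$ is a bijection (a coherent $n$-chain is the same as a pair of coherent sub-chains sharing the edge $\{i,j\}$), so $\mathsf{N}_{\bullet}(\mathfrak{X})$ is $2$-Segal; unitality is immediate from $i^l_{\mathfrak{X}}, i^r_{\mathfrak{X}}$. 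Functoriality in $\mathfrak{X}$ follows from the span morphism $\tilde{\varphi}_2$, and full faithfulness of $\mathsf{N}_{\bullet}$ is a direct diagram chase: a simplicial map $\mathsf{N}_{\bullet}(\mathfrak{X}) \to \mathsf{N}_{\bullet}(\mathfrak{X}')$ is determined by its values on $0$- and $1$-simplices together with a comparison of composition spans, and this is exactly the data of a morphism in $\mu\mathsf{Cat}$.

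For essential surjectivity, start with $X_{\bullet} \in 2\mhyphen\mathsf{Seg}\mathbb{S}$. Set $\mathfrak{X}_0 = X_0$, $\mathfrak{X}_1 = X_1$ with source and target the face maps $\partial_1, \partial_0 : X_1 \to X_0$, and
\[
m_X = \{\, X_1 \times_{X_0} X_1 \xleftarrow{(\partial_2,\partial_0)} X_2 \xrightarrow{\partial_1} X_1 \,\} \in \Hom_{\mathsf{Span}(\mathsf{Set})}(X_1 \times_{X_0} X_1, X_1)
\]
(this is just the set-valued analogue of \eqref{eq:multSpan}). Using the two $2$-Segal bijections on $X_3$,
\[
X_{\{0,1\}} \times_{X_{\{1\}}} X_{\{1,2,3\}} \xleftarrow{\sim} X_{\{0,1,2,3\}} \xrightarrow{\sim} X_{\{0,1,2\}} \times_{X_{\{0,2\}}} X_{\{2,3\}},
\]
one obtains a canonical isomorphism of spans $a_X : m_X \circ (m_X \times \mathbf{1}) \Rightarrow m_X \circ (\mathbf{1} \times m_X)$ on $X_1 \times_{X_0} X_1 \times_{X_0} X_1$. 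Take $e_X = s_0: X_0 \to X_1$ and produce $i^l_X, i^r_X$ from the unitality equivalences. The main coherence check is the pentagon: this is the Mac Lane identity for $a_X$, and it follows from the fact that $X_4$ is a common limit of the five triangulations of $P_4$ via $2$-Segal bijections. The symmetric-subdivisions viewpoint of Proposition \ref{prop:symmDecomp} is the cleanest way to package this: on $P_4$ the five triangulations assemble into a pentagon of bijections whose composite is the identity on $X_4$, which translates precisely into Mac Lane coherence for $a_X$. The unit triangle identity reduces similarly to a $2$-Segal/unitality bijection on $X_3$.

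Finally I would verify the unit and counit of the adjunction. Starting from $\mathfrak{X}$, running the second construction on $X_{\bullet} = \mathsf{N}_{\bullet}(\mathfrak{X})$ returns $\mathfrak{X}$ on the nose, because the span $m_X$ reads off $m_{\mathfrak{X}}$ and $a_X$ reads off $a_{\mathfrak{X}}$. Conversely, from $X_{\bullet}$ build $\mathfrak{X}$ and consider the simplicial map $X_{\bullet} \to \mathsf{N}_{\bullet}(\mathfrak{X})$ sending an $n$-simplex to the coherent $n$-chain of its edges $X_{\{i,j\}} = \partial_{\{i,j\}}(\cdot)$ with compositions prescribed by iterated $2$-Segal bijections; that this map is a bijection in each degree again reduces to the $2$-Segal and unitality conditions on $X_{\bullet}$. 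The main obstacle, as indicated, is the Mac Lane/pentagon verification, for which the polyhedral subdivision formalism of Section \ref{sec:symmSubdivisions} (suitably adapted to the non-relative case, \textit{cf.} \cite[Proposition 2.3.2]{dyckerhoff2012b}) is the efficient bookkeeping device.
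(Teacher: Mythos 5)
The paper does not prove this theorem itself---it is quoted from Dyckerhoff--Kapranov---but your sketch is correct and follows the same construction the paper uses for the relative analogue (Theorem \ref{thm:multiCatModule}): $\mathsf{N}_n(\mathfrak{X})$ as edge-and-triangle data cut out by the associator identities, with the pentagon/Mac Lane coherence supplying the higher compatibilities (compare also the explicit description of $\mathsf{N}_n(X_2,a)$ at the end of Section \ref{sec:multiCat}). One small caution: the $2$-Segal bijection for $\mathsf{N}_{\bullet}(\mathfrak{X})$ is not quite ``by inspection,'' since the edges and composition witnesses straddling the cut $\{i,j\}$ must be reconstructed from the two sub-chains, and that reconstruction is exactly where Mac Lane coherence of $a_{\mathfrak{X}}$ is consumed---so that step deserves the same care you give the pentagon in the opposite direction.
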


In the relative setting we will require the notion of a module over a multivalued category.

\begin{Def}
Let $\mathfrak{X}$ be a multivalued category. A unital left $\mathfrak{X}$-module is a tuple $\mathfrak{Y} =(\mathfrak{Y}_0, \mathfrak{F}_0, \mu_{\mathfrak{Y}}, \alpha_{\mathfrak{Y}}, \iota_{\mathfrak{Y}})$ consisting of
\begin{enumerate}[label=(\roman*)]
\item a set $\mathfrak{Y}_0$ together with a map $\mathfrak{F}_0: \mathfrak{Y}_0 \rightarrow \mathfrak{X}_0$,

\item a left action span $\mu_{\mathfrak{Y}} \in \Hom_{\mathsf{Span}(\mathsf{Set})}( \mathfrak{X}_1 \times_{\mathfrak{X}_0} \mathfrak{Y}_0, \mathfrak{Y}_0)$,

\item a module associator isomorphism
\[
( \mu_{\mathfrak{Y}} \circ (m_{\mathfrak{X}} \times \mathbf{1}_{\mathfrak{Y}_0}) \xrightarrow[]{\alpha_{\mathfrak{Y}}} \mu_{\mathfrak{Y}} \circ (\mathbf{1}_{\mathfrak{X}_1} \times \mu_{\mathfrak{Y}}))
 \in \Hom_{\mathsf{Span}(\mathsf{Set})}(\mathfrak{X}_1 \times_{\mathfrak{X}_0} \mathfrak{X}_1 \times_{\mathfrak{X}_0} \mathfrak{Y}_0, \mathfrak{Y}_0)
\]
which satisfies module theoretic Mac Lane coherence: the diagram
\begin{equation}
\label{eq:macCoher}
\begin{gathered}
\xymatrixrowsep{0.35in}
\xymatrixcolsep{0.02in}
\footnotesize
\xymatrix{
& \mu \circ ( m \circ ( m \times \mathbf{1}_{\mathfrak{X}_1}) \times \mathbf{1}_{\mathfrak{Y}_0})
\ar[dr]^{\mu \circ (a \times \mathbf{1}_{\mathfrak{Y}_0})}
\ar[dl]_{\alpha \circ (\mu \times \mathbf{1}_{\mathfrak{X}_1} \times \mathbf{1}_{\mathfrak{Y}_0})}
& \\
\mu \circ (m \times \mu) \ar[d]_{\alpha \circ (\mathbf{1}_{\mathfrak{X}_1} \times \mathbf{1}_{\mathfrak{X}_1} \times \mu)}& & \mu \circ (( m \circ (\mathbf{1}_{\mathfrak{X}_1} \times m) \times \mathbf{1}_{\mathfrak{Y}_0}) \ar[d]^{\alpha \circ (\mathbf{1}_{\mathfrak{X}_1} \times m \times \mathbf{1}_{\mathfrak{Y}_0})} \\
 \mu \circ ( \mathbf{1}_{\mathfrak{X}_1} \times \mu \circ ( \mathbf{1}_{\mathfrak{X}_1} \times \mu))  && \mu \circ (\mathbf{1}_{\mathfrak{X}_1} \times (\mu \circ (m \circ \mathbf{1}_{\mathfrak{Y}_0}))) \ar[ll]^{\mu \circ (\mathbf{1}_{\mathfrak{X}_1} \times \alpha)}
}
\end{gathered}
\end{equation}
commutes in $\Hom_{\mathsf{Span}(\mathsf{Set})}(\mathfrak{X}_1 \times_{\mathfrak{X}_0} \mathfrak{X}_1 \times_{\mathfrak{X}_0} \mathfrak{X}_1 \times_{\mathfrak{X}_0} \mathfrak{Y}_0, \mathfrak{Y}_0)$, and
\item a unit isomorphism $\iota_{\mathfrak{Y}}: \mu_{\mathfrak{Y}} \circ (e_{\mathfrak{X}} \times \mathbf{1}_{\mathfrak{Y}_0}) \rightarrow \mathbf{1}_{\mathfrak{Y}_0}$ in $\Hom_{\mathsf{Span}(\mathsf{Set})}(\mathfrak{Y}_0, \mathfrak{Y}_0)$ for which the diagram
\begin{equation}
\label{eq:unitCompat}
\begin{gathered}
\begin{tikzcd}[column sep=3.5em]
\mu \circ (m \circ (\mathbf{1}_{\mathfrak{X}_1} \times e) \times \mathbf{1}_{\mathfrak{Y}_0}) \arrow{rr}{\alpha \circ (\mathbf{1}_{\mathfrak{X}_1} \times e \times \mathbf{1}_{\mathfrak{Y}_0})} \arrow[swap]{rd}{\mu \circ (i^l_{\mathfrak{X}} \times \mathbf{1}_{\mathfrak{Y}_0})} &  & \mu \circ (\mathbf{1}_{\mathfrak{X}_1} \times \mu \circ ( e \times \mathbf{1}_{\mathfrak{Y}_0})) \arrow{ld}{\mu \circ (\mathbf{1}_{\mathfrak{X}_1} \times \iota_{\mathfrak{Y}})} \\
&\mu & 
\end{tikzcd}
\end{gathered}
\end{equation}
commutes in $\Hom_{\mathsf{Span}(\mathsf{Set})}( \mathfrak{X}_1 \times_{\mathfrak{X}_0} \mathfrak{Y}_0, \mathfrak{Y}_0)$.
\end{enumerate}
\end{Def}

Define a category $\mu \mathsf{Cat} \mhyphen \mathsf{mod}$ as follows. Objects are unital left modules over multivalued categories. A morphism $(\varphi, \vartheta): (\mathfrak{Y} \rightarrow \mathfrak{X}) \rightarrow (\mathfrak{Y}^{\prime} \rightarrow \mathfrak{X}^{\prime})$ consists of a morphism $\varphi: \mathfrak{X} \rightarrow \mathfrak{X}^{\prime}$, a map $\vartheta_0: \mathfrak{Y}_0 \rightarrow \mathfrak{Y}_0^{\prime}$ which satisfies $\mathfrak{F}^{\prime}_0 \circ \vartheta_0 = \varphi_0 \circ \mathfrak{F}_0$ and a morphism of spans
\[
\tilde{\vartheta}_1: \vartheta_0 \circ \mu_{\mathfrak{Y}} \rightarrow \mu_{\mathfrak{Y}^{\prime}} \circ (\varphi_1 \times_{\varphi_0} \vartheta_0).
\]
Informally, $\vartheta$ defines a morphism of $\mathfrak{X}$-modules $\mathfrak{Y} \rightarrow \varphi^* \mathfrak{Y}^{\prime}$.  

Let $2 \mhyphen \mathsf{SegRel}\mathbb{S} \subset \mathbb{S}^{[1]}$ be the full subcategory of unital relative $2$-Segal simplicial sets.

\begin{Thm}
\label{thm:multiCatModule}
The equivalence $\mathsf{N}_{\bullet}: \mu \mathsf{Cat} \xrightarrow[]{\sim} 2\mhyphen \mathsf{Seg}\mathbb{S}$ lifts to an equivalence $\mathsf{N}^{\mathsf{rel}}_{\bullet}: \mu \mathsf{Cat} \mhyphen \mathsf{mod} \xrightarrow[]{\sim} 2\mhyphen \mathsf{SegRel}\mathbb{S}$.
\end{Thm}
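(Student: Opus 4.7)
The plan is to construct a functor $\mathsf{N}^{\mathsf{rel}}_{\bullet}$ lifting $\mathsf{N}_{\bullet}$, together with a quasi-inverse, and to verify that the two are mutually inverse up to natural isomorphism. For a unital left $\mathfrak{X}$-module $\mathfrak{Y}$, put $X_{\bullet} := \mathsf{N}_{\bullet}(\mathfrak{X})$. The simplicial set $Y_{\bullet}$ is defined by mimicking the construction of $\mathsf{N}_{\bullet}(\mathfrak{X})$ from Theorem \ref{thm:multiCat}, but treating the rightmost edge of an $n$-simplex as the action of $\mathfrak{X}_1$ on an object of $\mathfrak{Y}_0$ rather than as a morphism in $\mathfrak{X}$. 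Concretely, $Y_0 = \mathfrak{Y}_0$; $Y_1$ is the apex of the action span $\mu_{\mathfrak{Y}}$; $Y_2$ is built from the module associator $\alpha_{\mathfrak{Y}}$; and higher $Y_n$ are assembled from both the composition law $m_{\mathfrak{X}}$ and the action $\mu_{\mathfrak{Y}}$ using the associators of $\mathfrak{X}$ and $\mathfrak{Y}$. Face maps $\partial_0, \dots, \partial_{n-1}$ use $m_{\mathfrak{X}}$, while $\partial_n$ uses $\mu_{\mathfrak{Y}}$; degeneracies are given by the units $e_{\mathfrak{X}}$ and $\iota_{\mathfrak{Y}}$. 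The simplicial identities reduce to module-theoretic Mac Lane coherence \eqref{eq:macCoher} and the unit compatibility \eqref{eq:unitCompat}. The forgetful simplicial map $F_{\bullet}: Y_{\bullet} \to X_{\bullet}$ drops the $\mathfrak{Y}_0$-data, and the construction makes the diagrams \eqref{eq:rel2SegalDiagram} and \eqref{eq:botRel2SegalDiagram} literal pullbacks of sets, so $F_{\bullet}$ is unital relative $2$-Segal.

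For the quasi-inverse, starting from a unital relative $2$-Segal simplicial set $F_{\bullet}: Y_{\bullet} \to X_{\bullet}$, I apply Theorem \ref{thm:multiCat} to the base to recover the multivalued category $\mathfrak{X}$. Set $\mathfrak{Y}_0 := Y_0$ and $\mathfrak{F}_0 := F_0$. The relative $2$-Segal bijection for $n=1$,
\[
Y_1 \xrightarrow{\sim} X_1 \times_{X_0} Y_0 = \mathfrak{X}_1 \times_{\mathfrak{X}_0} \mathfrak{Y}_0,
\]
paired with $\partial_0: Y_1 \to Y_0$, defines the action span $\mu_{\mathfrak{Y}}$. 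The associator $\alpha_{\mathfrak{Y}}$ is extracted from the two decompositions of $Y_2$: the $1$-Segal bijection $Y_2 \xrightarrow{\sim} Y_{\{0,1\}} \times_{Y_{\{1\}}} Y_{\{1,2\}}$ and the relative $2$-Segal bijection $Y_2 \xrightarrow{\sim} X_{\{0,1,2\}} \times_{X_{\{0,2\}}} Y_{\{0,2\}}$ produce two presentations of the same set which, after identifying $X_{\{0,1,2\}}$ with the apex of $m_{\mathfrak{X}}$, correspond to $\mu_{\mathfrak{Y}} \circ (\mathbf{1}_{\mathfrak{X}_1} \times \mu_{\mathfrak{Y}})$ and $\mu_{\mathfrak{Y}} \circ (m_{\mathfrak{X}} \times \mathbf{1}_{\mathfrak{Y}_0})$ respectively. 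The unit isomorphism $\iota_{\mathfrak{Y}}$ is extracted from the lowest unital relative $2$-Segal condition.

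Verifying the module axioms is then a matter of combinatorics: module-theoretic Mac Lane coherence \eqref{eq:macCoher} is exactly the commutativity of the pentagonal subdiagram of the poset of symmetric polyhedral subdivisions of $P_{\mathbf{3}}$ in Figure \ref{fig:macCoher}, as in the proof of Theorem \ref{thm:univHallMod}, and the unit compatibility \eqref{eq:unitCompat} is the analogous statement for $P_{\mathbf{2}}$ together with the unital conditions. The morphism-level correspondence is direct: a map $(\varphi, \vartheta)$ in $\mu\mathsf{Cat}\mhyphen\mathsf{mod}$ assembles into a map of relative simplicial sets, while conversely a morphism $Y_{\bullet} \to Y^{\prime}_{\bullet}$ over $X_{\bullet} \to X^{\prime}_{\bullet}$ is determined by its components in degrees $0$ and $1$ via the Segal-type identifications, and these recover $(\varphi, \vartheta)$. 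That the two constructions are quasi-inverse follows by comparing at each simplicial level, using Proposition \ref{prop:equivRelSeg} to rewrite $Y_n$ as the appropriate iterated fibre product. The main obstacle is the bookkeeping for module Mac Lane coherence, but this is already taken care of by the symmetric-subdivision analysis summarized in Figure \ref{fig:macCoher}.
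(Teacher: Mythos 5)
Your proposal is correct and follows essentially the same route as the paper: both directions are constructed by extracting the action span, associator and unit from the degree $\leq 2$ Segal/relative-Segal bijections, higher simplices are cut out of an iterated fibre product by associator-compatibility conditions, and module-theoretic Mac Lane coherence is delegated to the symmetric-subdivision pentagon of Figure \ref{fig:macCoher} exactly as in the proof of Theorem \ref{thm:univHallMod}. The paper merely packages the definition of $Y_n$ ($n\geq 2$) more formally as a subset of a limit over the category of symmetric embeddings, but the content is the same.
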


\begin{proof}
We begin by modifying the proof of \cite[Theorem 3.3.6]{dyckerhoff2012b} so as to define a functor $2 \mhyphen \mathsf{SegRel}\mathbb{S} \rightarrow \mu\mathsf{Cat} \mhyphen \mathsf{mod}$. Let $F_{\bullet}: Y_{\bullet} \rightarrow X_{\bullet}$ be a unital relative $2$-Segal simplicial set. The multivalued category $\mathfrak{X}$ associated to $X_{\bullet}$ via Theorem \ref{thm:multiCat} has $\mathfrak{X}_0= X_0$ and $\mathfrak{X}_1 = X_1$ with the canonical maps $\partial_1, \partial_0: \mathfrak{X}_1 \rightarrow \mathfrak{X}_0$ and composition span
\[
m_{\mathfrak{X}}= \{ \mathfrak{X}_1 \times_{\mathfrak{X}_0} \mathfrak{X}_1 \xleftarrow[]{(\partial_2,\partial_0)} X_2 \xrightarrow[]{\partial_1} \mathfrak{X}_1 \}.
\]
Turning to the relative data, let $\mathfrak{Y}_0 = Y_0$ and $\mathfrak{F}_0 = F_0$. Define an action span by
\[
\mu_{\mathfrak{Y}} = \{ \mathfrak{X}_1 \times_{\mathfrak{X}_0} \mathfrak{Y}_0 \xleftarrow{(F_1, \partial_0)} Y_1 \xrightarrow[]{\partial_1} \mathfrak{Y}_0 \}.
\]
The morphisms of spans
\[
\mu_{\mathfrak{Y}} \circ (m_{\mathfrak{X}} \times \mathbf{1}_{\mathfrak{Y}_0}) \leftarrow \{ \mathfrak{X}_1 \times_{\mathfrak{X}_0} \mathfrak{X}_1 \times_{\mathfrak{X}_0} \mathfrak{Y}_0 \leftarrow Y_2 \rightarrow \mathfrak{Y}_0 \} \rightarrow \mu_{\mathfrak{Y}} \circ (\mathbf{1}_{\mathfrak{X}_1} \times \mu_{\mathfrak{Y}}),
\]
each of which is constructed as in the proof of Theorem \ref{thm:univHallMod} and is an isomorphism by the relative $2$-Segal conditions, combine to define the associator $\alpha_{\mathfrak{Y}}$. The unit isomorphism $\iota_{\mathfrak{Y}}$ is defined to be the inverse of the relative unit bijection $Y_{\{0\}} \rightarrow X_{\{0\}} \times_{X_{\{0,1\}} } Y_{\{0,1\}}$. Mac Lane coherence and unit compatibility are verified as in the proof of Theorem \ref{thm:univHallMod}. Hence $\mathfrak{Y}$ is a unital left $\mathfrak{X}$-module. At the level of morphisms the functor $2 \mhyphen \mathsf{SegRel}\mathbb{S} \rightarrow \mu\mathsf{Cat} \mhyphen \mathsf{mod}$ is defined in the obvious way.

To describe a quasi-inverse $\mathsf{N}^{\mathsf{rel}}_{\bullet}: \mu\mathsf{Cat} \mhyphen \mathsf{mod} \rightarrow 2 \mhyphen \mathsf{SegRel}\mathbb{S}$, let $\mathfrak{X} \in \mu \mathsf{Cat}$ with associated unital $2$-Segal simplicial set $X_{\bullet}=\mathsf{N}_{\bullet}(\mathfrak{X})$. Given a unital left $\mathfrak{X}$-module $\mathfrak{Y}$, let $Y_0= \mathfrak{Y}_0$ and let $Y_1$ be the middle set of the span $\mu_{\mathfrak{Y}}$. We have canonical maps $\partial_i: Y_1 \rightarrow Y_0$ and $F_i: Y_i \rightarrow X_i$, $i=0,1$, and, by inverting the map which defines $\iota_{\mathfrak{Y}}$, a map $s_0: Y_0 \rightarrow Y_1$. Moreover, these maps obey the $1$-truncated simplicial identities. For each $n \geq 2$ define
\[
\widetilde{Y}_n=
\underleftarrow{\lim}^{\mathsf{Set}}_{\sigma \in \mathcal{S}^{\mathsf{max}}_{P_{\mathbf{n}}}} F_{P_{\mathbf{n}}}(\sigma)
\]
where $\mathcal{S}^{\mathsf{max}}_{P_{\mathbf{n}}}$ is the category of symmetric embeddings of the form $\Delta^k \sqcup \Delta^k \hookrightarrow \Delta^{P_{\mathbf{n}}}$, $k \leq 2$, or $\Delta^{\mathbf{m}} \hookrightarrow \Delta^{P_{\mathbf{n}}}$, $m \leq 1$ (\textit{cf}. Section \ref{sec:symmSubdivisions}). Given $0 \leq i < j < k \leq n$ and an element $\tilde{y} \in \widetilde{Y}_n$, write $y_{ij}$ and $y_{ijk}$ for the corresponding elements of $Y_{\{i,j\}}$ and $X_{\{i,j,k\}}$, respectively. The associator $\alpha_{\mathfrak{Y}}$ defines a collection of bijections
\[
\alpha_{ijk} : X_{\{i,j,k\}} \times_{X_{\{i,k\}}} Y_{\{i,k\}} \rightarrow Y_{\{i,j\}} \times_{Y_{\{j\}}} Y_{\{j,k\}}
\]
which we use to define
\[
Y_n = \{ \tilde{y} \in \widetilde{Y}_n \mid \alpha_{ijk}(y_{ijk}, y_{ik}) =  (y_{ij}, y_{jk}) \mbox{ for all } 0 \leq i < j < k \leq n\}.
\]
Module theoretic Mac Lane coherence implies that the $Y_n$ assemble to a $1$-Segal simplicial set $Y_{\bullet}$ and that the canonical maps $F_n : Y_n \rightarrow X_n$ assemble to a simplicial morphism $F_{\bullet}$. To see that $F_{\bullet}$ satisfies the remaining relative $2$-Segal conditions, consider the commutative diagram
\[
\begin{tikzcd}
Y_n \arrow{r} \arrow{d} & X_n \times_{X_{\{0,n\}}} Y_{\{0,n\}} \arrow{d}  \\
Y_1 \times_{Y_0}  \dots \times_{Y_0} Y_1 \arrow{r} & X_{\{0,1,2\}} \times_{X_{\{0,2\}}} \cdots \times_{X_{\{0,n-1\}}} X_{\{0,n-1,n\}} \times_{X_{\{0,n\}}} Y_{\{0,n\}}.
\end{tikzcd}
\]
The vertical morphisms are bijections by the $1$- and $2$-Segal conditions on $Y_{\bullet}$ and $X_{\bullet}$, respectively. The bottom arrow is the iterated application of the module associator, relating the bracketings in which $n$ elements of $\mathfrak{X}$ act on $\mathfrak{Y}$ from right to left and from left to right, and is therefore a bijection. Hence $F_{\bullet}$ is relative $2$-Segal. The relative unit bijection $Y_{\{0\}} \rightarrow X_{\{0\}} \times_{X_{\{0,1\}} } Y_{\{0,1\}}$ is the inverse of $\iota_{\mathfrak{Y}}$. The compatibility of $\iota_{\mathfrak{Y}}$, $i^l_{\mathfrak{X}}$ and $\alpha_{\mathfrak{Y}}$ implies that the higher relative unit bijections hold. The functor $\mathsf{N}_{\bullet}^{\mathsf{rel}}$ assigns to a morphism $(\varphi, \vartheta)$ the pair $(\phi_{\bullet}, \theta_{\bullet})$, where $\phi_{\bullet}=\mathsf{N}_{\bullet}(\varphi)$, $\theta_0$ is equal to $\tilde{\vartheta}_0$ and $\theta_n$, $n \geq 1$, are the canonically induced maps, which are well-defined by the definition of $Y_n$ and $Y_n^{\prime}$, $n \geq 2$.
\end{proof}

\begin{Rem}
Theorem \ref{thm:multiCatModule} could also have been formulated in terms of right modules.
\end{Rem}

There is a semi-simplicial variant of Theorem \ref{thm:multiCatModule} where $\mu \mathsf{Cat}\mhyphen \mathsf{mod}$ is replaced with $\frac{1}{2}\mu \mathsf{Cat}\mhyphen \mathsf{mod}$, the category of left modules over multivalued semicategories, the prefix `semi' indicating that all data related to unit morphisms is omitted. As a simple special case, let $X_{\bullet}$ be a $2$-Segal semi-simplicial set with $X_0 = X_1 = \pt$. Then $X_{\bullet}$  defines a distributive monoidal endofunctor on $\mathsf{Set}$ by $F\otimes F^{\prime}= X_2 \times F \times F^{\prime}$. As explained in \cite[\S 3.7]{dyckerhoff2012b}, the associator reduces to a bijection $a: X_2 \times X_2 \rightarrow X_2 \times X_2$ which satisfies the pentagon equation
\[
a_{23} \circ a_{13} \circ a_{12} = a_{12} \circ a_{23}.
\]
Conversely, a bijective solution $a$ of the pentagon equation on a set $X_2$ extends to a $2$-Segal semi-simplicial set with $\mathsf{N}_0(X_2,a) = \mathsf{N}_1(X_2,a) =\pt$ and $\mathsf{N}_n(X_2,a)$, $n \geq 2$, the set of tuples $\{x_{ijk} \in X_2 \mid 0 \leq i < j < k \leq n \}$ which satisfy
\[
a(x_{ijk},x_{ikl}) =(x_{ijl},x_{jkl}), \qquad 0 \leq i < j < k < l \leq n.
\]
Modifying this construction, a relative $2$-Segal semi-simplicial set $Y_{\bullet} \rightarrow X_{\bullet}$ with $Y_0= \pt$ defines a monoidal endofunctor $F \boxtimes G = Y_1 \times F \times G$ which is a left $\otimes$-module. The module associator is a bijection $\alpha : X_2 \times Y_1 \rightarrow Y_1 \times Y_1$ which satisfies the $a$-pentagon equation
\[
\alpha_{23} \circ \alpha_{13} \circ a_{12} = \alpha_{12} \circ \alpha_{23}.
\]
Moreover, from a bijective solution to the $a$-pentagon equation we can construct a relative $2$-Segal semi-simplicial set with $\mathsf{N}_0(Y_1,\alpha)= \pt$, $\mathsf{N}_1(Y_1,\alpha)= Y_1$ and $\mathsf{N}_n(Y_1,\alpha)$, $n \geq 2$,  the subset of $\mathsf{N}_n(X_2,a) \times \{ y_{ij} \in Y_1 \mid 0 \leq i < j \leq n\}$ consisting of tuples which satisfy
\[
\alpha(x_{ijk},y_{ik}) =(y_{ij}, y_{jk}), \qquad 0 \leq i < j < k \leq n.
\]
The structure map $\mathsf{N}_{\bullet}(Y_1,\alpha) \rightarrow \mathsf{N}_{\bullet}(X_2, a)$ is the canonical projection.

\begin{Ex}
For any group $\mathsf{G}$ the map
\[
a: \mathsf{G} \times \mathsf{G} \rightarrow \mathsf{G} \times \mathsf{G}, \qquad  (x,y) \mapsto (xy,y)
\]
is a bijective solution to the pentagon equation and so defines a $2$-Segal semi-simplicial set $\mathsf{N}_{\bullet}(\mathsf{G})$; see \cite{kashaev1998}, \cite[Example 3.7.7]{dyckerhoff2012b}.

Let now $\rho: \mathsf{G} \rightarrow \Aut(M)$ be a left $\mathsf{G}$-action on a set $M$. Then
\[
\alpha: \mathsf{G} \times M \rightarrow M \times M, \qquad (x,m) \mapsto (\rho(x)m, m)
\]
solves the $a$-pentagon equation. Moreover, $\alpha$ is a bijection if and only if $\rho$ gives $M$ the structure of a $\mathsf{G}$-torsor. Hence, associated to each $\mathsf{G}$-torsor $M$ is a relative $2$-Segal simplicial set $\mathsf{N}_{\bullet}(\mathsf{G},M) \rightarrow \mathsf{N}_{\bullet}(\mathsf{G})$. When $M =\mathsf{G}$ with $\mathsf{G}$ acting by left multiplication the above construction recovers the left path $P^{\lhd} \mathsf{N}_{\bullet}(\mathsf{G}) \rightarrow \mathsf{N}_{\bullet}(\mathsf{G})$.
\end{Ex}

\subsection{Presheaves on Hall \texorpdfstring{$2$}{}-categories}

We begin by constructing a module over a Hall-type algebra from a  relative $2$-Segal semi-simplicial set. Fix a field $k$. Let $X_{\bullet}$ be a $2$-Segal simplicial set for which the map $(\partial_2, \partial_0)$ from the span \eqref{eq:multSpan} has finite fibres. The Hall category $H(X_{\bullet})$ \cite[\S 3.4]{dyckerhoff2012b} is the $k$-linear category with $\text{Ob}(H(X_{\bullet}))=X_0$ and $\Hom_{H(X_{\bullet})}(a,b) = \mathfrak{F}_0(X_{a \rightarrow b})$, the finitely supported $k$-valued functions on the set
\[
X_{a \rightarrow b} = \{a\} \times_{X_0} X_1 \times_{X_0} \{b\}.
\]
Composition of morphisms is defined by push-pull along the span
\begin{equation}
\label{eq:ssSpan}
X_{b \rightarrow c} \times X_{a \rightarrow b} \leftarrow \{p \in X_2 \mid \partial_{\{0\}} p= a, \; \partial_{\{1\}} p=b, \; \partial_{\{2\}} p= c \} \rightarrow X_{a \rightarrow c}.
\end{equation}
Writing $\mathbf{1}_x$ fo the characteristic function of $x \in X_{a \rightarrow b}$, composition in $H(X_{\bullet})$ becomes $\mathbf{1}_x \cdot \mathbf{1}_{x^{\prime}} = \sum_{x^{\prime \prime}} f_{x, x^{\prime}}^{x^{\prime \prime}} \mathbf{1}_{x^{\prime \prime}}$ where
\[
f_{x, x^{\prime}}^{x^{\prime \prime}}= \vert \{ p \in X_2 \mid \partial_2 p = x, \; \partial_1 p =x^{\prime}, \; \partial_0 p =x^{\prime \prime}\} \vert.
\] 
Similarly, a relative $2$-Segal simplicial set $F_{\bullet} : Y_{\bullet} \rightarrow X_{\bullet}$ for which the map $(F_1, \partial_0)$ from the span \eqref{eq:leftModuleSpan} has finite fibres defines a presheaf $\mathcal{F} :H(X_{\bullet})^{\mathsf{op}} \rightarrow \mathsf{Set}$ by setting $\mathcal{F}(a) = \mathfrak{F}_0(F_0^{-1}(a))$, $a \in X_0$, and using push-pull along the span
\begin{equation}
\label{eq:relssSpan}
X_{a \rightarrow b} \times F_0^{-1}(b) \leftarrow \{ q \in Y_1 \mid \partial_1 q \in F_0^{-1}(a), \; \partial_0 q \in F_0^{-1}(b) \} \rightarrow F_0^{-1}(a)
\end{equation}
to define the action map $\Hom_{H(X_{\bullet})}(a,b) \times \mathcal{F}(b) \rightarrow \mathcal{F}(a)$. Writing $\mathbf{1}_{\xi} \in \mathcal{F}(a)$ for the characteristic function of $\xi \in F_0^{-1}(a)$, we have $\mathbf{1}_x \star \mathbf{1}_{\xi^{\prime}} = \sum_{\xi^{\prime \prime}} g_{x, \xi^{\prime}}^{\xi^{\prime \prime}} \mathbf{1}_{\xi^{\prime \prime}}$ where
\[
g_{x, \xi^{\prime}}^{\xi^{\prime \prime}} = \vert
\{ q \in Y_1 \mid \partial_1q = \xi^{\prime \prime}, \; \partial_0 q= \xi^{\prime}, \; F_1(q) =x\} \vert.
\]

Before categorifying the above construction we recall some preliminary definitions from \cite[\S 3.5.B]{dyckerhoff2012b}. A category $\mathcal{C}$ is called $\sqcup$-semisimple if it is equivalent to an overcategory $\mathsf{Set}_{\slash B}$, in which case a simple object of $\mathcal{C}$ is an object which is isomorphic to a point $\{b\} \in \mathsf{Set}_{\slash B}$. Denote by $\| \mathcal{C}\|$ the set of isomorphism classes of simple objects of $\mathcal{C}$. A functor between $\sqcup$-semisimple categories is called additive (resp. simple) if it preserves coproducts (resp. simple objects). The $\sqcup$-semisimple categories, their additive functors and their natural transformations form a bicategory $\mathsf{Cat}^{\sqcup}$. Let $\mathsf{Cat}^{\sqcup !} \subset \mathsf{Cat}^{\sqcup}$ be the subbicategory whose morphisms are instead simple additive functors.

Define a $2$-functor by $A : \mathsf{Span}(\mathsf{Set}) \rightarrow \mathsf{Cat}^{\sqcup}$ by
\[
Z \mapsto \mathsf{Set}_{\slash Z}, \qquad (Z \xleftarrow[]{s} W \xrightarrow[]{p} Z^{\prime}) \mapsto (\mathsf{Set}_{\slash Z} \xrightarrow[]{p_* \circ s^*} \mathsf{Set}_{\slash Z^{\prime}})
\]
with the obvious action on $2$-morphisms. Define also $B : \mathsf{Cat}^{\sqcup} \rightarrow \mathsf{Span}(\mathsf{Set})$ by
\[
\mathcal{C} \mapsto \| \mathcal{C} \|, \qquad (\mathcal{C} \xrightarrow[]{\phi} \mathcal{C}^{\prime}) \mapsto (\| \mathcal{C} \| \xleftarrow[]{} \bigsqcup_{a \in \| \mathcal{C}\|} F^{\prime}(\phi(a)) \xrightarrow[]{} \| \mathcal{C}^{\prime} \| )
\]
where $F^{\prime} : \mathcal{C}^{\prime} \rightarrow \mathsf{Set}_{\slash Z^{\prime}}$ is some equivalence.

\begin{Prop}[{\cite[Proposition 3.5.4]{dyckerhoff2012b}}]
\label{prop:ssCatEquiv}
The $2$-functors $A: \mathsf{Span}(\mathsf{Set}) \leftrightarrow \mathsf{Cat}^{\sqcup}: B$ are mutually inverse $2$-equivalences which restrict to $2$-equivalences $\mathsf{Set} \leftrightarrow \mathsf{Cat}^{\sqcup !}$.
\end{Prop}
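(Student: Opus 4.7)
The plan is to establish the 2-equivalence by explicit computation on objects, 1-morphisms and 2-morphisms, and then to identify the image of $\mathsf{Set}$ under $A$ with $\mathsf{Cat}^{\sqcup !}$.

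First I would verify $B \circ A \simeq \mathbf{1}_{\mathsf{Span}(\mathsf{Set})}$. On objects, the simple objects of $\mathsf{Set}_{\slash Z}$ are precisely the points $\{z\} \rightarrow Z$, $z \in Z$, and no two are isomorphic, so $\|A(Z)\| \cong Z$ naturally in $Z$. On 1-morphisms, applied to a span $Z \xleftarrow[]{s} W \xrightarrow[]{p} Z^{\prime}$, the functor $p_* \circ s^* : \mathsf{Set}_{\slash Z} \rightarrow \mathsf{Set}_{\slash Z^{\prime}}$ sends the simple object $\{z\} \to Z$ to $s^{-1}(z) \to Z^{\prime}$, which decomposes as a coproduct of simples indexed by $s^{-1}(z)$. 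Plugging this into $B$ produces precisely the span $Z \leftarrow \bigsqcup_{z \in Z} s^{-1}(z) \rightarrow Z^{\prime}$, which is canonically isomorphic to the original span via the bijection $W \simeq \bigsqcup_{z \in Z} s^{-1}(z)$. The extension to 2-morphisms is automatic, since a 2-morphism of spans is a bijection over $Z$ and $Z^{\prime}$, and $A$ turns this into the induced isomorphism of functors, which $B$ then takes back to the same bijection.

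Next I would verify $A \circ B \simeq \mathbf{1}_{\mathsf{Cat}^{\sqcup}}$. For a $\sqcup$-semisimple category $\mathcal{C}$, the definition gives a chosen equivalence $F : \mathcal{C} \xrightarrow[]{\sim} \mathsf{Set}_{\slash \| \mathcal{C}\|} = A(B(\mathcal{C}))$; this assignment is pseudonatural in $\mathcal{C}$ because an additive functor $\phi : \mathcal{C} \rightarrow \mathcal{C}^{\prime}$ is determined up to unique isomorphism by its behaviour on simple objects, and $B(\phi)$ is constructed precisely so that applying $A$ to it recovers $F^{\prime} \circ \phi \circ F^{-1}$ up to canonical isomorphism. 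The 2-morphism level is again routine: a natural transformation $\phi \Rightarrow \phi^{\prime}$ of additive functors is determined by its components on simples, which is exactly the datum of a 2-morphism between the associated spans.

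For the restriction, I would observe that under the equivalence $A$, a span $Z \xleftarrow[]{s} W \xrightarrow[]{p} Z^{\prime}$ corresponds to a map of sets (that is, lies in the image of $\mathsf{Set} \hookrightarrow \mathsf{Span}(\mathsf{Set})$) precisely when $s$ is a bijection. On the other side, the functor $p_* \circ s^* : \mathsf{Set}_{\slash Z} \rightarrow \mathsf{Set}_{\slash Z^{\prime}}$ sends the simple $\{z\}$ to the simple determined by $p(s^{-1}(z))$ exactly when each fibre $s^{-1}(z)$ is a singleton, i.e.\ $s$ is a bijection. Hence $A$ identifies $\mathsf{Set}$ with the subbicategory of $\mathsf{Cat}^{\sqcup}$ whose 1-morphisms are the simple additive functors, namely $\mathsf{Cat}^{\sqcup !}$. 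The most delicate point is the pseudonaturality of the unit and counit in the bicategorical sense, i.e.\ checking that the coherence 2-cells produced by $A$ and $B$ are compatible with composition of spans (pullback) and composition of functors; this is handled by the standard fact that $\mathsf{Set}_{\slash -}$ sends pullback squares to squares of functors commuting up to canonical isomorphism via the Beck-Chevalley condition in $\mathsf{Set}$.
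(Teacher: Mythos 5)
Your argument is correct: the identification $\|\mathsf{Set}_{\slash Z}\|\cong Z$, the computation that $p_*\circ s^*$ sends $\{z\}$ to $s^{-1}(z)\to Z'$ so that $B(A(\text{span}))$ recovers the span via $W\simeq\bigsqcup_{z}s^{-1}(z)$, the Beck--Chevalley compatibility with composition of spans, and the observation that $p_*\circ s^*$ preserves simples exactly when $s$ has singleton fibres all check out. Note that the paper does not prove this proposition at all --- it is quoted verbatim from Dyckerhoff--Kapranov --- so there is no in-text argument to compare against; your direct verification is the standard one and is complete at the expected level of detail (the only cosmetic point is that $B$ depends on choices of equivalences $F'$, which change the resulting span only up to $2$-isomorphism).
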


For later purposes, fix natural equivalences
\[
\kappa : \mathbf{1}_{\mathsf{Cat}^{\sqcup}} \Rightarrow A \circ B, \qquad \lambda: B \circ A \Rightarrow \mathbf{1}_{\mathsf{Span}(\mathsf{Set})}
\]
realizing the $2$-equivalences of Proposition \ref{prop:ssCatEquiv}.

A semibicategory is the data of a bicategory with all mention of unit $1$-morphisms omitted. A semibicategory is called $\sqcup$-semisimple if it is small, its morphism categories are $\sqcup$-semisimple and composition of morphisms is additive in each variable. A lax $2$-functor between $\sqcup$-semisimple semibicategories is called admissible if the associated functors on morphism categories are simple additive.

\begin{Def}
Two admissible lax $2$-functors $\phi, \phi^{\prime} : \mathbb{X} \rightarrow \mathbb{X}^{\prime}$ of $\sqcup$-semisimple semibicategories are called equivalent if they agree on objects and there exists a collection of natural isomorphisms
\[
\begin{tikzcd}[column sep=4em]
 \Hom_{\mathbb{X}} (a,b)
  \arrow[bend left=20]{r}[name=U,below]{}{\phi_{a,b}} 
  \arrow[bend right=20]{r}[name=D]{}[swap]{\phi^{\prime}_{a,b}}
& 
\Hom_{\mathbb{X}^{\prime}}(\phi(a),\phi(b)),
     \arrow[shorten <=3pt,shorten >=3pt,Rightarrow,to path={(U) -- node[label=left:\footnotesize \scriptsize $U_{a,b}$] {} (D)}]{}
\end{tikzcd}
\qquad a,b \in \text{\textnormal{Ob}}(\mathbb{X})
\]
which commute with the coherence maps for $\phi, \phi^{\prime}$ and the associativity isomorphisms.
\end{Def}

If $\mathbb{X}$, $\mathbb{X}^{\prime}$ are bicategories, then $\phi, \phi^{\prime}: \mathbb{X} \rightarrow \mathbb{X}^{\prime}$ are equivalent if there exists an invertible icon $U: \phi \Rightarrow \phi^{\prime}$. Slightly abusively, we will refer to $\{U_{a,b}\}_{a,b \in \text{Ob}(\mathbb{X})}$ as an icon even if $\mathbb{X}$, $\mathbb{X}^{\prime}$ are not bicategories. The $\sqcup$-semisimple semibicategories and their equivalence classes of admissible lax $2$-functors form a category $\frac{1}{2}\mathsf{biCat}^{\sqcup}$.

Let $X_{\bullet}$ be a $2$-Segal semi-simplicial set. Its Hall $2$-category $\mathbb{H}(X_{\bullet})$ is the $\sqcup$-semisimple semibicategory with
\[
\text{Ob}(\mathbb{H}(X_{\bullet})) = X_0, \qquad \Hom_{\mathbb{H}(X_{\bullet})}(a,b) = \mathsf{Set}_{\slash  X_{a \rightarrow b}}.
\]
Push-pull along the span \eqref{eq:ssSpan} defines composition of $1$-morphisms. Associated to a morphism $\phi_{\bullet}: X_{\bullet} \rightarrow X_{\bullet}^{\prime}$ is the admissible lax $2$-functor $\phi: \mathbb{H}(X_{\bullet}) \rightarrow \mathbb{H}(X_{\bullet}^{\prime})$ given by $\phi_0$ on objects and by pushforward along $\phi_1$ on morphisms.

\begin{Thm}[{\cite[Theorem 3.5.8]{dyckerhoff2012b}}]
\label{thm:biCat}
The assignment $X_{\bullet} \mapsto \mathbb{H}(X_{\bullet})$ extends to an equivalence between the category of $2$-Segal semi-simplicial sets and $\frac{1}{2}\mathsf{biCat}^{\sqcup}$.
\end{Thm}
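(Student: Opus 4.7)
The strategy is to construct a quasi-inverse $\mathbb{X} \mapsto X_{\bullet}(\mathbb{X})$ to $\mathbb{H}(-)$ by translating each piece of $\sqcup$-semisimple semibicategorical data into the semi-simplicial structure, using the $2$-equivalence $A \dashv B$ of Proposition \ref{prop:ssCatEquiv} as the translation device. The $2$-Segal conditions will then appear as a precise combinatorial encoding of associativity coherence for the span-valued composition obtained by applying $B$ to each bifunctor $c_{abc}$.

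Given a $\sqcup$-semisimple semibicategory $\mathbb{X}$, set $X_0(\mathbb{X}) = \mathrm{Ob}(\mathbb{X})$ and, for $a,b \in X_0$, $X_{a\to b} = \|\Hom_{\mathbb{X}}(a,b)\|$, with $X_1(\mathbb{X}) = \bigsqcup_{a,b} X_{a \to b}$ carrying source and target face maps. Each composition bifunctor $c_{abc}: \Hom_{\mathbb{X}}(b,c) \times \Hom_{\mathbb{X}}(a,b) \rightarrow \Hom_{\mathbb{X}}(a,c)$ is additive in each variable, so under $B$ it corresponds to a span
\[
X_{b\to c} \times X_{a\to b} \leftarrow M_{abc} \to X_{a\to c},
\]
and I take $X_2(\mathbb{X})$ to be the disjoint union of these middle sets $M_{abc}$, with face maps given by the two legs together with source-target projection. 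For $n \geq 3$, an element of $X_n(\mathbb{X})$ is a tuple $(x_{ij}, p_{ijk})$ with $x_{ij} \in X_1$ and $p_{ijk} \in X_2$ having the prescribed edges, subject to the coherence that, for each $4$-tuple $i<j<k<l$, the two nested pairs $(p_{ijk}, p_{ikl})$ and $(p_{ijl}, p_{jkl})$ correspond under the associator $B(\alpha_{abcd})$ of $\mathbb{X}$.

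The first substantial step is to verify that $X_{\bullet}(\mathbb{X})$ is $2$-Segal. Using the reduction \cite[Proposition 2.3.2]{dyckerhoff2012b} recalled in Section \ref{sec:2Segal}, it suffices to check the bijections $f_{\{i,j\}}$ with $i=0$ or $j=n$; at $n=3$ this is precisely the invertibility of $B(\alpha_{abcd})$, and at $n \geq 4$ it follows inductively from the pentagon axiom in $\mathbb{X}$, since any two triangulations of $P_n$ are connected by a sequence of square flips. For functoriality of the inverse, an admissible lax $2$-functor $\phi: \mathbb{X} \rightarrow \mathbb{X}'$ restricts on simple morphism classes to maps $X_{a\to b} \rightarrow X'_{\phi(a) \to \phi(b)}$ and its laxators induce the map on $2$-simplices; two laxly equivalent $2$-functors produce the same simplicial morphism since an invertible icon acts trivially on $\|\cdot\|$, which matches the quotient in $\frac{1}{2}\mathsf{biCat}^{\sqcup}$ exactly with morphisms of $2$-Segal semi-simplicial sets.

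The main obstacle is the bookkeeping in the higher-simplex construction: $X_n(\mathbb{X})$ must be defined so that its simplicial structure maps are canonical and so that $2$-Segality holds, which forces the associator coherence to be built into the definition. Mutual quasi-inverseness of $X_{\bullet}(-)$ and $\mathbb{H}(-)$ then follows at each simplicial level from the natural equivalences $\kappa: \mathbf{1} \Rightarrow A \circ B$ and $\lambda: B \circ A \Rightarrow \mathbf{1}$ fixed after Proposition \ref{prop:ssCatEquiv}.
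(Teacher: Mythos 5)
Your construction is correct, and its mathematical content coincides with the paper's, but you organize the argument differently: you build the quasi-inverse $\mathbb{X}\mapsto X_{\bullet}(\mathbb{X})$ in one direct step, whereas the paper factors the equivalence through the category $\frac{1}{2}\mu\mathsf{Cat}$ of multivalued semicategories. Concretely, the paper constructs $2$-functors $\Xi:\frac{1}{2}\mathsf{biCat}^{\sqcup}\rightarrow\frac{1}{2}\mu\mathsf{Cat}$ and $\Omega$ in the other direction (both built from the $A$, $B$ equivalence of Proposition \ref{prop:ssCatEquiv}, exactly as in your translation of $c_{abc}$ into spans), checks they are mutually inverse via $\kappa$ and $\lambda$ together with explicit icons, and then invokes the semi-simplicial variant of Theorem \ref{thm:multiCat} to pass from multivalued semicategories to $2$-Segal semi-simplicial sets. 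Your higher-simplex construction --- tuples $(x_{ij},p_{ijk})$ subject to associator coherence on every $4$-tuple, with $2$-Segality deduced from invertibility of $B(\alpha)$ at $n=3$ and from the pentagon plus connectivity of triangulations under flips at $n\geq 4$ --- is precisely the content of that nerve theorem, which you are re-deriving inline rather than citing. What the paper's factorization buys is reusability: the already-established equivalence $\mu\mathsf{Cat}\simeq 2\mhyphen\mathsf{Seg}\mathbb{S}$ absorbs all the combinatorial bookkeeping you identify as the "main obstacle", and the functors $\Xi$, $\Omega$ are then lifted to $\Xi^{\mathsf{rel}}$, $\Omega^{\mathsf{rel}}$ in the proof of Theorem \ref{thm:psh2Cat}, so the intermediate category is not dead weight. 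What your version buys is self-containedness and a more transparent picture of where each $2$-Segal condition comes from. The only point I would press you on is the inductive step for $n\geq 4$: the bijectivity of $f_{\{0,j\}}$ requires not just that flips connect all triangulations but that the reconstruction of the omitted $p_{ikl}$ is independent of the chosen flip sequence, which is exactly where Mac Lane coherence (not merely the pentagon as an axiom, but its consequence for all bracketings) is used; you gesture at this correctly, so I read it as a compression rather than a gap.
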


\begin{proof}
A proof is outlined in \cite{dyckerhoff2012b}. We fill in some details which will be needed below. We first construct a functor $\Xi : \frac{1}{2} \mathsf{biCat}^{\sqcup} \rightarrow \frac{1}{2}\mu \mathsf{Cat}$. Define $\mathfrak{X}= \Xi(\mathbb{X}) \in \frac{1}{2}\mu \mathsf{Cat}$ by
\[
\mathfrak{X}_0 = \text{Ob}(\mathbb{X}), \qquad \mathfrak{X}_1 = \bigsqcup_{a,b \in \text{Ob}(\mathbb{X})} \| \Hom_{\mathbb{X}}(a,b) \|
\]
with span $m_{\mathfrak{X}}$ obtained by applying Proposition \ref{prop:ssCatEquiv} to the functors which define composition of $1$-morphisms in $\mathbb{X}$. Given $\phi: \mathbb{X} \rightarrow \mathbb{X}^{\prime}$, define $\varphi=\Xi(\phi)$ so that $\varphi_0$ is equal to $\phi$ on objects and $\varphi_1$ is given by the functors $\phi_{a,b}$. If $\phi, \phi^{\prime}: \mathbb{X} \rightarrow \mathbb{X}^{\prime}$ are equivalent, say via an icon $U: \phi \Rightarrow \phi^{\prime}$, then $\varphi_0 = \varphi^{\prime}_0$ and $\varphi_1 = \varphi^{\prime}_1$ due to the isomorphisms $
U_{a,b}(f) : \phi_{a,b}(f) \xrightarrow[]{\sim} \phi^{\prime}_{a,b}(f)$, $f \in \Hom_{\mathbb{X}}(a,b)$.

Define also a functor $\Omega: \frac{1}{2} \mu\mathsf{Cat} \rightarrow \frac{1}{2}\mathsf{biCat}^{\sqcup}$ by setting $\mathbb{X} = \Omega(\mathfrak{X})$, where $\text{Ob}(\mathbb{X}) = \mathfrak{X}_0$ and $\Hom_{\mathbb{X}}(a,b) = \mathsf{Set}_{\slash \mathfrak{X}_{a \rightarrow b}}$. Given $\varphi: \mathfrak{X} \rightarrow \mathfrak{X}^{\prime}$, let $\phi=\Omega(\varphi)$ be the morphism which is equal to $\varphi_0$ on objects and whose component functor $\phi_{a,b}$ is pushforward along $\varphi_1: \mathfrak{X}_{a \rightarrow b} \rightarrow \mathfrak{X}^{\prime}_{\varphi_0(a) \rightarrow \varphi_0(b)}$.

We claim that $\Xi$ and $\Omega$ are mutually inverse equivalences. Let $\epsilon : \Xi \circ \Omega \Rightarrow \mathbf{1}_{\frac{1}{2} \mu \mathsf{Cat}}$ be the natural isomorphism whose component $\epsilon_{\mathfrak{X}}: \Xi( \Omega(\mathfrak{X})) \rightarrow \mathfrak{X}$ has $(\epsilon_{\mathfrak{X}})_0$ equal to the identity and has $(\epsilon_{\mathfrak{X}})_1$ equal to the map
\[
\bigsqcup_{a,b \in \mathfrak{X}_0} \lambda_{\mathfrak{X}_{a \rightarrow b}} :
\bigsqcup_{a,b \in \mathfrak{X}_0}  \| \mathsf{Set}_{\slash \mathfrak{X}_{a \rightarrow b} } \|  \rightarrow \mathfrak{X}_1.
\]
Similarly, let $\eta : \mathbf{1}_{\frac{1}{2}\mathsf{biCat}^{\sqcup}} \Rightarrow \Omega \circ \Xi$ be the natural transformation whose component $\eta_{\mathbb{X}}: \mathbb{X} \rightarrow \Omega(\Xi(\mathbb{X}))$ is the identity on objects and is equal to the functor $\kappa_{\Hom_{\mathbb{X}}(a,b)}$ on morphism categories. Given $\phi: \mathbb{X} \rightarrow \mathbb{X}^{\prime}$, we need to check that the diagram 
\[
\begin{tikzcd}
\mathbb{X} \arrow{d}[left]{\phi} \arrow{r}{\eta_{\mathbb{X}}} & \Omega(\Xi(\mathbb{X})) \arrow{d}[right]{\Omega(\Xi(\mathbb{\phi}))} \\ \mathbb{X}^{\prime} \arrow{r}[below]{\eta_{\mathbb{X}^{\prime}}} & \Omega(\Xi(\mathbb{X}^{\prime}))
\end{tikzcd}
\]
commutes in $\frac{1}{2}\mathsf{biCat}^{\sqcup}$, which amounts to giving an invertible icon $U: \eta_{\mathbb{X}^{\prime}} \circ \phi \Rightarrow \Omega(\Xi(\phi)) \circ \eta_{\mathbb{X}}$. The required natural transformation $U_{a,b}$ is defined to be
\[
\begin{tikzcd}[column sep=7em]
\Hom_{\mathbb{X}}(a,b) \arrow{d}[left]{\phi_{a,b}} \arrow{r}{\kappa_{\Hom_{\mathbb{X}}(a,b)}} & \mathsf{Set}_{\slash \| \Hom_{\mathbb{X}}(a, b)\|} \arrow{d}[right]{\| \phi_{a,b} \|_*} \\ \Hom_{\mathbb{X}^{\prime}}(\phi(a),\phi(b)) \arrow{r}[below]{\kappa_{\Hom_{\mathbb{X}^{\prime}}(\phi(a), \phi(b))}} \arrow[Rightarrow]{ru}[above]{\kappa_{\phi_{a,b}}} & \mathsf{Set}_{\slash \| \Hom_{\mathbb{X}^{\prime}}(\phi(a), \phi(b)) \|}.
\end{tikzcd}
\]

This establishes an equivalence $\frac{1}{2} \mathsf{biCat}^{\sqcup} \simeq \frac{1}{2} \mu \mathsf{Cat}$. Using the semi-simplicial variant of Theorem \ref{thm:multiCat} we get an equivalence from $\frac{1}{2} \mathsf{biCat}^{\sqcup}$ to the category of $2$-Segal semi-simplicial sets which is a quasi-inverse of the functor $X_{\bullet} \mapsto \mathbb{H}(X_{\bullet})$.
\end{proof}

Motivated by Proposition \ref{prop:relNerve}, the goal of the remainder of this section is to interpret relative $2$-Segal semi-simplicial sets in terms of a certain class of presheaves. To this end, define a $\mathsf{Cat}^{\sqcup}$-valued presheaf on a $\sqcup$-semisimple semibicategory to be a $2$-functor $\mathbb{F}: \mathbb{X}^{\mathsf{op}} \rightarrow \mathsf{Cat}^{\sqcup}$ with $\mathbb{X} \in \frac{1}{2}\mathsf{biCat}^{\sqcup}$. Here $\mathbb{X}^{\mathsf{op}}$ is the semibicategory obtained from $\mathbb{X}$ by reversing only its $1$-cells. A morphism
\[
(\phi, \theta): (\mathbb{F}: \mathbb{X}^{\mathsf{op}} \rightarrow \mathsf{Cat}^{\sqcup}) \rightarrow (\mathbb{F}^{\prime}: \mathbb{X}^{\prime \mathsf{op}} \rightarrow \mathsf{Cat}^{\sqcup})
\]
consists of a lax $2$-functor $\phi: \mathbb{X} \rightarrow \mathbb{X}^{\prime}$ and an oplax natural transformation $\theta : \mathbb{F} \Rightarrow \mathbb{F}^{\prime} \circ \phi^{\mathsf{op}}$. The pair $(\phi, \theta)$ is called admissible if $\phi$ is admissible and the functors $\theta_a : \mathbb{F}(a) \rightarrow \mathbb{F}^{\prime}(\phi(a))$ are simple additive. The composition $\mathbb{F} \xrightarrow[]{(\phi, \theta)} \mathbb{F}^{\prime} \xrightarrow[]{(\xi, \zeta)} \mathbb{F}^{\prime \prime}$ is defined by the right whiskering of $\zeta$ and $\phi^{\mathsf{op}}$: $(\xi, \zeta) \circ (\phi, \theta) = (\xi \circ \phi, (\zeta \phi^{\mathsf{op}}) \circ \theta)$.

\begin{Def}
Two admissible morphisms
\[
(\phi, \theta), (\phi^{\prime}, \theta^{\prime}): (\mathbb{F}: \mathbb{X}^{\mathsf{op}} \rightarrow \mathsf{Cat}^{\sqcup}) \rightarrow (\mathbb{F}^{\prime}: \mathbb{X}^{\prime \mathsf{op}} \rightarrow \mathsf{Cat}^{\sqcup})
\]
are called equivalent if there exists an invertible icon $U : \phi \Rightarrow \phi^{\prime}$ and an invertible modification
\[
\begin{tikzcd}[column sep=4.5cm]
\mathbb{X}^{\mathsf{op}} 
  \arrow[bend left=25]{r}[name=U,below]{}{\mathbb{F}} 
  \arrow[bend right=25]{r}[name=D]{}[swap]{\mathbb{F}^{\prime} \circ \phi^{\prime \mathsf{op}}}
& 
\mathsf{Cat}^{\sqcup} .
  \arrow[Rightarrow,to path={(U) to[out=-150,in=150] node[left] {$\scriptstyle (\mathbb{F}^{\prime} U^{\mathsf{op}}) \circ \theta$} coordinate (M) (D)}]{}
  \arrow[Rightarrow,to path={(U) to[out=-30,in=30] node[right] {$\scriptstyle \theta^{\prime} $} coordinate (N)  (D)}]{}
  \arrow[to path={node[label={center:\scalebox{2}[0.75]{$\Rrightarrow$}},label={[yshift=-2pt]above:$\scriptstyle \Gamma$}] at ( $ (M)!0.55!(N) $ ) {}}]{}
\end{tikzcd}
\]
\end{Def}

Denote by $\mathsf{Psh}^{\sqcup}$ the category of $\mathsf{Cat}^{\sqcup}$-valued presheaves on $\sqcup$-semisimple semibicategories and their equivalence classes of admissible morphisms.

We now construct a functor from the category of relative $2$-Segal semi-simplicial sets to $\mathsf{Psh}^{\sqcup}$. Given $F_{\bullet}: Y_{\bullet} \rightarrow X_{\bullet}$, define $\mathbb{F}:\mathbb{H}(X_{\bullet})^{\mathsf{op}} \rightarrow \mathsf{Cat}^{\sqcup}$ as follows. Put $\mathbb{F}(a) = \mathsf{Set}_{\slash F_0^{-1}(a)}$, $a \in X_0$, and use the span \eqref{eq:relssSpan} to define a functor
\[
\mathbb{F}_{a,b}: \mathsf{Set}_{\slash  X_{a \rightarrow b}} \rightarrow [\mathsf{Set}_{\slash F_0^{-1}(b)}, \mathsf{Set}_{\slash F_0^{-1}(a)}],
\]
the image of which consists of additive functors by Proposition \ref{prop:ssCatEquiv}. Here and in what follows we use the equivalence $[\mathcal{C} \times \mathcal{D}, \mathcal{E}] \simeq [\mathcal{C}, [\mathcal{D}, \mathcal{E}]]$ for categories $\mathcal{C}, \mathcal{D}$ and $\mathcal{E}$, the first two of which are small. The relative $2$-Segal bijections induce the coherence isomorphisms for $\mathbb{F}$. Given a morphism
\[
\begin{tikzcd}
Y_{\bullet} \arrow{r}[above]{\theta_{\bullet}} \arrow{d}[left]{F_{\bullet}} & Y_{\bullet}^{\prime} \arrow{d}[right]{F_{\bullet}^{\prime}}  \\
X_{\bullet} \arrow{r}[below]{\phi_{\bullet}} & X_{\bullet}^{\prime}
\end{tikzcd}
\]
of relative $2$-Segal semi-simplicial sets, we need to define a morphism 
\[
(\phi, \theta): (\mathbb{F}: \mathbb{H}(X_{\bullet})^{\mathsf{op}} \rightarrow \mathsf{Cat}^{\sqcup}) \rightarrow (\mathbb{F}^{\prime}: \mathbb{H}(X_{\bullet}^{\prime})^{\mathsf{op}} \rightarrow \mathsf{Cat}^{\sqcup}).
\]
Let $\phi$ be the lax $2$-functor associated to $\phi_{\bullet}$. Let $\theta_a: \mathbb{F}(a) \rightarrow \mathbb{F}^{\prime}(\phi(a))$ be the pushforward along $\theta_0 \vert_{F_0^{-1}(a)} : F_0^{-1}(a) \rightarrow F_0^{\prime -1}(\phi(a))$. The natural transformation
\[
\begin{tikzpicture}[baseline= (a).]
\node[scale=1] (a) at (0,0){
\begin{tikzcd}[row sep=2em,column sep=4em]
\mathsf{Set}_{\slash X_{a \rightarrow b}} \arrow{d}[left]{\mathbb{F}_{a,b}} \arrow{r}{(\mathbb{F}^{\prime} \circ \phi^{\mathsf{op}})_{a,b}} & \left[ \mathsf{Set}_{\slash F_0^{\prime-1}(\phi(b))}, \mathsf{Set}_{\slash F_0^{\prime-1}(\phi(a))} \right] \arrow{d}[right]{- \circ \theta_b} \\
\left[ \mathsf{Set}_{\slash F_0^{-1}(b)}, \mathsf{Set}_{\slash F_0^{-1}(a)} \right] \arrow{r}[below]{\theta_a \circ -} \arrow[Rightarrow]{ru}[above]{\theta_{a,b}} & \left[ \mathsf{Set}_{\slash F_0^{-1}(b)}, \mathsf{Set}_{\slash F_0^{\prime-1}(\phi(a))} \right]
\end{tikzcd}
};
\end{tikzpicture}
\]
is induced by the morphism of spans determined by the restriction of the map $\theta_1: Y_1 \rightarrow Y_1^{\prime}$ to the middle set of the span \eqref{eq:relssSpan}.

\begin{Thm}
\label{thm:psh2Cat}
The assignment $F_{\bullet} \mapsto \mathbb{F}$ extends to an equivalence between the category of relative $2$-Segal semi-simplicial sets and $\mathsf{Psh}^{\sqcup}$.
\end{Thm}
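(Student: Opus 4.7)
The plan is to factor the equivalence through the category $\frac{1}{2}\mu\mathsf{Cat}\mhyphen\mathsf{mod}$ of modules over multivalued semicategories, in parallel with the factorization used in the proof of Theorem \ref{thm:biCat}. By the semi-simplicial variant of Theorem \ref{thm:multiCatModule} the assignment $\mathsf{N}^{\mathsf{rel}}_\bullet$ induces an equivalence between $\frac{1}{2}\mu\mathsf{Cat}\mhyphen\mathsf{mod}$ and the category of relative $2$-Segal semi-simplicial sets, so it suffices to establish an equivalence $\mathsf{Psh}^{\sqcup} \simeq \frac{1}{2}\mu\mathsf{Cat}\mhyphen\mathsf{mod}$ that lifts the equivalence $\Xi: \frac{1}{2}\mathsf{biCat}^{\sqcup} \simeq \frac{1}{2}\mu\mathsf{Cat}$ constructed in the proof of Theorem \ref{thm:biCat}.

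First I will construct a functor $\Xi^{\mathsf{rel}}: \mathsf{Psh}^{\sqcup} \rightarrow \frac{1}{2}\mu\mathsf{Cat}\mhyphen\mathsf{mod}$. Given $\mathbb{F}: \mathbb{X}^{\mathsf{op}} \rightarrow \mathsf{Cat}^{\sqcup}$, set $\mathfrak{X} = \Xi(\mathbb{X})$ and define $\mathfrak{Y}_0 = \bigsqcup_{a \in \text{Ob}(\mathbb{X})} \| \mathbb{F}(a) \|$ with $\mathfrak{F}_0$ the map collapsing $\|\mathbb{F}(a)\|$ onto $\{a\}$. The action span $\mu_{\mathfrak{Y}} \in \Hom_{\mathsf{Span}(\mathsf{Set})}(\mathfrak{X}_1 \times_{\mathfrak{X}_0} \mathfrak{Y}_0, \mathfrak{Y}_0)$ is obtained by applying Proposition \ref{prop:ssCatEquiv} to the additive functors $\mathbb{F}_{a,b}: \Hom_{\mathbb{X}}(a,b) \rightarrow [\mathbb{F}(b), \mathbb{F}(a)]$ packaged as action functors $\Hom_{\mathbb{X}}(a,b) \times \mathbb{F}(b) \rightarrow \mathbb{F}(a)$; the $2$-functor coherence isomorphisms of $\mathbb{F}$ then provide the module associator $\alpha_{\mathfrak{Y}}$, with module theoretic Mac Lane coherence following from the coherence axiom satisfied by $\mathbb{F}$. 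Morphisms $(\phi, \theta)$ are sent to their images under $\Xi$ together with the map $\tilde{\vartheta}_0: \mathfrak{Y}_0 \rightarrow \mathfrak{Y}^{\prime}_0$ assembled from $\| \theta_a \|$ on each component; the span morphism $\tilde{\vartheta}_1$ is then extracted from the natural transformations $\theta_{a,b}$ via Proposition \ref{prop:ssCatEquiv}. Equivalent morphisms in $\mathsf{Psh}^{\sqcup}$ go to the same morphism because the invertible icon $U$ and invertible modification $\Gamma$ become trivial after passing to isomorphism classes.

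Next I will construct a quasi-inverse $\Omega^{\mathsf{rel}}: \frac{1}{2}\mu\mathsf{Cat}\mhyphen\mathsf{mod} \rightarrow \mathsf{Psh}^{\sqcup}$. Given $(\mathfrak{Y} \rightarrow \mathfrak{X})$, let $\mathbb{X} = \Omega(\mathfrak{X})$, set $\mathbb{F}(a) = \mathsf{Set}_{\slash \mathfrak{F}_0^{-1}(a)}$, and define the action functors by push-pull along the span obtained by restricting $\mu_{\mathfrak{Y}}$ to fibres over $(a,b) \in \mathfrak{X}_0 \times \mathfrak{X}_0$. The module associator $\alpha_{\mathfrak{Y}}$ induces the required coherence isomorphisms for $\mathbb{F}$. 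On morphisms the construction is the obvious analogue of the one used in Theorem \ref{thm:biCat}, with $\theta_a$ defined by pushforward along $\tilde{\vartheta}_0 \vert_{\mathfrak{F}_0^{-1}(a)}$ and the oplax structure $\theta_{a,b}$ determined by $\tilde{\vartheta}_1$.

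Finally, I will verify that the compositions $\Xi^{\mathsf{rel}} \circ \Omega^{\mathsf{rel}}$ and $\Omega^{\mathsf{rel}} \circ \Xi^{\mathsf{rel}}$ are naturally isomorphic to the identity. One direction reduces, after passing from modules to the underlying multivalued semicategories, to the natural equivalence $\epsilon: \Xi \circ \Omega \Rightarrow \mathbf{1}_{\frac{1}{2}\mu\mathsf{Cat}}$ from Theorem \ref{thm:biCat}; on the relative part it is the equivalence induced on slice categories by $\lambda_{\mathfrak{F}_0^{-1}(a)}: \| \mathsf{Set}_{\slash \mathfrak{F}_0^{-1}(a)} \| \simeq \mathfrak{F}_0^{-1}(a)$. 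The other direction is built from the natural transformation $\eta: \mathbf{1}_{\frac{1}{2}\mathsf{biCat}^{\sqcup}} \Rightarrow \Omega \circ \Xi$ together with components $\kappa_{\mathbb{F}(a)}: \mathbb{F}(a) \rightarrow \mathsf{Set}_{\slash \| \mathbb{F}(a)\|}$, the naturality square for a morphism $(\phi, \theta)$ being witnessed by the invertible icon $\{\kappa_{\phi_{a,b}}\}_{a,b}$ constructed in the proof of Theorem \ref{thm:biCat} and the invertible modification whose $a$-component is $\kappa_{\theta_a}$. The compatibility of these two pieces with the oplax structure follows from naturality of $\kappa$ and the definition of composition in $\mathsf{Psh}^{\sqcup}$. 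Composing with the equivalence $\mathsf{N}^{\mathsf{rel}}_\bullet$ then yields the desired equivalence between relative $2$-Segal semi-simplicial sets and $\mathsf{Psh}^{\sqcup}$, and a direct chase of definitions shows that this equivalence sends $F_{\bullet}$ to $\mathbb{F}$ as described before the theorem. The main obstacle will be the careful bookkeeping required to match the modification data in $\mathsf{Psh}^{\sqcup}$ with the ambiguity already present in the equivalence $\kappa$ at the non-relative level, so that the unit and counit of the equivalence are seen to be well-defined on equivalence classes of morphisms.
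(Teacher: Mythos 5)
Your proposal is correct and follows essentially the same route as the paper: it factors the equivalence through $\frac{1}{2}\mu\mathsf{Cat}\mhyphen\mathsf{mod}$ via the semi-simplicial variant of Theorem \ref{thm:multiCatModule}, lifts $\Xi$ and $\Omega$ to $\Xi^{\mathsf{rel}}$ and $\Omega^{\mathsf{rel}}$ with the same definitions of $\mathfrak{Y}_0$, $\mu_{\mathfrak{Y}}$ and $\alpha_{\mathfrak{Y}}$, and lifts the unit and counit $\eta$, $\epsilon$, witnessing the naturality squares by the icons and modifications built from $\kappa$ and $\lambda$ exactly as in the paper's argument. (Your use of $\kappa_{\mathbb{F}(a)}$ for the component $\mathbb{F}(a)\rightarrow\mathsf{Set}_{\slash\|\mathbb{F}(a)\|}$ is in fact the notationally consistent choice relative to the conventions fixed after Proposition \ref{prop:ssCatEquiv}.)
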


\begin{proof}
Lift $\Xi: \frac{1}{2}\mathsf{biCat}^{\sqcup} \rightarrow \frac{1}{2}\mu\mathsf{Cat}$ to a functor $\Xi^{\mathsf{rel}}: \mathsf{Psh}^{\sqcup} \rightarrow \frac{1}{2}\mu\mathsf{Cat} \mhyphen \mathsf{mod}$ as follows. Define $\Xi^{\mathsf{rel}}(\mathbb{F}: \mathbb{X}^{\mathsf{op}} \rightarrow \mathsf{Cat}^{\sqcup}) = (\mathfrak{F}: \mathfrak{Y} \rightarrow \mathfrak{X})$ by $\mathfrak{X} = \Xi(\mathbb{X})$ and $\mathfrak{Y}_0 = \bigsqcup_{a \in \text{Ob}(\mathbb{X})} \| \mathbb{F}(a) \|$ with the canonical map $\mathfrak{F}_0: \mathfrak{Y}_0 \rightarrow \mathfrak{X}_0$ . The span $\mu_{\mathfrak{Y}}$ is obtained by applying Proposition \ref{prop:ssCatEquiv} to the functors $\mathbb{F}_{a,b}$ while the associator $\alpha_{\mathfrak{Y}}$ is defined using the coherence isomorphisms for $\mathbb{F}$. Given a morphism $(\phi, \theta): \mathbb{F} \rightarrow \mathbb{F}^{\prime}$, set $\varphi = \Xi(\phi)$, let $\vartheta_0 : \mathfrak{Y}_0 \rightarrow \mathfrak{Y}_0^{\prime}$ be the map determined by the functors $\theta_a$ and let $\tilde{\vartheta}_1: \vartheta_0 \circ \mu_{\mathfrak{Y}} \rightarrow \mu_{\mathfrak{Y}^{\prime}} \circ (\varphi_1 \times_{\varphi_0} \vartheta_0)$ be the morphism obtained by applying Proposition \ref{prop:ssCatEquiv} to the coherence natural transformations
\[
\begin{tikzpicture}[baseline= (a).]
\node[scale=1] (a) at (0,0){
\begin{tikzcd}[row sep=2.5em,column sep=5em]
\Hom_{\mathbb{X}}(a,b) \times \mathbb{F}(b) \arrow{d}[left]{\mathbb{F}_{a,b}} \arrow{r}{\phi_{a,b} \times \theta_b} & \Hom_{\mathbb{X}^{\prime}}(\phi(a),\phi(b)) \times \mathbb{F}^{\prime}(\phi(b)) \arrow{d}[right]{\mathbb{F}^{\prime}_{a,b}} \\
\mathbb{F}(a) \arrow{r}[below]{\theta_a} \arrow[Rightarrow]{ru}[above]{\theta_{a,b}} & \mathbb{F}^{\prime}(\phi(a)).
\end{tikzcd}
};
\end{tikzpicture}
\]
Suppose that $(\phi,\theta)$ and $(\phi^{\prime}, \theta^{\prime})$ are equivalent morphisms, say via an icon $U$ and a modification $\Gamma$. Then $\varphi = \varphi^{\prime}$. Since $U_a: \phi(a) \rightarrow \phi^{\prime}(a)$ is the identity we have $((\mathbb{F}^{\prime} U^{\mathsf{op}}) \circ \theta)_a = \theta_a$ and hence a natural isomorphism $\Gamma_a: \theta_a \Rightarrow \theta_a^{\prime}$, showing that $\vartheta_0 = \vartheta_0^{\prime}$. The equality $\tilde{\vartheta}_1 = \tilde{\vartheta}^{\prime}_1$ follows from the fact that the components $\{\Gamma_a\}_{a \in \text{Ob}(\mathbb{X})}$ commute with morphisms in $\mathbb{X}$.

Similarly, construct a lift $\Omega^{\mathsf{rel}}: \frac{1}{2}\mu\mathsf{Cat} \mhyphen \mathsf{mod} \rightarrow \mathsf{Psh}^{\sqcup}$ of $\Omega: \frac{1}{2}\mu\mathsf{Cat}\rightarrow \frac{1}{2}\mathsf{biCat}^{\sqcup}$. Define $\Omega^{\mathsf{rel}}(\mathfrak{F} : \mathfrak{Y} \rightarrow \mathfrak{X}) = (\mathbb{F}: \mathbb{X}^{\mathsf{op}} \rightarrow \mathsf{Cat}^{\sqcup})$ by $\mathbb{X} = \Omega(\mathfrak{X})$ with $\mathbb{F}(a) = \mathsf{Set}_{\slash \mathfrak{F}_0^{-1}(a)}$ and functors $\mathbb{F}_{a,b}$ determined by $\mu_{\mathfrak{Y}}$. Given a morphism $(\varphi, \vartheta) : \mathfrak{F} \rightarrow \mathfrak{F}^{\prime}$, put $\phi=\Omega(\varphi)$ and let $\theta$ be the oplax natural transformation whose component functor $\theta_a : \mathbb{F}(a) \rightarrow \mathbb{F}^{\prime}(\phi(a))$ is pushforward along the map $\vartheta_{0 \vert \mathfrak{F}_0^{-1}(a)} :
\mathfrak{F}_0^{-1}(a) \rightarrow \mathfrak{F}_0^{\prime -1}(\phi(a))$ and whose coherence natural transformations $\theta_{a,b}$ are induced by the morphism $\tilde{\vartheta}_1$.

To prove that $\Xi^{\mathsf{rel}}$ and $\Omega^{\mathsf{rel}}$ are inverse equivalences, we first lift $\epsilon : \Xi \circ \Omega \Rightarrow \mathbf{1}_{\frac{1}{2}\mu \mathsf{Cat}}$ to $\epsilon^{\mathsf{rel}} : \Xi^{\mathsf{rel}} \circ \Omega^{\mathsf{rel}} \Rightarrow \mathbf{1}_{\frac{1}{2} \mu \mathsf{Cat} \mhyphen \mathsf{mod}}$. On the total space $\epsilon^{\mathsf{rel}}_{\mathfrak{F}}: \Xi^{\mathsf{rel}}( \Omega^{\mathsf{rel}}(\mathfrak{F})) \rightarrow \mathfrak{F}$ is given by $\bigsqcup_{a \in \mathfrak{X}_0} \lambda_{\mathfrak{F}_0^{-1}(a)}$ on objects while the morphism $\vartheta_0 \circ \mu_{\Xi^{\mathsf{rel}}( \Omega^{\mathsf{rel}}(\mathfrak{Y}))} \rightarrow \mu_{\mathfrak{Y}} \circ (\varphi_1 \times_{\varphi_0} \vartheta_0)$ is $\lambda_{\mu_{\mathfrak{Y}}}$. Similarly, lift $\eta: \mathbf{1}_{\frac{1}{2} \mathsf{biCat}^{\sqcup}} \Rightarrow \Omega \circ \Xi$ to $\eta^{\mathsf{rel}} : \mathbf{1}_{\mathsf{Psh}^{\sqcup}} \Rightarrow \Omega^{\mathsf{rel}} \circ \Xi^{\mathsf{rel}}$ as follows. Writing $\eta^{\mathsf{rel}}_{\mathbb{F}}=(\phi_{\mathbb{F}}, \theta_{\mathbb{F}})$, put $\phi_{\mathbb{F}}=\eta_{\mathbb{X}}$. Since $\eta_{\mathbb{X}}$ is the identity on objects, we can define $\theta_{\mathbb{F}}: \mathbb{F} \rightarrow \Omega^{\mathsf{rel}} (\Xi^{\mathsf{rel}}(\mathbb{F})) \circ \eta_{\mathbb{X}}^{\mathsf{op}}$ so that its component functor $(\theta_{\mathbb{F}})_a$ is $\lambda_{\mathbb{F}(a)}$. Associated to the functor $\mathbb{F}_{a,b}$ is the natural transformation
\[
\begin{tikzpicture}[baseline= (a).]
\node[scale=1] (a) at (0,0){
\begin{tikzcd}[row sep=2.5em,column sep=7.5em]
\Hom_{\mathbb{X}}(a,b) \times \mathbb{F}(b) \arrow{d}[left]{\mathbb{F}_{a,b}} \arrow{r}{\lambda_{\Hom_{\mathbb{X}}(a,b) \times \mathbb{F}(b)}} & \mathsf{Set}_{\slash \|\Hom_{\mathbb{X}}(a,b) \times \mathbb{F}(b) \|} \arrow{d}[right]{\|\mathbb{F}_{a,b}\|_*} \\
\mathbb{F}(a) \arrow{r}[below]{\lambda_{\mathbb{F}(a)}} \arrow[Rightarrow]{ru}[above]{\lambda_{\mathbb{F}_{a,b}}} & \mathsf{Set}_{\slash{\| \mathbb{F}(a) \|}}
\end{tikzcd}
};
\end{tikzpicture}
\]
which we take to be the coherence data $(\theta_{\mathbb{F}})_{a,b}$. It remains to check that, for an admissible morphism $(\phi, \theta): \mathbb{F} \rightarrow \mathbb{F}^{\prime}$, the diagram 
\[
\begin{tikzcd}
\mathbb{F} \arrow{d}[left]{(\phi, \theta)} \arrow{r}{\eta^{\mathsf{rel}}_{\mathbb{F}}} & \Omega^{\mathsf{rel}}(\Xi^{\mathsf{rel}}(\mathbb{F})) \arrow{d}[right]{\Omega^{\mathsf{rel}}(\Xi^{\mathsf{rel}}(\phi, \theta))} \\ \mathbb{F}^{\prime} \arrow{r}[below]{\eta^{\mathsf{rel}}_{\mathbb{F}^{\prime}}} & \Omega^{\mathsf{rel}}(\Xi^{\mathsf{rel}}(\mathbb{F}^{\prime}))
\end{tikzcd}
\]
commutes in $\mathsf{Psh}^{\sqcup}$. That is, we need an invertible icon $U: \phi_{\mathbb{F}^{\prime}} \circ \phi \Rightarrow \eta_{\mathbb{X}}(\phi) \circ \phi_{\mathbb{F}}$, which exists by Theorem \ref{thm:biCat}, and an invertible modification
\[
\begin{tikzcd}[column sep=9cm]
\mathbb{X}^{\mathsf{op}} 
  \arrow[bend left=20]{r}[name=U,below]{}{\mathbb{F}} 
  \arrow[bend right=20]{r}[name=D]{}[swap]{\Omega^{\mathsf{rel}}(\Xi^{\mathsf{rel}}(\mathbb{F}^{\prime})) \circ (\eta_{\mathbb{X}^{\prime}} \circ \phi)^{\mathsf{op}}}
& 
\mathsf{Cat}^{\sqcup}.
  \arrow[Rightarrow,to path={(U) to[out=-150,in=150] node[left] {$\scriptstyle (\Omega^{\mathsf{rel}}(\Xi^{\mathsf{rel}}(\mathbb{F}^{\prime})) U^{\mathsf{op}})  \circ (\theta_{\mathbb{F}^{\prime}} \circ \theta)$} coordinate (M) (D)}]{}
  \arrow[Rightarrow,to path={(U) to[out=-30,in=30] node[right] {$\scriptstyle \Omega^{\mathsf{rel}} (\Xi^{\mathsf{rel}}(\theta)) \circ \theta_{\mathbb{F}}$} coordinate (N)  (D)}]{}
  \arrow[to path={node[label={center:\scalebox{2.75}[0.75]{$\Rrightarrow$}},label={[yshift=-2pt]above:$\scriptstyle\Gamma$}] at ( $ (M)!0.55!(N) $ ) {}}]{}
\end{tikzcd}
\]
The required natural transformations $\Gamma_a$ are defined to be
\[
\begin{tikzcd}[column sep=6em]
\mathbb{F}(a) \arrow{d}[left]{\theta_a} \arrow{r}{\lambda_{\mathbb{F}(a)}} & \mathsf{Set}_{\slash \| \mathbb{F}(a) \|} \arrow{d}[right]{\| \theta_a \|_*} \\ \mathbb{F}^{\prime}(\phi(a)) \arrow{r}[below]{\lambda_{\mathbb{F}^{\prime}(\phi(a))}} \arrow[Rightarrow]{ru}[above]{\lambda_{\theta_a}} & \mathsf{Set}_{\slash \| \mathbb{F}^{\prime}(\phi(a)) \|}.
\end{tikzcd}
\]
This establishes the equivalence $\mathsf{Psh}^{\sqcup} \simeq \frac{1}{2} \mu \mathsf{Cat}\mhyphen \mathsf{mod}$. Now apply Theorem \ref{thm:multiCatModule}.
\end{proof}

The simplicial variant of Theorem \ref{thm:biCat} is an equivalence between $2 \mhyphen \mathsf{Seg}\mathbb{S}$ and the category of $\sqcup$-semisimple bicategories with simple units. The modification of Theorem \ref{thm:psh2Cat} is a compatible equivalence between $2 \mhyphen \mathsf{SegRel}\mathbb{S}$ and the category of $\mathsf{Cat}^{\sqcup}$-valued presheaves over $\sqcup$-semisimple bicategories with simple units.

\begin{Ex}
In the above language, the semi-simplicial set $\widetilde{\mathbb{T}}_{\bullet}(M)$ of Section \ref{sec:pseudoPoly} defines a $\sqcup$-semisimple semibicategory $\mathbb{X}(M)$ with $\text{Ob}(\mathbb{X}(M)) = M$ and $\Hom_{\mathbb{X}(M)}(a,b) = \mathsf{Set}_{\slash M_{a \rightarrow b}}$, where $M_{a \rightarrow b} \subset C^0([-\infty, \infty],M)$ is the subset of paths from $a$ to $b$, with composition given by counting (suitably interpreted) pseudoholmorphic polygons in $M$ with prescribed boundary conditions. The relative $2$-Segal semi-simplicial morphism $\widetilde{\mathbb{T}}^{\tau}_{\bullet}(M) \rightarrow \widetilde{\mathbb{T}}_{\bullet}(M)$ of Theorem \ref{thm:realPseudoPoly} becomes the presheaf $\mathbb{F}^{\tau} : \mathbb{X}(M)^{\mathsf{op}} \rightarrow \mathsf{Cat}^{\sqcup}$ given by $\mathbb{F}^{\tau}(a) = \mathsf{Set}_{\slash a^{\tau} \sqcup M_{a \rightarrow \tau(a)}^{\tau}}$, where $a^{\tau} =a$ if $a \in M^{\tau}$ and $a^{\tau} = \varnothing$ otherwise and $M^{\tau}_{a \rightarrow \tau(a)} \subset M_{a \rightarrow \tau(a)}$ is the subset of real paths. Counts of real pseudoholomorphic $n$-gons, with $n \leq 4$, obeying one ordinary and two real boundary conditions determine the required functors $\mathbb{F}^{\tau}(b) \rightarrow \mathbb{F}^{\tau}(a)$.
\end{Ex}


\bibliographystyle{plain}
\bibliography{mybib}

\end{document}